\setlist[enumerate]{label*=\arabic*.}
\numberwithin{equation}{section}
\newtheorem{theorem}{Theorem}[section]
\newtheorem{proposition}{Proposition}[section]
\newtheorem{lemma}[proposition]{Lemma}
\theoremstyle{definition}
\newtheorem{definition}{Definition}[section]
\newtheorem{remark}[proposition]{Remark}
\DeclareMathAlphabet{\mathcalligra}{T1}{calligra}{m}{n}
\DeclareFontShape{T1}{calligra}{m}{n}{<->s*[2.2]callig15}{}
\newcommand{\ent}{s}
\newcommand{\gradEnt}{S}
\newcommand{\vgradientEnt}{\vec{\gradEnt}}
\newcommand{\pressure}{\mathfrak{p}}
\newcommand{\density}{\varrho}
\newcommand{\enth}{H}
\newcommand{\lnenth}{h}
\newcommand{\temp}{\theta}
\newcommand{\tempoverEnth}{\mathfrak{q}}
\newcommand{\vort}{\omega}
\newcommand{\vvort}{\vec{\vort}}
\newcommand{\vortrenormalized}{\omega}
\newcommand{\vVort}{\vec{\vortrenormalized}}
\newcommand{\Int}{\text{int}}
\newcommand{\Domain}{\mathcal{R}}
\newcommand{\speed}{c}
\newcommand{\DivGradEnt}{\mathcal{D}}
\newcommand{\vectvort}{\vec{\modivort}}
\newcommand{\modivort}{\mathcal{C}}
\newcommand{\variables}{\Psi}
\newcommand{\vvariables}{\vec{\variables}}
\newcommand{\norm}[1]{\left\lVert#1\right\rVert}
\newcommand{\onenorm}[4]{\norm{#1}_{L_{#2}^{#3}(#4)}}
\newcommand{\twonorms}[6]{\norm{#1}_{L_{#2}^{#3}L_{#4}^{#5}(#6)}}
\newcommand{\threenorms}[8]{\norm{#1}_{L_{#2}^{#3}L_{#4}^{#5}L_{#6}^{#7}(#8)}}
\newcommand{\holdernorm}[5]{\norm{#1}_{C_{#2}^{#3,#4}(#5)}}
\newcommand{\semiholdernorm}[5]{\norm{#1}_{\dot{C}_{#2}^{#3,#4}(#5)}}
\newcommand{\holdertwonorms}[7]{\norm{#1}_{L_{#2}^{#3}C_{#4}^{#5,#6}(#7)}}
\newcommand{\holderthreenorms}[9]{\norm{#1}_{L_{#2}^{#3}L_{#4}^{#5}C_{#6}^{#7,#8}(#9)}}
\newcommand{\sobolevnorm}[3]{\norm{#1}_{H^{#2}(#3)}}
\newcommand{\abs}[1]{\left\vert#1\right\vert}
\newcommand{\sgabs}[1]{\left\vert#1\right\vert_{\gsphere}}
\newcommand{\curl}{\mbox{\upshape curl}}
\newcommand{\antisymmetic}{\epsilon}
\newcommand{\asphere}{\upomega^{A}}
\newcommand{\bsphere}{\upomega^{B}}
\newcommand{\csphere}{\upomega^{C}}
\newcommand{\sangle}{\upomega}
\newcommand{\sangleone}{\sangle_{(1)}}
\newcommand{\sangletwo}{\sangle_{(2)}}
\newcommand{\derinormal}{\frac{\p}{\p w}}
\newcommand{\deriasphere}{\frac{\p}{\p\asphere}}
\newcommand{\deribsphere}{\frac{\p}{\p\bsphere}}
\newcommand{\dericsphere}{\frac{\p}{\p\csphere}}
\newcommand{\Tstar}{T_{*}}
\newcommand{\Trescale}{T_{*;(\lambda)}}
\newcommand{\region}{\mathcal{M}}
\newcommand{\intregion}{\region^{(\text{Int})}}
\newcommand{\extregion}{\region^{(\text{Ext})}}
\newcommand{\Stwo}{\mathbb{S}^2}
\newcommand{\stu}{S_{t,u}}
\newcommand{\St}{\Sigma_t}
\newcommand{\Szero}{\Sigma_0}
\newcommand{\initialSzero}{\Sigma_0^{w(\lambda)}}
\newcommand{\coneu}{\mathcal{C}_{u}}
\newcommand{\gensmoothfunction}{\mathrm{f}}
\newcommand{\quadsmoothfunction}{\mathscr{Q}}
\newcommand{\linsmoothfunction}{\mathscr{L}}
\newcommand{\lgensmoothfunction}{\mathrm{f}_{(\vLunit)}}
\newcommand{\Ricfour}[2]{\mathbf{Ric}_{#1 #2}}
\newcommand{\Riemfour}[4]{\mathbf{Riem}_{#1 #2 #3 #4}}
\newcommand{\spherenormal}{N}
\newcommand{\nulllapse}{b}
\newcommand{\lapse}{a}
\newcommand{\volume}{\upsilon}
\newcommand{\spheresecondfund}{\uptheta}
\newcommand{\modtorsion}{\tilde{\upzeta}}
\newcommand{\uzeta}{\underline{\upzeta}}
\newcommand{\hchi}{\hat{\upchi}}
\newcommand{\uchi}{\underline{\upchi}}
\newcommand{\huchi}{\hat{\uchi}}
\newcommand{\Restrace}{\mytr_{\congsphere}}
\newcommand{\reschi}{\tilde{\upchi}}
\newcommand{\chismall}{\tilde{\upchi}^{\text{(Small)}}}
\newcommand{\resuchi}{\tilde{\uchi}}
\newcommand{\Jen}[1]{^{(#1)} \mkern-3mu \mathbf{J}}
\newcommand{\Jenarg}[2]{{^{(#1)} \mkern-3mu \mathbf{J}^{#2}}}
\newcommand{\Jenwithlowerarg}[2]{{^{(#1)} \mkern-3mu \mathbf{P}^{#2}}}
\newcommand{\Harg}[2]{{^{(#1)} \mkern-3mu H_{#2}}}
\newcommand{\deform}[1]{{^{(#1)} \mkern-1mu \pmb{\pi}}}
\newcommand{\minkowski}{M}
\newcommand{\gacoustical}{\gfour_{Acou}}
\newcommand{\gfour}{\mathbf{g}}
\newcommand{\rescaledgfour}{\widetilde{\mathbf{g}}}
\newcommand{\gt}{g}
\newcommand{\gsphere}{g \mkern-8.5mu / }
\newcommand{\esphere}{e \mkern-8.5mu / }
\newcommand{\congsphere}{\widetilde{g} \mkern-8.5mu / }
\newcommand{\tip}{\mathbf{z}}
\newcommand{\conformalfactor}{\upsigma}
\newcommand{\Chfour}{\pmb{\Gamma}}
\newcommand{\gvol}{\varpi_\gfour}
\newcommand{\tvol}{\varpi_g}
\newcommand{\spherevol}{\varpi_{\gsphere}}
\newcommand{\flatspherevol}{\varpi_{\esphere}}
\newcommand{\cphi}{\ln\left(\rgeo^{-2}v\right)}
\newcommand{\sphereproject}{{\Pi \mkern-12mu / } \, }
\newcommand{\Sigmatproject}{\underline{\Pi}}
\newcommand{\inhom}[1]{\mathfrak{F}_{(#1)}}
\newcommand{\remainder}[1]{\mathfrak{R}_{(#1);\upnu}}
\newcommand{\remainde}[1]{\mathfrak{R}}
\newcommand{\mytr}{{\mbox{\upshape tr}}}
\newcommand{\gtr}{\mytr_{\gsphere}}
\newcommand{\ggtr}{\mytr_{g}}
\newcommand{\Timelike}{\mathbf{T}}
\newcommand{\materialderivative}{\mathbf{B}}
\newcommand{\Lunit}{L}
\newcommand{\uLunit}{\underline{L}}
\newcommand{\Lgeo}{L_{(Geo)}}
\newcommand{\p}{\partial}
\newcommand{\pfour}{\pmb\partial}
\newcommand{\vLunit}{\vec{\Lunit}}
\newcommand{\Dfour}{\mathbf{D}}
\newcommand{\angD}{ {\Dfour \mkern-14mu / \,}}
\newcommand{\angnabla}{ {\nabla \mkern-14mu / \,} }
\newcommand{\angdiv}{\mbox{\upshape{div} $\mkern-17mu /$\,}}
\newcommand{\angcurl}{\mbox{\upshape{curl} $\mkern-17mu /$\,}}
\newcommand{\anglap}{ {\Delta \mkern-12mu / \, } }
\newcommand{\rgeo}{\tilde{r}}
\newcommand{\modmass}{\check{\upmu}}
\newcommand{\hodgemass}{{\upmu \mkern-10mu / \,}}
\newcommand{\mass}{\upmu}
\newcommand{\umodmass}{\bar{\modmass}}
\newcommand{\massone}{\hodgemass_{(1)}}
\newcommand{\masstwo}{\hodgemass_{(2)}}
\newcommand{\Lie}{\mathcal{L}}
\newcommand{\angLie}{ { \mathcal{L} \mkern-10mu / } }
\newcommand{\littlewood}{P_\upnu}
\newcommand{\boxg}{\square_{\gfour}}
\newcommand{\diff}{\mathrm d}
\newcommand{\neighborhood}{\mathfrak{O}}
\newcommand{\zero}{\mathcal{O}}
\newcommand{\average}[1]{\overline{#1}}
\newcommand{\Energyconformal}{\mathfrak{C}}
\newcommand{\iEnergyconformal}{\Energyconformal^{(i)}}
\newcommand{\eEnergyconformal}{\Energyconformal^{(e)}}
\newcommand{\modifiedcurrent}[1]{\mkern-1mu ^{(#1)} \mkern-3mu \widetilde{\mathbf{J}}}
\title{Rough Solutions of the Relativistic Euler Equations}
\date{\today}       
\begin{document}
	\author{Sifan Yu
		\thanks{The author was supported in part by NSF grant \# 2054184.}
		\thanks{Department of Mathematics, Vanderbilt University, Nashville, TN, USA. sifan.yu@vanderbilt.edu.}}
	\maketitle         
	\begin{abstract}
		We prove that the time of classical existence of smooth solutions to the relativistic Euler equations can be bounded from below in terms of norms that measure the ``(sound) wave-part" of the data in Sobolev space and ``transport-part" in higher regularity Sobolev space and H\"older spaces.  The solutions are allowed to have nontrivial vorticity and entropy. We use the geometric framework from \cite{TheRelativistivEulerEquations}, where the relativistic Euler flow is decomposed into a ``wave-part", that is, geometric wave equations for the velocity components, density and enthalpy, and a ``transport-part", that is, transport-div-curl systems for the vorticity and entropy gradient. Our main result is that the Sobolev norm $H^{2+}$ of the variables in the ``wave-part" and the H\"older norm $C^{0,0+}$ of the variables in the ``transport-part" can be controlled in terms of initial data for short times. We note that the Sobolev norm assumption $H^{2+}$ is the optimal result for the variables in the ``wave-part." Compared to low regularity results for quasilinear wave equations and the 3D non-relativistic compressible Euler equations, the main new challenge of the paper is that when controlling the acoustic geometry and bounding the wave equation energies, we must deal with the difficulty that the vorticity and entropy gradient are four-dimensional space-time vectors satisfying a space-time div-curl-transport system, where the space-time div-curl part is not elliptic. Due to lack of ellipticity, one can not immediately rely on the approach taken in \cite{TheRelativistivEulerEquations} to control these terms. To overcome this difficulty, we show that the space-time div-curl systems imply elliptic div-curl-transport systems on constant-time hypersurfaces plus error terms that involve favorable differentiations and contractions with respect to the four-velocity. By using these structures, we are able to adequately control the vorticity and entropy gradient with the help of energy estimates for transport equations, elliptic estimates, Schauder estimates, and Littlewood-Paley theory.\\
		\\
		\textbf{Keywords:} local well-posedness, low regularity, acoustic geometry, Schauder estimates, Strichartz estimates\\
		\\
		\textbf{Mathematics Subject Classification:} 76Y05; Secondary: 35Q31, 35Q75, 35Q35, 35L52, 35L72. \\
		\\
	\end{abstract}
	\tableofcontents
	\newpage
\section{Introduction}\label{sectionintro}
This paper is concerned with the special relativistic Euler equations on the Minkowski background $(\mathbb{R}^{1+3},\minkowski)$, where $\minkowski$ is the Minkowski metric. For use throughout the article, we fix a coordinate system $\{x^\alpha\}_{\alpha=0,1,2,3}$, relative\footnote{Throughout this article, we use the notation that Greek ``space-time" indices take on the values $0,1,2,3$, while Latin ``spatial indices" take on the values $1,2,3$. We use Einstein summation convention throughout the paper.} to which $\minkowski_{\alpha\beta}:=\text{diag}(-1,1,1,1)$, where the speed of light is set to be $1$. For these equations, there is considerable freedom in the choice of state-space variables, that is, the fundamental unknowns of the PDEs. In this work, we choose the logarithmic enthalpy $\lnenth$, the entropy $\ent$, and the four-velocity $v$, which is a future directed $\minkowski$-timelike vectorfield normalized as $\minkowski(v,v) = -1$.  We allow for non-trivial vorticity. All other unknowns in the system can be considered as functions of the state-space variables. We denote the pressure as $\pressure=\pressure(\lnenth,\ent)$, the fluid density as $\density=\density(\lnenth,\ent)$ and speed of sound as $\speed:=\sqrt{\frac{\p\pressure}{\p\density}}$. In this coordinate system, the relativistic Euler Equations can be expressed as\footnote{For Greek and Latin indices, for any vectorfield or one-form $V$, we lower and raise indices with the Minkowski metric $\minkowski_{\alpha\beta}:=\text{diag}(-1,1,1,1)$ and its inverse by using the notation  $(V_\flat)_\beta:=\minkowski_{\alpha\beta}V^\alpha$ and $(V^\sharp)^\beta:=(\minkowski^{-1})^{\alpha\beta}V_\alpha$.}:
\begin{subequations}\label{firstorderEuler}
	\begin{align}
	v^\kappa\p_\kappa\lnenth+\speed^2\p_\kappa v^\kappa&=0,\\
	v^\kappa\p_\kappa (v_\flat)_\alpha+\p_\alpha \lnenth+(v_\flat)_\alpha v^\kappa\p_\kappa \lnenth-\tempoverEnth\p_\alpha \ent&=0,\\
	v^\kappa\p_\kappa \ent&=0,
	\end{align}
\end{subequations}
where $\tempoverEnth:=\frac{\temp}{\enth}$ is temperature over enthalpy, which can be expressed as $\tempoverEnth=\tempoverEnth(\lnenth,\ent)$. Also see Section \ref{deffluid}-\ref{RelEuler} for the details.\\

Our work intimately depends on a new formulation of the equations derived by Disconzi-Speck \cite{TheRelativistivEulerEquations}, where the authors found that the flow splits into a ``sound-wave-part” (``wave-part" for short) for $(\lnenth,\ent,v)$ and a ``transport-div-curl-part” (``transport-part" for short) for the vorticity $\vort$ and the entropy gradient $\gradEnt$. Schematically, the geometric formulation takes the following form\footnote{We denote schematic spatial partial derivatives and space-time partial derivatives by $\p$ and $\pfour$ respectively. Also, we use the following schematic notations throughout the paper where $A,B,C$ are arrays of variables:
	\begin{itemize}
		\item $\linsmoothfunction[A](B)$ denotes any scalar-valued function that is linear in the components of $B$ with coefficients that are a function of the components of $A$.
		\item $\quadsmoothfunction[A](B,C)$ denotes any scalar-valued function that is quadratic in the components of $B$ and $C$ with coefficients that are a function of the components of $A$.
\end{itemize}}:\\

\textbf{Wave equations}	
\begin{align}
\label{intro1}\boxg\variables=\linsmoothfunction(\vvariables)[\vectvort,\DivGradEnt]+\quadsmoothfunction(\vvariables)[\pfour\vvariables,\pfour\vvariables].
\end{align}

\textbf{Transport equations}
\begin{subequations}\label{intro2}
	\begin{align}
\label{intro2.1}	&\materialderivative\vort^\alpha=\linsmoothfunction(\vvariables,\vvort,\vgradientEnt)[\pfour\vvariables],\\ \label{intro2.2}&\materialderivative(\gradEnt^\sharp)^\alpha=\linsmoothfunction(\vvariables,\vgradientEnt)[\pfour\vvariables].
	\end{align}
\end{subequations}

\textbf{Transport-Div-Curl system}
\begin{subequations}\label{intro3}
	\begin{align}
\label{intro3.1}&\materialderivative \modivort^\alpha=\inhom{\modivort^\alpha}:=\quadsmoothfunction(\vvariables)[\pfour\vvariables,(\pfour\vort,\pfour\vgradientEnt,\pfour\vvariables)]+\quadsmoothfunction(\vgradientEnt)[\pfour\vvariables,\pfour\vvariables]+\linsmoothfunction(\vvariables,\vvort,\vgradientEnt)[\pfour\vvariables,\pfour\vgradientEnt],\\
\label{intro3.2}&\materialderivative\DivGradEnt=\inhom{\DivGradEnt}:=\linsmoothfunction(\vvariables,\vgradientEnt)[\pfour\vort]+\quadsmoothfunction(\vvariables)[\pfour\vvariables,(\pfour\vgradientEnt,\pfour\vvariables)]+\quadsmoothfunction(\vgradientEnt)[\pfour\vvariables,\pfour\vvariables]+\linsmoothfunction(\vvariables,\vvort,\vgradientEnt)[\pfour\vvariables],\\
\label{intro3.3}&\text{vort}^\alpha(S)=0,\\ 
\label{intro3.4}&\p_\alpha\vort^\alpha=\linsmoothfunction(\vort)[\pfour\vvariables],
\end{align}
\end{subequations}
where $\vvariables:=(v^0,v^1,v^2,v^3,h,s)$, $\gfour=\gfour(\vvariables)$ is a solution-dependent Lorentzian metric which governs the geometry of sound waves, $\modivort\simeq\text{vort}(\vort)$ and $\DivGradEnt\simeq\text{div} \gradEnt$ are special modified fluid variables, and  $\materialderivative$ is the material derivative, which is parallel to $v^\alpha$. See Definition \ref{vorticity} for the definition of $\vort$, Definition \ref{D:entgradient} for the definition of $\gradEnt$, and Section \ref{newformulation} for the precise definitions of $\gfour,\modivort,\DivGradEnt,\materialderivative$ and more details of the geometric formulation of the equations. This formulation reveals miraculous regularity and geometric properties of the flow, which is used in a fundamental way in the present work. These geometric properties are not visible in first order equations (\ref{firstorderEuler}).

In this paper, we show that under low regularity assumptions on the ``wave-part" (see Section \ref{Data} for more details of ``wave-part") of the initial data, the regularity of solutions of the relativistic Euler equations can be preserved for a short time. Specifically, we assume that the ``wave-part" of the data belongs to $H^{2+}$, and that the ``transport-part" $\modivort$ and $\DivGradEnt$ are in H\"older space $C^{0,0+}$. Our proof shows, in particular, that it is possible to avoid instantaneous shock formation, which in \cite{counterexamplestolocalexistenceforquasilinearwaveequations} was shown to occur in the irrotational case (i.e., for quasilinear wave equations) for initial data in $H^2$. \textbf{In particular, our regularity assumptions are optimal with respect to the ``sound-wave-part" of the data.} One cannot hope to avoid singularities globally in time: it is known that, even in the irrotational and isentropic case, the compression of sound waves can cause shocks to develop from regular initial data in finite time. Moreover, in more than one space dimension and away from symmetry, these singularities are known to be stable as in \cite{Theformationofshocksin3-dimensionalfluids}. 

For the irrotational and isentropic case in \cite{Theformationofshocksin3-dimensionalfluids}, the relativistic Euler equations reduce to a system of covariant quasilinear wave equations for the first derivatives $\psi_\alpha:=\p_\alpha\phi$ of a potential function $\phi$ of the following form:
\begin{align}\label{quasi}
\square_{\gfour(\psi)}(\psi_\alpha)=\quadsmoothfunction(\psi)[\pfour\psi,\pfour\psi].
\end{align} 
Classical local well-posedness in $H^{(5/2)+}$ for the quasilinear wave system (\ref{quasi}) can be obtained by applying energy estimates and Sobolev embedding, see Kato \cite{kato}. Starting in the late 90s, the regularity needed for local well-posedness for quasilinear wave equations was improved in a series of works by Bahouri-Chemin, Smith-Tataru and Klainerman-Rodnianski, see \cite{Equationsd'ondesquasilinearireseteffetdispersif,Equationsd'ondesquasilineairesetestimationdeStrichartz,Strichartzestimatesforsecondorderhyperbolicoperators,Strichartzestimatesforoperatorswithnonsmooth,ACommutingVectorfieldsApproachtoStrichartz,ImprovedLocalwellPosedness}.
 The optimal result for low regularity $H^{2+}$ of quasilinear wave equations was first achieved by Smith-Tataru in \cite{Sharplocalwell-posednessresultsforthenonlinearwaveequation}. In  \cite{AGeoMetricApproach}, Wang reached the same result as in \cite{Sharplocalwell-posednessresultsforthenonlinearwaveequation} by using a geometric approach. With the presence of vorticity, Disconzi-Luo-Mazzone-Speck, Wang, and Zhang-Andersson proved low-regularity local well-posedness result for the 3D compressible Euler equations in \cite{3DCompressibleEuler}, \cite{Roughsolutionsofthe3-DcompressibleEulerequations} and \cite{Alterproof} respectively. In all three works, the regularity of ``wave-part" is in the optimal level $H^{2+}$. We will discuss the details of the assumptions for the data in \cite{3DCompressibleEuler} and \cite{Roughsolutionsofthe3-DcompressibleEulerequations} in Section \ref{Overviewofpreviousresults}.

Compared to the non-relativistic case, the first fundamental form of $\St:=\{t\}\times\mathbb{R}^3$ is no longer conformally flat in the relativistic case, leading to more complicated geometry. One of the main challenges of this paper is that, (\ref{intro1})-(\ref{intro3}) seemingly suffers from a loss of derivative. This is because at the level of regularity, $\modivort,\DivGradEnt\simeq\pfour^2\vvariables$, which is an issue since $\modivort$ and $\DivGradEnt$ show up as the source terms in the right-hand side of the wave equation (\ref{intro1}). In \cite{3DCompressibleEuler}, this was solved by using Hodge theory on the spacelike hypersurfaces $\St$. In our case, the transport-div-curl system (\ref{intro3}) is a space-time (non-elliptic Hodge) system, from which we have to extract a quasilinear elliptic Hodge system on the spacelike hypersurfaces, that is, we rewrite (see Proposition \ref{NewDivCurl}) the space-time div-curl system (\ref{intro3}) into a spatial elliptic div-curl system with source terms that can be controlled only due to the special structure of the equations:
 \begin{align}
\label{Gabintro}G^{ab}\p_a((\vort_\flat),\gradEnt)_b&=F,\\
\p_a((\vort_\flat),\gradEnt)_b-\p_b((\vort_\flat),\gradEnt)_a&=H_{ab}.
\end{align}
In (\ref{Gabintro}), $G^{-1}:=G^{-1}(v)$ is the inverse of a Riemannian metric on constant-time hypersurfaces (see equation (\ref{D:Gab}) for the definition of $G^{-1}$). By using these structures, we are able to adequately control the vorticity and entropy gradient by using energy estimates for transport equations, elliptic estimates, Schauder estimates, and Littlewood-Paley theory.\\
\indent
We now state the main results of this paper.
\begin{theorem}[Main theorem]\label{IntroMainTheorem}
	Consider a smooth\footnote{By smooth we mean as smooth as necessary for the analysis arguments to go through. We note that all of our quantitative estimates depend only on the Sobolev and H\"older norms.} solution to the relativistic Euler equations whose initial data on the initial Cauchy hypersurface $\Sigma_{0}:=\{0\}\times\mathbb{R}^3$ satisfies following assumptions for some real number $2<N<5/2$, $0<\alpha<1$, $c_1>0$ and $D$:\\
	1. \textbf{``Wave part"}: $\sobolevnorm{h, v}{N}{\Sigma_0}\leq D$,\\
	2. \textbf{``Transport part"}: $\sobolevnorm{\vort}{N}{\Sigma_0}+\sobolevnorm{s}{N+1}{\Sigma_0}\leq D$, In addition, Modified fluid variables $\modivort$ and $\DivGradEnt$ ($\modivort\simeq\textnormal{vort}(\vort)$ and $\DivGradEnt\simeq\textnormal{div} \gradEnt$, see Subsection \ref{v-ortho} for the definition of operator \textnormal{vort} and $\vort$, Definition \ref{D:entgradient} for the definition of $\gradEnt$, and Section \ref{modifiedfluidvariables} for the definition of $\modivort$ and $\DivGradEnt$)  satisfy the H\"older-norm bound $\holdernorm{\modivort,\DivGradEnt}{}{0}{\alpha}{\Sigma_0}\leq D$,\\
	3. The image of data functions are contained in an interior of a compact subset $\Domain$ (defined in Section \ref{Data}) and the enthalpy $\enth$ is positive, i.e. $\enth\geq c_1>0$.\\
	Then the solution's time of classical existence $T:=T(D,\Domain)>0$ can be controlled in terms of only $D$ and $\Domain$. Moreover, the Sobolev and some H\"older regularities of the data are propagated by the solution.
\end{theorem}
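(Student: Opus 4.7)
The plan is to close a bootstrap on a time interval $[0,T]$ on which we postulate a priori bounds matching the regularities of Theorem \ref{IntroMainTheorem}: Sobolev control of $\vvariables$ in $L_t^\infty H^N_x$, dispersive Strichartz-type control of $\pfour\vvariables$ in $L_t^q L_x^\infty$ at the scale used by Smith--Tataru for quasilinear waves, Sobolev control of $(\vort,\gradEnt)$ at the level of the data, and H\"older control of $(\modivort,\DivGradEnt)$ in $L_t^\infty C^{0,\alpha}_x$. The wave equations \eqref{intro1} furnish $H^N$ energy estimates provided the right-hand side involving $\vectvort,\DivGradEnt$ is controlled in $L_t^1 H^{N-1}_x$, while the null-form-like nonlinearity $\quadsmoothfunction(\vvariables)[\pfour\vvariables,\pfour\vvariables]$ is absorbed by the Strichartz bound on $\pfour\vvariables$. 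To descend from the classical $H^{5/2+}$ threshold down to $H^{2+}$, I would insert geometric Strichartz estimates for $\boxg$ in the spirit of \cite{Sharplocalwell-posednessresultsforthenonlinearwaveequation,AGeoMetricApproach}, which requires controlling an acoustic eikonal function and the associated null connection coefficients, null second fundamental form, mass aspect, and conformal factor attached to the solution-dependent metric $\gfour(\vvariables)$.

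The principal obstacle is propagating the transport-part at the regularity allowed by the wave-part. At the scaling of the problem, $\modivort$ and $\DivGradEnt$ sit at the top-order level $\pfour^2\vvariables$, so $H^N$ control of $\vvariables$ alone does not suffice to close the source term $\linsmoothfunction(\vvariables)[\vectvort,\DivGradEnt]$ on the right of \eqref{intro1}. In the non-relativistic setting of \cite{3DCompressibleEuler,Roughsolutionsofthe3-DcompressibleEulerequations} one invokes an elliptic Hodge system on each slice $\St$, but in the relativistic setting the natural div-curl identities are \emph{space-time} in nature and are not elliptic. The key step, which I would carry out as in Proposition \ref{NewDivCurl}, is to project the space-time system onto $\St$ and rewrite it as a $v$-weighted Riemannian elliptic div-curl system
\begin{align*}
G^{ab}\p_a((\vort_\flat),\gradEnt)_b &= F, \\
\p_a((\vort_\flat),\gradEnt)_b - \p_b((\vort_\flat),\gradEnt)_a &= H_{ab},
\end{align*}
whose error terms $F,H$ are collections of favorable contractions with $v$ and quadratic combinations of $\pfour\vvariables$ that can, after using the transport structure \eqref{intro2}--\eqref{intro3}, be treated as lower-order.

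With this spatial elliptic system in hand, I would run two propagation arguments in parallel. Top-order Sobolev bounds on $(\vort,\gradEnt)$ come from commuting \eqref{intro2} with sufficiently many spatial derivatives and closing via $L^2$ energy estimates for transport equations together with $L^2$ elliptic estimates for the div-curl system; the error terms $F,H$ are dominated thanks to their quadratic structure and their contractions with $v^\kappa$. Simultaneously, H\"older control of $(\modivort,\DivGradEnt)$ is propagated by applying $\materialderivative$ to \eqref{intro3.1}--\eqref{intro3.2} and combining (i) elliptic Schauder estimates for the div-curl system, which promote Sobolev control of $\vvariables$ into $C^{0,\alpha}$ control of the first-order quantities appearing in $F,H$, with (ii) a Littlewood--Paley decomposition that converts the $L_t^q L_x^\infty$ Strichartz bound on $\pfour\vvariables$ into an $L_t^1 C^{0,\alpha}_x$ source for the transport equations for $\modivort,\DivGradEnt$. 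This H\"older output is then fed back to close the acoustic-geometry estimates that drive the Strichartz inequality, since the curvature of $\gfour$ depends on $(\modivort,\DivGradEnt)$. The hardest step will be the derivation and closure of the spatial elliptic div-curl system: both identifying the favorable structure of $F$ and $H$ so that they genuinely lose no derivatives, and establishing that the Schauder/Littlewood--Paley propagation of $C^{0,\alpha}$ through the coupled transport system is compatible with the optimal $H^{2+}$ threshold on the wave-part.
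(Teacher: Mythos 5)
Your proposal follows essentially the same route as the paper: a bootstrap argument in which Strichartz estimates for $\boxg$ are obtained via the acoustic eikonal function, null connection coefficients, and conformal energy, while the transport part is closed by rewriting the space-time div-curl system as the spatial elliptic system $(G^{-1})^{ab}\p_a(\cdot)_b=F$ on $\St$ (the paper's Proposition \ref{NewDivCurl}) and combining $L^2$ elliptic, transport, and Schauder/Littlewood--Paley estimates exactly as you describe. The only slight divergence is architectural: in the paper the acoustic geometry is controlled from the $L^2$ and null-cone energy estimates for $\modivort,\DivGradEnt$ under the bootstrap assumptions alone, and the H\"older (Schauder) improvement of the transport bootstrap is derived afterwards, conditionally on the already-established Strichartz improvement, so the feedback loop you sketch from the H\"older bound into the geometry is not needed.
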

See Section \ref{mainidea} for the main ideas behind proving Theorem \ref{IntroMainTheorem}.
\subsection{Overview of Previous Low-Regularity Results}\label{Overviewofpreviousresults} 
There has been many developments on low regularity problems for quasilinear wave equations and the non-relativistic 3D compressible Euler equations in past two decades. For quasilinear wave equations of the form (\ref{quasi}), Bahouri-Chemin \cite{Equationsd'ondesquasilineairesetestimationdeStrichartz} and Tataru \cite{Strichartzestimatesforoperatorswithnonsmooth} independently showed local well-posedness with $H^{(2+\frac{1}{4})+}$ data. The improvements rely on Strichartz estimates based on Fourier integral parametrix representations. Bahouri-Chemin improved their earlier result to $H^{(2+\frac{1}{5})+}$ in \cite{Equationsd'ondesquasilinearireseteffetdispersif}. Tataru pushed the results down to $H^{(2+\frac{1}{6})+}$ in \cite{Strichartzestimatesforsecondorderhyperbolicoperators} and Klainerman reached the same level in \cite{ACommutingVectorfieldsApproachtoStrichartz}. Klainerman-Rodnianski achieved $H^{(2+\frac{2-\sqrt{3}}{2})+}$ in \cite{ImprovedLocalwellPosedness}.
	  The optimal low regularity result $H^{2+}$ for generic quasilinear wave equations was first achieved by Smith-Tataru in \cite{Sharplocalwell-posednessresultsforthenonlinearwaveequation} by using wave-packets and properties of the geometry of characteristic light cones that were introduced in \cite{ImprovedLocalwellPosedness}. Besides the improvements over Sobolev exponents, a commuting vectorfield approach for Strichartz estimates was introduced by Klainerman in \cite{ACommutingVectorfieldsApproachtoStrichartz}, and a fundamental decomposition of a Ricci component of $\gfour$ was used for improving the regularity in the causal geometry by Klainerman-Rodnianski in \cite{ImprovedLocalwellPosedness}. Recently, Wang gave a second proof of Smith-Tataru \cite{Sharplocalwell-posednessresultsforthenonlinearwaveequation} by using this geometric approach. The proof in Wang \cite{AGeoMetricApproach} relied on an upgraded version of Klainerman-Rodnianski's vectorfield method with the help of conformal energy estimates. We again emphasize that, for the general quasilinear wave equation of the form (\ref{quasi}), it is impossible to prove any well-posedness result with data in $H^2$. Specifically, Lindblad provided an example of ill-posedness for a quasilinear wave equation with $H^2$ initial data in \cite{counterexamplestolocalexistenceforquasilinearwaveequations}.  For the non-relativistic compressible Euler flow with vorticity and entropy, under the $H^{2+}$ assumptions on ``wave-part" and "transport part" of the data, Disconzi-Luo-Mazzone-Speck \cite{3DCompressibleEuler} and Wang \cite{Roughsolutionsofthe3-DcompressibleEulerequations} proved local well-posedness result for 3D compressible Euler equations. By assuming the H\"older regularity $C^{0,\alpha}$ for the data of the modified fluid variables $\modivort$ and $\DivGradEnt$ (our analogue of $\modivort$ and $\DivGradEnt$ are defined in Definition \ref{modifiedfluidvariables}), the authors are able to prove a Schauder estimate in \cite{3DCompressibleEuler} for a transport-div-curl system in order to propagate the vorticity and entropy gradient along the waves. A method of decomposing the velocity is given in \cite{Roughsolutionsofthe3-DcompressibleEulerequations} for the isentropic compressible Euler equations, which allowed Wang to remove the H\"older assumption on vorticity. In \cite{Alterproof}, Zhang-Andersson combined the methods in \cite{Sharplocalwell-posednessresultsforthenonlinearwaveequation} and \cite{Roughsolutionsofthe3-DcompressibleEulerequations} to give an alternate proof of the same result as in Wang \cite{Roughsolutionsofthe3-DcompressibleEulerequations}.
	\subsection{A Brief Overview of the Strategy of the Proof}
	Klainerman \cite{ACommutingVectorfieldsApproachtoStrichartz}, Klainerman-Rodnianski \cite{ImprovedLocalwellPosedness}, and Wang \cite{AGeoMetricApproach} developed a geometric approach for proving the low regularity well-posedness for the quasilinear wave equations. The new formulation (\ref{intro1})-(\ref{intro3}) provided by Disconzi-Speck \cite{TheRelativistivEulerEquations} makes it possible to import the geometric techniques from \cite{ACommutingVectorfieldsApproachtoStrichartz,ImprovedLocalwellPosedness,AGeoMetricApproach} to the ``sound-wave-part" of compressible Euler flow. The main difference with the wave problem is the addition of another characteristic speed into the problem, namely, the ``transport-part". These two parts of the equations and solutions interact with each other, which creates substantial difficulties for understanding the Euler flow. See Section \ref{introgeometricformulation} for further discussions of the geometric formulation. See also Luk-Speck \cite{ThehiddennullstructureofthecompressibleEulerequationsandapreludetoapplications} for similar formulations for 3D isentropic compressible Euler equation and Speck \cite{ANewFormulationofthe3DCompressibleEulerEquationswithDynamicEntropy} for 3D compressible Euler equations with any equation of state. The main tool for controlling the solution in the low-regularity setting, by using energy estimate (see Christodoulou \cite[Chapter 1]{Theformationofshocksin3-dimensionalfluids} for the energy current and its properties) and Littlewood-Paley theory, is the following estimates\footnote{We denote the constant-time hypersurface at time $t$ by $\Sigma_{t}$. Moreover, $A\lesssim B$ means $A\leq C\cdot B$ for some universal constant $C$ depending on region $\mathcal{R}$ and data $D$.}:
	
	\begin{align}\label{standardenergy}
	\norm{(\lnenth,\ent,v)}_{H^{2+\varepsilon}(\St)}&\lesssim\norm{(\lnenth,\ent,v)}_{H^{2+\varepsilon}(\Sigma_{0})}
	+\int_{0}^{t}\left(\norm{\pfour(\lnenth,\ent,v)}_{L_x^\infty(\Sigma_{\tau})}+1\right)\norm{(\lnenth,\ent,v)}_{H^{2+\varepsilon}(\Sigma_{\tau})}\diff\tau.
	\end{align} 
	In order to make (\ref{standardenergy}) useful, one needs to control $\norm{\pfour(\lnenth,\ent,v)}_{L^1_tL_x^\infty}$.
	Since one is not able to apply Sobolev embedding to recover the bound below $H^{5/2+}$, we instead use a geometric approach to show the following \textbf{Strichartz estimates}: $\norm{\pfour(\lnenth,\ent,v)}_{L^2_tL_x^\infty}\lesssim\Tstar^{2\delta}$. This is done by a bootstrap argument with bootstrap assumptions $\norm{\pfour(\lnenth,\ent,v)}_{L^2_tL_x^\infty}\leq1$, where $\Tstar$ is the bootstrap time. In order to prove the Strichartz estimates, we apply a series of reductions. We reduce the Strichartz estimates to a decay estimates by using a $\mathcal{T}\mathcal{T}^*$ argument, then to a conformal energy estimate (see Definition \ref{definitionofconformalenergy} for the definition of conformal energy and Theorem \ref{BoundnessTheorem} for boundness theorem for conformal energy) by Littlewood-Paley theory. See Section \ref{reductionofstrichartz} for overview of the reduction, Section \ref{sectionstructure} for an extended overview of a global structure and Section \ref{sectionreduction}-\ref{sectiongeometrysetup} for details. 
	
	A crucial step in our geometric approach is the introduction of an acoustical function $u$ satisfying the acoustical eikonal equation $\left(\gfour^{-1}\right)^{\alpha\beta}\p_\alpha u\p_\beta u=0$, where the acoustical metric $\gfour=\gfour(\lnenth,v,\ent)$ is a Lorentzian metric (see Definition \ref{D:acousticmetric} for the definition of $\gfour$) distinct from the Minkowski metric. With the help of $u$, we construct a null frame and control the acoustic geometry along acoustic null cones, which are the level sets of $u$ (see the figure on page \pageref{figuregeometry}). This allows us to derive suitable estimates for conformal energy. Disconzi-Luo-Mazzone-Speck \cite{3DCompressibleEuler} and Wang \cite{Roughsolutionsofthe3-DcompressibleEulerequations} showed that given good control over the ``transport-part", we can run the machinery of Strichartz estimates for the ``wave-part" where we treat the ``transport-part" as a favorable source term. Thus, good control over the ``transport-part" is a crucial component of our analysis. Inspired by the analysis of the 3D non-relativistic compressible Euler case in \cite{3DCompressibleEuler}, we derive elliptic and Schauder estimates for the transport-div-curl systems to bound the $H^{1+\varepsilon}$ and $C^{0,\alpha}$ norms of $\modivort,\DivGradEnt$ when the wave part is rough.
	
	 The main new difficulty that is not found in 3D non-relativistic compressible Euler is: due to the space-time structure of the relativistic Euler flow, we encounter space-time velocity, vorticity, and the div-curl system where the ellipticity is not immediately apparent. To overcome this difficulty, we exploit two crucial aspects. We first notice that the $v$-directional derivative of the vorticity and entropy gradient is favorable due to the transport phenomena. To obtain control of $v$-orthogonal directional derivatives, we reduce the space-time div-curl system of vorticity and entropy gradient to a dynamic div-curl system on the constant-time hypersurfaces. By combining these special structures of relativistic Euler equations with Littlewood-Paley decomposition and properties of pseudodifferential operators, we derive estimates for vorticity and entropy.

	We present the logical graph of this paper in Section \ref{mainidea}.
	\subsection{Geometric Formulation of the Relativistic Euler Equations}\label{introgeometricformulation} Due to the coupling of sound waves with vorticity and entropy in the equations (\ref{intro1})-(\ref{intro3}), when considering the relativistic Euler equations with an arbitrary equation of state, 
	one needs to precisely and carefully split the dynamics into a ``wave-part", which describes the propagation of sound waves, and a ``transport-part", which describes the evolution of vorticity and entropy. For the 3D non-relativistic compressible Euler equations with any equation of state, Speck \cite{ANewFormulationofthe3DCompressibleEulerEquationswithDynamicEntropy} derived a system consisting of geometric wave equations and transport-div-curl equations. This geometric formulation is used for the low-regularity problem in \cite{3DCompressibleEuler,Roughsolutionsofthe3-DcompressibleEulerequations}. See also Luk-Speck \cite{Shockformationinsolutionstothe2DcompressibleEulerequationsinthepresenceofnon-zerovorticity} and \cite{ThehiddennullstructureofthecompressibleEulerequationsandapreludetoapplications} for the geometric formulation of the compressible Euler in the barotropic case and its application to the shock formation problem. Disconzi-Speck \cite{TheRelativistivEulerEquations} derived the geometric formulation of the relativistic Euler equations with vorticity and dynamic entropy that we used in this paper. It allows us to describe the influence of transport phenomena on the wave part of the system and the acoustic geometry with rough sound wave data given in the relativistic Euler flow. These geometric formulations have origins in Christodoulou and Christodoulou-Miao's proof of stable shock formation for the relativistic Euler equations and non-relativistic 3D compressible Euler equations in the irrotational and isentropic case \cite{Theformationofshocksin3-dimensionalfluids}\cite{CompressibleflowandEuler'sequations}. 

	The new geometric formulation from \cite{TheRelativistivEulerEquations} (see (\ref{intro1})-(\ref{intro3}) and Proposition \ref{WaveEqunderoringinalacoustical} for the formulation) splits the dynamics into a ``wave-part", which consists of geometric wave equations for the fluid variables $\lnenth, v, \ent$, and a ``transport-div-curl-part", which governs the transport equations of special vorticity, entropy gradient and modified fluid variables $\modivort, \DivGradEnt$ ($\modivort, \DivGradEnt$ are special combinations of variables whose essential terms are the vorticity of vorticity and divergence of entropy gradient that are defined in Definition \ref{modifiedfluidvariables}), and div-curl systems for the special vorticity and entropy gradient. The advantage of the geometric formulation is that one can do analysis on both ``wave-part" and ``transport-part", which are highly coupled. Here we briefly summarize the new formulation and its connection to establishing the Strichartz estimate: 
\begin{itemize}

\item  The ``wave-part" of the formulation involves wave equations with principal part $\boxg$. Properties of this operator are intimately related to the acoustic geometry, which is constructed via an acoustical function $u$. Here, $u$ is a solution to the acoustical eikonal equation $\left(\gfour^{-1}\right)^{\alpha\beta}\p_\alpha u\p_\beta u=0$, where the acoustical metric $\gfour=\gfour(\lnenth,v,\ent)$ is the Lorentzian metric defined in Definition $\ref{D:acousticmetric}$. With the help of $u$, we construct a null frame and derive some transport equations as well as div-curl systems for some particular connection coefficients along acoustic null cones, which are the level sets of $u$ (see the figure on page \pageref{figuregeometry}). We note that these equations for the connection coefficients are derived from basic geometry considerations and are independent of the relativistic Euler equations. By using a delicate decomposition of certain curvature components, which are highly tied to the geometric wave equations (\ref{waveEQ}), we can control a large group of geometric quantities that are fundamental for deriving the conformal energy estimates. We emphasize already that achieving control of these geometric quantities is essential for controlling certain conformal energy for solutions to the linear wave equation corresponding to the acoustical metric $\gfour$, i.e., solutions $\varphi$ to the PDE $\square_{\gfour(\vvariables)}\varphi=0$. It is crucial to control the conformal energy in order to derive the decay estimates, which we again emphasize are the main ingredient needed to obtain the desired Strichartz estimate. We will describe conformal energy and decay estimates with more details in Section \ref{mainidea}.
\item The ``transport-div-curl-part" of the formulation allows one to control the vorticity and entropy at one derivative level above standard estimates. The analysis uses transport estimates as well as Hodge estimates at constant-time hypersurfaces. This is highly non-trivial and more complicated compared to non-relativistic 3D compressible Euler because the Hodge system that we encounter is a space-time div-curl system. In total, we are able to show that the transport terms are ``good" source terms in the wave equation estimates. We point out the vorticity and entropy gradient also appear in PDEs that we use to control the acoustic geometry because of the geometric wave equations (\ref{intro1}). This shows that there are interactions between the vorticity, entropy, sound waves, and acoustic geometry.
\end{itemize}
	\subsection{Comparision with Low-Regularity Results for 3D Non-relativistic Compressible Euler Equations}
	Recently, \cite{3DCompressibleEuler} and \cite{Roughsolutionsofthe3-DcompressibleEulerequations} proved low-regularity results for the 3D non-relativistic compressible Euler equations with the help of the geometric formulation in \cite{ANewFormulationofthe3DCompressibleEulerEquationswithDynamicEntropy}. In \cite{3DCompressibleEuler}, Disconzi-Luo-Mazzone-Speck showed that, if the ``wave-part" of the data is initially in $H^{2+}$, and the ``transport-part" $\modivort, \DivGradEnt$ are in $C^{0,\alpha}$, then the regularity of the solutions can be preserved for short times. For barotropic flow, Wang proved a similar result by removing the H\"older assumption for $\modivort,\DivGradEnt$ and assuming the $H^{2+}$ assumptions on the ``transport-part" in \cite{Roughsolutionsofthe3-DcompressibleEulerequations}.
	For the relativistic Euler equations, we prove a similar result as in \cite{3DCompressibleEuler} for the 3D compressible Euler equations. That is, we allow any equation of state and we have the same level of regularity assumptions on the initial data. Due to the geometric nature of the relativistic Euler flow, the vorticity $\vort^\alpha$ in this article is a space-time $v$-orthogonal vectorfield (see Definition \ref{vorticity}) which solves a space-time transport-div-curl system. In the 3D non-relativistic compressible Euler case, the geometry of vorticity is much simpler: it is a $\St$-tangent vector field and solves a div-curl system with constant coefficients on constant-time slices. 
	
	In the relativistic Euler equations, vorticity and entropy gradient satisfy transport equations in the $v$-direction. To control generic $v$-orthogonal (with respect to Minkowski metric) derivatives of vorticity and entropy gradient, we rely on a space-time div-curl system. Moreover, using the transport equations satisfied by the modified fluid variables $\modivort,\DivGradEnt$, we can control these quantities not only along constant-time slices, but also along null cones, which is fundamental in our work. To derive sufficient regularity for vorticity, entropy gradient, and modified fluid variables along constant-time slices, we rewrite the space-time div-curl system into a spatial div-curl system with source terms that can be controlled only due to the special structure of the equations. A crucial ingredient in our analysis is that the spatial divergence equation has the form $(G^{-1})^{ij}\p_i(\vort_\flat)_j=...$ where $G^{-1}:=G^{-1}(v)$ is the inverse of a Riemannian metric on constant-time hypersurfaces (see equation (\ref{D:Gab}) for the definition of $G^{-1}$).
	Because the coefficient metric $G$ of the divergence equation is Riemannian, by using the technique of freezing the spatial points, we are able to derive a localized div-curl system with constant coefficient principle terms, such that the Fourier transform of vorticity and entropy gradient is bounded in the frequency space by the source terms of the div-curl system. This allows us to control appropriate H\"older norms of $\p\vort,\p\gradEnt$ in terms of the same H\"older norms of $\modivort,\DivGradEnt,\pfour\vvariables$. The analysis relies on the Littlewood-Paley theory as well as the standard theorem in pseudodifferential operators. We take a similar approach when deriving the elliptic div-curl estimates in $L^2$ space, where we need to control derivatives of vorticity and the entropy gradient by $\pfour\vvariables$, the modified fluid variables $\modivort$ and $\DivGradEnt$. Finally, we use the transport equations (\ref{intro3.1})-(\ref{intro3.2}) and initial assumptions of $\modivort,\DivGradEnt$ to bound the H\"older norm of $\modivort,\DivGradEnt$ by $\p\vort,\p\gradEnt$ to close the estimates for $\p\vort,\p\gradEnt$.
	\subsection{Main Idea of the Proof of Theorem \ref{IntroMainTheorem}}\label{mainidea}
	
	Theorem \ref{IntroMainTheorem} provides a priori estimates for smooth solutions, which is needed for a full proof of local well-posedness. The remaining aspects of a full proof of local well-posedness could be shown by deriving uniform estimates for sequences of smooth solutions and their differences. We refer readers to \cite[Sections 2-3]{Sharplocalwell-posednessresultsforthenonlinearwaveequation} for the proof of local well-posedness based on a priori estimates.

In this subsection, we present the logic of proofs in this paper, that is, the bootstrap argument. The colored steps involve new ingredients, where we need to do analysis based on the special structure of the relativistic Euler equations (see Subsection \ref{standardenergyestimates} and \ref{introsectionschauder} for a discussion of these steps). The uncolored steps are introduced in the previous low-regularity problem works (see Subsections \ref{reductionofstrichartz}-\ref{introcontrol} for a discussion of these steps). We emphasized that, with the estimates we derive in the colored steps, the proofs of the uncolored steps are essentially the same as in \cite{3DCompressibleEuler,AGeoMetricApproach,ImprovedLocalwellPosedness,Equationsd'ondesquasilineairesetestimationdeStrichartz,Equationsd'ondesquasilinearireseteffetdispersif,Sharplocalwell-posednessresultsforthenonlinearwaveequation,Strichartzestimatesforsecondorderhyperbolicoperators,ACommutingVectorfieldsApproachtoStrichartz}. Hence, in this work, we provide all of the details for the colored steps, and give terse sketches for the uncolored steps with the appropriate citations.
\\

\begin{tikzpicture}
\tikzstyle{process} = [rectangle, minimum width=3cm, minimum height=1cm, text centered, text width=4cm, draw=black]
\node(ba)[process,xshift=3cm]{\hyperlink{BA}{Bootstrap assumptions} on the ``wave-part" and the ``transport-part"};
\node(ee)[process,fill={rgb:red,1,;yellow,1},below of=ba, xshift=-3cm, yshift=-1cm] {Energy estimates for fluid variables};
\node (cg)[process,below of=ee, yshift=-1cm]{Controlling of the acoustic geometry};
\node(cee)[process,below of=cg, yshift=-1cm] {Conformal energy estimates};
\node(de) [process,below of=cee, yshift=-1cm]{Decay estimates};
\node(se)[process,right of=de, xshift=5cm] {Strichartz estimates};
\node(end)[process,right of=se, xshift=4cm] {End of the bootstrap argument};
\node(te) [process,fill={rgb:red,1,;yellow,1},right of=ba, yshift=-2cm, xshift=2cm]{Transport-Schauder estimates for vorticity $\vort$ and entropy gradient $\gradEnt$};
\node(iw) [process,above of=se, yshift=2cm]{Improvements of bootstrap assumptions on the ``wave-part"};
\node(it) [process,right of=iw, xshift=4cm]{Improvements of bootstrap assumptions on the ``transport-part"};
\draw[thick,->](ba)--(ee);
\draw[thick,->](ee)--(cg);
\draw[thick,->](cg)--(cee);
\draw[thick,->](cee)--(de);
\draw[thick,->](de)--(se);
\draw[thick,->](ba)--(te);
\draw[thick,->](se)--(iw);
\draw[thick,-](6,-2.7)--(9.25,-3.82);
\draw[thick,->](iw.north) to[out=30, in=150] (it.north);
\draw[thick,-](10,-5.5)--(8.5,-6.5);
\draw[thick,->](iw)--(end);
\end{tikzpicture}

	\subsubsection{Overview of Elliptic and Energy Estimates}\label{standardenergyestimates}In this subsection, we provide an overview of how energy estimates work and are related to the bootstrap assumptions (\ref{boot1})-(\ref{boot2}). We provide representative energy estimates for wave variables, vorticity, and entropy gradient by using the basic energy estimates (see Section \ref{energymethodforwave}) and $L^2$ elliptic estimates (see Section \ref{EllipticDiv-Curl}). Then we leave the discussion of the key assumptions to future subsections and detailed estimates in Section \ref{Section4.1}.

	We first consider the energy estimates for the ``wave-part". Given any $2<N<5/2$, $0<t\leq\Tstar$ where $0<\Tstar\ll1$ denotes the bootstrap time. By the vectorfield multiplier method and Littlewood-Paley calculus applied to the equations (\ref{intro1}), we derive the following energy estimates for the ``wave-part":

	\begin{align}\label{introenergy1}
	\sobolevnorm{\pfour\vvariables}{N-1}{\Sigma_{t}}^2\lesssim\sobolevnorm{\pfour\vvariables}{N-1}{\Sigma_{0}}^2+\int_{0}^{t}\left(\norm{\pfour\vvariables}_{L^\infty_x(\Sigma_{\tau})}+1\right)\sobolevnorm{\pfour\vvariables, \vectvort,\DivGradEnt
	 }{N-1}{\Sigma_{\tau}}^2\diff\tau,
	\end{align}
	which is an analogue of (\ref{standardenergy}). We will provide a detailed expression and its proof in Section \ref{Discussionofprop5.1}.

	To control $\vectvort,\DivGradEnt$ on the right-hand side of (\ref{introenergy1}), we then consider the energy estimates for the ``transport-part". We first need an important $L^2$ elliptic div-curl estimate
	\begin{align}\label{introelliptic}
	\norm{(\p\vvort,\p\vgradientEnt)}_{L_x^2(\St)}\lesssim\norm{\pfour\vvariables,\vectvort,\DivGradEnt}_{L_x^2(\St)}.
	\end{align}
	
	We note that the proof of (\ref{introelliptic}) requires a rewriting div-curl system (\ref{Gab0})-(\ref{Hab0}) for vorticity and entropy gradient, where one has to exploit the structure of the relativistic Euler equations. By splitting the space-time div-curl systems into time and spatial directions of derivatives, taking the advantage of the transport equations (\ref{intro2.1})-(\ref{intro2.2}) for $\vort$ and $\gradEnt$, we write time derivative of vorticity and entropy gradient components as a combination of spatial derivatives of $\vort$ and $\gradEnt$. We obtain a new spatial div-curl system of the form:

\begin{subequations}\label{introGH}
		\begin{align}
	\label{GAB}(G^{-1})^{ab}\p_a((\vort_\flat),\gradEnt)_b&=F,\\
	\label{HAB}\p_a((\vort_\flat),\gradEnt)_b-\p_b((\vort_\flat),\gradEnt)_a&=H_{ab},
	\end{align}
\end{subequations}
	where $G^{-1}=G^{-1}(v)$ is Riemannian, $F=\DivGradEnt+l.o.t.$ and $H=\modivort+l.o.t.$. (\ref{introGH}) is a PDE system on constant-time slices. Notice that equation (\ref{GAB}) is a quasilinear divergence equation while the analogue in \cite{3DCompressibleEuler} is a constant-coefficient equation.
	Then, by Littlewood-Paley estimates and a partition of unity argument, we prove (\ref{introelliptic}) in Proposition \ref{EllipticestimatesinL2}.
	
	As in Proposition \ref{EllipticestimatesinL2}, by (\ref{introelliptic}) and Littlewood-Paley calculus, we also have, for $2<N<5/2$,
	\begin{align}\label{introelliptic2}
	\sobolevnorm{(\p\vvort,\p\vgradientEnt)}{N-1}{\St}\lesssim\sobolevnorm{\pfour\vvariables,\vectvort,\DivGradEnt}{N-1}{\St}.
	\end{align}
	
	By applying energy estimates and Littlewood-Paley calculus on evolution equations (\ref{intro3.1})-(\ref{intro3.2}) for $\vectvort,\DivGradEnt$, we have the following energy estimate for $\vectvort,\DivGradEnt$:
	\begin{align}\label{introenergy2}
	\sobolevnorm{\vectvort,\DivGradEnt}{N-1}{\Sigma_{t}}^2\lesssim\sobolevnorm{\vectvort,\DivGradEnt}{N-1}{\Sigma_{0}}^2+\int_{0}^{t}\left(\norm{\pfour\vvariables,\pfour\vvort,\pfour\vgradientEnt}_{L^\infty_x(\Sigma_{\tau})}+1\right)\sobolevnorm{\pfour\vvariables,\pfour\vvort,\pfour\vgradientEnt,\vectvort,\DivGradEnt
	}{N-1}{\Sigma_{\tau}}^2\diff\tau.
	\end{align}
	By elliptic estimates (\ref{introelliptic2}), (\ref{introenergy2}) and (\ref{introenergy1}), we have 
	\begin{align}\label{introenergy3}
	\sobolevnorm{\pfour\vvort,\pfour\vgradientEnt}{N-1}{\Sigma_{t}}^2&\lesssim\sobolevnorm{\pfour\vvariables,\vectvort,\DivGradEnt}{N-1}{\Sigma_{t}}^2\\
	\notag&\lesssim\sobolevnorm{\pfour\vvariables,\vectvort,\DivGradEnt}{N-1}{\Sigma_{0}}^2+\int_{0}^{t}\left(\norm{\pfour\vvariables,\pfour\vvort,\pfour\vgradientEnt}_{L^\infty_x(\Sigma_{\tau})}+1\right)\sobolevnorm{\pfour\vvariables,\pfour\vvort,\pfour\vgradientEnt,\vectvort,\DivGradEnt
	}{N-1}{\Sigma_{\tau}}^2\diff\tau.
	\end{align}
	
	The results of energy estimates are obtained in Subsection \ref{Discussionofprop5.1}.
	
	From (\ref{introenergy1}), (\ref{introenergy2}), (\ref{introenergy3}) and Gr\"onwall's inequality, we see that if $\twonorms{\pfour\vvariables,\pfour\vvort,\pfour\vgradientEnt}{t}{1}{x}{\infty}{\region}$ is  bounded (note that $\modivort\simeq\text{vort}(\vort)+l.o.t,\DivGradEnt\simeq\text{div}\gradEnt+l.o.t$), the Sobolev regularity of the data can be propogated by the solution. This drives us to set up a bootstrap argument with the bootstrap assumptions as introduced in the next subsection.
	\subsubsection{Bootstrap Assumptions}
	As we made it clear in the previous subsection, our argument crucially relies on the boundness of the term $\twonorms{\pfour\vvariables,\pfour\vvort,\pfour\vgradientEnt}{t}{1}{x}{\infty}{\region}$. 
We prove the boundness of this via a bootstrap argument that we now describe:\\
	
	\hypertarget{BA}{Throughout the paper,} $0<\Tstar\ll1$ denotes the bootstrap time. We assume that $\vvariables,\vvort,\vgradientEnt$ is a smooth solution to the relativistic Euler equations. For $\delta_0>0$ defined as in Section \ref{ChoiceofParameters}, we assume the following estimates hold:
\begin{subequations}
	\begin{align}
	\label{boot1}\twonorms{\pfour\vvariables}{t}{2}{x}{\infty}{[0,\Tstar]\times\mathbb{R}^3}^2+\sum\limits_{\upnu\geq2}\upnu^{2\delta_0}\twonorms{\littlewood\pfour\vvariables}{t}{2}{x}{\infty}{[0,\Tstar]\times\mathbb{R}^3}^2&\leq1,\\
	\label{boot2}\twonorms{\p\vvort,\p\vgradientEnt}{t}{2}{x}{\infty}{[0,\Tstar]\times\mathbb{R}^3}^2+\sum\limits_{\upnu\geq2}\upnu^{2\delta_0}\twonorms{\littlewood\p\vvort,\littlewood\p\vgradientEnt}{t}{2}{x}{\infty}{[0,\Tstar]\times\mathbb{R}^3}^2&\leq1,
	\end{align}
\end{subequations}
where $\littlewood$ is the Littlewood-Paley projection (see Section \ref{definitionlittlewood} for definition).
	 The Littlewood-Paley terms in the assumptions are needed for establishing the dyadic Strichartz estimate in order to improve the bootstrap assumptions.\\
	 \indent
	 In the classical local well-posedness problem of the relativistic Euler equations, the regularity assumptions are $(\lnenth, v, \ent)\in H^{(5/2)+}(\Sigma_{0})$. This gives the $\pfour\vvariables\in H^{(3/2)+}(\Sigma_{0})$. One can recover the boundness assumption $\norm{\pfour\vvariables}_{L_{x}^{\infty}(\Sigma_{t})}$ by standard energy estimates and Sobolev embedding $H^{3/2+}\hookrightarrow L^\infty$ at any constant-time hypersurface $\Sigma_{t}$. The lack of Sobolev embedding in the low-regularity level forces one to find a new machinery to improve the reasonable bootstrap assumptions (see Section \ref{Overviewofpreviousresults} for introduction of previous results). Recovering the bootstrap assumptions occupies a large part of this article.
	 
	 \subsubsection{Transport-Schauder Estimates for the Transport-div-curl System}\label{introsectionschauder}
	 In this subsection, we explain how to improve the bootstrap assumption (\ref{boot2}). In particular, by applying H\"older's inequality in time, this will show $\twonorms{\pfour\vvort,\pfour\vgradientEnt}{t}{1}{x}{\infty}{[0,\Tstar]\times\mathbb{R}^3}$ is small. This improvement is conditional on (\ref{IBA}), which we explain how to derive in Subsection \ref{reductionofstrichartz}.
	 
	 Because of the lack of tools in Hodge estimates in $L^\infty$ space, we have assumed a slight bit of extra regularity for the ``transport-part". That is, we propagate the H\"older boundness of ``transport-part" with given initial data $\holdernorm{\vectvort,\DivGradEnt}{x}{0}{\alpha}{\Sigma_0}\leq D$ where $0<\alpha<1$ and $D\in\mathbb{R}$. Besides using the transport equations (\ref{intro2.1})-(\ref{intro2.2}) which exhibit source terms with surprisingly good structures, we also rely on the following Schauder-type estimate:
	 \begin{align}
	 \label{introschauder}\holdernorm{\p\vvort,\p\vgradientEnt}{x}{0}{\alpha}{\Sigma_{t}}\lesssim\holdernorm{\pfour\vvariables,\vectvort,\DivGradEnt}{x}{0}{\alpha}{\Sigma_{t}}.
	 \end{align}
	 In order to control $\modivort, \DivGradEnt$ on the right-hand side of (\ref{introschauder}), we use the transport equations (\ref{intro3.1})-(\ref{intro3.2}) of $\modivort$ and $\DivGradEnt$, which are coupled to $\p\omega,\p\gradEnt$ (see equation (\ref{intro3.1})-(\ref{intro3.2})). By combining the two, under bootstrap assumptions (\ref{boot1}), we can apply Gr\"onwall's inequality to bound the vorticity and entropy gradient of the relativistic Euler flow. We will discuss this approach in more details below. 
	 
	 To derive Schauder estimates, we split the derivative of the vorticity $\vort$ and entropy gradient $\gradEnt$ (see Definition \ref{vorticity} and \ref{D:entgradient} for the definition of $\vort$ and $\gradEnt$) into $v$-tangent direction and the $\St$-tangent directions ($v$ is transversal to $\St$ with respect to Minkowski metric). Now we highlight the following two features in our analysis:
	 \begin{itemize}
	 	\item The $v$-tangent direction derivatives of vorticity and entropy gradient are favorable because of the transport phenomena. That is, by using transport equations (\ref{intro2.1})-(\ref{intro2.2}), we are able to obtain the H\"older bound for $v$-tangent direction of vorticity and entropy gradient by using bootstrap assumptions.
	 	\item To control the $\St$-tangent directional derivatives of vorticity and entropy gradient, we rely on a space-time transport-div-curl system for $\vort$ and $\gradEnt$. We note that it is qualitatively distinct from the case in \cite{3DCompressibleEuler} for 3D non-relativistic compressible Euler equations where the div-curl equations are spatial with constant coefficients.
	 \end{itemize}
	  We now explain how we derive Schauder estimates (\ref{introschauder}). We use the same div-curl system (\ref{GAB})-(\ref{HAB}) as in the $L^2$ elliptic estimates. By partition of unity, Fourier transform, Littlewood-Paley theory and properties of pseudodifferential operators, we are able to bound $\holdernorm{\pfour\vvort,\pfour\vgradientEnt}{x}{0}{\alpha}{\Sigma_{t}}$ by $\holdernorm{\pfour\vvariables,\vectvort,\DivGradEnt}{x}{0}{\alpha}{\Sigma_{t}}$ as in (\ref{introschauder}), see Lemma \ref{Schaudertypeestimate} for the detailed proof. Then we bound $\holdernorm{\vectvort,\DivGradEnt}{x}{0}{\alpha}{\Sigma_{t}}$ by applying transport equations (\ref{intro3.1})-(\ref{intro3.2}) as follows:
	  \begin{align}\label{introtrans}
	  \holdernorm{\vectvort,\DivGradEnt}{x}{0}{\delta_1}{\Sigma_{t}}
	  \lesssim1+\int_0^t\left(\holdernorm{\pfour\vvariables}{x}{0}{\delta_1}{\Sigma_\tau}+1\right)\holdernorm{\pfour\vvariables,\pfour\vvort,\pfour\vgradientEnt}{x}{0}{\delta_1}{\Sigma_\tau}\diff\tau.
	  \end{align}
	 Finally, combining (\ref{introschauder}) and (\ref{introtrans}), we use Gr\"onwall's inequality and bootstrap assumptions to close the transport-Schauder type estimates 
	 \begin{align}\label{introschauder2}
	 \holdernorm{\pfour\vvort,\pfour\vgradientEnt}{x}{0}{\alpha}{\Sigma_{t}}\lesssim\holdernorm{\pfour\vvariables}{x}{0}{\alpha}{\Sigma_{t}}.
	 \end{align}
	 
	 We emphasize that later in the argument, we will integrate (\ref{introschauder2}) in time and combine it with the improved Strichartz estimate (\ref{IBA}) (which is obtained independently of (\ref{introschauder2})). These lead to a strict improvement of the bootstrap assumptions (\ref{boot2}). We provide full details of the Schauder estimates in Section \ref{section4}.
	 \subsubsection{Reductions of the Strichartz Type Estimates}
	 \label{reductionofstrichartz}Our argument above crucially relies on bounding $\twonorms{\pfour\vvariables}{t}{1}{x}{\infty}{\region}$. In this subsection, we explain how we derive strict improvements of bootstrap assumptions (\ref{boot1}), that is, we describe how to derive Strichartz estimates (\ref{IBA}). By taking the advantage of the smallness of bootstrap time interval $[0,\Tstar]$, we will improve our bootstrap assumptions to the following Strichartz estimates:
	 \begin{align}\label{IBA}
	 \twonorms{\pfour\vvariables}{t}{2}{x}{\infty}{[0,\Tstar]\times\mathbb{R}^3}^2+\sum\limits_{\upnu\geq2}\upnu^{2\delta_1}\twonorms{\littlewood\pfour\vvariables}{t}{2}{x}{\infty}{[0,\Tstar]\times\mathbb{R}^3}^2\lesssim\Tstar^{2\delta},
	 \end{align}
	 where $\delta>0$ is sufficiently small as in Subsection \ref{ChoiceofParameters} and $8\delta_0<\delta_1<N-2$, where $\delta_0$ is from the bootstrap assumptions (\ref{boot1}) and (\ref{boot2}). Notice that if $\Tstar$ is small, (\ref{IBA}) is a strict improvement of (\ref{boot1}). We reduce the proof of (\ref{IBA}) to the proof of estimates on the acoustic geometry by adopting the geometric approach of \cite{AGeoMetricApproach}. This reduction is done through the following steps: Improvement of bootstrap assumptions$\longleftarrow$Strichartz estimates$\longleftarrow$Decay estimates$\longleftarrow$Conformal energy estimates$\longleftarrow$Controlling of the acoustic geometry, where the left arrow indicates that the latter estimate implies the former. We remind readers of the logic diagram at the beginning of Section \ref{mainidea}.
	 \begin{itemize}
	 	\item \textit{Reduction to dyadic Strichartz estimates.} The first step in the proof of (\ref{IBA}) is to reduce Strichartz estimates to a dyadic Strichartz estimate. Specifically, for a fixed large frequency $\lambda$, we partition $[0,\Tstar]$ into disjoint union of sub-intervals $I_{k}:=[t_{k-1},t_k]$ of total number $\lesssim\lambda^{8\varepsilon_0}$ with $|I_k|\lesssim\lambda^{-8\varepsilon_0}$ (see Section \ref{ChoiceofParameters} for the definition of $\varepsilon_0$). By Littlewood-Paley decomposition and Duhamel principle, the proof of (\ref{IBA}) can be reduced to a dyadic Strichartz estimate
	 	\begin{align}\label{DyadicStrichartz}
	 	\twonorms{P_\lambda\pfour\varphi}{t}{q}{x}{\infty}{[\tau,t_{k+1}]\times\mathbb{R}^3}\lesssim\lambda^{\frac{3}{2}-\frac{1}{q}}\norm{\pfour\varphi}_{L_x^2(\Sigma_{\tau})},
	 	\end{align}
	 	where $\varphi$ is a solution of geometric equation
	 	\begin{align}
	 	\boxg\varphi=0,
	 	\end{align}
	 	on the time interval $I_k$. In (\ref{DyadicStrichartz}), $q>2$ is any real number which is sufficiently close to $2$ and $\tau\in[t_{k},t_{k+1}]$. Here, we focus on large frequencies since control of small frequency is easier due to Berstein inequalities.
	 	\item \textit{Reduction to decay estimates.} For a large frequency $\lambda$, by rescaling the coordinates (see Section \ref{SelfRescaling} for the rescaling) and using an abstract $\mathcal{T}\mathcal{T}^*$ argument, we can reduce the dyadic Strichartz estimates (\ref{DyadicStrichartz}) to  $L^2-L^\infty$ decay estimates at any $t\in[0,\Trescale]$ where $\Trescale$ is the rescaled bootstrap time (see Section \ref{SelfRescaling} for the definition of $\Trescale$):
	 	\begin{align}\label{introdecayestimates}
	 	\norm{P_1\Timelike\varphi}_{L_x^\infty(\St)}\lesssim\left(\frac{1}{(1+\abs{t-1})^{\frac{2}{q}}}+d(t)\right)\left(\norm{\pfour\varphi}_{L_x^2(\Sigma_1)}+\norm{\varphi}_{L_x^2(\Sigma_1)}\right),
	 	\end{align}
	 	where the timelike vectorfied $\Timelike$ is $\gfour$-unit normal to $\St$ (that is defined in Definition \ref{timelike}) and  $\varphi$ is an arbitrary solution to the equation $\boxg\varphi=0$ on the time interval $[0,\Trescale]\times\mathbb{R}^3$ with $\varphi(1,x)$ supported in the Euclidean ball $B_{R}$ (see Theorem \ref{Spatiallylocalizeddecay} for detailed definition of $R$).
	 	Moreover, the function $d(t)$ satisfies
	 	\begin{align}
	 	\norm{d}_{L_t^{\frac{q}{2}}([0,\Trescale])}\lesssim1,
	 	\end{align}
	 	for $q>2$ sufficiently close to 2.
	 	\item \textit{Reduction to conformal energy estimates.} By product estimates and Littlewood-Paley theory, we reduce the proof of (\ref{introdecayestimates}) to a proof of following estimates for the conformal energy $\Energyconformal[\varphi](t)$ (see Section \ref{definitionofconformalenergy} for the definition of the conformal energy $\Energyconformal[\varphi](t)$) at time $t\in[1,\Trescale]$:
	 	\begin{align}\label{introconformalestimates}
	 	\Energyconformal[\varphi](t)\lesssim_{\varepsilon}(1+t)^{2\varepsilon}\left(\norm{\pfour\varphi}^2_{L_x^2(\Sigma_{1})}+\norm{\varphi}^2_{L_x^2(\Sigma_{1})}\right),
	 	\end{align}
	 	where $\varphi$ is an arbitrary solution to the equation $\boxg\varphi=0$ on $[0,\Trescale]\times\mathbb{R}^3$ with $\varphi(1)$ supported in $B_R\subset\intregion\bigcap\Sigma_{1}$ (see Section \ref{opticalfunction} for the definition of $\intregion$) where $\varepsilon>0$ is an arbitrary small number.
	 \end{itemize}
 
\textbf{ We emphasize that both the reduction of (\ref{introdecayestimates}) to (\ref{introconformalestimates}), as well as the very definition of $\Energyconformal[\varphi]$ require the acoustic geometry, where its sharp control is needed for deriving (\ref{introconformalestimates}).} We describe how to obtain such control in Subsection \ref{introgeometry}-\ref{introcontrol}.
	 We provide an overview of the structure over the reductions in Section \ref{sectionstructure} and more detailed discussions in Section \ref{sectionreduction}.
	\subsubsection{Structures for the Causal Geometry of the Acoustic Space-time}\label{introgeometry}
	In order to reduce the decay estimates to conformal energy estimates (see Subsection \ref{introconformal} and \ref{structurenullenergy} for introduction and Section \ref{subsectionconformalenergy} for details), one needs sharp information about the acoustic geometry. In this subsection, we discuss the geometric framework that is crucial for our analysis. This part of the result is well-known and standard (see Section \ref{Overviewofpreviousresults} for the introduction of the previous results). The central object of our geometric framework is the acoustical function $u$, which is defined as a solution of the acoustical eikonal equation $\left(\gfour^{-1}\right)^{\alpha\beta}\p_\alpha u\p_\beta u=0$, where $\gfour^{-1}$ is the inverse of the normalized acoustical metric. We denote the level sets of $u$ by $\coneu$, which are forward truncated null cones (defined in Section \ref{opticalfunction}).
	
	We construct a null frame, which consists of a null pair $\Lunit, \uLunit$ and two spherical vectorfields $\{e_A\}_{A=1,2}$ (see Section \ref{geometricquantities} for detailed definitions). We derive transport and Hodge type equations for the Ricci coefficients. An important example is the Raychaudhuri equation (see Section \ref{connectioncoefficientsandpdes} for definitions of connection coefficients and PDEs verified by geometric quantities):
	\begin{equation}\label{introRaychau}
	\Lunit\gtr\upchi+\frac{1}{2}(\gtr\upchi)^2=-\sgabs{\hchi}^2-k_{\spherenormal\spherenormal}\gtr\upchi-\Ricfour{\Lunit}{\Lunit}.
	\end{equation}
	With the help of a remarkable decomposition of the Ricci curvature tensor
	\begin{align}\label{introRicci}
	\Ricfour{\alpha}{\beta}=-\frac{1}{2}\boxg \gfour_{\alpha\beta}(\vvariables)+\frac{1}{2}\left(\Dfour_\alpha\Chfour_\beta+\Dfour_\beta\Chfour_\alpha\right)+\quadsmoothfunction(\vvariables)[\pfour\vvariables,\pfour\vvariables],
	\end{align}
	and the Bianchi identities, where we can substitute $\boxg \gfour_{\alpha\beta}(\vvariables)$ in (\ref{introRicci}) by using relativistic Euler equations (\ref{intro1}), we are able to obtain some estimates for the Ricci coefficients, which we will utilize in the following subsections. We emphasize that it is important to have exactly $\modivort,\DivGradEnt$ on the right-hand side of (\ref{intro1}). $\modivort,\DivGradEnt$ satisfies the transport equations (\ref{intro3.1})-(\ref{intro3.2}), which allows us to derive estimates along constant-time slices and null hypersurfaces (in Subsection \ref{standardenergyestimates}) and control the acoustic geometry.\\
	\indent
	The advantage of using the acoustic geometry in this low regularity setting is that it reveals the dispersive properties of solutions to the wave equations. That is, for a solution $\phi$ of wave equation $\boxg\phi=0$, the derivatives which are tangent to the characteristic null cones $\coneu$ have better decay than the transversal derivatives parallel to the $\uLunit$ direction. We have to control some geometric quantities for several reasons: 
	\begin{itemize}
		\item The acoustic geometry that we set up must be well-defined. In particular, we have to rule out short-time shock formation due to the intersection of distinct null cones.
		\item To bound a suitably constructed weighted energy in order to derive decay estimates, a multiplier vectorfield method needs to be introduced. The multipliers we use are related to $\Lunit$ and $\Sigma_{t}$-tangent sphere normal vector $\spherenormal$. Since $\Lunit$ and $\spherenormal$ depend on the wave variables $(\lnenth,\ent,v)$, the acoustical eikonal function $u$, and their first derivatives, to control the weighted energy, one needs to control relative derivatives of the above quantities.  
	\end{itemize}

\subsubsection{Control of the Conformal Energy}\label{introconformal}
A crucial part in the reduction of Strichartz estimates is to derive the decay estimates. As we discussed in Section \ref{reductionofstrichartz}, we use the conformal energy method that was introduced by Wang in \cite{AGeoMetricApproach}. We need to consider both of equation $\boxg\varphi=0$ and the conformal wave equation $\square_{\rescaledgfour}(e^{-\conformalfactor}\varphi)=\cdots$ (see Definition \ref{conformalchangemetric} for the definition of $\conformalfactor$ and $\rescaledgfour$) to control various terms via the energy method. We are interested in such equations because we have reduced the Strichartz estimates for solution $\varphi$ of geometric equation $\boxg\varphi=0$ (see Subsection \ref{reductionofstrichartz} for the reduction).

Since the metric $\rescaledgfour$ is only smoother along null hypersurfaces, we have to first use the original wave equation and choose $X=f\spherenormal$ (see Section \ref{BoundnessTheorem} for the definition of $f$ and Section \ref{geometricquantities} for sphere normal vector $\spherenormal$) as the multiplier. By using the divergence theorem for a modified current on an appropriate region, we get a Morawetz-type energy estimate where we obtain a uniform bound for the standard energy of $\varphi$ along a union of a portion of the constant-time hypersurfaces and null cones. 

Then we consider the conformal wave equation. We use the multiplier approach with $\rgeo^p\Lunit$ type vectorfields in the region $\{\tau_1\leq u\leq\tau_2\}\bigcap\{\rgeo\geq R\}$ where $1\leq\tau_1<\tau_2<\Trescale$ to control the conformal energy in the exterior region and to provide energy decay for each null slice. Finally, we control the conformal energy in the interior region with the help of the argument in \cite{Anewphysical-spaceapproachtodecayforthewaveequationwith} by obtaining energy decay in each spatial-null slice.

The very definition of the conformal energy, as well as its analysis, requires delicate and precise control of the acoustic geometry. We state the boundness theorem of conformal energy in Theorem \ref{BoundnessTheorem}. By using our estimates on the Ricci coefficients, one could follow the steps listed in \cite[Section 11]{3DCompressibleEuler} to prove Theorem \ref{BoundnessTheorem}. One could go through the details of the argument in \cite[Section 7]{AGeoMetricApproach}. Also, readers could look into \cite[Section 3]{ACommutingVectorfieldsApproachtoStrichartz} for initial ideas. We omit these details because the exact same arguments hold in our case. 

\subsubsection{Control of the Acoustic Geometry} \label{introcontrol}To control the conformal energy, we need to control the acoustic geometry. Klainerman-Rodnianski \cite{ImprovedLocalwellPosedness} and Wang \cite{AGeoMetricApproach} developed an approach of controlling the geometric quantities in the low regularity setting. In this paper, we control the acoustic geometry by following the approach in Wang \cite{AGeoMetricApproach}. 

First, we provide the PDEs verified by the geometric quantities in \cite[Section 2]{ImprovedLocalwellPosedness}. We write down the geometric transport equations and div-curl system for the connection coefficients. These equations depend on our geometric formalism and are independent of the relativistic Euler equations. Secondly, we use the estimates for certain Ricci and Riemann curvature tensor components by using the decomposition of the Ricci curvature (\ref{introRicci}) in \cite[Lemma 2.1]{ImprovedLocalwellPosedness} and the Bianchi identities. It is at this step that the structure of the relativistic Euler equations is used. Specifically, we can substitute $\boxg \gfour_{\alpha\beta}(\vvariables)$ in (\ref{introRicci}) by using relativistic Euler equations (\ref{intro1}). $\modivort,\DivGradEnt$ on the right-hand side of (\ref{intro1}) satisfies the transport-div-curl system, which allows us to derive elliptic estimates and control the acoustic geometry.

Then by combining the geometric transport equations and the aforementioned Ricci and Riemann curvature tensor components estimates, one can derive and analyze the equations for many acoustic variables. These include the important mass aspect function $\mass$ and the conformal factor $\conformalfactor$, which introduce the rough geometry. Finally, we derive mixed space-time norm estimates for all the quantities, which are needed in the conformal energy estimates. We omit the details of the proof of controlling the geometry since the argument follows the same as in \cite[Section 10]{3DCompressibleEuler}.

In the following few paragraphs, we show why the standard Morawetz energy estimates are insufficient. The deformation tensor $\deform{K}:=\Lie_K\gfour$ is present in the standard Morawetz-type energy estimates, where $K:=\frac{1}{2}\left(u^2\uLunit+(2t-u)^2\Lunit\right)$ and $\Lie$ is the Lie derivative. $\deform{K}$ can be expressed by the connection coefficients of the null frame. We need to control them with the help of the transport and Hodge type equations for geometric quantities. \\
\indent
After integrating by parts to obtain the Morawetz-type energy identity, one needs to control various derivatives of $\deform{K}$. In particular, $\angnabla\gtr\upchi$, which is the angular derivative of the expansion scalar, as well as the mass aspect function $\mass:=\uLunit\gtr\upchi+\frac{1}{2}\gtr\upchi\gtr\uchi$ (see Definition \ref{D:DEFSOFCONNECTIONCOEFFICIENTS} for the definition of these geometric quantities). In order to control $\angnabla\gtr\chi$, we rely on the Raychaudhuri equation (\ref{introRaychau}) commuted with $\angnabla$. To control the Ricci term in (\ref{introRaychau}), we use (\ref{introRicci}) contracted with $\Lunit^\alpha\Lunit^\beta$. After commuting with $\angnabla$, we have to control the error term $\Lunit(\angnabla\Chfour_{\Lunit})$. It turns out that it is difficult to control $\angnabla\gtr\chi$ and $\angnabla\Chfour_{\Lunit}$ separately. Therefore, standard Morawetz energy estimates are insufficient in our case.

Therefore, to control all terms at a consistent level of regularity, we use the approach of Wang \cite{AGeoMetricApproach}, which relies on renormalized quantities and a metric that is conformal to the acoustical metric, where the conformal factor is carefully constructed so that the null expansion scalar associated to the conformal metric $\Restrace\reschi$ is precisely $\gtr\upchi+\Chfour_{\Lunit}$. We are able to obtain the regularity theory of $\gtr\upchi+\Chfour_{\Lunit}$, while it seems impossible to treat them independently.

The conformal wave equation attempts to resolve the issues described above, but introduces the difficult conformal factor $\conformalfactor$. Thus, in order to obtain sufficient regularity for the conformal factor $\conformalfactor$, we must in fact control the modified mass aspect function $\modmass$: 
\begin{align}
\modmass:=2\anglap\conformalfactor+\mass-\gtr\upchi k_{\spherenormal\spherenormal}+\frac{1}{2}\gtr\upchi\Chfour_{\uLunit}.
\end{align}
as well as the modified torsion $\angnabla\conformalfactor+\upzeta$. These quantities satisfy favorable transport and div-curl systems, i.e., the source terms have sufficient regularity, moreover, they have good decay properties. We stress that this analysis relies on obtaining careful control over the top derivatives of the specific vorticity and entropy gradient, since the modified fluid variables $\modivort, \DivGradEnt$ enter as source terms in various geometric equations, such as the Raychaudhuri equation.
\subsection{Paper Outline}
The structure of this article will follow the non-relativistic 3D compressible Euler in \cite{3DCompressibleEuler}. The logical graph of this paper is in Section \ref{mainidea}. 
\begin{itemize}
	\item In Section \ref{section2}, we first state the notations that we are going to use throughout the paper. Then we define the fluid variables and tensor fields, including the acoustical metric. We also introduce the geometric formulation of the relativistic Euler equations.
	\item In Section \ref{sectionmainthm}, we define the Littlewood-Paley projections, which are frequently used in our analysis. We provide the frequency-projected versions of the evolution equations (i.e., the relativistic Euler equations) in Lemma \ref{lwpequations}. We state the main theorem and the bootstrap assumptions in Theorem \ref{maintheorem} and Section \ref{bootstrap}.
	\item In Section \ref{sectionstructure}, we discuss the structure of the proofs which we will follow in the rest of the paper.
	\item In Section \ref{section4}, we use the bootstrap assumptions to derive the energy, $L^2$ elliptic, and Schauder estimates for the fluid variables along constant-time hypersurfaces. Note that Schauder estimates will improve the bootstrap assumption (\ref{transportba}) in Section \ref{bootstrap} after the bootstrap assumption (\ref{waveba}) is improved by Strichartz estimates. 
	\item In Section \ref{sectionreduction}, following the approach of Tataru \cite{Strichartzestimatesforsecondorderhyperbolicoperators} and Wang \cite{AGeoMetricApproach}, we rescale the fluid solution and reduce the proof of the Strichartz estimates to the proof of a spatially localized decay estimate. The reduction is essentially the same as one used by Wang \cite{AGeoMetricApproach}, which we refer to for various details.
	\item In Section \ref{sectiongeometrysetup}, we implement nonlinear geometric optics by constructing an acoustical function $u$ and setting up its geometry, including constructing an appropriate null frame. Finally, we define the conformal energy and state the boundness theorem of the conformal energy in Theorem \ref{BoundnessTheorem}, which plays a crucial role in deriving the decay estimates that were stated in Theorem \ref{Spatiallylocalizeddecay}.
	\item In Section \ref{connectioncoefficientsandpdes}, we prove the energy estimates for the fluid variables along the acoustical null hypersurfaces in Section \ref{energyalongnullhypersurfaces}. We define additional geometric quantities, including the connection coefficients of the null frame, conformal factors, mass aspect functions, and curvature tensor components.  Then we restate the estimates from \cite{AGeoMetricApproach} that yield control over geometry along the initial data hypersurface. We state the bootstrap assumptions satisfied by the rescaled fluid variables as well as the bootstrap assumptions for geometric quantities in Subsection \ref{BAGeo}. Then we state the main estimates for the geometric quantities in Prop.\ref{mainproof}, followed by a discussion of its proof in Subsection \ref{finalproof2}.
	
\end{itemize}
\section{Acknowledgments}The author gratefully acknowledges support from NSF grant \#2054184. The author wants to express his gratitude to his advisor, Jared Speck for suggesting this problem and discussions. He also wishes to thank Leonardo Abbrescia for his useful suggestions.

\section{The Relativistic Euler Equations and Its Geometric Formulation}\label{section2}
In this section, we provide the standard first-order relativistic Euler equations and its geometric formulation. The latter will be used throughout our analysis.
\subsection{Notations}\label{Notations}

Greek ``space-time" indices take on the values $0,1,2,3$, while Latin ``spatial" indices take on the values $1,2,3$. In this article, for Greek and Latin indices, for any vectorfield or one-form $V$, we lower and raise indices with the Minkowski metric $\minkowski_{\alpha\beta}:=\text{diag}(-1,1,1,1)$ and its inverse by using the notation  $(V_\flat)_\beta:=\minkowski_{\alpha\beta}V^\alpha$ and $(V^\sharp)^\beta:=(\minkowski^{-1})^{\alpha\beta}V_\alpha$. Similar notations apply to all tensorfields. Moreover, $\antisymmetic_{\alpha\beta\gamma\delta}$ denotes the fully antisymmetric symbol normalized by $\antisymmetic_{0123}=1$. Note that $(\antisymmetic^\sharp)^{0123}=-1$. We use Einstein summation throughout the paper. 

We denote $\St:=\{(t^\prime,x^1,x^2,x^3)\in\mathbb{R}^{1+3}|t^\prime\equiv t\}$ as the standard constant-time slice.\\

We denote the spatial partial derivatives by $\p$ and the space-time partial derivatives by $\pfour$.
\begin{remark}
	Since $\minkowski_{\alpha\beta}:=\text{diag}(-1,1,1,1)$, for any vectorfield or one-form $V$, we have the identities $\pfour(V_\flat)_a=\pfour(V^\sharp)^a$ and $\pfour(V_\flat)_0=-\pfour(V^\sharp)^0$.
\end{remark}
\subsection{Definitions of the Fluid Variables and Related Quantities}\label{deffluid}
\subsubsection{The Basic Fluid Variables} \label{thebasicfluidvariables}

The fluid velocity $v^\alpha$ is a future-directed four vector and normalized by $(v_\flat)_\alpha v^\alpha=-1$. $\pressure$ denotes the pressure, $\density$ denotes the proper energy density, $n$ denotes the proper number density, $\ent$ denotes the entropy per particle, $\temp$ denotes the temperature, and 
\begin{equation}
\enth=(\density+\pressure)/n
\end{equation}
is the enthalpy per particle. Thermodynamics supplies the following laws:
\begin{align}
\enth&=\left.\frac{\p\density}{\p n}\right\vert_\ent,&
\temp&=\left.\frac{1}{n}\frac{\p\density}{\p\ent}\right\vert_n,&
\diff\enth&=\frac{\diff \pressure}{n}+\temp\diff\ent,
\end{align}
where $\left.\frac{\p}{\p n}\right\vert_\ent$ denotes partial differentiation with respect to $n$ at fixed $\ent$ and $\left.\frac{\p}{\p \ent}\right\vert_n$ denotes partial differentiation with respect to $\ent$ at fixed $n$.
\subsubsection{$v$-orthogonal Vorticity}\label{v-ortho}
\begin{definition}[The $v$-orthogonal vorticity of a one form]\label{vorthogonalvorticity}
	Given a space-time one-form $V$, we define the corresponding $v$-orthogonal (with respect to Minkowski metric) vorticity vectorfield as follows:
	\begin{equation}
	\text{vort}^\alpha(V):=-(\antisymmetic^\sharp)^{\alpha\beta\gamma\delta}(v_\flat)_\beta\p_\gamma V_\delta.
	\end{equation}
\end{definition}

\begin{definition}[The $v$-orthogonal vorticity vector field]\label{vorticity}
	We define the vorticity vector field $\vort^\alpha$ as follows:
	\begin{align}
	\vort^\alpha:=\text{vort}^\alpha(\enth v)=-(\antisymmetic^\sharp)^{\alpha\beta\gamma\delta}(v_\flat)_\beta\p_\gamma(\enth (v_\flat)_\delta).
	\end{align}
\end{definition}
\subsubsection{Auxiliary Fluid Variables}
\begin{definition}[Logarithmic enthalpy]\label{D:loglnen}
	Let $\average{H}>0$ be a fixed constant value of the background enthalpy. We define the logarithmic enthalpy $\lnenth$ as follows:
	\begin{equation}
	\lnenth:=\ln(\enth/\average{H}).
	\end{equation}
\end{definition}
\begin{definition}[Temperature over enthalpy]\label{D:Tempeatureoverenth}
	We define the quantity $\tempoverEnth$ as follows:
	\begin{equation}
	\tempoverEnth:=\frac{\temp}{\enth}.
	\end{equation}
\end{definition}
\begin{definition}[Entropy gradient one-form]\label{D:entgradient}
	We define the entropy gradient one-form $\gradEnt_\alpha$ as follows:
	\begin{equation}
	\gradEnt_\alpha:=\p_\alpha\ent.
	\end{equation}
\end{definition}
\subsubsection{Equation of State and Speed of Sound}\label{D:speedofsound}
\begin{definition}[Partial derivatives with respect to $\lnenth$ and $\ent$]
	If $Q$ is a quantity that can be expressed as a function of $(\lnenth,\ent)$, then
	\begin{align}
	Q_{;\lnenth}&=\left.Q_{;\lnenth}(\lnenth,\ent):=\frac{\p Q}{\p\lnenth}\right\vert_\ent,\\
	Q_{;\ent}&=\left.Q_{;\ent}(\lnenth,\ent):=\frac{\p Q}{\p\ent}\right\vert_\lnenth.
	\end{align}
\end{definition}
We assume an equation of state of the form $\pressure=\pressure(\density,\ent)$. The speed of sound $\speed$ is defined as follows:
\begin{equation}
\speed:=\sqrt{\left.\frac{\p \pressure}{\p\density}\right\vert_\ent}.
\end{equation}
In the rest of the article, we view the speed of sound be a function of $\lnenth$ and $\ent$: $\speed=\speed(\lnenth,\ent)$.\\
We restrict to the physically relevant regime where the speed of sound does not exceed the speed of light:
	\begin{align}
	0<\speed\leq 1.
	\end{align}
\subsection{Standard First-Order Equations}\label{RelEuler}
Considering $\ent$, $\lnenth$ and $\{v^\alpha\}_{\alpha=0,1,2,3}$ to be the fundamental unknowns, as in \cite[Section 3]{TheRelativistivEulerEquations}, the relativistic Euler equations take the form of a quasilinear hyperbolic system:
\begin{subequations}\label{RelEuler1}
	\begin{align}
v^\kappa\p_\kappa h+c^2\p_\kappa v^\kappa&=0,\\
v^\kappa\p_\kappa (v_\flat)_\alpha+\p_\alpha h+(v_\flat)_\alpha v^\kappa\p_\kappa h-q\p_\alpha s&=0,\\
\label{svothogonal}v^\kappa\p_\kappa s&=0.
\end{align}
\end{subequations}
\subsection{Modified Fluid Variables and the Geometric Wave-Transport Formulation}In this subsection we define several variables followed by the new formulation of relativistic Euler equations.\label{newformulation}
\begin{definition}\cite[Definition 2.8. Modified fluid variables]{TheRelativistivEulerEquations}\label{modifiedfluidvariables} We define the modified fluid variables as follows:
	\begin{align}
	\label{Def:C}\modivort^\alpha:=&\text{vort}^\alpha(\vort_\flat)+\speed^{-2}\antisymmetic^{\alpha\beta\gamma\delta}(v_\flat)_\beta(\p_\gamma\lnenth)(\vort_\flat)_\delta+(\temp-\temp_{;\lnenth})(\gradEnt^\sharp)^\alpha(\p_\kappa v^\kappa)\\
	\notag&+(\temp-\temp_{;\lnenth})v^\alpha\left\{(\gradEnt^\sharp)^\kappa\p_\kappa\lnenth\right\}-(\temp-\temp_{;\lnenth})(\gradEnt^\sharp)^\kappa\left\{(\minkowski^{-1})^{\alpha\lambda}\p_\lambda (v_\flat)_\kappa\right\},\\
	\label{Def:D}\DivGradEnt:=&\frac{1}{n}\left\{\p_\kappa(\gradEnt^\sharp)^\kappa\right\}+\frac{1}{n}\left\{(\gradEnt^\sharp)^\kappa\p_\kappa\lnenth\right\}-\frac{1}{n}\speed^{-2}\left\{(\gradEnt^\sharp)^\kappa\p_\kappa\lnenth\right\}.
	\end{align}
\end{definition}
\begin{definition}[Acoustical metric and its inverse]\label{D:acousticunrescaled}
	Let $\minkowski$ be the Minkowski as defined in Section \ref{Notations}, we define the acoustical metric $\gfour_{Acou_{\alpha\beta}}$ and its inverse $(\gacoustical^{-1})^{\alpha\beta}$ as follows\footnote{For convience, we write $\gacoustical^{\alpha\beta}$ instead of $(\gacoustical^{-1})^{\alpha\beta}$ in this paper.}:
\begin{subequations}
	\begin{align}
	\gfour_{Acou_{\alpha\beta}}&:=\speed^{-2}\minkowski_{\alpha\beta}+(\speed^{-2}-1)(v_\flat)_\alpha (v_\flat)_\beta,\\
	(\gacoustical^{-1})^{\alpha\beta}
	&:=\speed^2(\minkowski^{-1})^{\alpha\beta}+(\speed^2-1)v^\alpha v^\beta.
	\end{align}
\end{subequations}	
\end{definition}
\begin{definition}[Adjusted acoustical metric and its inverse]\label{D:acousticmetric}
We define the adjusted acoustical metric $\gfour_{\alpha\beta}=\gfour_{\alpha\beta}(\vvariables)$ and its inverse $(\gfour^{-1})^{\alpha\beta}=(\gfour^{-1})^{\alpha\beta}(\vvariables)$ as follows\footnotemark:
\begin{subequations}\label{gacoustic}
	\begin{align}
\label{gacoustic1}\gfour_{\alpha\beta}&=\gfour_{Acou_{\alpha\beta}}(-\gfour_{Acou}^{00})=\left\{\speed^{-2}\minkowski_{\alpha\beta}+(\speed^{-2}-1)(v_\flat)_\alpha (v_\flat)_\beta\right\}\left\{c^2-(c^2-1)(v^0)^2\right\},\\
\label{gacoustic2}(\gfour^{-1})^{\alpha\beta}&=\frac{\gfour_{Acou}^{\alpha\beta}}{-\gfour_{Acou}^{00}}=\frac{\speed^2\minkowski^{\alpha\beta}+(\speed^2-1)v^\alpha v^\beta}{c^2-(c^2-1)(v^0)^2}.
	\end{align}
\end{subequations}
 We emphasize that $(\gfour^{-1})^{00}=-1$. This helps us to simplify some of our formulas. Our bootstrap assumption will be so that $0<\speed\leq1$, so $(\gacoustical^{-1})^{00}<0$.
\end{definition}
 We lower and raise indices with the acoustic metric $\gfour$ and its inverse by using the notation $V_\beta=\gfour_{\alpha\beta}V^\alpha$ and $V^\beta=(\gfour^{-1})^{\alpha\beta}V_\alpha$. Note the difference between with raising and lowering indices with $\gfour$ versus $\minkowski$, see Section \ref{Notations}.

\footnotetext{For convience, we write $\gfour^{\alpha\beta}$ instead of $(\gfour^{-1})^{\alpha\beta}$ in this paper.}	

\begin{definition}\label{timelike}
	We define the future-directed $\gfour$-timelike vectorfield 
	\begin{align}
	\label{timelike2}\Timelike^\alpha:=-\gfour^{\alpha0}.
	\end{align}	
	We note that $\gfour(\Timelike,\Timelike)=-1$. We note that $\Timelike_\alpha=-\delta_\alpha^0$ where $\delta_\alpha^0$ is the Kronecker delta.
\end{definition}

\begin{definition}\label{D:inducegonconstanttime}
In Cartesian coordinates, the induced metric $g$ and its inverse on constant-time hypersurface $\St$ from $\gfour$ are as follows:
\begin{subequations}
	\begin{align}
g_{ab}&:=\gfour_{ab}+\Timelike_a\Timelike_b,\\
\label{gsigmatab}\left(g^{-1}\right)^{ab}&:=\gfour^{ab}+\Timelike^a\Timelike^b.
\end{align}
\end{subequations}
By (\ref{gacoustic}) and (\ref{timelike2}), one can compute that  $g_{ab}\left(g^{-1}\right)^{bc}=\delta^c_a$. We note that $g$ can be also viewed as a spacetime tensor, that is, 
\begin{subequations}
	\begin{align}
g_{\alpha\beta}&:=\gfour_{\alpha\beta}+\Timelike_\alpha\Timelike_\beta,\\
\left(g^{-1}\right)^{\alpha\beta}&:=\gfour^{\alpha\beta}+\Timelike^\alpha\Timelike^\beta.
\end{align}
\end{subequations}
Notice that by (\ref{timelike2}) and $(\gfour^{-1})^{00}=-1$, $\Sigmatproject^\delta_\alpha:=\gfour^{\beta\delta}g_{\alpha\beta}$ can be viewed as $\gfour-$orthogonal projection operator from whole spacetime onto $\St$.
\end{definition}

\begin{proposition}
	 The metric $g$ and its inverse $g^{-1}$ have the following spatial components relative to rectangular coordinates:
\begin{subequations}
	\begin{align}\label{gab}
(g^{-1})^{ab}&=\left\{\speed^2-(\speed^2-1)(v^0)^2\right\}^{-2}\left\{\speed^2\delta^{ab}\left[\speed^2-(\speed^2-1)(v^0)^2\right]+\speed^2(\speed^2-1)v^av^b\right\},\\
\label{gab2}g_{ab}&=\left\{\speed^{-2}\delta_{ab}+(\speed^{-2}-1)(v_\flat)_a (v_\flat)_b\right\}\left\{c^2-(c^2-1)(v^0)^2\right\}.
\end{align}	
\end{subequations}
Using (\ref{gacoustic2}), (\ref{timelike2}) and (\ref{gsigmatab}), (\ref{gab}) is obtained by direct computation. (\ref{gab2}) is obtained by the fact that $\Timelike_a=0$ for $a=1,2,3$.
\end{proposition}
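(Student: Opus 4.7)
The plan is to derive both formulas by a direct algebraic computation from the definitions, using the decomposition $(g^{-1})^{\alpha\beta} = \gfour^{\alpha\beta} + \Timelike^\alpha \Timelike^\beta$ (and its covariant counterpart), the explicit forms (\ref{gacoustic1})--(\ref{gacoustic2}), and the identity $\Timelike^\alpha = -\gfour^{\alpha 0}$ from (\ref{timelike2}).

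For the covariant formula (\ref{gab2}), I would exploit the identity $\Timelike_\alpha = -\delta_\alpha^0$ noted in Definition~\ref{timelike}. This forces $\Timelike_a \Timelike_b = 0$ whenever $a,b \in \{1,2,3\}$, and hence $g_{ab} = \gfour_{ab}$ on $\Sigma_t$. Substituting $\minkowski_{ab} = \delta_{ab}$ for spatial indices into (\ref{gacoustic1}) yields (\ref{gab2}) immediately.

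For the contravariant formula (\ref{gab}), I would first compute $\Timelike^a = -\gfour^{a0}$ from (\ref{gacoustic2}). Because $\minkowski^{a0} = 0$ for spatial $a$, this reduces to $\Timelike^a = -(\speed^2-1) v^a v^0 / D$ with $D := \speed^2 - (\speed^2-1)(v^0)^2$. Plugging this into $(g^{-1})^{ab} = \gfour^{ab} + \Timelike^a \Timelike^b$ and placing the result over the common denominator $D^2$, the $\delta^{ab}$-piece contributes $\speed^2 \delta^{ab} D$ in the numerator, while the coefficient of $v^a v^b$ collects as $(\speed^2-1)\bigl[D + (\speed^2-1)(v^0)^2\bigr]$, which telescopes to $\speed^2(\speed^2-1)$. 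This yields (\ref{gab}) exactly.

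There is no real conceptual obstacle here; the only care needed is algebraic bookkeeping in the cancellation of the $(\speed^2-1)(v^0)^2$ terms between $D$ and the quadratic correction from $\Timelike^a \Timelike^b$. A natural consistency check, which I would record after the computation, is to verify $g_{ab} (g^{-1})^{bc} = \delta_a^c$ by directly multiplying the two expressions; this also confirms that the formulas are self-consistent with Definition~\ref{D:inducegonconstanttime}.
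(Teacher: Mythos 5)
Your proposal is correct and is exactly the direct computation the paper has in mind: $g_{ab}=\gfour_{ab}$ because $\Timelike_a=0$ for spatial indices, and the telescoping of the $v^av^b$ coefficient to $\speed^2(\speed^2-1)$ after inserting $\Timelike^a=-\gfour^{a0}$ reproduces (\ref{gab}). The added consistency check $g_{ab}(g^{-1})^{bc}=\delta_a^c$ is a nice touch already noted in Definition \ref{D:inducegonconstanttime}.
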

\begin{definition}[Differential operators defined by $\gfour$]\label{LeviCivita}
	$\Dfour$ denotes the Levi-Civita connection
	of $\gfour$ and $\boxg:=\gfour^{\alpha\beta}\Dfour_\alpha\Dfour_\beta$ denotes the corresponding covariant wave operator. In Cartesian coordinates, for scalar function $\varphi$, $\boxg\varphi=\frac{1}{\sqrt{\abs{\gfour}}}\p_\alpha\left(\sqrt{\abs{\gfour}}\gfour^{\alpha\beta}\p_\beta\varphi\right)$, where $\abs{\gfour}$ is the determinant of $\gfour$.
\end{definition}
\begin{definition}[Arrays of variables] For convevience in presenting the formulations and analysis, we define the following arrays of solution variables:\label{D:cartesiancomponentsofvariables}
	\begin{align}
	\vec{v}:&=(v^0,v^1,v^2,v^3),& \vvort:&=(\vort^0,\vort^1,\vort^2,\vort^3),& \vgradientEnt:&=((\gradEnt^\sharp)^0,(\gradEnt^\sharp)^1,(\gradEnt^\sharp)^2,(\gradEnt^\sharp)^3),& \vectvort:=(\modivort^0,\modivort^1,\modivort^2,\modivort^3).
	\end{align}
 We also define the array $\vvariables$ of wave variables, as follows:
	\begin{align}
	\vvariables:=(v^0,v^1,v^2,v^3,h,s).
	\end{align}
\end{definition}
\begin{definition}[Material derivative]
We define the material derivative\footnote{We note that in \cite{3DCompressibleEuler}, $\materialderivative=\p_t+v^i\p_i$.} $\materialderivative=\materialderivative(\vvariables)$ as follows:
\begin{align}
\materialderivative:=\frac{v^\alpha}{v^0}\p_\alpha.
\end{align}
\end{definition}
\subsubsection{The geometric wave-transport formulation of the relativistic Euler equations}\label{geometriceq}
We use the following schematic notations throughout the paper where $A,B,C$ are arrays of variables:
\begin{itemize}
	\item $\linsmoothfunction[A](B)$ denotes any scalar-valued function that is linear in the components of $B$ with coefficients that are a function of the components of $A$.
	\item $\quadsmoothfunction[A](B,C)$ denotes any scalar-valued function that is quadratic in the components of $B$ and $C$ with coefficients that are a function of the components of $A$.
\end{itemize}
\begin{proposition}\label{WaveEqunderoringinalacoustical}
	\cite[(3.1)-(3.12b). The geometric wave-transport formulation of the relativistic Euler equations]{TheRelativistivEulerEquations} If $\variables\in\{v^0,v^1,v^2,v^3,h,s\}$ solves the relativistic Euler equations (\ref{RelEuler1}), then $\variables, \vort, \gradEnt, \modivort, \DivGradEnt$ also satisfy the following:\\
\textbf{\underline{Wave equations}}	
\begin{align}\label{boxacoustical}
\square_{\gacoustical}\variables=\linsmoothfunction(\vvariables)[\vectvort,\DivGradEnt]+\quadsmoothfunction(\vvariables)[\pfour\vvariables,\pfour\vvariables].
\end{align}
\textbf{Transport equations}
\begin{subequations}
	\begin{align}
\label{Bvort}&\materialderivative\vort^\alpha=\linsmoothfunction(\vvariables,\vvort,\vgradientEnt)[\pfour\vvariables],\\ &\label{BS}\materialderivative(\gradEnt^\sharp)^\alpha=\linsmoothfunction(\vvariables,\vgradientEnt)[\pfour\vvariables].
\end{align}
\end{subequations}
\textbf{Transport-Div-Curl system}

\begin{subequations}\label{DivCurlSystem}
	\begin{align}
	\label{Bmodivort}&\materialderivative \modivort^\alpha=\inhom{\modivort^\alpha}:=\quadsmoothfunction(\vvariables)[\pfour\vvariables,(\pfour\vort,\pfour\vgradientEnt,\pfour\vvariables)]+\quadsmoothfunction(\vgradientEnt)[\pfour\vvariables,\pfour\vvariables]+\linsmoothfunction(\vvariables,\vvort,\vgradientEnt)[\pfour\vvariables,\pfour\vgradientEnt],\\
	\label{BDiventGrad}&\materialderivative\DivGradEnt=\inhom{\DivGradEnt}:=\linsmoothfunction(\vvariables,\vgradientEnt)[\pfour\vort]+\quadsmoothfunction(\vvariables)[\pfour\vvariables,(\pfour\vgradientEnt,\pfour\vvariables)]+\quadsmoothfunction(\vgradientEnt)[\pfour\vvariables,\pfour\vvariables)]+\linsmoothfunction(\vvariables,\vvort,\vgradientEnt)[\pfour\vvariables],\\
    &\label{vortS}\textnormal{vort}^\alpha(S)=0,\\
	&\label{Divvort}\p_\alpha\vort^\alpha=\linsmoothfunction(\vvort)[\pfour\vvariables].
\end{align}
\end{subequations}
\end{proposition}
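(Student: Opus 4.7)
The proposition is essentially the content of \cite{TheRelativistivEulerEquations}, so the strategy is to systematically differentiate the first-order system (\ref{RelEuler1}), unfold the definitions of $\vort$, $\gradEnt$, $\modivort$, and $\DivGradEnt$, and verify that every remainder falls into one of the schematic classes $\linsmoothfunction$ or $\quadsmoothfunction$ with the claimed arguments. The engine throughout is the careful choice of $\gacoustical$ in Definition \ref{D:acousticunrescaled}, whose principal symbol is designed to reassemble into $\square_\gacoustical$ after combining transport and spatial derivatives of the fluid equations.

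For the wave equations (\ref{boxacoustical}): I would start with $\variables = h$, applying $v^\kappa \p_\kappa$ to the continuity equation of (\ref{RelEuler1}) and $(\minkowski^{-1})^{\alpha\beta}\p_\alpha$ to the momentum equation, and adding them. The principal part assembles into $\{(\minkowski^{-1})^{\alpha\beta} - (1 - \speed^{-2}) v^\alpha v^\beta\} \p_\alpha \p_\beta h$ times a scalar, which by Definition \ref{D:acousticunrescaled} is exactly $\speed^2 \square_\gacoustical h$ up to lower-order factors. The cross-derivatives $\p_\alpha \p_\beta v^\kappa$ that appear antisymmetrize against $(v_\flat)$-contractions to produce $\text{vort}(v_\flat)$-type terms, which after combining with enthalpy factors reassemble into $\modivort^\alpha$ via Definition \ref{Def:C}; the entropy second derivatives $\p_\alpha \p_\beta s$ similarly reassemble into $\DivGradEnt$ via Definition \ref{Def:D}, both modulo quadratic corrections in $\pfour \vvariables$. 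A parallel calculation handles $\variables \in \{v^0, v^1, v^2, v^3\}$ by applying $\materialderivative$ to the momentum equation and $\p_\alpha$ to the continuity equation, and for $\variables = s$ one takes $\materialderivative$ of (\ref{svothogonal}) and converts the resulting $(\materialderivative)^2 s$ into $\square_\gacoustical s$ modulo $\quadsmoothfunction(\vvariables)[\pfour \vvariables, \pfour \vvariables]$.

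For the transport equations (\ref{Bvort})--(\ref{BS}): the equation for $\gradEnt$ follows immediately by applying $\p_\alpha$ to (\ref{svothogonal}) and using the commutator $[\materialderivative, \p_\alpha] = -\p_\alpha(v^\kappa/v^0)\, \p_\kappa$, which is manifestly $\linsmoothfunction(\vvariables, \vgradientEnt)[\pfour \vvariables]$. For $\vort$, I would apply $\materialderivative$ to Definition \ref{vorticity}, substitute the momentum equation for $\materialderivative (v_\flat)_\delta$, and use the antisymmetry of $\antisymmetic^{\alpha\beta\gamma\delta}$ in the pair $(\gamma, \delta)$ to kill the $\p_\gamma \p_\delta h$ contribution; the residual is manifestly of the claimed schematic form. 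Equation (\ref{vortS}) follows by antisymmetry of $\antisymmetic$ applied to $\p_\gamma \p_\delta s$, and (\ref{Divvort}) follows analogously by expanding $\p_\alpha \vort^\alpha$, with second-derivative pieces dropping out by symmetry.

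The main obstacle, and the genuinely novel content of the proposition, lies in (\ref{Bmodivort})--(\ref{BDiventGrad}) for the modified variables. A direct computation of $\materialderivative\, \text{vort}^\alpha(\vort_\flat)$ and $\materialderivative\, \p_\kappa (\gradEnt^\sharp)^\kappa$ produces second-derivative cross terms — $\antisymmetic^{\alpha\beta\gamma\delta}(v_\flat)_\beta \p_\gamma \materialderivative (\vort_\flat)_\delta$, $\p_\kappa \materialderivative (\gradEnt^\sharp)^\kappa$, and analogues arising from commutators — which, upon unfolding $\vort$ and $\gradEnt$ via their definitions, contain $\pfour^2 \vvariables$ factors that would spoil the schematic form. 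The correction terms in Definition \ref{modifiedfluidvariables}, namely the $\speed^{-2}$-weighted antisymmetric piece $\antisymmetic^{\alpha\beta\gamma\delta}(v_\flat)_\beta (\p_\gamma \lnenth)(\vort_\flat)_\delta$ and the $(\temp - \temp_{;\lnenth})$ divergence and gradient pieces in $\modivort^\alpha$, together with the $\speed^{-2}$ factor in $\DivGradEnt$, are precisely engineered so that after substituting the wave equations (\ref{boxacoustical}) for $\p^2 h, \p^2 v$ and the transport equation for $s$, all the unwanted $\pfour^2 \vvariables$ terms cancel exactly. I expect the bulk of the work to be careful index bookkeeping through these substitutions, verifying that every residual term is either quadratic in $\pfour \vvariables, \pfour \vgradientEnt, \pfour \vort$ or linear in a single $\pfour$-derivative with coefficients in $\vvariables, \vvort, \vgradientEnt$; the computations are carried out in full in \cite[Section 3]{TheRelativistivEulerEquations} and can be reproduced term by term.
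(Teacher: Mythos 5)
Your proposal is correct and matches the paper's treatment: the paper offers no proof of this proposition at all, deferring entirely to Disconzi--Speck \cite[(3.1)--(3.12b)]{TheRelativistivEulerEquations}, and your outline — assembling the acoustical principal symbol from combined derivatives of (\ref{RelEuler1}), killing second derivatives by antisymmetry of $\antisymmetic$ for (\ref{Bvort}), (\ref{vortS}), (\ref{Divvort}), and using the correction terms in Definition \ref{modifiedfluidvariables} to cancel the dangerous $\pfour^2\vvariables$ contributions in (\ref{Bmodivort})--(\ref{BDiventGrad}) — is an accurate account of how that derivation proceeds. Since you, like the paper, ultimately defer the term-by-term index bookkeeping to the cited reference, no further verification is required here.
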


\begin{remark}
	The div-curl system (\ref{DivCurlSystem}) in the geometric formulation of the relativistic Euler equations is a space-time div-curl system. This feature causes difficulties as we want to derive estimates for vorticity and entropy gradient along the constant-time hypersurface $\St$. To solve this issue, we rewrite the div-curl system into a dynamic spatial system along constant-time slices and apply theory in Littlewood-Paley decomposition as well as pseudodifferential operators in Section \ref{section4}. These difficulties are not present in the non-relativistic 3D compressible Euler equations because the analogue of (\ref{vortS})-(\ref{Divvort}) is already a spatial div-curl system.
\end{remark}

We now provide some useful identities, which we are going to use throughout the rest of the paper.
\begin{lemma}[Identities involving vorticity and entropy gradient]\label{indentities}
	We list some useful identities in \cite[Section 4]{TheRelativistivEulerEquations} as follows:
\begin{subequations}
	 \begin{align}
\label{identities1} \vort^\kappa (v_\flat)_\kappa&=0,\\
\label{identities2} v^\kappa\p_\alpha(\vort_\flat)_\kappa&=-\vort^\kappa\p_\alpha (v_\flat)_\kappa,\\
\label{identities2.5} v^\kappa\p_\alpha\gradEnt_\kappa&=-(\gradEnt^\sharp)^\kappa\p_\alpha (v_\flat)_\kappa,\\
\label{identities3} \p_\gamma(\vort_\flat)_\delta-\p_\delta(\vort_\flat)_\gamma&=\antisymmetic_{\gamma\delta\kappa\lambda}v^\kappa \textnormal{vort}^\lambda(\vort)-(v^\kappa\p_\kappa(\vort_\flat)_\delta)(v_\flat)_\gamma+v^\kappa(\p_\delta(\vort_\flat)_\kappa)(v_\flat)_\gamma\\
\notag&+(v^\kappa\p_\kappa(\vort_\flat)_\gamma)(v_\flat)_\delta-v^\kappa(\p_\delta(\vort_\flat)_\kappa)(v_\flat)_\delta,\\
\label{identities4} \p_\gamma\gradEnt_\delta-\p_\delta\gradEnt_\gamma&=\antisymmetic_{\gamma\delta\kappa\lambda}v^\kappa \textnormal{vort}^\lambda(\gradEnt)-(v^\kappa\p_\kappa\gradEnt_\delta)(v_\flat)_\gamma\\
\notag&+v^\kappa(\p_\delta\gradEnt_\kappa)(v_\flat)_\gamma+(v^\kappa\p_\kappa\gradEnt_\gamma)(v_\flat)_\delta-v^\kappa(\p_\delta\gradEnt_\kappa)(v_\flat)_\delta.
 \end{align}
\end{subequations}
\begin{proof}[Discussion of the proof]
	(\ref{identities1}) follows from Definition \ref{vorticity}. (\ref{identities2}) follows from taking $\p_\alpha$ derivative of (\ref{identities1}). (\ref{identities2.5}) follows from taking $\p_\alpha$ derivative of (\ref{svothogonal}). 
	
	To prove (\ref{identities3}) and (\ref{identities4}), we use Definition \ref{vorticity} to express $\textnormal{vort}^\lambda(V)$ for $V=\vort_\flat$ and $V=\gradEnt$ respectively. Then, using the fact that 
	\begin{align}
	-\antisymmetic_{\alpha\beta\gamma\delta}\antisymmetic^{\delta\theta\kappa\lambda}=\delta_\alpha^\theta\delta_\gamma^\kappa\delta_\beta^\lambda-\delta_\alpha^\theta\delta_\gamma^\lambda\delta_\beta^\kappa+\delta_\alpha^\lambda\delta_\gamma^\theta\delta_\beta^\kappa-\delta_\alpha^\lambda\delta_\gamma^\kappa\delta_\beta^\theta+\delta_\alpha^\kappa\delta_\gamma^\lambda\delta_\beta^\theta-\delta_\alpha^\kappa\delta_\gamma^\theta\delta_\beta^\lambda,
	\end{align}
	we rearrange the terms and obtain (\ref{identities3}) and (\ref{identities4}). 
	
	We refer readers to \cite[Lemma 4.1]{TheRelativistivEulerEquations} for detailed proofs.
\end{proof}
\end{lemma}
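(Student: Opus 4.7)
My plan is to handle the five identities in order, using only the algebraic antisymmetry of $\antisymmetic$, the flat character of Minkowski index manipulation, and the $v$-orthogonality of $\vort$ and $\gradEnt$. For (\ref{identities1}), I would unfold Definition \ref{vorticity} and contract against $(v_\flat)_\kappa$:
\begin{align*}
\vort^\kappa (v_\flat)_\kappa = -(\antisymmetic^\sharp)^{\kappa\beta\gamma\delta}(v_\flat)_\kappa (v_\flat)_\beta\, \p_\gamma\bigl(\enth (v_\flat)_\delta\bigr).
\end{align*}
The totally antisymmetric tensor $(\antisymmetic^\sharp)$ is contracted with the symmetric pair $(v_\flat)_\kappa (v_\flat)_\beta$, so the whole expression vanishes. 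Lowering the free index gives the equivalent statement $v^\kappa(\vort_\flat)_\kappa = 0$. For (\ref{identities2}), I would apply $\p_\alpha$ to the latter, obtaining $v^\kappa \p_\alpha(\vort_\flat)_\kappa = -(\vort_\flat)_\kappa \p_\alpha v^\kappa$, and then move the constant Minkowski metric across the contraction to rewrite the right-hand side as $-\vort^\kappa \p_\alpha (v_\flat)_\kappa$. Identity (\ref{identities2.5}) follows by the same recipe starting from equation (\ref{svothogonal}), which reads $v^\kappa \gradEnt_\kappa = 0$ once $\gradEnt_\kappa = \p_\kappa \ent$ is recognized.

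The substantive work is in (\ref{identities3}) and (\ref{identities4}), which I would handle uniformly by working with a generic one-form $V \in \{\vort_\flat, \gradEnt\}$ satisfying $v^\kappa V_\kappa = 0$. Unfolding Definition \ref{vorthogonalvorticity} and contracting,
\begin{align*}
\antisymmetic_{\gamma\delta\kappa\lambda}\, v^\kappa \textnormal{vort}^\lambda(V) = -\antisymmetic_{\gamma\delta\kappa\lambda}(\antisymmetic^\sharp)^{\lambda\mu\nu\rho}\, v^\kappa (v_\flat)_\mu\, \p_\nu V_\rho.
\end{align*}
After cyclically reshuffling indices so that the contracted index $\lambda$ sits in matching slots of the two Levi-Civita factors, I would invoke the stated $\antisymmetic$--$\antisymmetic$ contraction identity to expand the product into a signed sum of six triple Kronecker-delta products. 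Carrying out the resulting contractions against $v^\kappa (v_\flat)_\mu \p_\nu V_\rho$, and using the normalization $v^\kappa(v_\flat)_\kappa = -1$ to evaluate any $v$-self-contractions that appear, I expect two ``principal'' terms to combine into the antisymmetric pair $\p_\gamma V_\delta - \p_\delta V_\gamma$, while the remaining four ``correction'' terms each carry a factor of $(v_\flat)_\gamma$ or $(v_\flat)_\delta$ and match the terms on the right-hand sides of (\ref{identities3})--(\ref{identities4}). Moving the corrections across the equality produces the stated formulas.

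The main obstacle is the index bookkeeping in this last step. The six-term $\antisymmetic$--$\antisymmetic$ expansion distributes the free indices $\gamma,\delta$ and the contracted $v^\kappa$, $(v_\flat)_\mu$ across several slots, and tracking the signs --- including those introduced by the cyclic reshuffle of $\antisymmetic$ --- requires care. I would organize the calculation as a six-row table of Kronecker-delta products, evaluate each against $v^\kappa (v_\flat)_\mu \p_\nu V_\rho$, and then group the results according to which of $(v_\flat)_\gamma$, $(v_\flat)_\delta$, $\p_\gamma$, $\p_\delta$, $\p_\kappa$ appears in the outcome. No structural information about the Euler system is used beyond the definitions of $\vort$ and $\gradEnt$ together with the orthogonality already established in (\ref{identities1}) and (\ref{svothogonal}).
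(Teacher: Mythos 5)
Your proposal is correct and follows essentially the same route as the paper: contracting the definition of $\vort$ against $(v_\flat)_\kappa$ for (\ref{identities1}), differentiating the orthogonality relations for (\ref{identities2})--(\ref{identities2.5}), and expanding $\antisymmetic_{\gamma\delta\kappa\lambda}v^\kappa\textnormal{vort}^\lambda(V)$ via the six-term $\antisymmetic$--$\antisymmetic$ contraction identity together with $v^\kappa(v_\flat)_\kappa=-1$ for (\ref{identities3})--(\ref{identities4}). The bookkeeping you outline does close as claimed (two of the six terms produce $\p_\gamma V_\delta-\p_\delta V_\gamma$ after using the normalization, and the other four carry factors of $(v_\flat)_\gamma$ or $(v_\flat)_\delta$), so no further ideas are needed.
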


In the following proposition, we provide the geometric wave equation with respect to the rescaled acoustical metric $\gfour$, which is more convenient for us to derive energy estimates and construct geometry. We will use this equation in the rest of the paper. 
\begin{proposition}[Wave equations after rescaling the acoustical metric]
	Let $\gfour$ be as defined in Definition \ref{D:acousticmetric}. If $\variables\in\{v^0,v^1,v^2,v^3,h,s\}$ solves the relativistic Euler equations (\ref{RelEuler1}), we have the following equation holds:
\label{Waveequationsafterrescalingtheacousticalmetric}	
\begin{align}
\label{waveEQ}\boxg\variables=\inhom{\variables}:=\linsmoothfunction(\vvariables)[\vectvort,\DivGradEnt]+\quadsmoothfunction(\vvariables)[\pfour\vvariables,\pfour\vvariables].
\end{align}
\end{proposition}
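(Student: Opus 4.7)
The plan is to reduce Proposition \ref{Waveequationsafterrescalingtheacousticalmetric} to the already-established wave equation (\ref{boxacoustical}) of Proposition \ref{WaveEqunderoringinalacoustical} by tracking the effect of the conformal rescaling $\gfour = \Omega \, \gacoustical$, where $\Omega := -\gfour_{Acou}^{00} = \speed^{2} - (\speed^{2}-1)(v^{0})^{2}$ is a scalar function of $\vvariables$. The task is to express $\boxg \variables$ in terms of $\square_{\gacoustical} \variables$ plus a controllable error, and then to verify that both pieces fit inside the schematic classes $\linsmoothfunction(\vvariables)[\vectvort, \DivGradEnt]$ and $\quadsmoothfunction(\vvariables)[\pfour\vvariables, \pfour\vvariables]$.

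First I would record the basic identities implied by $\gfour_{\alpha\beta} = \Omega \, \gacoustical_{\alpha\beta}$, viewed at the level of matrices in Cartesian coordinates: $\gfour^{\alpha\beta} = \Omega^{-1} \gacoustical^{\alpha\beta}$, and in four space-time dimensions $|\gfour| = \Omega^{4} |\gacoustical|$, so $\sqrt{|\gfour|} = \Omega^{2} \sqrt{|\gacoustical|}$. Substituting these into the coordinate formula $\boxg\varphi = \frac{1}{\sqrt{|\gfour|}} \p_\alpha\!\left(\sqrt{|\gfour|} \, \gfour^{\alpha\beta} \p_\beta \varphi\right)$ from Definition \ref{LeviCivita} and applying the Leibniz rule to the remaining factor of $\Omega$ inside the divergence yields, by a short direct computation, the identity
\begin{align*}
\boxg \variables = \Omega^{-1} \, \square_{\gacoustical} \variables + \Omega^{-2} \, \gacoustical^{\alpha\beta} (\p_\alpha \Omega) \, \p_\beta \variables.
\end{align*}

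Next I would verify the schematic structure on the right-hand side. Since $v$ is future-directed $\minkowski$-unit timelike, $(v^{0})^{2} \geq 1$, and since $0 < \speed \leq 1$ by Subsection \ref{D:speedofsound}, one has $\Omega = \speed^{2} + (1-\speed^{2})(v^{0})^{2} \geq 1$, so $\Omega^{-1}$ and $\Omega^{-2}$ are smooth nonvanishing functions of $\vvariables$. Multiplying (\ref{boxacoustical}) through by $\Omega^{-1}(\vvariables)$ then produces a contribution of exactly the form $\linsmoothfunction(\vvariables)[\vectvort, \DivGradEnt] + \quadsmoothfunction(\vvariables)[\pfour\vvariables, \pfour\vvariables]$. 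For the conformal correction term, the chain rule expresses $\p_\alpha \Omega$ as a smooth $\vvariables$-dependent linear combination of the components of $\pfour \vvariables$, whence $\Omega^{-2}(\vvariables) \, \gacoustical^{\alpha\beta}(\vvariables) (\p_\alpha \Omega) \p_\beta \variables$ is quadratic in $\pfour \vvariables$ with smooth $\vvariables$-dependent coefficients, so it too lies in $\quadsmoothfunction(\vvariables)[\pfour\vvariables, \pfour\vvariables]$. Summing the two contributions delivers (\ref{waveEQ}).

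The main potential obstacle here is really only bookkeeping: one must check that $\Omega$ stays bounded strictly away from zero on the solution regime so that $\Omega^{-1}$ is a legitimate smooth coefficient of $\vvariables$, and one must confirm that the conformal correction term does not manufacture any new $\vectvort$ or $\DivGradEnt$ contributions (it does not, since it depends on the solution only through $\pfour \vvariables$). Both checks are immediate once the uniform lower bound $\Omega \geq 1$ is established, so the proposition follows with no genuinely new analytic input beyond Proposition \ref{WaveEqunderoringinalacoustical}.
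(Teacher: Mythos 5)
Your proposal is correct and follows essentially the same route as the paper: both rewrite $\boxg$ via the coordinate formula for the covariant wave operator under the conformal rescaling $\gfour=(-\gacoustical^{00})\gacoustical$, identify the correction term as $\quadsmoothfunction(\vvariables)[\pfour\vvariables,\pfour\vvariables]$, and then multiply (\ref{boxacoustical}) by the nonvanishing smooth factor $(-\gacoustical^{00})^{-1}$. Your explicit lower bound $\Omega\geq 1$ is a slightly more careful justification than the paper's remark that $\gacoustical^{00}\neq 0$, but the argument is the same.
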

\begin{proof}[Proof of Prop \ref{Waveequationsafterrescalingtheacousticalmetric} using Prop \ref*{WaveEqunderoringinalacoustical} given]
\begin{align}\label{proofofboxg}
\boxg\variables&=\frac{1}{\sqrt{\abs{\gfour}}}\p_\alpha\left(\sqrt{\abs{\gfour}}\gfour^{\alpha\beta}\p_\beta\variables\right)\\
\notag&=\frac{1}{\sqrt{\abs{\gacoustical}}}\left(-\gacoustical^{00}\right)^{-2}\p_\alpha\left(\sqrt{\abs{\gacoustical}}\left(-\gacoustical^{00}\right)\gacoustical^{\alpha\beta}\p_\beta\variables\right)\\
\notag&=\frac{1}{\sqrt{\abs{\gacoustical}}}\left(-\gacoustical^{00}\right)^{-1}\p_\alpha\left(\sqrt{\abs{\gacoustical}}\gacoustical^{\alpha\beta}\p_\beta\variables\right)+\left(-\gacoustical^{00}\right)^{-2}\gacoustical^{\alpha\beta}\p_\alpha\left(\gacoustical^{00}\right)\p_{\beta}\variables\\
\notag&=\left(-\gacoustical^{00}\right)^{-1}\square_{\gacoustical}\variables+\quadsmoothfunction(\vvariables)[\pfour\vvariables,\pfour\vvariables].
\end{align}
Note that $\gacoustical^{\alpha\beta}$ is smooth function of $\vvariables$ and $\gacoustical^{00}\neq0$. Therefore by combining (\ref{boxacoustical}) and (\ref{proofofboxg}), we obtain the desired equation.
\end{proof}
\section{Norms, Littlewood-Paley Projections, Statement of Main Results and Bootstrap Assumptions}\label{sectionmainthm}
In this section, we define the norms, define the standard Littlewood-Paley projections that we use in the analysis, state our main results of the paper and bootstrap assumptions.
\subsection{Norms}

In this article, for functions $f,g$ on a normed space $(X,\norm{\cdot}_X)$, we use the notation $\norm{f,g}_X:=\norm{f}_X+\norm{g}_X$. Similarly, for an array of functions $\vec{U}=(U^1,U^2,\dots,U^k)$, we have $\norm{\vec{U}}_X:=\sum\limits_{a=1}^k\norm{U^a}_X$. In particular, we use $\abs{\vec{U}}:=\sqrt{\sum\limits_{i=1}^{k}(U^i)^2}$. For functions $f$ and arrays $\vec{g}$, we also use $\norm{f,g}_X:=\norm{f}_X+\norm{\vec{g}}_X$.

Since the volume form on the constant-time hypersurface $\St$ induced by Minkowski metric $\minkowski$ is $\diff x^1\diff x^2\diff x^3$, and by identifying $(t,x^1,x^2,x^3)\in\St$ with $(x^1,x^2,x^3)\in\mathbb{R}$, we define the standard Sobolev norm on $\St$ for $s\in\mathbb{R}$: $\sobolevnorm{F}{s}{\St}:=\norm{\langle\xi\rangle^s\hat{f}(\xi)}_{L_x^2(\St)}$, where $\langle\xi\rangle:=(1+\abs{\xi}^2)^{1/2}$. 

We denote the standard H\"older semi-norm $\dot{C}_x^{0,\beta}$ and H\"older norm $C_x^{0,\beta}$, where $0<\beta<1$, of a function $F$ with respect to flat metric on constant-time hypersurface $\St$ by 
\begin{align}
\semiholdernorm{F}{x}{0}{\beta}{\St}:=&\sup\limits_{x\neq y\in\St}\frac{\abs{F(x)-F(y)}}{\abs{x-y}^\beta},\\
\holdernorm{F}{x}{0}{\beta}{\St}:=&\sup\limits_{x\in\St}\abs{F(x)}+\sup\limits_{x\neq y\in\St}\frac{\abs{F(x)-F(y)}}{\abs{x-y}^\beta}.
\end{align}

We also use the following mixed norms for function $F: \mathbb{R}^3\rightarrow\mathbb{R}$, where $1\leq q_1<\infty$, $1\leq q_2\leq\infty$, and $I$ is an interval of time:
\begin{align}
\twonorms{F}{t}{q_1}{x}{q_2}{I\times\St}&:=\left\{\int_I\onenorm{F}{x}{q_2}{\Sigma_{\tau}}^{q_1}\diff\tau\right\}^{1/q_1},&\twonorms{F}{t}{\infty}{x}{q_2}{I\times\St}&:=\text{ess}\sup\limits_{\tau\in I}\onenorm{F}{x}{q_2}{\Sigma_{\tau}},\\
\holdertwonorms{F}{t}{q_1}{x}{0}{\beta}{I\times\St}&:=\left\{\int_I\holdernorm{F}{x}{0}{q_2}{\Sigma_{\tau}}^{q_1}\diff\tau\right\}^{1/q_1},&\holdertwonorms{F}{t}{\infty}{x}{0}{q_2}{I\times\St}&:=\text{ess}\sup\limits_{\tau\in I}\holdernorm{F}{x}{0}{q_2}{\Sigma_{\tau}}.
\end{align}

If $\{F_\lambda\}_{\lambda\in 2^\mathbb{N}}$ is a dyadic-indexed sequence of functions on $\St$, we define
\begin{align}
\norm{F_{\upnu}}_{l^2_\upnu L_x^2(\St)}:=\left(\sum\limits_{\upnu\geq 1}\onenorm{F_{\upnu}}{x}{2}{\St}^2\right)^{1/2}.
\end{align}
\subsection{Littlewood-Paley Projections}\label{definitionlittlewood} We fix a smooth function $\psi=\psi(\abs{\xi}): \mathbb{R}^3\rightarrow [0,1]$ supported on the frequency space annulus $\{\xi\in\mathbb{R}^3|1/2\leq\abs{\xi}\leq2\}$ such that for $\xi\neq0$, we have $\sum\limits_{k\in\mathbb{Z}}\psi(2^k\xi)=1$. For dyadic frequencies $\upnu=2^k$ with $k\in\mathbb{Z}$, we define the standard Littlewood-Paley projection $\littlewood$, which acts on scalar functions $F:\mathbb{R}\rightarrow\mathbb{C}$, as follows:
\begin{align}
\littlewood F(x):=\int_{\mathbb{R}^3}e^{2\pi ix\cdot\xi}\psi(\upnu^{-1}\xi)\hat{F}(\xi)\diff\xi,
\end{align}
where $\hat{F}(\xi):=\int_{\mathbb{R}^3}e^{-2\pi ix\cdot\xi}F(x)\diff x$ is the Fourier transform of $F$. If $I\subset 2^\mathbb{Z}$ is an interval of dyadic frequencies, then $P_IF:=\sum\limits_{\upnu\in I}\littlewood F$, and $P_{\leq\upnu}F:=P_{[-\infty,\upnu]}F$.
For functions $f,g$, we use the schematic notation that $\littlewood(f,g)$ as the linear combination of $\littlewood f$ and $\littlewood g$, namely, $\littlewood f+\littlewood g$.
\begin{proposition}
	For a function $F$, standard results in Littlewood-Paley theory give the following:
	\begin{align}
	\label{Littlewoodsobolev}\sobolevnorm{F}{s}{\St}&\approx\onenorm{F}{x}{2}{\St}+\left(\sum\limits_{\upnu>1}\upnu^{2s}\onenorm{\littlewood F}{x}{2}{\St}\right)^{1/2},\\
	\label{Littlewoodholder}\holdernorm{F}{x}{0}{s}{\St}&\approx\norm{ F}_{L_x^\infty(\St)}+\sup\limits_{\upnu\geq2}\upnu^s\norm{\littlewood F}_{L_x^\infty(\St)},
	\end{align}
	where $H^s$ is the standard Sobolev norm and $C^{0,s}$ is the standard H\"older norm. One can refer \cite[Section 1]{AGeometricApproachtotheLittlewoodPaleyTheory} and \cite[A.1]{pseudodifferentialoperatorsandnonlinearpde} for above results.
\end{proposition}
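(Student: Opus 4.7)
The plan is to treat the two equivalences separately, as each is classical but uses different tools. For the Sobolev characterization \eqref{Littlewoodsobolev}, I would invoke Plancherel's theorem and work on the frequency side. Setting $\psi_\upnu(\xi) := \psi(\upnu^{-1}\xi)$, the support condition on $\psi$ forces a finite-overlap property: $\psi_\upnu$ and $\psi_{\upnu'}$ have disjoint supports whenever $|\log_2(\upnu/\upnu')| \geq 2$. Combined with the partition-of-unity identity $\sum_{\upnu \in 2^\mathbb{Z}} \psi_\upnu(\xi) = 1$ for $\xi \neq 0$, this yields $\sum_\upnu \psi_\upnu(\xi)^2 \approx 1$. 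I would then split
\begin{align*}
\sobolevnorm{F}{s}{\St}^2 = \int_{\mathbb{R}^3} \langle \xi \rangle^{2s} |\hat{F}(\xi)|^2 \diff\xi
\end{align*}
into the low-frequency region $|\xi| \leq 2$, where $\langle \xi\rangle^{2s} \approx 1$ and the integral is $\approx \onenorm{P_{\leq 2}F}{x}{2}{\St}^2 \lesssim \onenorm{F}{x}{2}{\St}^2$, and the high-frequency region $|\xi| > 2$, where on each dyadic annulus $\{|\xi| \approx \upnu\}$ with $\upnu > 1$ the weight $\langle\xi\rangle^{2s}$ is comparable to $\upnu^{2s}$. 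Inserting the square-function identity and applying Plancherel produces $\sum_{\upnu > 1} \upnu^{2s} \onenorm{\littlewood F}{x}{2}{\St}^2$, giving \eqref{Littlewoodsobolev}.

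For \eqref{Littlewoodholder} the ``$\gtrsim$'' direction is the kernel estimate. Writing $\littlewood F = K_\upnu * F$ where $K_\upnu(z) = \upnu^3 K(\upnu z)$ with $K \in \mathcal{S}(\mathbb{R}^3)$ and $\int K = 0$, I would exploit the mean-zero property to rewrite
\begin{align*}
\littlewood F(x) = \int_{\mathbb{R}^3} K_\upnu(z)\bigl[F(x-z) - F(x)\bigr] \diff z.
\end{align*}
The H\"older bound $|F(x-z) - F(x)| \leq |z|^s \holdernorm{F}{x}{0}{s}{\St}$ combined with the Schwartz-class estimate $\int |z|^s |K_\upnu(z)| \diff z \lesssim \upnu^{-s}$ yields $\onenorm{\littlewood F}{x}{\infty}{\St} \lesssim \upnu^{-s} \holdernorm{F}{x}{0}{s}{\St}$, while the $L^\infty$ bound on $F$ is trivial.

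For the ``$\lesssim$'' direction I would decompose $F = P_{\leq 2} F + \sum_{\upnu > 1} \littlewood F$, with the low-frequency piece handled directly. The $L^\infty$ bound on $F$ follows from telescoping since $s > 0$. For the H\"older seminorm, given $x \neq y$ with $r := |x-y|$, pick the dyadic threshold $\upnu_0 \approx r^{-1}$ and split the sum: for $\upnu \leq \upnu_0$, use Bernstein's inequality $\onenorm{\p \littlewood F}{x}{\infty}{\St} \lesssim \upnu \onenorm{\littlewood F}{x}{\infty}{\St}$ to get $|\littlewood F(x) - \littlewood F(y)| \lesssim r \upnu^{1-s} \sup_\upnu \upnu^s \onenorm{\littlewood F}{x}{\infty}{\St}$; for $\upnu > \upnu_0$ use the crude estimate $|\littlewood F(x) - \littlewood F(y)| \leq 2 \onenorm{\littlewood F}{x}{\infty}{\St} \lesssim \upnu^{-s}$. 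Summing the two geometric series (which converge precisely because $0 < s < 1$) gives a bound proportional to $r^s$.

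The main technical obstacle is ensuring the geometric series on both sides of $\upnu_0$ sum correctly, which requires $0 < s < 1$; this is exactly the hypothesis built into the H\"older norm definition earlier in the paper. Secondary care is needed to treat the low-frequency contribution $P_{\leq 2} F$ in a way that matches the inhomogeneous norms on the right-hand sides of \eqref{Littlewoodsobolev}--\eqref{Littlewoodholder}. Since both characterizations are entirely standard and independent of the relativistic Euler structure, I would simply cite the textbook references \cite[Section 1]{AGeometricApproachtotheLittlewoodPaleyTheory} and \cite[A.1]{pseudodifferentialoperatorsandnonlinearpde} rather than reproduce the full argument.
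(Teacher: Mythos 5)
Your proposal is correct and follows exactly the standard route: the paper offers no proof of this proposition at all, deferring entirely to the two cited references, and your sketch (Plancherel plus finite overlap of the dyadic annuli for the Sobolev equivalence; the mean-zero kernel bound for one direction and a Bernstein/telescoping split at the scale $|x-y|^{-1}$ for the other in the H\"older equivalence, with $0<s<1$ needed for the geometric series) is precisely the argument those references contain. One incidental remark: as printed in the paper, the right-hand side of \eqref{Littlewoodsobolev} is missing a square on $\onenorm{\littlewood F}{x}{2}{\St}$ inside the sum, and your version with the square is the correct statement.
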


The following two Lemmas consist of a commuted version of the equations. Lemma \ref{lemmacommute} commutes $\boxg$ and $\materialderivative$ with $\pfour$ and is needed for below-top order estimates. Lemma \ref{lwpequations} commutes $\boxg$ and $\materialderivative$ with $\littlewood\pfour$ and is needed for the top order estimates.
\begin{lemma}[Commuted equations satisfied by one derivative of the solution variables]\label{lemmacommute}
	
	We consider the solutions to the equations of Proposition \ref{WaveEqunderoringinalacoustical}, that is, if  $\variables\in\{v^0,v^1,v^2,v^3,\lnenth,\ent\}$ solves the relativistic Euler equations (\ref{RelEuler1}), the following equations hold:
	\begin{align}
\label{boxg2}&\boxg\pfour\variables=\linsmoothfunction(\vvariables)[\pfour\vectvort,\pfour\DivGradEnt]+\quadsmoothfunction(\vvariables)[\pfour^2\vvariables,\pfour\vvariables]+\linsmoothfunction(\vvariables)[(\pfour\vvariables)^3],\\
\label{bc2}&\materialderivative\pfour\modivort^\alpha=\quadsmoothfunction(\vvariables)[\pfour\vvariables,(\pfour^2\vort,\pfour^2\vgradientEnt,\pfour^2\vvariables)]+\quadsmoothfunction(\vvariables)[(\pfour^2\vvariables,\pfour\vvort,\pfour\vvariables,\pfour\vgradientEnt),(\pfour\vvort,\pfour\vgradientEnt,\pfour\vvariables)]+\linsmoothfunction(\vvariables)[(\pfour\vvariables)^2\cdot(\pfour\vvariables,\pfour\vgradientEnt,\pfour\vvort)],\\
\label{bd2}&\materialderivative\pfour\DivGradEnt=\linsmoothfunction(\vvariables,\vvort,\vgradientEnt)[\pfour^2\vort,\pfour^2\vvariables,\pfour^2\vgradientEnt]+\quadsmoothfunction(\vvariables)[(\pfour^2\vvariables,\pfour\vvort,\pfour\vvariables,\pfour\vgradientEnt),(\pfour\vgradientEnt,\pfour\vvariables)]+\linsmoothfunction(\vvariables)[(\pfour\vvariables)^2\cdot(\pfour\vvariables,\pfour\vgradientEnt)].
\end{align}
\end{lemma}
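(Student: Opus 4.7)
The plan is to differentiate each equation of Proposition \ref{WaveEqunderoringinalacoustical} by applying $\pfour$ and to reorganize the commutator and Leibniz contributions into the admissible schematic classes $\linsmoothfunction,\quadsmoothfunction$. All three equations share a common strategy; the only genuinely new computation is the commutator $[\boxg,\pfour]$ needed for (\ref{boxg2}), while (\ref{bc2}) and (\ref{bd2}) require only the first-order commutator $[\materialderivative,\pfour]$. A key structural observation used repeatedly is that $\vort$ and $\gradEnt$ are schematically first derivatives of $\vvariables$ (by Definitions \ref{vorticity} and \ref{D:entgradient}), and consequently $\vectvort,\DivGradEnt$ are schematically $\pfour^2\vvariables+(\pfour\vvariables)^2$ by Definition \ref{modifiedfluidvariables}.

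For (\ref{boxg2}), I start from (\ref{waveEQ}). Writing $\boxg$ in coordinates as $\gfour^{\alpha\beta}\p_\alpha\p_\beta-\gfour^{\alpha\beta}\Chfour^\gamma_{\alpha\beta}\p_\gamma$ and noting that $\gfour^{\alpha\beta}$ is a smooth function of $\vvariables$, the commutator identity $[\pfour,\boxg]=(\pfour\gfour^{\alpha\beta})\p_\alpha\p_\beta+\pfour(\gfour^{\alpha\beta}\Chfour^\gamma_{\alpha\beta})\p_\gamma$ reads schematically
\begin{align*}
[\boxg,\pfour]\variables = \quadsmoothfunction(\vvariables)[\pfour^2\vvariables,\pfour\vvariables]+\linsmoothfunction(\vvariables)[(\pfour\vvariables)^3],
\end{align*}
after absorbing $\pfour^2\variables$ into $\pfour^2\vvariables$ (since $\variables$ is a component of $\vvariables$). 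Differentiating the RHS of (\ref{waveEQ}), the quadratic piece yields $\quadsmoothfunction(\vvariables)[\pfour^2\vvariables,\pfour\vvariables]+\linsmoothfunction(\vvariables)[(\pfour\vvariables)^3]$ by Leibniz, while $\pfour[\linsmoothfunction(\vvariables)[\vectvort,\DivGradEnt]]$ produces the desired $\linsmoothfunction(\vvariables)[\pfour\vectvort,\pfour\DivGradEnt]$ together with the extraneous factor $\linsmoothfunction(\vvariables)[\vectvort,\DivGradEnt]\cdot\pfour\vvariables$. Using the structural observation above, this last term is majorized by $\quadsmoothfunction(\vvariables)[\pfour^2\vvariables,\pfour\vvariables]+\linsmoothfunction(\vvariables)[(\pfour\vvariables)^3]$, completing the derivation.

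For (\ref{bc2})--(\ref{bd2}), since $\materialderivative=(v^\alpha/v^0)\p_\alpha$ one has $[\pfour,\materialderivative]=\pfour(v^\alpha/v^0)\p_\alpha$, hence $[\materialderivative,\pfour]\modivort^\alpha\in\linsmoothfunction(\vvariables)[\pfour\vvariables]\cdot\pfour\vectvort$ and similarly for $\DivGradEnt$; these land in the target classes once we note $\pfour\vectvort,\pfour\DivGradEnt\sim\pfour^2\vvort+\pfour^2\vgradientEnt+\pfour^2\vvariables+\text{quadratic lower order}$ via Definition \ref{modifiedfluidvariables}. Applying $\pfour$ to the RHS of (\ref{Bmodivort})--(\ref{BDiventGrad}) and applying Leibniz, each contribution either raises the differential order of one of the $\pfour$-factors (producing a term in the $\quadsmoothfunction$ class with a $\pfour^2$ argument matching one of the slots displayed in (\ref{bc2})--(\ref{bd2})), or differentiates a coefficient that depends on $\vvariables$, $\vvort$, or $\vgradientEnt$, generating an extra factor of $\pfour\vvariables$, $\pfour\vvort$, or $\pfour\vgradientEnt$; these combine with the two pre-existing first-derivative factors to form the cubic class $\linsmoothfunction(\vvariables)[(\pfour\vvariables)^2\cdot(\pfour\vvariables,\pfour\vgradientEnt,\pfour\vvort)]$.

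The main obstacle is purely combinatorial bookkeeping: one must verify that every term produced by applying $\pfour$ to the various schematic expressions is captured by one of the summands listed on the right-hand sides of (\ref{boxg2})--(\ref{bd2}). The only non-trivial identification is the replacement $\vectvort,\DivGradEnt\leadsto\pfour^2\vvariables+(\pfour\vvariables)^2$, which follows directly from Definition \ref{modifiedfluidvariables} together with $\vort\sim\linsmoothfunction(\vvariables)[\pfour\vvariables]$ and $\gradEnt=\pfour\ent$. No new analytic estimate is required; the lemma is a direct consequence of Proposition \ref{WaveEqunderoringinalacoustical} and the Leibniz rule applied consistently through the schematic calculus.
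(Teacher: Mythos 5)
Your proposal is correct and follows essentially the same route as the paper, which itself only sketches this lemma as "commute with $\pfour$ and use the schematic relations $\modivort=\linsmoothfunction(\vvariables,\vvort,\vgradientEnt)[\pfour\vvort,\pfour\vvariables]$, $\DivGradEnt=\linsmoothfunction[\pfour\vgradientEnt]+\linsmoothfunction(\vgradientEnt)[\pfour\vvariables]$"; your version supplies the commutator identities and the absorption of the differentiated-coefficient terms explicitly, and your deeper substitution $\vort\sim\linsmoothfunction(\vvariables)[\pfour\vvariables]$, $\gradEnt=\pfour\ent$ is exactly what is needed to place $\linsmoothfunction(\vvariables)[\vectvort,\DivGradEnt]\cdot\pfour\vvariables$ into $\quadsmoothfunction(\vvariables)[\pfour^2\vvariables,\pfour\vvariables]+\linsmoothfunction(\vvariables)[(\pfour\vvariables)^3]$.
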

\begin{proof}[Sketch of the proof of Lemma \ref{lemmacommute}]
	By commuting (\ref{waveEQ}), (\ref{Bmodivort}) and (\ref{BDiventGrad}) with $\pfour$, using relations (by Definition \ref{modifiedfluidvariables}) $\modivort=\linsmoothfunction(\vvariables,\vvort,\vgradientEnt)[\pfour\vvort,\pfour\vvariables]$ and $\DivGradEnt=\linsmoothfunction[\pfour\vgradientEnt]+\linsmoothfunction(\vgradientEnt)[\pfour\vvariables]$, (\ref{boxg2})-(\ref{bd2}) are derived by straightforward computations.
\end{proof}
The following Lemma provides the commuted equations with the Littlewood-Paley projections.
\begin{lemma}\cite[Lemma 5.2, Lemma 5.4. Equations satisifed by the frequency-projected solution variables]{3DCompressibleEuler}\label{lwpequations}
	For solutions to the equations of Proposition \ref{WaveEqunderoringinalacoustical}, the following equations hold:
	\begin{align}
	\label{lwpboxg}\boxg\littlewood\pfour\variables&=\remainder{\pfour\vvariables},\\
	\label{lwpbc}\materialderivative\littlewood\pfour\modivort^\alpha&=\remainder{\pfour\modivort^\alpha},\\
	\label{lwpbd}\materialderivative\littlewood\pfour\DivGradEnt&=\remainder{\pfour\DivGradEnt},
	\end{align}
	where
	\begin{align}
	\label{remainder1}\remainder{\pfour\vvariables}=&\littlewood\pfour\inhom{\variables}-\sum\limits_{(\alpha,\beta)\neq(0,0)}\littlewood\left[\pfour\gfour^{\alpha\beta}\p_\alpha\p_\beta\variables\right]-\Chfour^{\alpha}\littlewood\p_\alpha\pfour\variables\\
	\notag&+\sum\limits_{(\alpha,\beta)\neq(0,0)}\left[\gfour^{\alpha\beta}-P_{\leq\upnu}\gfour^{\alpha\beta}\right]\littlewood\p_\alpha\p_{\beta}\pfour\variables\\
	\notag&+\sum\limits_{(\alpha,\beta)\neq(0,0)}\left\{P_{\leq\upnu}\gfour^{\alpha\beta}\littlewood\p_\alpha\p_{\beta}\pfour\variables-\littlewood[\gfour^{\alpha\beta}\p_\alpha\p_\beta\pfour\variables]\right\},\\
	\label{remainder2}\remainder{\pfour\modivort^\alpha}=&\littlewood\pfour\inhom{\modivort^\alpha}-\littlewood\left[\pfour \left(\frac{v^a}{v^0}\right)\p_a\modivort^\alpha\right]+\left[\frac{v^a}{v^0}-P_{\leq\upnu}\left(\frac{v^a}{v^0}\right)\right]\littlewood\p_a\pfour\modivort^\alpha\\
	\notag&+P_{\leq\upnu}\left(\frac{v^a}{v^0}\right)\littlewood\p_a\pfour\modivort^\alpha-\littlewood\left[\frac{v^a}{v^0}\p_a\pfour\modivort^\alpha\right],\\
	\label{remainder3}\remainder{\pfour\DivGradEnt}=&\littlewood\pfour\inhom{\DivGradEnt}-\littlewood\left[\pfour \left(\frac{v^a}{v^0}\right)\p_a\DivGradEnt\right]+\left[\frac{v^a}{v^0}-P_{\leq\upnu}\left(\frac{v^a}{v^0}\right)\right]\littlewood\p_a\pfour\DivGradEnt\\
	\notag&+P_{\leq\upnu}\left(\frac{v^a}{v^0}\right)\littlewood\p_a\pfour\DivGradEnt-\littlewood\left[\frac{v^a}{v^0}\p_a\pfour\DivGradEnt\right].
	\end{align}
	
	Moreover the following estimates hold for the remainders where $l^2_\upnu$-seminorm is taken over dyadic frequencies:
	\begin{align}
	\label{remainderestimates}\norm{\upnu^{N-2}\left(\remainder{\pfour\vvariables},\remainder{\pfour\modivort^\alpha},\remainder{\pfour\DivGradEnt}\right)}_{l^2_\upnu L_x^2(\St)}\lesssim&\sobolevnorm{\p(\vectvort,\DivGradEnt)}{N-2}{\St}\\
	\notag&+\left(\norm{\pfour(\vvariables,\vvort,\vgradientEnt)}_{L_x^\infty(\St)}+1\right)\left(\sobolevnorm{\pfour(\vvariables,\vvort,\vgradientEnt)}{N-1}{\St}+1\right).
	\end{align}
\end{lemma}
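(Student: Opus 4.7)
The derivation of the three identities (\ref{lwpboxg})--(\ref{lwpbd}) starts by applying $\littlewood\pfour$ to the three PDEs of Proposition \ref{WaveEqunderoringinalacoustical}. For the wave equation, I write the principal part of $\boxg$ in rectangular coordinates as $\gfour^{\alpha\beta}\p_\alpha\p_\beta - \Chfour^\gamma \p_\gamma$ and exploit the normalization $\gfour^{00}\equiv -1$, so that the $(0,0)$ piece of the principal part reduces to $-\p_t^2$ and commutes exactly with $\littlewood\pfour$; the only nontrivial commutators therefore come from $(\alpha,\beta)\neq(0,0)$. The Leibniz rule pushes $\pfour$ onto the metric (producing $\littlewood[\pfour\gfour^{\alpha\beta}\p_\alpha\p_\beta\variables]$ and the contracted Christoffel term $\Chfour^\alpha\littlewood\p_\alpha\pfour\variables$), while the paradifferential splitting $\gfour^{\alpha\beta} = P_{\leq\upnu}\gfour^{\alpha\beta} + (\gfour^{\alpha\beta}-P_{\leq\upnu}\gfour^{\alpha\beta})$ separates $\littlewood[\gfour^{\alpha\beta}\p_\alpha\p_\beta\pfour\variables]$ into the high-high piece in line three of (\ref{remainder1}) and a low-high piece, whose discrepancy with $P_{\leq\upnu}\gfour^{\alpha\beta}\littlewood\p_\alpha\p_\beta\pfour\variables$ becomes the symmetric commutator in the last line of (\ref{remainder1}). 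Rearranging these identities places $\boxg\littlewood\pfour\variables$ on the left and defines $\remainder{\pfour\vvariables}$. The transport identities (\ref{lwpbc})--(\ref{lwpbd}) arise analogously after expanding $\materialderivative=\p_t+(v^a/v^0)\p_a$; since only one derivative is present in the principal part, we obtain the simpler four-term decompositions (\ref{remainder2})--(\ref{remainder3}).

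The heart of the argument is the estimate (\ref{remainderestimates}), which I prove by bounding each piece of the three remainders separately in weighted $l^2_\upnu L_x^2$. For $\littlewood\pfour\inhom{\variables}$ I substitute the schematic expression in (\ref{boxg2}); the top-order linear piece $\linsmoothfunction(\vvariables)[\pfour\vectvort, \pfour\DivGradEnt]$ produces, after a Moser-type absorption of the smooth coefficient and an application of the Littlewood-Paley characterization (\ref{Littlewoodsobolev}), exactly the summand $\sobolevnorm{\p(\vectvort,\DivGradEnt)}{N-2}{\St}$ in the right-hand side of (\ref{remainderestimates}); the remaining quadratic and cubic terms in $(\pfour\vvariables,\pfour\vvort,\pfour\vgradientEnt)$ are controlled via the standard tame product inequality
\begin{align*}
\sobolevnorm{fg}{N-2}{\St}\lesssim \onenorm{f}{x}{\infty}{\St}\sobolevnorm{g}{N-2}{\St}+\onenorm{g}{x}{\infty}{\St}\sobolevnorm{f}{N-2}{\St},
\end{align*}
which yields precisely the factor $(\norm{\pfour(\vvariables,\vvort,\vgradientEnt)}_{L_x^\infty(\St)}+1)(\sobolevnorm{\pfour(\vvariables,\vvort,\vgradientEnt)}{N-1}{\St}+1)$. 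The Leibniz terms $\littlewood[\pfour\gfour^{\alpha\beta}\p_\alpha\p_\beta\variables]$ and $\Chfour^\alpha\littlewood\p_\alpha\pfour\variables$ are of the same form, since $\pfour\gfour^{\alpha\beta}$ and $\Chfour^\alpha$ are schematically $\linsmoothfunction(\vvariables)[\pfour\vvariables]$. For the two paradifferential pieces I invoke the standard commutator and paraproduct bounds
\begin{align*}
\norm{[\littlewood, f]\,g}_{L_x^2(\St)}\lesssim \upnu^{-1}\norm{\p f}_{L_x^\infty(\St)}\norm{g}_{L_x^2(\St)},\qquad \norm{(\mathrm{I}-P_{\leq\upnu})f}_{L_x^\infty(\St)}\lesssim \upnu^{-1}\norm{\p f}_{L_x^\infty(\St)},
\end{align*}
which produce a crucial gain of one derivative that compensates for the two spatial derivatives falling on $\pfour\variables$. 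Summing dyadically with the $\upnu^{N-2}$ weight and using (\ref{Littlewoodsobolev}) converts everything back into Sobolev norms of order $N-1$. The remainders $\remainder{\pfour\modivort^\alpha}$ and $\remainder{\pfour\DivGradEnt}$ are handled in exactly the same way using the schematic expressions from (\ref{bc2})--(\ref{bd2}); here the structure is even simpler because the transport operator is first order and the source terms already contain the combinations $\pfour\vvort,\pfour\vgradientEnt,\pfour\vvariables$ in the desired arrangement.

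The main difficulty is not conceptual but rather one of careful bookkeeping: one must track precisely which factor in each cubic or quartic nonlinearity of (\ref{boxg2})--(\ref{bd2}) is placed in $L_x^\infty$ versus $H^{N-2}$, and one must verify that the $\upnu^{-1}$ gain from the paradifferential commutator -- applied to the high frequency $\p_\alpha\p_\beta\pfour\variables$ at frequency $\sim\upnu$ -- exactly balances the extra spatial derivative, so that no derivative is lost in the coupling between the metric coefficients and the second derivatives of the wave variables. Once this bookkeeping is organized, the estimate (\ref{remainderestimates}) follows by the triangle inequality and a dyadic summation; the entire argument is essentially a rewriting of \cite[Lemma 5.2, Lemma 5.4]{3DCompressibleEuler}, the only new input being that our source terms $\inhom{\variables}$, $\inhom{\modivort^\alpha}$, $\inhom{\DivGradEnt}$ come from the geometric wave-transport formulation of the \emph{relativistic} Euler equations rather than the non-relativistic ones.
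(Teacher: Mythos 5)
Your proposal is correct and follows essentially the same route the paper takes (the paper simply defers to \cite[Lemmas 5.2 and 5.4]{3DCompressibleEuler}, noting the schematic structure of the equations is identical): commute $\littlewood\pfour$ through the equations using the normalization $\gfour^{00}=-1$ so that only the $(\alpha,\beta)\neq(0,0)$ terms contribute commutators, isolate the paradifferential pieces, and close with tame product and Littlewood--Paley commutator bounds that gain the factor $\upnu^{-1}$ needed to absorb the extra spatial derivative. The one bookkeeping point you gloss over is that the source term $\linsmoothfunction(\vvariables)[\pfour\vectvort,\pfour\DivGradEnt]$ in (\ref{boxg2}) carries space-time derivatives while (\ref{remainderestimates}) only allows $\sobolevnorm{\p(\vectvort,\DivGradEnt)}{N-2}{\St}$, so the time derivatives of $\vectvort,\DivGradEnt$ must first be traded for spatial ones via the transport equations (\ref{Bmodivort})--(\ref{BDiventGrad}), with the resulting quadratic terms absorbed into the second summand of the estimate.
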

\begin{proof}[Discussion of the proof of Lemma \ref{lwpequations}]
	We omit the proof of equation (\ref{remainder1}),(\ref{remainder2}) and (\ref{remainder3}) since it follows from straightforward computations. We use bootstrap assumptions, product and commutator estimates for Littlewood-Paley calculus to prove estimates (\ref{remainderestimates}). We refer readers to \cite[Lemma 5.4]{3DCompressibleEuler} for the detailed proofs where the structure of the equations are the same as in this article. We note that we have $\materialderivative^a=\frac{v^a}{v^0}$ compare to $\materialderivative^a=v^a$ in \cite{3DCompressibleEuler}, which doesn't change the proof or result in the estimates (\ref{remainderestimates}).
\end{proof}
\subsection{Statement of Main Theorem}
Using the notations introduced in the previous subsections, we precisely provide our assumptions on the data and the statement of the main theorem
\begin{theorem}[Main theorem]\label{maintheorem}
	Consider a solution to the relativistic Euler equations whose initial data satisfies following assumptions for some real number $2<N<5/2$, $0<\alpha<1$, $c_1>0$ and $D$:\\
	1. $\sobolevnorm{h, v, \vort}{N}{\Sigma_0}+\sobolevnorm{s}{N+1}{\Sigma_0}\leq D$,\\
	2. $\holdernorm{\modivort,\DivGradEnt}{}{0}{\alpha}{\Sigma_0}\leq D$,\\
	3. The data functions are contained in the interior of $\Domain$ (See Definition \ref{Hyperbolicity} for definition of $\Domain$) and the enthalpy $\enth$ is strictly positive, i.e. $\enth\geq c_1>0$.\\
	Then the solution's time of classical existence $T>0$ can be bounded from below in terms of $D$ and $\mathcal{R}$. Moreover, the Sobolev and some H$\ddot{o}$lder regularity\footnote{We note that not entire H\"older regularity is propogated in the way that only $\holdernorm{\vvariables,\modivort,\DivGradEnt}{}{0}{\delta_1}{\Sigma_t}$ (see Section \ref{ChoiceofParameters} for the definition of $\delta_1$) is bounded by data.} of the data are propogated by the solution on the slab of classical existence.
\end{theorem}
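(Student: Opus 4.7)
The plan is to close a bootstrap argument on a short interval $[0,\Tstar]$ in which the Strichartz-type smallness conditions (\ref{boot1})--(\ref{boot2}) are assumed and then strictly improved, from which the lower bound on the time of classical existence and the propagation of regularity follow by standard continuity. First, I would combine the geometric wave equations (\ref{waveEQ}) with the vectorfield multiplier method and Littlewood--Paley calculus, through the commuted equations of Lemma \ref{lwpequations}, to derive the high-order energy inequality $\sobolevnorm{\pfour\vvariables}{N-1}{\St}^2 \lesssim \sobolevnorm{\pfour\vvariables}{N-1}{\Szero}^2 + \int_0^t (\norm{\pfour\vvariables}_{L^\infty_x(\Sigma_\tau)} + 1) \sobolevnorm{\pfour\vvariables,\vectvort,\DivGradEnt}{N-1}{\Sigma_\tau}^2 \, d\tau$. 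Pairing this with an analogous energy estimate for the modified fluid variables $\vectvort$ and $\DivGradEnt$ coming from the transport equations (\ref{Bmodivort})--(\ref{BDiventGrad}), and with an $L^2$ elliptic div-curl estimate controlling $\sobolevnorm{\p\vvort, \p\vgradientEnt}{N-1}{\St}$ by the same right-hand side, reduces the Sobolev propagation to controlling $\twonorms{\pfour\vvariables,\pfour\vvort,\pfour\vgradientEnt}{t}{1}{x}{\infty}{[0,\Tstar]\times\mathbb{R}^3}$.

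Second, the elliptic and Schauder estimates required for the transport part must be extracted from the genuinely space-time div-curl system (\ref{DivCurlSystem}), whose principal part is not elliptic. I would use the transport equations (\ref{Bvort})--(\ref{BS}) to rewrite $v$-directional derivatives of $\vort$ and $\gradEnt$ in terms of spatial derivatives plus favorable lower-order terms, thereby converting the space-time Hodge system into the spatial system (\ref{GAB})--(\ref{HAB}) on $\St$ whose divergence principal part has the Riemannian coefficient $G^{-1}(v)$. Freezing the spatial coefficients, applying the Fourier transform, Littlewood--Paley decomposition, and standard pseudodifferential calculus would then yield both the $H^{N-1}$ bound (\ref{introelliptic2}) and the Schauder-type bound (\ref{introschauder}). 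Coupling the Schauder bound with the transport equations (\ref{Bmodivort})--(\ref{BDiventGrad}) via Gr\"onwall's inequality propagates the $C^{0,\alpha}$ norm of $\vectvort, \DivGradEnt$, which under the bootstrap hypothesis on $\pfour\vvariables$ strictly improves (\ref{boot2}).

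Third, to improve the wave-part bootstrap (\ref{boot1}) I would follow the now standard reduction chain of Smith--Tataru and Wang as adapted in \cite{3DCompressibleEuler}: reduce the Strichartz estimate (\ref{IBA}) to a frequency-localized dyadic Strichartz estimate (\ref{DyadicStrichartz}) for solutions of $\boxg\varphi = 0$; then via self-rescaling and a $\mathcal{T}\mathcal{T}^*$ duality to the localized decay estimate (\ref{introdecayestimates}); then via Littlewood--Paley theory to the conformal energy bound (\ref{introconformalestimates}) on the rescaled slab. The conformal energy bound rests on sharp control of the acoustic geometry built from the eikonal function $u$ solving $\gfour^{\alpha\beta}\p_\alpha u\, \p_\beta u = 0$, obtained by analyzing the Raychaudhuri equation, the Ricci decomposition (\ref{introRicci}), and the transport/Hodge systems for the null connection coefficients, the mass aspect $\mass$, the modified mass aspect $\modmass$, and the conformal factor $\conformalfactor$; crucially, $\vectvort$ and $\DivGradEnt$ enter these systems through the substitution of $\boxg\vvariables$ via (\ref{waveEQ}) into (\ref{introRicci}), so the H\"older and Sobolev bounds from the previous step feed into the geometric control at precisely the right regularity.

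The principal obstacle, absent in the non-relativistic analyses of \cite{3DCompressibleEuler,Roughsolutionsofthe3-DcompressibleEulerequations}, is exactly the space-time rather than spatial character of the div-curl system for $\vort$ and $\gradEnt$, together with the presence of a variable Riemannian coefficient $G^{-1}(v)$ in the extracted spatial divergence equation (\ref{GAB}). The new technical heart of the proof is therefore the localization, partition of unity, and pseudodifferential analysis of this quasilinear spatial Hodge system on $\St$, which delivers the elliptic and Schauder estimates at the required low regularity; once these are in hand, the wave-part Strichartz machinery proceeds largely in parallel with the non-relativistic case, and the full bootstrap closes by Gr\"onwall on $[0,\Tstar]$, yielding a time of existence depending only on $D$ and $\Domain$.
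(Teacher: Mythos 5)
Your proposal follows essentially the same route as the paper: the same bootstrap structure, the same energy, $L^2$ elliptic, and Schauder estimates built on the extracted spatial div-curl system with Riemannian coefficient $G^{-1}(v)$, and the same Strichartz--decay--conformal-energy reduction resting on control of the acoustic geometry via the eikonal function. The only slight imprecision is that the strict improvement of the transport-part bootstrap assumption to the $\Tstar^{2\delta}$ smallness requires the already-improved Strichartz estimate for the wave part (as in Theorem \ref{Schauder0}), not merely the bootstrap hypothesis on $\pfour\vvariables$, but your overall logical ordering accounts for this.
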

\subsection{Choice of Parameters}\label{ChoiceofParameters}In this subsection, we introduce several parameters that each of them either measures the regularity or plays a role in our analysis. We denote the assumed Sobolev regularity of the ``wave-part" of the data and the H\"older regularity of the ``transport-part" of the data by, respectively, $2<N<5/2$ and $0<\alpha<1$. For the purpose of analysis, we choose positive numbers $q,\varepsilon_0, \delta_0,\delta$ and $\delta_1$ that satisfy the following conditions:
\begin{subequations}
	\begin{align}
&2<q<\infty,\\
&0<\varepsilon_0:=\frac{N-2}{10}<\frac{1}{10},\\
&\delta_0:=\min\{\varepsilon_0^2,\frac{\alpha}{10}\},\\
&0<\delta:=\frac{1}{2}-\frac{1}{q}<\varepsilon_0,\\
&\delta_1:=\min\{N-2-4\varepsilon_0-\delta(1-8\varepsilon_0),\alpha\}>8\delta_0>0.
\end{align}
\end{subequations}

More precisely, we consider $N,\alpha,\varepsilon_0$ and $\delta_0$ to be fixed throughout the paper, while $q,\delta$ and $\delta_1$ will be treated as parameters.

\subsection{Assumptions on the Initial Data}\label{Data}
In this subsection, we provide the bootstrap assumptions that will be used in the proof of Theorem \ref{maintheorem}.
\begin{definition}[Regime of hyperbolicity]\label{Hyperbolicity}
	We define $\Domain$ as follows
	\begin{align}\label{Hyperbolicity1}
	\Domain:=\left\{(\lnenth,\ent,\vec{v},\vvort,\vgradientEnt)\in\mathbb{R}^{14}|\ 0<\speed\leq 1\right\}.
	\end{align}
\end{definition}
With $N$ and $\alpha$ as in Section \ref{ChoiceofParameters}, we assume that 
\begin{align}
\textbf{``Wave-part"}&&\label{wavepart}\sobolevnorm{\lnenth,\vec{v}}{N}{\Sigma_{0}}&<\infty,\\
\textbf{``Transport-part"}&&\label{transportpart}\sobolevnorm{\ent}{N+1}{\Sigma_{0}}+\sobolevnorm{\vvort}{N}{\Sigma_{0}}+\holdernorm{\vectvort,\DivGradEnt}{}{0}{\alpha}{\Sigma_{0}}&<\infty.
\end{align}
Assumptions (\ref{wavepart}) and (\ref{transportpart}) correspond to regularity assumptions on the ``wave-part" and ``transport-part" of the data respectively. 

Let $\Int U$ denote the interior of the set $U$. We assume that there is a compact subset $\breve{\Domain}$ such that 
\begin{align}
\left(\vvariables,\vvort,\vgradientEnt\right)(\Sigma_{0})\subset\Int\breve{\Domain}\subset\breve{\Domain}\subset\Int\Domain,
\end{align}
where $\Domain$ is defined in (\ref{Hyperbolicity1}).
\subsection{Bootstrap Assumptions}\label{bootstrap} Throughout the article, $0<\Tstar\ll1$ denotes a bootstrap time that depends only on initial data. We assume that $\vvariables$ is a smooth\footnote{By smooth we mean as smooth as neccessary for the analysis arguments to go through. Meanwhile, all of our quantitative estimates depend only on the Sobolev and H\"older norms.} 
solution to the equation in Section \ref{RelEuler} and the following estimates hold:
\begin{subequations}
	\begin{align}
	\label{databa}\left(\vvariables,\vvort,\vgradientEnt\right)([0,\Tstar]\times\mathbb{R}^3)&\subset\Domain,\\
	\twonorms{\pfour\vvariables}{t}{2}{x}{\infty}{[0,\Tstar]\times\mathbb{R}^3}^2+\sum\limits_{\upnu\geq2}\upnu^{2\delta_0}\twonorms{\label{waveba}\littlewood\pfour\vvariables}{t}{2}{x}{\infty}{[0,\Tstar]\times\mathbb{R}^3}^2&\leq1,\\
	\label{transportba}\twonorms{\p\vvort,\p\vgradientEnt}{t}{2}{x}{\infty}{[0,\Tstar]\times\mathbb{R}^3}^2+\sum\limits_{\upnu\geq2}\upnu^{2\delta_0}\twonorms{\littlewood\p\vvort,\littlewood\p\vgradientEnt}{t}{2}{x}{\infty}{[0,\Tstar]\times\mathbb{R}^3}^2&\leq1.
	\end{align}
\end{subequations}

In Theorem \ref{Schauder0}, we derive an improvement of (\ref{transportba}). In Theorem \ref{MainEstimates}, we derive an improvement of (\ref{waveba}). By fundamental theorem of calculus, equations (\ref{Bvort}) and (\ref{BS}), (\ref{databa}) is a direct result of (\ref{waveba}) and (\ref{transportba}).

\section{Structure of the Proofs in the Rest of the Article}\label{sectionstructure}

In this section, we provide the structure of the proofs in the article. 
Our proofs rely on a bootstrap argument where the bootstrap assumptions are in Section \ref{bootstrap}. See Section \ref{mainidea} for the logic of the bootstrap argument. The main goal for us is to improve the bootstrap assumptions to the following Strichartz-type estimates:
\begin{subequations}
\begin{align}
\label{improvewaveba}\twonorms{\pfour\vvariables}{t}{2}{x}{\infty}{[0,\Tstar]\times\mathbb{R}^3}^2+\sum\limits_{\upnu\geq2}\upnu^{2\delta_1}\twonorms{\littlewood\pfour\vvariables}{t}{2}{x}{\infty}{[0,\Tstar]\times\mathbb{R}^3}^2\lesssim\Tstar^{2\delta},\\
\label{improvetransportba}\twonorms{\pfour\vvort,\pfour\vgradientEnt}{t}{2}{x}{\infty}{[0,\Tstar]\times\mathbb{R}^3}^2+\sum\limits_{\upnu\geq2}\upnu^{2\delta_1}\twonorms{\littlewood\pfour\vvort,\littlewood\pfour\vgradientEnt}{t}{2}{x}{\infty}{[0,\Tstar]\times\mathbb{R}^3}^2\lesssim\Tstar^{2\delta}.
\end{align}
\end{subequations}

We prove the (\ref{improvewaveba}) through the following series of reductions, see Subsection \ref{reductionofstrichartz} for an overview of the logic: Strichartz estimates$\longleftarrow$Decay estimates$\longleftarrow$Conformal energy estimates$\longleftarrow$Controlling of the acoustic null geometry.

To prove (\ref{improvetransportba}), we prove a transport-Schauder type estimate in Section \ref{section4}, which is independent of the proof of (\ref{improvewaveba}). In Theorem \ref{Schauder0}, we obtain (\ref{improvetransportba}) by combining the transport-Schauder estimate and (\ref{improvewaveba}).

\subsection{Similarities and Differences Compared to the 3D Compressible Euler Equations}
Broadly speaking,  we use the same machinery as in \cite{3DCompressibleEuler} to reduce the proof of the Strichartz estimates to geometric quantities that have to be controlled in order to derive a conformal energy estimate. This reduction was first introduced and developed in the context of low-regularity problems for quasilinear wave equations, as we discussed in Section 1.1.
Disconzi-Luo-Mazzone-Speck \cite{3DCompressibleEuler} and Wang \cite{Roughsolutionsofthe3-DcompressibleEulerequations} have exploited the remarkable structure of the non-relativistic 3D compressible Euler equations to derive similar low-regularity well-posedness results in the presence of vorticity and entropy.
The main purpose of this paper is to derive similar results for the relativistic Euler flow by using the remarkable structure of the equations derived by Disconzi-Speck in \cite{TheRelativistivEulerEquations}. 

Two main differences in the present paper compared to the non-relativistic case are \textbf{1)} the first fundamental form of $\St$ is no longer conformally flat in the relativistic case, leading to more complicated geometry and \textbf{2)} the $L^2$ elliptic and Schauder estimates that we need to handle the vorticity and entropy are more complicated because unlike in the non-relativistic case, the Hodge systems that we study are quasilinear (instead of constant-coefficient). 
\subsection{Energy, $L^2$ Elliptic and Schauder Estimates in Section \ref{section4}}
In Section \ref{section4}, first we prove the energy estimates for wave variables $\lnenth,\ent,v$ and transport variables $\vort,\gradEnt,\modivort,\DivGradEnt$ in Proposition \ref{EnergyandEllipticEstimates}. These estimates are essential to the local well-posedness theorem Theorem \ref{maintheorem}. We also need these estimates for controlling the acoustic geometry. We control the $H^{2+\varepsilon}$ norm of wave variables under the bootstrap assumptions by using the geometric energy method in Subsection \ref{energymethodforwave} and commuted equations in Lemma \ref{lemmacommute} and Lemma \ref{lwpequations}. We refer readers to \cite[Section 6]{RoughSolutionsofEinsteinVacuumEquationsinCMCSHGauge} for the commutator estimates involving LP projections in fractional Sobolev spaces. We note that the $L^2$ elliptic estimates for transport variables $\vort,\gradEnt,\modivort,\DivGradEnt$ in Proposition \ref{EllipticestimatesinL2} is proven based on a rewritten dynamic div-curl system in Proposition \ref{NewDivCurl}.

We then prove the transport-Schauder estimates in the H\"older space $C_x^{0,\delta_1}$ for the transport variables $\vort,\gradEnt,\modivort,\DivGradEnt$ in Theorem \ref{Schauder0}, which recovers the bootstrap assumptions (\ref{transportba}) in condition of (\ref{improvewaveba}). To prove these estimates, we use the div-curl system (\ref{newdivcurl}) and the transport equations (\ref{Bvort})-(\ref{BDiventGrad}). We prove Schauder estimates by some standard results in pseudodifferential operators as well as the Littlewood-Paley decomposition with the help of the equivalence between H\"older spaces and frequency spaces (\ref{Littlewoodholder}).

\subsection{Reduction of Strichartz Estimates to Decay Estimates in Section \ref{sectionreduction}}
We state the Strichartz estimates for wave variables in Theorem \ref{MainEstimates}, which improves the bootstrap assumptions (\ref{waveba}). Our reductions of the Strichartz estimates to the bounded conformal energy consists of several steps. 
In Section \ref{sectionreduction}, we list several reductions from Strichartz estimates to a spatially localized decay estimate in Theorem \ref{Spatiallylocalizeddecay}. We first use Duhamel's principle to reduce Theorem \ref{MainEstimates} to a frequency localized version of Strichartz estimates in Theorem \ref{FrequencyStrichartz}. Then after rescaling all the quantities with respect to the frequency in Subsection \ref{SelfRescaling}, we run a $\mathcal{T}\mathcal{T}^*$ argument to reduce the Theorem \ref{FrequencyStrichartz} to a decay estimate in Theorem \ref{DecayEstimates}. Finally, by Bernstein inequalities, partition of unity, and Sobolev embedding, we obtain the spatially localized version of decay estimates in Theorem \ref{Spatiallylocalizeddecay}. The reductions are by now standard, therefore we only state the reductions without proof. We refer to Wang \cite[Section 3]{AGeoMetricApproach} for the details.
It is crucial to derive the decay estimates (\ref{Decay}). To further reduce the decay estimates to conformal energy estimates, we need a geometric setup, which is in Section \ref{sectiongeometrysetup}, as we will discuss in the next subsection.

\subsection{Geometric Setup and Conformal Energy in Section \ref{sectiongeometrysetup}}\label{structurenullenergy}
To control the conformal energy, we use Wang's approach from \cite{AGeoMetricApproach}, which relies on analysis on a conformal changed acoustic geometry. We reduce the decay estimates to the conformal energy estimates Theorem \ref{BoundnessTheorem} in Section \ref{subsectionconformalenergy} via product estimates and Berstein inequality of Littlewood-Paley theory. 

In order to define conformal energy and do analysis based on the geometric structure of the 
relativistic Euler equations, in Section \ref{opticalfunction}-\ref{geometricquantities}, we construct the geometric null frame based on a solution $u$ to the acoustical eikonal equation:
\begin{align}
\gfour^{\alpha\beta}\p_\alpha u\p_\beta u=0.
\end{align}

Then we control the acoustic geometry based on the null frame that we just constructed. Note that given the control over the acoustic geometry, we have favorable estimates for the conformal energy as we discuss in Subsection \ref{discussionofconformalenergy}. We will discuss the control of the acoustic geometry in the next two subsections.

\subsection{Energy along Acoustic Null Hypersurfaces and Control of the Acoustic Geometry in Section \ref{connectioncoefficientsandpdes}}
In Section \ref{connectioncoefficientsandpdes}, We prove the energy estimates for fluid variables along the acoustic null cones in Subsection \ref{energyalongnullhypersurfaces}. This is important since we need to control fluid variables along the null cones.

Then we control the acoustic geometry. To start with, we list the connection coefficients in Definition \ref{D:DEFSOFCONNECTIONCOEFFICIENTS}. We define conformal factor for the metric in Definition \ref{conformalchangemetric}. We provide initial conditions for geometric cones in Proposition \ref{InitialCT1} and Proposition \ref{InitialCT2}.

We set up the bootstrap assumptions for geometric quantities in Section \ref{BAGeo} and list the main estimates for the geometric quantities in Proposition \ref{mainproof}. We give a discussion of the proof of Proposition \ref{mainproof} in Subsection \ref{finalproof2}, which improves the assumptions in Section \ref{BAGeo}. Proposition \ref{mainproof} is proven via doing analysis on the transport equations and the div-curl systems of the geometric quantities. We only give a brief discussion of the proof since it follows the same in \cite[Section 10]{3DCompressibleEuler}.

\section{Energy, $L^2$ Elliptic and Schauder Estimates}\label{section4}In this section, we first derive the energy and $L^2$ elliptic estimates along constant-time hypersurfaces. Note that we obtain the same results as in \cite[Section 4-5]{3DCompressibleEuler}, where $\density$ in \cite{3DCompressibleEuler} plays the same role as $\lnenth$ in this paper. Then we derive transport-Schauder type estimates for the vorticity and entropy gradient.
\subsection{Energy and $L^2$ Elliptic Estimates}\label{Section4.1}
The following Proposition is the main result of the energy estimates.
\begin{proposition}[Energy and elliptic estimates]\label{EnergyandEllipticEstimates}
	Under the initial data and bootstrap assumptions of Section \ref{sectionmainthm}, smooth solutions to the relativistic Euler equations satisfy the following estimates for $2<N<5/2$ and $t\in[0,\Tstar]$:

\begin{align}\label{energyestimates}
	&\sum\limits^2_{k=0}\sobolevnorm{\p_t^k(\lnenth,\vec{v},\vvort)}{N-k}{\St}+\sum\limits^2_{k=0}\sobolevnorm{\p_t^k\ent}{N+1-k}{\St}+\sum\limits^1_{k=0}\sobolevnorm{\p_t^k(\vectvort,\DivGradEnt)}{N-1-k}{\St}\\
	\notag&\lesssim\sobolevnorm{(\lnenth,\vec{v},\vvort)}{N}{\Szero}+\sobolevnorm{\ent}{N+1}{\Szero}+1.
\end{align}

\end{proposition}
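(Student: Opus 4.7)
The plan is to prove Proposition~\ref{EnergyandEllipticEstimates} by combining (i) a standard wave energy estimate for the ``wave-part'' $\vvariables$, (ii) the quasilinear spatial div-curl rewriting alluded to in Proposition~\ref{NewDivCurl} followed by an $L^2$ elliptic estimate controlling $\p(\vvort,\vgradientEnt)$ by $\pfour\vvariables$ and $(\vectvort,\DivGradEnt)$, (iii) a transport-type energy estimate for $(\vectvort,\DivGradEnt)$, and finally (iv) a Gr\"onwall closure fueled by the bootstrap assumptions \eqref{waveba}--\eqref{transportba}. The time-differentiated norms on the left-hand side will then be recovered by trading $\p_t$ for spatial derivatives via the evolution equations.

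The argument proceeds in the following order. First, treating $\variables \in \vvariables$ as solutions to the wave equation \eqref{waveEQ}, I apply the multiplier-$\Timelike$ energy current (in the form of Christodoulou's covariant compatible current) together with the Littlewood--Paley commuted version \eqref{lwpboxg}--\eqref{remainderestimates} and sum the dyadic pieces using \eqref{Littlewoodsobolev}. The resulting inequality reads
\begin{align*}
\sobolevnorm{\pfour\vvariables}{N-1}{\St}^2 \lesssim \sobolevnorm{\pfour\vvariables}{N-1}{\Szero}^2 + \int_0^t \bigl(\norm{\pfour\vvariables}_{L^\infty_x(\Sigma_\tau)}+1\bigr)\, \sobolevnorm{\pfour\vvariables,\vectvort,\DivGradEnt}{N-1}{\Sigma_\tau}^2 \, d\tau.
\end{align*}
Second, I invoke the quasilinear spatial div-curl system \eqref{introGH}, i.e., $(G^{-1})^{ab}\p_a ((\vort_\flat),\gradEnt)_b = F$ and $\p_a((\vort_\flat),\gradEnt)_b - \p_b((\vort_\flat),\gradEnt)_a = H_{ab}$, where $F = \DivGradEnt + \text{l.o.t.}$ and $H = \modivort + \text{l.o.t.}$, to get
\begin{align*}
\sobolevnorm{\p(\vvort,\vgradientEnt)}{N-1}{\St} \lesssim \sobolevnorm{\pfour\vvariables,\vectvort,\DivGradEnt}{N-1}{\St}.
\end{align*}
Third, running the transport energy estimate on the commuted equations \eqref{lwpbc}--\eqref{lwpbd} yields
\begin{align*}
\sobolevnorm{\vectvort,\DivGradEnt}{N-1}{\St}^2 \lesssim \sobolevnorm{\vectvort,\DivGradEnt}{N-1}{\Szero}^2 + \int_0^t \bigl(\norm{\pfour\vvariables,\p\vvort,\p\vgradientEnt}_{L^\infty_x(\Sigma_\tau)}+1\bigr) \sobolevnorm{\pfour\vvariables,\p\vvort,\p\vgradientEnt,\vectvort,\DivGradEnt}{N-1}{\Sigma_\tau}^2 \, d\tau,
\end{align*}
and similarly for $\vvort,\vgradientEnt$ via \eqref{Bvort}--\eqref{BS}. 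Adding all three inequalities and applying the elliptic bound controls the right-hand side by a quantity of the same form as the left.

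Fourth, I close with Gr\"onwall: the factors $\norm{\pfour\vvariables}_{L^\infty_x}$ and $\norm{\p\vvort,\p\vgradientEnt}_{L^\infty_x}$ are integrable in time thanks to the bootstrap assumptions \eqref{waveba}--\eqref{transportba} combined with Cauchy--Schwarz in time (which converts the $L^2_t L^\infty_x$ control into $L^1_t L^\infty_x$ on $[0,\Tstar]$ with a $\Tstar^{1/2}$ factor). This produces the pure spatial ($k=0$) part of \eqref{energyestimates}. The $\p_t$ and $\p_t^2$ estimates are obtained by expressing $\p_t$-derivatives in terms of spatial derivatives using the equations themselves: from $\materialderivative\ent = 0$ one solves for $\p_t\ent$; from \eqref{Bvort}--\eqref{BS} one solves for $\p_t\vvort,\p_t\vgradientEnt$; from \eqref{Bmodivort}--\eqref{BDiventGrad} one solves for $\p_t\vectvort,\p_t\DivGradEnt$; and from \eqref{waveEQ}, after solving algebraically for $\p_t^2\variables$ using $(\gfour^{-1})^{00}=-1$, one expresses second time derivatives of $\vvariables$ in terms of spatial derivatives and the source $\inhom{\variables}$. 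Iterating this substitution once more yields the $k=2$ bounds; standard fractional product and commutator estimates of Littlewood--Paley calculus (see \cite[Section 6]{RoughSolutionsofEinsteinVacuumEquationsinCMCSHGauge}) handle the loss of half a derivative.

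The main obstacle, and the only genuinely new point compared to the non-relativistic analysis of \cite{3DCompressibleEuler}, is in step (ii): the space-time div-curl system \eqref{DivCurlSystem} is \emph{not} elliptic on its own, so one must first project onto $\St$ using the transport equations \eqref{Bvort}--\eqref{BS} to trade the $v$-directional components for spatial ones, arriving at the quasilinear Hodge system \eqref{introGH} whose principal coefficient matrix $G^{-1}(v)$ is Riemannian. Because $G^{-1}$ depends on $v$, the $L^2$ div-curl estimate cannot be derived by a constant-coefficient Fourier argument as in \cite{3DCompressibleEuler}; one instead freezes the spatial point, applies Littlewood--Paley decomposition, and combines pseudodifferential mapping properties with a partition of unity (the content of Proposition~\ref{EllipticestimatesinL2}). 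Once this elliptic gain is in hand, the rest of the argument is a routine adaptation of the energy-method framework.
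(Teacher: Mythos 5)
Your proposal is correct and follows essentially the same route as the paper's proof: the $\Timelike$-multiplier energy estimate with Littlewood--Paley commutation for the wave part, the transport energy estimate for $\vectvort,\DivGradEnt$, the quasilinear spatial div-curl elliptic estimate of Proposition~\ref{EllipticestimatesinL2} to close the loop on $\p\vvort,\p\vgradientEnt$, and a Gr\"onwall argument using the bootstrap assumptions to render the $L^\infty_x$ factors time-integrable. The only cosmetic difference is that the paper folds all space-time derivatives $\pfour^k$ into a single total energy $P_N(t)$ and recovers the lower-order and undifferentiated norms by the fundamental theorem of calculus in time, rather than first proving purely spatial bounds and then substituting for $\p_t$ via the equations, but the two presentations are equivalent.
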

\begin{remark}
	We note that $1$ on the right-hand side of (\ref{energyestimates}) is due to technical reasons. Specifically, as shown in Section \ref{Discussionofprop5.1}, $1$ can be replaced by  $\int_{0}^{t}\norm{\pfour(\vvariables,\vvort,\vgradientEnt)}_{L_x^\infty(\Sigma_{\tau})}\diff\tau$. By bootstrap assumptions (\ref{waveba})-(\ref{transportba}) and H\"older's inequality, we could actually bound this term by $\Tstar^{1/2}$. 
\end{remark}
We provide several key ingredients for proving Proposition \ref{EnergyandEllipticEstimates} in the next two subsections. We provide the basic energy inequality for wave equations and transport equations in Subsection \ref{energymethodforwave}. We prove a crucial elliptic div-curl estimate in Subsection \ref{EllipticDiv-Curl}. We give the proof of Proposition \ref{EnergyandEllipticEstimates} in Subsection \ref{Discussionofprop5.1}. We refer readers to \cite[Section 4-5]{3DCompressibleEuler} and \cite[Section 2]{AGeoMetricApproach} for the energy estimates in the non-relativistic 3D compressible Euler equations case and the quasilinear wave equations case respectively.
\subsubsection{The basic energy inequality for wave equations and transport equations}\label{energymethodforwave}
We provide the basic energy inequality for the wave equations in this subsection.
\begin{definition}[Energy-momentum tensor, energy current, and deformation tensor]\label{energymomentum} We define the energy-momentum tensor $Q_{\mu\nu}[\varphi]$ associated to a scalar function $\varphi$ to be the following tensorfield:
	\begin{align}
Q_{\mu\nu}[\varphi]:=\p_\mu\varphi\p_\nu\varphi-\frac{1}{2}\gfour_{\mu\nu}(\gfour^{-1})^{\alpha\beta}\p_\alpha\varphi\p_\beta\varphi.
\end{align} 

Given $\varphi$ and any multiplier vectorfield $\mathbf{X}$, we define the corresponding energy current $\Jenarg{\mathbf{X}}{\alpha}[\varphi]$ vectorfield as follows:
\begin{align}
\Jenarg{\mathbf{X}}{\alpha}[\varphi]:=Q^{\alpha\beta}[\varphi]\mathbf{X}_{\beta}-\varphi^2\mathbf{X}^\alpha.
\end{align}

We define the deformation tensor of $\mathbf{X}$ as follows:
\begin{align}\label{deformtensor}
\deform{\mathbf{X}}_{\alpha\beta}:=\Dfour_{\alpha}\mathbf{X}_\beta+\Dfour_{\beta}\mathbf{X}_\alpha,
\end{align}
where $\Dfour$ is the Levi-Civita connection with respect to $\gfour$.

We have the following well-known divergence identity:
\begin{align}\label{DalphaJalpha}
\Dfour_\alpha\Jenarg{\mathbf{X}}{\alpha}[\varphi]=\boxg\varphi(\mathbf{X}\varphi)+\frac{1}{2}Q^{\mu\nu}[\varphi]\deform{\mathbf{X}}_{\mu\nu}-2\varphi\mathbf{X}\varphi-\frac{1}{2}\varphi^2(\gfour^{-1})^{\mu\nu}\deform{\mathbf{X}}_{\mu\nu}.
\end{align}

We define the energy $\mathbb{E}[\varphi](t)$ as follows where $\Timelike^\alpha:=-\gfour^{\alpha0}$ is the future-directed $\gfour$-timelike vectorfield defined in Definition \ref{timelike}:
\begin{align}\label{Energy1}
\mathbb{E}[\varphi](t):=\int_{\St}\Jenarg{\mathbf{\Timelike}}{\alpha}[\varphi]\Timelike_\alpha\diff\tvol=\int_{\St}\left(Q^{00}[\varphi]+\varphi^2\right)\diff\tvol,
\end{align}
where $\diff\tvol$ is the volume form on $\St$ with respect to $g$ induced by $\gfour$.
\end{definition}

\begin{lemma}[Coerciveness of $\mathbb{E}$]\label{coeciveness} Under the bootstrap assumptions of Section \ref{bootstrap}, the following estimate holds for $t\in[0,\Tstar]$:
	\begin{align}\label{Coecive}
	\mathbb{E}[\varphi](t)\approx\norm{\varphi}_{H^1(\St)}^2+\norm{\p_t\varphi}_{ L_x^2(\St)}^2.
	\end{align}
\end{lemma}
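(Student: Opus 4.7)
The plan is to compute $Q^{00}[\varphi]$ explicitly, exhibit it as a manifestly non-negative quadratic form in the space-time derivatives of $\varphi$, and then use the bootstrap assumptions to pass from this form to the standard $H^1 \times L^2$ norms on $\St$. I will first unpack the energy-momentum tensor using the normalization $(\gfour^{-1})^{00} = -1$ from Definition~\ref{D:acousticmetric}. Raising indices, one has
\begin{align*}
Q^{\alpha\beta}[\varphi] = \gfour^{\alpha\mu}\gfour^{\beta\nu}\p_\mu\varphi\,\p_\nu\varphi - \frac{1}{2}\gfour^{\alpha\beta}\,\gfour^{-1}(\pfour\varphi,\pfour\varphi),
\end{align*}
so that, using $\Timelike^\alpha = -\gfour^{\alpha0}$ from Definition~\ref{timelike}, the $(0,0)$ component reduces to
\begin{align*}
Q^{00}[\varphi] = (\Timelike\varphi)^2 + \tfrac{1}{2}\,\gfour^{-1}(\pfour\varphi,\pfour\varphi).
\end{align*}

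Next I would rewrite $\gfour^{-1}(\pfour\varphi,\pfour\varphi)$ in terms of the induced Riemannian metric $g$ on $\St$. From Definition~\ref{D:inducegonconstanttime} we have $(g^{-1})^{\alpha\beta} = \gfour^{\alpha\beta} + \Timelike^\alpha\Timelike^\beta$. A quick computation using $\gfour^{00} = -1$ and $\Timelike^\alpha = -\gfour^{\alpha0}$ gives $(g^{-1})^{00} = 0$ and $(g^{-1})^{0a} = \gfour^{0a}(1+\gfour^{00}) = 0$, so $g^{-1}$ has only spatial components. Hence $\gfour^{-1}(\pfour\varphi,\pfour\varphi) = (g^{-1})^{ab}\p_a\varphi\,\p_b\varphi - (\Timelike\varphi)^2$, and substituting back yields the clean expression
\begin{align*}
Q^{00}[\varphi] = \tfrac{1}{2}(\Timelike\varphi)^2 + \tfrac{1}{2}(g^{-1})^{ab}\p_a\varphi\,\p_b\varphi,
\end{align*}
which is manifestly non-negative; this is the step that encodes the $\gfour$-timelike character of the normal $\Timelike$.

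The remaining step is purely quantitative. Under the bootstrap assumption (\ref{databa}), the array $\vvariables$ takes values in the compact set $\Domain$, on which the metric coefficients $\gfour^{\alpha\beta}(\vvariables)$ and $g_{ab}(\vvariables)$ are smooth, and $(g^{-1})^{ab}$ is a positive definite symmetric matrix with eigenvalues bounded above and below away from zero. Consequently $(g^{-1})^{ab}\p_a\varphi\,\p_b\varphi \approx |\nabla\varphi|^2$ pointwise, and since $\Timelike\varphi = \p_t\varphi - \gfour^{a0}\p_a\varphi$ with $\gfour^{a0}$ uniformly bounded, one obtains
\begin{align*}
Q^{00}[\varphi] \approx (\p_t\varphi)^2 + |\nabla\varphi|^2.
\end{align*}
The same compactness yields $\det g \approx 1$, so the volume form $\diff\tvol = \sqrt{\det g}\,dx^1 dx^2 dx^3$ is equivalent to Lebesgue measure on $\St$. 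Inserting the trivial identity $\varphi^2 \approx \varphi^2$ and integrating gives $\mathbb{E}[\varphi](t) \approx \norm{\p_t\varphi}_{L_x^2(\St)}^2 + \norm{\varphi}_{H^1(\St)}^2$, which is the claim.

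There is no serious obstacle here: the content of the lemma is the algebraic identity producing the sum of squares for $Q^{00}$, and everything else reduces to comparing positive-definite matrices of coefficients that depend only on $\vvariables \in \Domain$. The only point requiring care is checking that the comparison constants depend only on $\Domain$ (and thus only on the data and the bootstrap), which is guaranteed precisely because $\Domain$ is compact and the metric coefficients are smooth functions of $\vvariables$ alone.
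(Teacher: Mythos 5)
Your proof is correct and follows essentially the same route as the paper: both reduce to the identity $Q^{00}[\varphi]=\tfrac12(\Timelike\varphi)^2+\tfrac12(g^{-1})^{ab}\p_a\varphi\,\p_b\varphi$ (the paper's (\ref{Q00})), then combine uniform positive-definiteness of $(g^{-1})^{ab}$ with $\diff\tvol\approx dx$ and the boundedness of $\Timelike-\p_t$. The only point to fix is your appeal to ``the compact set $\Domain$'': as defined in (\ref{Hyperbolicity1}), $\Domain=\{0<\speed\leq1\}\subset\mathbb{R}^{14}$ is \emph{not} compact, and membership in $\Domain$ alone does not give the lower bound on $\speed$ and upper bound on $|v|$ that the uniform eigenvalue bounds for $(g^{-1})^{ab}$ require (the paper instead verifies positivity by the explicit computation (\ref{remiannianofg}), with constants depending on $\norm{v}_{L_x^\infty}$). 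The correct justification is that, by the bootstrap assumptions (\ref{waveba})--(\ref{transportba}), the fundamental theorem of calculus, and the smallness of $\Tstar$, the solution remains in a small neighborhood of the compact set $\breve{\Domain}\subset\Int\Domain$ containing the data, and compactness of that set is what yields the uniform constants.
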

\begin{proof}[Proof of Lemma \ref{coeciveness}]
	First recall that $\Timelike_a=0$, so $g_{ab}=\gfour_{ab}$.
Notice that since $0<\speed(\lnenth,\ent)\leq1$ and $(v^0)^2\geq1$, by direct computation and the bootstrap assumption (\ref{databa}), we have
	\begin{align}
	\label{closetoEuclidean}\diff\tvol=\sqrt{\det g}\diff x^1\diff x^2\diff x^3=\left\{\speed^2-(\speed^2-1)(v^0)^2\right\}^3\left\{\speed^{-6}+\speed^{-4}(\speed^{-2}-1)\left[(v^0)^2-1\right]\right\}\approx1.
	\end{align}	
	Then we compute $Q^{00}[\varphi]$. By (\ref{gsigmatab}) and (\ref{gab}), we have
	\begin{align}
\label{Q00}	Q^{00}[\varphi]=\frac{1}{2}\left\{(\Timelike\varphi)^2+(g^{-1})^{ab}\p_a\varphi\p_b\varphi \right\}=\frac{1}{2}\left\{(\Timelike\varphi)^2+\frac{\speed^2\delta^{ab}\left[\speed^2-(\speed^2-1)(v^0)^2\right]+\speed^2(\speed^2-1)v^av^b}{\left\{\speed^2-(\speed^2-1)(v^0)^2\right\}^{2}}\p_a\varphi\p_b\varphi\right\},
	\end{align}
	where $\delta^{ab}$ is the Kronecker delta.
	Note that 
	\begin{align}
	\label{T}\Timelike=\p_t+\frac{(\speed^2-1)v^a v^0}{c^2-(c^2-1)(v^0)^2}\p_a.
	\end{align}		
	Then, since the speed of sound satisfies $0<\speed\leq1$, it follows that (\ref{Q00}) is coercive in $\abs{\p\varphi}$, since $(v^0)^2=1+\sum\limits_{i=1,2,3}(v^i)^2$:
	\begin{align}\label{remiannianofg}
	&\left\{\speed^2\delta^{ab}\left[\speed^2-(\speed^2-1)(v^0)^2\right]+\speed^2(\speed^2-1)v^av^b\right\}\p_a\varphi\p_b\varphi\\
	\notag&=\speed^4\abs{\p\varphi}^2-\speed^2(\speed^2-1)\left\{\delta^{ab}(v^0)^2-v^av^b\right\}\p_a\varphi\p_b\varphi\\
	\notag&\geq\speed^4\abs{\p\varphi}^2.
	\end{align} 
	
	By bootstrap assumptions that $\abs{v^\alpha}$ are uniformly bounded and Young's inequality, we derive that $Q^{00}[\varphi]\lesssim\abs{\pfour\varphi}^2$. Combined with (\ref{remiannianofg}), the desired estimates (\ref{Coecive}) follows.
\end{proof}
\begin{lemma}[Basic energy inequality for the wave equations]\label{basicenergy}
	Let $\varphi$ be smooth on $[0,\Tstar]\times\mathbb{R}^3$. Under the bootstrap assumptions of Section \ref{bootstrap}, the following inequality holds for $t\in[0,\Tstar]$: 
	\begin{align}
	\mathbb{E}[\varphi](t)\lesssim &\mathbb{E}[\varphi](0)
	+\int_{0}^{t}\norm{\pfour\vvariables}_{L_x^\infty(\Sigma_{\tau})}\mathbb{E}[\varphi](\tau)\diff\tau\\
	\notag&+\int_{0}^{t}\norm{\boxg\varphi}_{L_x^2(\Sigma_{\tau})}\norm{\pfour\varphi}_{L_x^2(\Sigma_{\tau})}\diff\tau.
	\end{align}
\end{lemma}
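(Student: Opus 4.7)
The natural approach is the vectorfield multiplier method with $\mathbf{X}=\Timelike$. I would first apply the divergence identity~\eqref{DalphaJalpha} and integrate over the spacetime slab $[0,t]\times\mathbb{R}^{3}$ against $\diff\gvol$. Because $\Timelike$ is the future-directed $\gfour$-unit normal to each $\Sigma_{\tau}$, the boundary flux through $\Sigma_{\tau}$ equals $\int_{\Sigma_{\tau}}\Jenarg{\Timelike}{\alpha}[\varphi]\Timelike_{\alpha}\,\diff\tvol=\mathbb{E}[\varphi](\tau)$ by construction. Combined with~\eqref{closetoEuclidean} (which compares $\diff\gvol$ with $\diff\tvol\,\diff\tau$ up to bounded factors under the bootstrap assumptions), this produces the identity
\begin{align*}
\mathbb{E}[\varphi](t)=\mathbb{E}[\varphi](0)+\int_{0}^{t}\!\!\int_{\Sigma_{\tau}}\!\Bigl\{\boxg\varphi\cdot\Timelike\varphi+\tfrac{1}{2}Q^{\mu\nu}[\varphi]\deformarg{\Timelike}{\mu}{\nu}-2\varphi\,\Timelike\varphi-\tfrac{1}{2}\varphi^{2}(\gfour^{-1})^{\mu\nu}\deformarg{\Timelike}{\mu}{\nu}\Bigr\}\,\diff\tvol\,\diff\tau
\end{align*}
modulo bounded multiplicative constants.

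The second step is pointwise control of each bulk term. Since $\Timelike^{\alpha}=-\gfour^{\alpha 0}$ is a smooth function of $\vvariables$ alone (Definition~\ref{D:acousticmetric}), the chain rule yields $|\deformarg{\Timelike}{\mu}{\nu}|\lesssim|\pfour\vvariables|$ uniformly under~\eqref{databa}. Hence $|Q^{\mu\nu}[\varphi]\deformarg{\Timelike}{\mu}{\nu}|\lesssim|\pfour\vvariables|\,|\pfour\varphi|^{2}$ and $|\varphi^{2}(\gfour^{-1})^{\mu\nu}\deformarg{\Timelike}{\mu}{\nu}|\lesssim|\pfour\vvariables|\,\varphi^{2}$; both are pointwise bounded by $\onenorm{\pfour\vvariables}{x}{\infty}{\Sigma_{\tau}}$ times the energy density appearing in $\mathbb{E}[\varphi]$, via Lemma~\ref{coeciveness} and the coercivity computation~\eqref{Q00}. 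For the principal term, Cauchy--Schwarz on $\Sigma_{\tau}$ gives $\bigl|\int_{\Sigma_{\tau}}\boxg\varphi\cdot\Timelike\varphi\,\diff\tvol\bigr|\lesssim\onenorm{\boxg\varphi}{x}{2}{\Sigma_{\tau}}\onenorm{\pfour\varphi}{x}{2}{\Sigma_{\tau}}$, using that the components of $\Timelike$ are uniformly bounded by~\eqref{T} and~\eqref{databa}. The zeroth-order cross term $-2\varphi\,\Timelike\varphi$ is handled by Young's inequality: $|2\varphi\,\Timelike\varphi|\lesssim\varphi^{2}+(\Timelike\varphi)^{2}\lesssim Q^{00}[\varphi]+\varphi^{2}$, which integrates to a bare $\int_{0}^{t}\mathbb{E}[\varphi](\tau)\,\diff\tau$ contribution with no $\onenorm{\pfour\vvariables}{x}{\infty}{\Sigma_{\tau}}$ weight.

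Assembling these bounds produces the schematic intermediate estimate
\begin{align*}
\mathbb{E}[\varphi](t)\lesssim \mathbb{E}[\varphi](0)+\int_{0}^{t}\bigl(\onenorm{\pfour\vvariables}{x}{\infty}{\Sigma_{\tau}}+1\bigr)\mathbb{E}[\varphi](\tau)\,\diff\tau+\int_{0}^{t}\onenorm{\boxg\varphi}{x}{2}{\Sigma_{\tau}}\onenorm{\pfour\varphi}{x}{2}{\Sigma_{\tau}}\diff\tau.
\end{align*}
A short Gr\"onwall argument on the bootstrap interval $[0,\Tstar]$ then absorbs the bare $\int_{0}^{t}\mathbb{E}[\varphi](\tau)\,\diff\tau$ term into the implicit constant of $\lesssim$, since $\Tstar\ll 1$, yielding the stated inequality. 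The only genuine care-points are the volume-form comparison supplied by~\eqref{closetoEuclidean} and the chain-rule bound on $\deform{\Timelike}$; neither is delicate under the bootstrap hypotheses, and no step presents a serious obstacle.
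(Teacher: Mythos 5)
Your proposal is correct and follows essentially the same route as the paper: apply the divergence identity \eqref{DalphaJalpha} with multiplier $\mathbf{X}=\Timelike$ over $[0,t]\times\mathbb{R}^3$, use the flux identity \eqref{Energy1} together with coercivity \eqref{Coecive}, bound $\abs{\deform{\Timelike}}\lesssim\abs{\pfour\vvariables}$ by the chain rule under \eqref{databa}, and finish with Cauchy--Schwarz on $\Sigma_\tau$. Your extra care with the unweighted $-2\varphi\Timelike\varphi$ term (absorbing the bare $\int_0^t\mathbb{E}\,\diff\tau$ using $\Tstar\ll1$) is a reasonable way to reconcile the intermediate estimate with the stated form, a point the paper's own proof passes over silently.
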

\begin{proof}[Proof of Lemma \ref{basicenergy}]
	We apply the divergence theorem on the space-time region $[0,t]\times\mathbb{R}^3$ relative to the volume form $\diff\gvol=\sqrt{\det g}\diff x^1\diff x^2\diff x^3\diff\tau=\diff\tvol\diff\tau$. Note that $\Timelike$ is the future-directed $\gfour$-unit normal to $\St$. By (\ref{deformtensor}), (\ref{DalphaJalpha}), (\ref{Energy1}) and (\ref{Coecive}), with $\mathbf{X}:=\Timelike$, we have:
	\begin{align}
	\mathbb{E}[\varphi](t)&=\mathbb{E}[\varphi](0)-\int_{0}^{t}\int_{\Sigma_{\tau}}\left(\boxg\varphi(\Timelike\varphi)+\frac{1}{2}Q^{\mu\nu}[\varphi]\deform{\Timelike}_{\mu\nu}-2\varphi\Timelike\varphi-\frac{1}{2}\varphi^2(\gfour^{-1})^{\mu\nu}\deform{\Timelike}_{\mu\nu}\right)\diff\tvol\diff\tau.
	\end{align}
	
	By bootstrap assumptions, we have $\abs{\Timelike\varphi}\lesssim\abs{\pfour\varphi}$, $\abs{Q^{\mu\nu}[\varphi]}\lesssim\abs{\pfour\varphi}^2$ and $\abs{\deform{\Timelike}_{\mu\nu}}\lesssim\abs{\pfour\vvariables}$. Thus by Cauchy-Schwarz inequality along $\Sigma_{\tau}$, we get the desired estimate.
\end{proof}

\begin{lemma}[Basic energy inequality for the transport equations]\label{basicenergy2}
	Let $\varphi$ be smooth on $[0,\Tstar]\times\mathbb{R}^3$. Under the bootstrap assumptions of Section \ref{bootstrap}, the following inequality holds for $t\in[0,\Tstar]$: 
	\begin{align}
	\label{energytransport}\norm{\varphi}^2_{L^2_x(\St)}\lesssim\norm{\varphi}^2_{L^2_x(\Sigma_{0})}+\int_{0}^t\norm{\pfour\vvariables}_{L_x^\infty(\Sigma_{\tau})}\norm{\varphi}^2_{L^2_x(\Sigma_{\tau})}\diff\tau+\int_0^t\norm{\varphi}_{L^2_x(\Sigma_{\tau})}\norm{\materialderivative\varphi}_{L^2_x(\Sigma_{\tau})}\diff\tau.
	\end{align}
\end{lemma}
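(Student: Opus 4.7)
\medskip

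The plan is to compute the $t$-derivative of $\int_{\St}\varphi^2\,\diff x$ (with respect to the standard flat Lebesgue measure on $\mathbb{R}^3$, which is the measure used in the definition of $\norm{\cdot}_{L^2_x(\St)}$) and then integrate back in time. Writing $\materialderivative=\p_t+\frac{v^i}{v^0}\p_i$, we have the pointwise identity
\begin{align}
\p_t(\varphi^2)=2\varphi\,\p_t\varphi=2\varphi\,\materialderivative\varphi-2\varphi\,\frac{v^i}{v^0}\p_i\varphi=2\varphi\,\materialderivative\varphi-\frac{v^i}{v^0}\p_i(\varphi^2).
\end{align}
Integrating over $\Sigma_{\tau}$ and using integration by parts in the spatial variables (the boundary term at spatial infinity vanishes for Schwartz-class functions, which is harmless since our analysis is quantitative and depends only on norms), we obtain
\begin{align}
\tderivative\int_{\Sigma_{\tau}}\varphi^2\,\diff x=2\int_{\Sigma_{\tau}}\varphi\,\materialderivative\varphi\,\diff x+\int_{\Sigma_{\tau}}\p_i\!\left(\frac{v^i}{v^0}\right)\varphi^2\,\diff x.
\end{align}

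Next, I bound the two terms on the right-hand side. For the first term, the Cauchy--Schwarz inequality gives
\begin{align}
\left|\int_{\Sigma_{\tau}}\varphi\,\materialderivative\varphi\,\diff x\right|\leq\norm{\varphi}_{L^2_x(\Sigma_{\tau})}\norm{\materialderivative\varphi}_{L^2_x(\Sigma_{\tau})}.
\end{align}
For the second term, using the chain rule and $v^0\geq 1$ together with the bootstrap assumption (\ref{databa}) (which confines $\vvariables$ to the compact set $\Domain$ where $v^0$ is bounded away from zero and from above), we have the pointwise bound
\begin{align}
\left|\p_i\!\left(\frac{v^i}{v^0}\right)\right|=\left|\frac{\p_i v^i}{v^0}-\frac{v^i\,\p_i v^0}{(v^0)^2}\right|\lesssim\abs{\pfour\vvariables},
\end{align}
so that
\begin{align}
\left|\int_{\Sigma_{\tau}}\p_i\!\left(\frac{v^i}{v^0}\right)\varphi^2\,\diff x\right|\lesssim\norm{\pfour\vvariables}_{L^\infty_x(\Sigma_{\tau})}\norm{\varphi}^2_{L^2_x(\Sigma_{\tau})}.
\end{align}

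Finally, integrating the resulting differential inequality in $\tau$ from $0$ to $t$ yields exactly (\ref{energytransport}). The argument is essentially identical in spirit to the proof of Lemma \ref{basicenergy}, except that the absence of second-order derivatives makes it considerably simpler: the only structural input is that $\materialderivative$ has smooth coefficients whose first derivatives are controlled pointwise by $\abs{\pfour\vvariables}$, and I do not expect any genuine obstacle. The only minor subtlety worth flagging is that the norm in the statement uses the flat Lebesgue measure on $\Sigma_{\tau}$ rather than $\diff\tvol$, which is what dictates the use of $\p_t$ rather than $\Timelike$ as the multiplier here.
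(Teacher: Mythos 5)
Your proof is correct and is essentially the paper's argument: the paper applies the space-time divergence theorem in Cartesian coordinates to the current $\mathbf{J}^\alpha:=\varphi^2\materialderivative^\alpha$ (with $\mathbf{J}^0=\varphi^2$ since $\materialderivative^0=1$), which is precisely your "differentiate in $t$, integrate by parts in $x$" computation, followed by the same Cauchy--Schwarz and $\abs{\p_i(v^i/v^0)}\lesssim\abs{\pfour\vvariables}$ bounds. Your closing remark about the flat measure dictating the use of $\p_t$ (equivalently, the Cartesian divergence theorem) rather than $\Timelike$ matches the paper's setup exactly.
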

\begin{proof}[Proof of Lemma \ref{basicenergy2}]
	Let $\mathbf{J}^\alpha:=\varphi^2\materialderivative^\alpha$, then $\p_\alpha \mathbf{J}^\alpha=2\varphi\materialderivative\varphi+\left(\p_\alpha\materialderivative^\alpha\right)\varphi^2$. We apply the divergence theorem on the space-time region $[0,t]\times\mathbb{R}^3$ relative to the Cartesian coordinates. Note that $\mathbf{J}^0=\varphi^2$. By Cauchy-Schwarz inequality along $\Sigma_{\tau}$, we obtain the desired estimates.
\end{proof}
\begin{remark}
	We remark that for our implementation of the geometric energy method for wave equations, the timelike vectorfield $\Timelike$ (defined in Definition \ref{timelike}) plays the same role as $\materialderivative$ (Note that $\materialderivative=\p_t+v^a\p_a$ in \cite{3DCompressibleEuler} is not the same as $\materialderivative=\frac{v^\alpha}{v^0}\p_\alpha$ in this paper.) in \cite[section 4.1]{3DCompressibleEuler}. All the arguments for geometric energy method for wave equations go through in the same fashion as in \cite[section 4.1]{3DCompressibleEuler}.
\end{remark}
\subsubsection{Elliptic div-curl estiamtes in $L^2$ space}\label{EllipticDiv-Curl}
This subsection is dedicated to the proof of Proposition \ref{EllipticestimatesinL2}, which is a key ingredient in the proof of the energy estimates (\ref{energyestimates}) for the $\vvort,\vgradientEnt,\vectvort,\DivGradEnt$.
	\begin{proposition}[Elliptic div-curl estimates in $L^2$ space]\label{EllipticestimatesinL2}Under the bootstrap assumptions in Section \ref{bootstrap}, the following estimates holds for $\vort$ and $\gradEnt$:
		\begin{align}\label{elliptic1}
		\norm{(\p\vvort,\p\vgradientEnt)}_{L_x^2(\St)}\lesssim\norm{\vvort,\vgradientEnt,\vectvort,\DivGradEnt}_{L_x^2(\St)}.
		\end{align}
		Moreover, $H^{N-k}$ elliptic estimate also holds true for $k=1,2$, that is,
		\begin{align}\label{elliptic2}
		\sobolevnorm{(\p\vvort,\p\vgradientEnt)}{N-k}{\St}&\lesssim\sobolevnorm{\vvort,\vgradientEnt,\vectvort,\DivGradEnt}{N-k}{\St},&
		\text{for } &k=1,2.
		\end{align}
	\end{proposition}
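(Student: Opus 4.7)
The plan is a three-stage argument. First, reduce the space-time div-curl system to a quasilinear spatial div-curl system on $\St$ via Proposition \ref{NewDivCurl} (which uses the transport equations (\ref{Bvort})--(\ref{BS}) to trade $v$-directional derivatives of $\vort$ and $\gradEnt$ for terms involving $\pfour\vvariables$). This yields, for $Y \in \{\vort_\flat,\gradEnt\}$, a system of the schematic form
\begin{align*}
(G^{-1})^{ab}\p_a Y_b = F, \qquad \p_a Y_b - \p_b Y_a = H_{ab},
\end{align*}
where the Riemannian metric $G^{-1}=G^{-1}(\vec v)$ is as in (\ref{gab}), and schematically $F = \DivGradEnt + \linsmoothfunction(\vvariables,\vgradientEnt)[\pfour\vvariables]$, $H = \modivort + \linsmoothfunction(\vvariables,\vvort,\vgradientEnt)[\pfour\vvariables]$. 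Under the bootstrap assumptions of Section \ref{bootstrap}, $G^{-1}$ is uniformly positive definite on $\St$ with spatial derivatives of schematic form $\linsmoothfunction(\vvariables)[\pfour\vvariables]$.

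For the $L^2$ bound (\ref{elliptic1}), observe that for any frozen value $v_* \in \Domain$ the constant-coefficient div-curl operator with coefficient $G^{-1}(v_*)$ is elliptic: its Fourier symbol $\widehat{Y}(\xi) \mapsto \bigl((G^{-1}(v_*))^{ab}\xi_a \widehat{Y}_b,\ \xi_a \widehat{Y}_b - \xi_b \widehat{Y}_a\bigr)$ is injective for $\xi \neq 0$ with left-inverse of norm $\lesssim |\xi|^{-1}$, uniformly in $v_* \in \Domain$. I would cover $\mathbb{R}^3$ by balls $\{B_j\}$ of small radius $\rho$ with a subordinate smooth partition of unity $\{\chi_j\}$ of finite multiplicity, freeze $G^{-1}$ at $x_j \in B_j$, apply the resulting constant-coefficient estimate via Plancherel to each $\chi_j Y$, and sum. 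Freezing errors $(G^{-1}(x)-G^{-1}(x_j))\p(\chi_j Y)$ are bounded by $C\rho \norm{\pfour\vvariables}_{L_x^\infty(\St)} \norm{\p Y}_{L_x^2(\St)}$, absorbable into the left-hand side once $\rho$ is taken small (depending only on $\Domain$); cutoff commutators $[\p,\chi_j]Y$ produce the $\norm{\vvort,\vgradientEnt}_{L_x^2(\St)}$ term on the right after summation.

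For (\ref{elliptic2}), apply $\littlewood$ to both spatial equations and invoke the stage-two $L^2$ estimate frequency-by-frequency on the projected $1$-forms $\littlewood Y$. The price is a pair of Littlewood-Paley commutators, principally $[\littlewood,(G^{-1})^{ab}]\p_a Y_b$ from the divergence equation, which I would control by Kato--Ponce type bounds (cf.\ \cite[Section 6]{RoughSolutionsofEinsteinVacuumEquationsinCMCSHGauge}) in terms of $\norm{\pfour\vvariables}_{L_x^\infty(\St)}\sobolevnorm{\p Y}{N-k-1}{\St}$ plus lower-order remainders. Dyadic summation via (\ref{Littlewoodsobolev}), together with the schematic expressions for $F$ and $H$ and standard product estimates on the low-order pieces, then closes the bound.

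The main obstacle is precisely the quasilinearity of the divergence equation: in the non-relativistic setting \cite{3DCompressibleEuler} the analogous operator is the flat divergence, so freezing and commutator steps are entirely absent. Here one must confront the $v$-dependent Riemannian metric $G^{-1}$, and the argument is viable only because (i) the bootstrap guarantees $\vec v(\St) \subset \Domain$ so that $G^{-1}$ is uniformly elliptic with quantitative constants, and (ii) spatial derivatives of $G^{-1}$ reduce schematically to $\pfour\vvariables$, whose $L^\infty$ norm enters multiplicatively and is absorbed either through smallness of the freezing radius or via the bootstrap assumption (\ref{waveba}).
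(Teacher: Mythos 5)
Your proposal is correct and follows essentially the same route as the paper: reduction to the spatial quasilinear div-curl system of Proposition \ref{NewDivCurl}, localization by a finite-multiplicity partition of unity, freezing of $G^{-1}$ at the center of each patch, inversion of the frozen constant-coefficient symbol on the Fourier side (Lemma \ref{schauderalgebraic} and Lemma \ref{Positivedefinite}), absorption of the freezing error by shrinking the patch radius, and summation. The only cosmetic difference is in (\ref{elliptic2}): the paper simply reuses the frozen-coefficient Fourier identity with $\upnu^{N-k+1}$ weights and sums dyadically, which sidesteps the variable-coefficient commutator $[\littlewood,(G^{-1})^{ab}]\p_a Y_b$ that your Kato--Ponce route must (and can) control.
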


	 Since we have to derive energy estimates on constant-time hypersurfaces, and the Hodge system (\ref{DivCurlSystem}) is a \textbf{space-time} div-curl system, we begin by deriving a \textbf{spatial} div-curl system for $\vort$ and $\gradEnt$.
\begin{proposition}[The div-curl system on constant-time hypersurfaces]\label{NewDivCurl}
	Given the div-curl system (\ref{DivCurlSystem}), following equations hold on $\St$ for vorticity $\vort$ and entropy gradient $\gradEnt$:
\begin{subequations}
	\begin{align}
	\label{Gab0}(G^{-1})^{ab}\p_a(\vort_\flat)_b&=F_\vort,&  (G^{-1})^{ab}\p_a\gradEnt_b&=F_\gradEnt,\\
	\label{Hab0}\p_a(\vort_\flat)_b-\p_b(\vort_\flat)_a&=\Harg{\vort}{ab},& \p_a\gradEnt_b-\p_b\gradEnt_a&=\Harg{\gradEnt}{ab},
	\end{align}
\end{subequations}
	where 
	\begin{subequations}
		\begin{align}
		F_\vort&=\linsmoothfunction(\vvariables,\vvort,\vgradientEnt)[\pfour\vvariables],& F_\gradEnt&=\linsmoothfunction(\vvariables)\DivGradEnt+\linsmoothfunction(\vvariables,\vgradientEnt)[\pfour\vvariables],\\ \Harg{\vort}{ab}&=\linsmoothfunction(\vvariables)\vectvort+\linsmoothfunction(\vvariables,\vvort,\vgradientEnt)[\pfour\vvariables],&
		\Harg{\gradEnt}{ab}&=\linsmoothfunction(\vvariables,\vgradientEnt)[\pfour\vvariables],\\
		\label{D:Gab}(G^{-1})^{ab}&=\delta^{ab}-\frac{v^av^b}{(v^0)^2}.
		\end{align}
	\end{subequations}

For convenience, we write the above 2 div-curl systems as follows where $(\upeta,F,H_{ab})$ which\footnote{We use the same notation throughout the remainder of the article.} is either $(\vort_\flat,F_\vort,\Harg{\vort}{ab})$ or $(\gradEnt,F_\gradEnt,\Harg{\gradEnt}{ab})$:
	\begin{subequations}\label{newdivcurl}
	\begin{align}
	\label{Gab}(G^{-1})^{ab}\p_a\upeta_b&=F,\\
	\label{Hab}\p_a\upeta_b-\p_b\upeta_a&=H_{ab}.
	\end{align}
\end{subequations}
\end{proposition}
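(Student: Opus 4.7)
The plan is to derive the spatial divergence and curl equations for $(\vort_\flat)$ and $\gradEnt$ separately, in each case trading time derivatives for spatial derivatives modulo acceptable lower-order terms by combining: (i) the spacetime divergence identity / Hodge relation, (ii) the transport equation along $v$, (iii) the $v$-orthogonality of each vectorfield, and (iv) the algebraic identities of Lemma \ref{indentities}. The Riemannian coefficient matrix $(G^{-1})^{ab} = \delta^{ab} - v^a v^b/(v^0)^2$ will emerge automatically from this combination.

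For the divergence equation applied to $\vort$, split (\ref{Divvort}) as $\p_0\vort^0 + \p_a\vort^a = \linsmoothfunction(\vvort)[\pfour\vvariables]$. The transport equation (\ref{Bvort}) yields $\p_0\vort^\alpha = -(v^a/v^0)\p_a\vort^\alpha + \linsmoothfunction(\vvariables,\vvort,\vgradientEnt)[\pfour\vvariables]$, and the $v$-orthogonality $v_\kappa\vort^\kappa=0$ gives $\vort^0 = v^b\vort^b/v^0$. Applying the first with $\alpha=0$ and using the second to rewrite $\p_a\vort^0 = (v^b/v^0)\p_a\vort^b + \vort^b\,\p_a(v^b/v^0)$, one gets
\begin{align*}
\p_a\vort^a - \frac{v^a v^b}{(v^0)^2}\p_a\vort^b = \linsmoothfunction(\vvariables,\vvort,\vgradientEnt)[\pfour\vvariables] = F_\vort,
\end{align*}
which is exactly (\ref{Gab0}) since $(\vort_\flat)_b = \vort^b$. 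The same strategy for $\gradEnt$ uses the definition of $\DivGradEnt$ (which yields $\p_\kappa(\gradEnt^\sharp)^\kappa = \linsmoothfunction(\vvariables)\DivGradEnt + \linsmoothfunction(\vvariables,\vgradientEnt)[\pfour\vvariables]$), the $v$-orthogonality $v^\kappa\gradEnt_\kappa = 0$ (from (\ref{svothogonal})), and the transport equation (\ref{BS}); this produces the same coefficient $(G^{-1})^{ab}$ with $F_\gradEnt$ of the claimed form.

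For the curl equation, evaluate identity (\ref{identities3}) at $\gamma=a$, $\delta=b$. The leading term $\antisymmetic_{ab\kappa\lambda}v^\kappa\textnormal{vort}^\lambda(\vort_\flat)$ equals $\antisymmetic_{ab\kappa\lambda}v^\kappa\modivort^\lambda$ modulo terms of type $\linsmoothfunction(\vvariables,\vvort,\vgradientEnt)[\pfour\vvariables]$ by rearranging Definition \ref{modifiedfluidvariables}. The remaining terms on the right-hand side of (\ref{identities3}) have two schematic forms: $(v^\kappa\p_\kappa(\vort_\flat)_*)(v_\flat)_*$, controlled by the transport equation (\ref{Bvort}); and $(v^\kappa\p_*(\vort_\flat)_\kappa)(v_\flat)_*$, rewritten via identity (\ref{identities2}) as $-\vort^\kappa(\p_*(v_\flat)_\kappa)(v_\flat)_*$. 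Both are manifestly $\linsmoothfunction(\vvariables,\vvort,\vgradientEnt)[\pfour\vvariables]$, giving the claimed form of $\Harg{\vort}{ab}$. The analogous computation for $\gradEnt$ uses (\ref{identities4}), (\ref{identities2.5}), and (\ref{BS}); crucially, $\textnormal{vort}^\lambda(\gradEnt)=0$ by (\ref{vortS}), which eliminates the modified-fluid-variable contribution and explains the absence of $\modivort$ or $\DivGradEnt$ in $\Harg{\gradEnt}{ab}$.

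The proof is essentially algebraic rearrangement and the only real task is bookkeeping: one must verify that every term generated by the above substitutions falls into the claimed schematic classes $\linsmoothfunction(\cdot)[\cdot]$, with no spurious second derivatives of $\vvariables$ or first derivatives of $\vvort,\vgradientEnt$ leaking into $F$ or $H_{ab}$. This is guaranteed by two structural facts: (a) the $v$-orthogonality constraints convert the single ``bad'' time derivative in each spacetime Hodge identity into a spatial derivative plus lower-order factors involving only $\pfour \vvariables$, and (b) the transport equations for $\vort^\alpha$ and $(\gradEnt^\sharp)^\alpha$ are themselves first-order in $\pfour\vvariables$ rather than in derivatives of the transported quantities, so substituting them never raises the derivative count on $\vvort$ or $\vgradientEnt$.
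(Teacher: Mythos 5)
Your proposal is correct and follows essentially the same route as the paper's proof: for the divergence part you trade $\p_0$-derivatives for spatial ones via the transport equations and the $v$-orthogonality relations (the paper merely performs the two substitutions in the opposite order, first expressing the time component via orthogonality and then applying the transport equation to the spatial components), and for the curl part you evaluate the identities (\ref{identities3})--(\ref{identities4}) at spatial indices and absorb the remaining terms using (\ref{identities2}), (\ref{identities2.5}), the transport equations, and the definitions of $\modivort$ and $\text{vort}(\gradEnt)=0$, exactly as in the paper. No gaps.
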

	\begin{proof}[Proof of Prop \ref{NewDivCurl}]
		
		For the div part (\ref{Gab}), by the equations (\ref{svothogonal}) and (\ref{identities1}), we write
		\begin{align}
		\label{rewrite1}\upeta_0=-\frac{\upeta_bv^b}{v^0}.
		\end{align}	
		Also by using transport equation (\ref{Bvort}) and (\ref{BS}), we have
		\begin{align}
		\label{rewrite2}\p_0(\upeta^\sharp)^b=-\frac{v^a\p_a(\upeta^\sharp)^b}{v^0}+\linsmoothfunction(\vvariables,\vvort,\gradEnt)[\pfour\vvariables].
		\end{align}
		
		Using (\ref{Divvort}) for $\upeta=\vort_\flat$, we write $\p_0\vort^0+\p_a\vort^a=\linsmoothfunction(\vvort)[\pfour\vvariables]$. By lowering the index $\p_0\vort^0=-\p_0(\vort_\flat)_0$, equations (\ref{rewrite1}) and (\ref{rewrite2}), we prove (\ref{Gab0}) for $\vort$. Similarly, for $\upeta=\gradEnt$, by definition of $\DivGradEnt$ (\ref{Def:D}), we write $\p_0(\gradEnt^\sharp)^0+\p_a(\gradEnt^\sharp)^a=\linsmoothfunction(\vvariables)\DivGradEnt+\linsmoothfunction(\vvariables,\vgradientEnt)[\pfour\vvariables]$. Using equations (\ref{rewrite1}) and (\ref{rewrite2}), we obtain the equation (\ref{Gab0}) for $\gradEnt$. 
		
		Now we consider the curl part, note that we have the facts (\ref{identities3}) and (\ref{identities4}):
		\begin{align}
		\label{rewrite3}\p_\gamma\upeta_\delta-\p_\delta\upeta_\gamma=&\epsilon_{\gamma\delta\kappa\lambda}v^\kappa \text{vort}^\lambda(\upeta)-(v^\kappa\p_\kappa\upeta_\delta)(v_\flat)_\gamma+v^\kappa(\p_\delta\upeta_\kappa)(v_\flat)_\gamma\\
		\notag&+(v^\kappa\p_\kappa\upeta_\gamma)(v_\flat)_\delta-v^\kappa(\p_\delta\upeta_\kappa)(v_\flat)_\delta.
		\end{align}		
		Recall that
		\begin{align}
		\modivort^{\alpha}=\text{vort}^\alpha(\vort_\flat)+\linsmoothfunction(\vvariables,\vgradientEnt)[\pfour\vvariables].
		\end{align}		
		Hence for $\upeta=\vort$, the first term on the right-hand side of (\ref{rewrite3}) for $\vort$ is manifestly in $\Harg{\vort}{ab}$. Next, using  $v^\kappa\p_\kappa(\vort_\flat)_\delta=v^0\materialderivative(\vort_\flat)_\delta$ and (\ref{Bvort}), as well as (\ref{identities2}), we have that the right-hand side of (\ref{rewrite3}) for $\vort$ is $\Harg{\vort}{ab}$. Similarly, by (\ref{vortS}), $v^\kappa\p_\kappa\gradEnt_\gamma=v^0\materialderivative\gradEnt_\gamma$, (\ref{BS}) and (\ref{identities2.5}), we obtain the equation (\ref{Hab0}) for $\gradEnt$.
	\end{proof}

\begin{remark}
	In terms of elliptic estimates, there is a major difference in Proposition \ref{EllipticestimatesinL2} compared to the Hodge system of the non-relativistic 3D compressible Euler equations. In the non-relativistic case, the analogous elliptic equations are constant-coefficient div-curl equations along flat hypersurfaces of constant Cartesian time and, for example, the basic $L^2$ theory can be derived with the simple Hodge identity for $\St$ vectorfields $V\in H^1(\mathbb{R}^3;\mathbb{R}^3)$:
	\begin{align}
	\sum\limits_{a,b=1}^3\norm{\p_aV^b}^2_{L^2_x(\mathbb{R}^3)}=\norm{\text{div}V}^2_{L^2_x(\mathbb{R}^3)}+\norm{\curl V}^2_{L^2_x(\mathbb{R}^3)}.
	\end{align}
	In contrast, the divergence equation (\ref{Gab}) has dynamic, solution-dependent coefficients.
\end{remark}
In order to derive elliptic estimates and Schauder estimates from the div-curl system (\ref{newdivcurl}), we need Lemma \ref{schauderalgebraic} provided below, which allows us to do estimates via Littlewood-Paley theory. We provide a partition of unity before the Lemma \ref{schauderalgebraic}. 

	We want to apply the Fourier transform to a localized version of the div-curl system in Proposition \ref{EllipticestimatesinL2}. We consider the lattice $\mathcal{A}:=\delta_2\mathbb{Z}^3$, where $\delta_2$ is assumed to be small and will be determined in future analysis. Notice that $\{x_l\}_{l\in\mathbb{N}}:=\mathcal{A}\subset\St$ has points equally spread out, that is, for each $x_l$, there are 6 points in $\mathcal{A}$ such that the distance between $x_l$ and any of them is $\delta_2$. We define the family of functions $\{\psi_l\}_{l\in\mathbb{N}}$ as follows:
\begin{align}
\psi_l(x)&=
\begin{cases}
1&  x\in B(x_l,\frac{1}{8}\delta_2),\\
\exp(\frac{4}{3\delta_2^2})\exp(\frac{1}{\abs{x-x_l}^2-(\frac{7}{8}\delta_2)^2})&  x\in B(x_l,\frac{7}{8}\delta_2)-B(x_l,\frac{1}{8}\delta_2),\\
0&  x\notin B(x_l,\frac{7}{8}\delta_2),
\end{cases} 
\end{align}
and set
\begin{align}
\phi_l(x):=\frac{\psi_l(x)}{\sum\limits_k\psi_k(x)}.
\end{align}
We have constructed cut-off functions $\{\phi_l\}_{l\in\mathbb{N}}\subset C_0^\infty(\Sigma_t)$ such that $\phi_l=1$ in $ B(x_l,\frac{1}{8}\delta_2)$, $\text{supp}(\phi_l)\subset B(x_l,\frac{7}{8}\delta_2)$, $\sum\limits_l\phi_l(x)=1$ and for any $x_a,x_b\in\mathcal{A}$, $\phi_a(x)=\phi_b(x-x_a+x_b)$.
We want to apply Fourier transform on a localized region, where $(G^{-1})^{ab}(x_l)$ is a constant and $(G^{-1})^{ab}(x)-(G^{-1})^{ab}(x_l)$ will be shown to be a controllable error term in the future analysis.
\begin{lemma}\label{schauderalgebraic}
	Given the Proposition \ref{NewDivCurl}, with $x_l$ and $\phi_l$, $l\in\mathbb{N}$, defined as above, let $\upeta_i$ be the solution of equations (\ref{newdivcurl}), then the following identity holds in frequency space for $i=1,2,3$:
	\begin{equation}
	\label{Gxi}(G^{-1})^{ab}(x_l)\xi_a\xi_b\widehat{(\phi_l\upeta_i)}=C\xi_i\hat{\underline{F}}^l+\sum\limits_{k\neq i}C(G^{-1})^{ak}(x_l)\xi_a\hat{\underline{H}}_{ki}^l,
	\end{equation}
	where rewrite
	\begin{subequations}
	\begin{align}
	\label{Frep}\underline{F}^l=&(G^{-1})^{ab}(x_l)\p_a(\phi_l\upeta_b)=\phi_l F+(G^{-1})^{ab}(x_l)(\p_a\phi_l)\upeta_b\\
	\notag&-\left[(G^{-1})^{ab}(x)-(G^{-1})^{ab}(x_l)\right]\p_a(\phi_l\upeta_b)-\left[(G^{-1})^{ab}(x)-(G^{-1})^{ab}(x_l)\right](\p_a\phi_l)\upeta_b,\\
	\label{Habrep}\underline{H}_{ab}^l=&\p_a(\phi_l\upeta_b)-\p_b(\phi_l\upeta_a)=(\p_a\phi_l)\upeta_b-(\p_b\phi_l)\upeta_a+\phi_l H_{ab}.
	\end{align}
\end{subequations}
\end{lemma}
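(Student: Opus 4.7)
\textbf{Proof proposal for Lemma \ref{schauderalgebraic}.} The plan is to perform three successive manipulations of the div-curl system \eqref{newdivcurl}: localize via multiplication by $\phi_l$, freeze the coefficient matrix $G^{-1}$ at the base point $x_l$, and then pass to the frequency side, where a simple algebraic combination of the (now constant-coefficient) divergence and curl relations will isolate $\widehat{(\phi_l\upeta_i)}$.

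First, I multiply \eqref{Gab} by $\phi_l$ and use the Leibniz rule to move $\phi_l$ inside the derivative, producing the identity
\begin{align*}
(G^{-1})^{ab}(x)\,\p_a(\phi_l\upeta_b) = \phi_l F + (G^{-1})^{ab}(x)(\p_a\phi_l)\upeta_b.
\end{align*}
I then add and subtract $(G^{-1})^{ab}(x_l)\p_a(\phi_l\upeta_b)$ on the left-hand side, moving the difference $\big[(G^{-1})^{ab}(x)-(G^{-1})^{ab}(x_l)\big]\p_a(\phi_l\upeta_b)$ to the right. This yields exactly the representation \eqref{Frep} for $\underline{F}^l$, and converts the divergence equation into the constant-coefficient identity $(G^{-1})^{ab}(x_l)\p_a(\phi_l\upeta_b) = \underline{F}^l$. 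A parallel (but simpler) multiplication of \eqref{Hab} by $\phi_l$ and Leibniz expansion produces \eqref{Habrep}, giving the constant-coefficient curl identity $\p_a(\phi_l\upeta_b)-\p_b(\phi_l\upeta_a)=\underline{H}_{ab}^l$.

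Next, I take the Fourier transform of both constant-coefficient identities. Writing $w_b := \widehat{(\phi_l\upeta_b)}$, and using $\widehat{\p_a f}=2\pi i\,\xi_a\hat f$, the localized system becomes the purely algebraic pair
\begin{align*}
2\pi i\,(G^{-1})^{ab}(x_l)\,\xi_a\, w_b &= \hat{\underline{F}}^l,\\
2\pi i\,\xi_a w_b - 2\pi i\,\xi_b w_a &= \hat{\underline{H}}_{ab}^l.
\end{align*}
To isolate $w_i$, I multiply the first relation by $\xi_i$ and, for each $b$, rewrite $\xi_i w_b = \xi_b w_i - (2\pi i)^{-1}\hat{\underline{H}}_{bi}^l$ using the second relation; equivalently, I contract the curl identity with $(G^{-1})^{ab}(x_l)\xi_a$ and add it to $\xi_i$ times the divergence identity. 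Either way, the contribution involving $\hat{\underline{H}}_{ii}^l$ vanishes by antisymmetry, and after collecting terms one obtains exactly \eqref{Gxi} with $C=(2\pi i)^{-1}$.

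I do not expect a genuine obstacle here: all steps are routine. The only place where one must be careful is bookkeeping in the commutator \eqref{Frep}, where both the coefficient-freezing error $(G^{-1})^{ab}(x)-(G^{-1})^{ab}(x_l)$ and the cutoff-derivative terms $(\p_a\phi_l)\upeta_b$ must be packaged into $\underline{F}^l$ in the precise form claimed, since these are the terms that will later be estimated by the smallness of $\delta_2$ (via Taylor expansion of $G^{-1}$) and by $\|\upeta\|$ with favorable gradient localization, respectively, when Lemma \ref{schauderalgebraic} is used to derive the $L^2$ elliptic bound \eqref{elliptic1} and the Schauder bound \eqref{introschauder}.
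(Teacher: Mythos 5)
Your proposal is correct and follows essentially the same route as the paper: localize with $\phi_l$, freeze $G^{-1}$ at $x_l$ while packaging the commutator and cutoff terms into $\underline{F}^l$ and $\underline{H}^l$, Fourier transform, and then contract the curl identity with $(G^{-1})^{ak}(x_l)\xi_a$ and add it to $\xi_i$ times the divergence identity, with the diagonal curl term dropping by antisymmetry. The constant $C=(2\pi i)^{-1}$ and the final identity \eqref{Gxi} match the paper exactly.
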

\begin{proof}[Proof of Lemma \ref{schauderalgebraic}]
	
By multiplying the div-curl system (\ref{Gab}) and (\ref{Hab}) by $\phi_l$, we rewrite the system as follows:
	\begin{subequations}
		\begin{align}
		\label{newGab}\left\{(G^{-1})^{ab}(x_l)+(G^{-1})^{ab}(x)-(G^{-1})^{ab}(x_l)\right\}\left\{\p_a(\phi_l\upeta_b)-(\p_a\phi_l)\upeta_b\right\}&=\phi_l F,\\
		\label{newHab}\p_a(\phi_l\upeta_b)-(\p_a\phi_l)\upeta_b-\p_b(\phi_l\upeta_a)+(\p_b\phi_l)\upeta_a&=\phi_l H_{ab}.
		\end{align}
	\end{subequations}
Taking the Fourier transform of (\ref{newGab}) and multiplying by $\xi_1$, we have
		\begin{align}
		\label{Algebra1}\xi_1\left\{(G^{-1})^{a1}(x_l)\xi_a\widehat{(\phi_l\upeta_1)}+(G^{-1})^{a2}(x_l)\xi_a\widehat{(\phi_l\upeta_2)}+(G^{-1})^{a3}(x_l)\xi_a\widehat{(\phi_l\upeta_3)}\right\}=C\xi_1\hat{\underline{F}}^l,
		\end{align}
			where $C=\frac{1}{2\pi i}$ is a constant from Fourier transform, and
		\begin{align}
		\underline{F}^l=&(G^{-1})^{ab}(x_l)\p_a(\phi_l\upeta_b)=\phi_l F+(G^{-1})^{ab}(x_l)(\p_a\phi_l)\upeta_b\\
		\notag&-\left[(G^{-1})^{ab}(x)-(G^{-1})^{ab}(x_l)\right]\p_a(\phi_l\upeta_b)-\left[(G^{-1})^{ab}(x)-(G^{-1})^{ab}(x_l)\right](\p_a\phi_l)\upeta_b.
		\end{align}
Similarly, taking the Fourier transform of (\ref{newHab}) and multiplying by $(G^{-1})^{a2}(x_l)\xi_a$ and $(G^{-1})^{a3}(x_l)\xi_a$, we have 
\begin{subequations}
	\begin{align}
	\label{Algebra2}&(G^{-1})^{a2}(x_l)\xi_a\left\{\xi_2\widehat{(\phi_l\upeta_1)}-\xi_1\widehat{(\phi_l\upeta_2)}\right\}=C(G^{-1})^{a2}(x_l)\xi_a\hat{\underline{H}}_{21}^l,\\
	\label{Algebra3}&(G^{-1})^{a3}(x_l)\xi_a\left\{\xi_3\widehat{(\phi_l\upeta_1)}-\xi_1\widehat{(\phi_l\upeta_3)}\right\}=C(G^{-1})^{a3}(x_l)\xi_a\hat{\underline{H}}_{31}^l,
	\end{align}
\end{subequations}
	where $C=\frac{1}{2\pi i}$ is a constant from Fourier transform, and
		\begin{align}
	\underline{H}_{ab}^l&=\p_a(\phi_l\upeta_b)-\p_b(\phi_l\upeta_a)=(\p_a\phi_l)\upeta_b-(\p_b\phi_l)\upeta_a+\phi_l H_{ab}.
		\end{align}
	Adding (\ref{Algebra1}), (\ref{Algebra2}) and (\ref{Algebra3}), we obtain
	\begin{equation}
	(G^{-1})^{ab}(x_l)\xi_a\xi_b\widehat{(\phi_l\upeta_1)}=C\xi_1\hat{\underline{F}}^l+C(G^{-1})^{a2}(x_l)\xi_a\hat{\underline{H}}^l_{21}+C(G^{-1})^{a3}(x_l)\xi_a\hat{\underline{H}}_{31}^l.
	\end{equation}
	
	We use the same argument for $\upeta_2$ and $\upeta_3$. Hence for $i=1,2,3$, we obtain
	\begin{equation}
	(G^{-1})^{ab}(x_l)\xi_a\xi_b\widehat{(\phi_l\upeta_i)}=C\xi_i\hat{\underline{F}}^l+\sum\limits_{k\neq i}C(G^{-1})^{ak}(x_l)\xi_a\hat{\underline{H}}_{ki}^l.
	\end{equation}
\end{proof}
\begin{lemma}[Positive definiteness of $G^{-1}$]\label{Positivedefinite}
	For any $\St$-tangent one-form $\xi$, we denote $\abs{\xi}^2:=\sum\limits_{i=1,2,3}\xi_i^2$. Then the following estimate holds for any $x_l\in\St$, where $G$ is defined in Proposition \ref{NewDivCurl}:
	\begin{align}\label{RiemannnianG}
	C|\xi|^2\leq (G^{-1})^{ab}(x_l)\xi_a\xi_b\leq|\xi|^2,
	\end{align}
	where $0<C<1$ is a constant depends only on $\norm{v}_{L_x^\infty(\St)}$, which is in turn controlled by the bootstrap assumption (\ref{databa}).
\end{lemma}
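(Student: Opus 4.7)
The plan is to evaluate $(G^{-1})^{ab}(x_l)\xi_a\xi_b$ explicitly and exploit the normalization $\minkowski(v,v) = -1$, which forces $(v^0)^2 = 1 + \sum_{i=1,2,3}(v^i)^2 \geq 1$. Using the formula $(G^{-1})^{ab} = \delta^{ab} - \frac{v^a v^b}{(v^0)^2}$ from \eqref{D:Gab}, direct computation yields
\begin{align}\label{GABident}
(G^{-1})^{ab}(x_l)\xi_a\xi_b = |\xi|^2 - \frac{(v^a(x_l)\xi_a)^2}{(v^0(x_l))^2}.
\end{align}

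The upper bound is immediate from \eqref{GABident} since the second term is nonnegative: $(G^{-1})^{ab}(x_l)\xi_a\xi_b \leq |\xi|^2$.

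For the lower bound, I would apply the Cauchy--Schwarz inequality to the spatial components of $v$ to get $(v^a(x_l)\xi_a)^2 \leq |\vec{v}(x_l)|^2 |\xi|^2$, where $|\vec{v}|^2 := \sum_{i=1,2,3}(v^i)^2$. The normalization $\minkowski(v,v)=-1$ gives $|\vec{v}(x_l)|^2 = (v^0(x_l))^2 - 1$, so substituting into \eqref{GABident} produces
\begin{align}
(G^{-1})^{ab}(x_l)\xi_a\xi_b \geq |\xi|^2 - \frac{(v^0(x_l))^2 - 1}{(v^0(x_l))^2}|\xi|^2 = \frac{|\xi|^2}{(v^0(x_l))^2}.
\end{align}
Since the bootstrap assumption \eqref{databa} places $\vvariables$ inside the compact regime $\Domain$, $v^0$ is uniformly bounded above in terms of $\norm{v}_{L_x^\infty(\St)}$, hence $(v^0(x_l))^{-2} \geq C$ for a positive constant $C$ depending only on $\norm{v}_{L_x^\infty(\St)}$, which yields the claimed lower bound.

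There is no substantive obstacle here: the estimate is purely algebraic, and the only structural input is the Minkowski normalization of the four-velocity. I do not expect to need any of the deeper machinery developed elsewhere in the paper.
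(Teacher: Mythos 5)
Your proposal is correct and follows the same route as the paper: the identity $(G^{-1})^{ab}\xi_a\xi_b = |\xi|^2 - (v^a\xi_a)^2/(v^0)^2$ together with the normalization $\minkowski(v,v)=-1$. You merely spell out the Cauchy--Schwarz step and the resulting lower bound $|\xi|^2/(v^0)^2$ that the paper leaves implicit, which is a faithful filling-in of its terse argument.
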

	\begin{proof}[Proof of Lemma \ref{Positivedefinite}]Using the definition of $(G^{-1})^{ab}$, we have
		\begin{equation}
		(G^{-1})^{ab}(x_l)\xi_a\xi_b
		=\left(\delta^{ab}-\frac{v^av^b}{(v^0)^2}\right)\xi_a\xi_b=|\xi|^2-\left(\frac{v^i\xi_i}{v^0}\right)^2.
		\end{equation}		
		Hence by the normalization $(v_\flat)_\alpha v^\alpha=-1$ in Section \ref{thebasicfluidvariables}, we have
		\begin{align}
		C|\xi|^2\leq (G^{-1})^{ab}(x_l)\xi_a\xi_b\leq|\xi|^2,
		\end{align}
		where $0<C<1$ is a constant depends only on $\norm{v}_{L_x^\infty(\St)}$.
	\end{proof}

	\begin{proof}[Proof of Proposition \ref{EllipticestimatesinL2}]
Throughout, $\upeta, F, G, H$ are the same as in Proposition \ref{NewDivCurl} and $\underline{F}, \underline{H}$ are defined in Lemma \ref{schauderalgebraic}. Since $\abs{\xi}\simeq2\upnu$ on support of $\widehat{\littlewood(\phi_l\upeta_i)}$, by Littlewood-Paley estimate (\ref{Littlewoodsobolev}), (\ref{RiemannnianG}) and (\ref{Gxi}), we remind $\psi$ is defined in Section \ref{definitionlittlewood}
	\begin{align}\label{Elliptic1}
	\norm{\p\left\{\phi_l\upeta_i\right\}}_{L_x^2(\St)}^2
	&\leq C\sum\limits_{\upnu}\norm{\upnu\littlewood(\phi_l\upeta_i)}_{L_x^2(\St)}^2\\
	\notag&\leq C\sum\limits_{\upnu}\|\mathcal{F}^{-1}\left(\psi\left(\frac{|\xi|}{\upnu}\right)|\upnu|^{-1}\left\{\xi_i\hat{\underline{F}}^l+\sum\limits_{k\neq i}(G^{-1})^{jk}(x_l)\xi_j\hat{\underline{H}}^l_{ki}\right\}\right)\|_{L_x^2(\St)}^2\\
	\notag&\leq C\left(\norm{\underline{F}^l}^2_{L_x^2(\St)}+\norm{\underline{H}^l}^2_{L_x^2(\St)}\right),
	\end{align}
	where $\mathcal{F}^{-1}$ is the Fourier inverse transform. By (\ref{Frep}), and the fundamental theorem of Calculus to $G^{ab}(x)-G^{ab}(x_l)$, we have,
	\begin{align}\label{Fanaysis0}
	\norm{\underline{F}^l}_{L_x^2(\St)}^2\leq&\norm{\phi_lF}_{L_x^2(\St)}^2+\norm{(G^{-1})}_{L_x^\infty(\St)}^2\norm{\p\phi_l}_{L_x^\infty(\St)}^2\norm{\upeta}_{L_x^2(\St)}^2\\
	\notag&+\sup\limits_{x\in B(x_l,\delta_2)}\abs{x-x_l}^{2}\norm{\p (G^{-1})}_{L_x^\infty(\St)}^2\norm{\p(\phi_l\upeta)}_{L_x^2(\St)}^2.
	\end{align}
	
	By (\ref{Habrep}),
	\begin{equation}
	\label{Hanalysis0}
	\norm{\underline{H}^l}_{L_x^2(\St)}^2\leq\norm{\phi_l H}_{L_x^2(\St)}^2+2\norm{\p\phi_l}_{L_x^\infty(\St)}^2\norm{\upeta}_{L_x^2(\St)}^2.
	\end{equation}
	
	Note that $\norm{\p (G^{-1})}_{L_x^\infty(\St)}\lesssim\norm{\pfour\vvariables}_{L_x^\infty(\St)}$. Let $\delta_2$ be small such that for all $x\in B(x_l,\delta_2)$,
	\begin{equation}\label{Schauder15}
	C\abs{x-x_l}^{2}\norm{\p (G^{-1})}_{L_x^\infty(\St)}^2<\frac{1}{4}.
	\end{equation}
	
	Hence by (\ref{Elliptic1}), (\ref{Fanaysis0}), (\ref{Hanalysis0}), (\ref{Schauder15}) and definition of $F,H$,  we have
	\begin{align}\label{Elliptic2}
	\norm{\p\left\{\phi_l\upeta_i\right\}}_{L_x^2(\St)}^2\lesssim\norm{\phi_l(\upeta,\modivort,\DivGradEnt)}_{L_x^2(\St)}^2.
	\end{align}
	
	Now we consider the $\norm{\p\upeta_i}_{L^2_x(\St)}^2$:
	\begin{align}\label{pupeta}
	\norm{\p\upeta_i}_{L_x^2(\St)}^2&=\norm{\sum\limits_l\p\left\{\phi_l\upeta_i\right\}}_{L_x^2(\St)}^2\\
	\notag&=\int_{\St}\sum\limits_l|\p\left\{\phi_l\upeta_i\right\}|^2+\sum\limits_{k,m}|\p\left\{\phi_k\upeta_i\right\}||\p\left\{\phi_m\upeta_i\right\}|\diff x.
	\end{align}
	
	By the construction of $\phi_l$, there are only finitely many pairs of $k,m$ such that $|\p\left\{\phi_k\upeta_i\right\}||\p\left\{\phi_m\upeta_i\right\}|\neq0$. Moreover, for each  $|\p\left\{\phi_k\upeta_i\right\}||\p\left\{\phi_m\upeta_i\right\}|\neq0$, $|\p\left\{\phi_k\upeta_i\right\}||\p\left\{\phi_m\upeta_i\right\}|\leq|\p\left\{\phi_n\upeta_i\right\}|^2$ for some $n$. Hence by (\ref{pupeta}) and (\ref{Elliptic2}) we have,
	\begin{align}
	\norm{\p\upeta_i}_{L_x^2(\St)}^2&\lesssim\sum\limits_l\norm{\p\left\{\phi_l\upeta_i\right\}}_{L_x^2(\St)}^2\lesssim\sum\limits_l\norm{\phi_l(\upeta,\modivort,\DivGradEnt)}_{L_x^2(\St)}^2\lesssim\norm{\upeta,\modivort,\DivGradEnt}_{L_x^2(\St)}^2.
	\end{align}
	
	We have proved (\ref{elliptic1}). Using (\ref{Littlewoodsobolev}), (\ref{elliptic2}) can be proved in a similar fashion by using the following estimates compared to (\ref{Elliptic1}) in $L^2$ elliptic estimate:
	\begin{subequations}
		\begin{align}\label{Elliptic3}
	\sobolevnorm{\p\left\{\phi_l\upeta_i\right\}}{N-k}{\St}
	\leq \norm{\p\left\{\phi_l\upeta_i\right\}}_{L^2_x(\St)}+C\sum\limits_{\upnu}\norm{\upnu^{N-k+1}\littlewood(\phi_l\upeta_i)}_{L_x^2(\St)}^2.
	\end{align}
	\begin{align}	
	C\sum\limits_{\upnu}\norm{\upnu^{N-k+1}\littlewood(\phi_l\upeta_i)}_{L_x^2(\St)}^2
	&\leq C\sum\limits_{\upnu}\|\mathcal{F}^{-1}\left(\psi\left(\frac{|\xi|}{\upnu}\right)|\upnu|^{N-k-1}\left\{\xi_i\hat{\underline{F}}^l+\sum\limits_{k\neq i}(G^{-1})^{jk}(x_l)\xi_j\hat{\underline{H}}^l_{ki}\right\}\right)\|_{L_x^2(\St)}^2\\
	\notag&\leq C\left(\sobolevnorm{\underline{F}^l}{N-k}{\St}^2+\sobolevnorm{\underline{H}^l}{N-k}{\St}^2\right).
	\end{align}
	\end{subequations}
\end{proof}  
\subsubsection{Proof of Proposition \ref{EnergyandEllipticEstimates}}\label{Discussionofprop5.1}
\begin{proof}
	For $N$ defined as in Section \ref{ChoiceofParameters}, we let
	\begin{align}\label{totalenergy}
P_N(t):=\sum\limits_{k=0}^2\sobolevnorm{\pfour^k(\vvariables,\vvort,\vgradientEnt)}{N-k}{\St}^2+\sum\limits_{k=0}^1\sobolevnorm{\pfour^k(\vectvort,\DivGradEnt)}{N-k-1}{\St}^2.
	\end{align}
	
	In this proof, we derive integral inequalities for $\sum\limits_{k=0}^2\sobolevnorm{\pfour^k\vvariables}{N-k}{\St}^2$ and $\sum\limits_{k=0}^1\sobolevnorm{\pfour^k(\vectvort,\DivGradEnt)}{N-k-1}{\St}^2$ in $P_N(t)$, namely (\ref{proof6.1.2}), (\ref{proof6.1.7}), (\ref{proof6.1.4}), (\ref{proof6.1.6}). We then use elliptic estimates (\ref{elliptic1}) and apply Gr\"onwall's inequality to all the terms in $P_N(t)$ collectively.
	
The proof of Prop.\ref{EnergyandEllipticEstimates} for $\vvariables$ combines the vectorfield multiplier method and Littlewood-Paley theory. That is, to derive the energy estimates at the top order, one integrate (\ref{DalphaJalpha}) and applies the divergence theorem using the energy current $\Jenarg{\Timelike}{\alpha}[\pfour\vvariables]:=Q^{\alpha\beta}[\pfour\vvariables]\Timelike_\beta-\Timelike^\alpha(\pfour\vvariables)^2$ and  $\Jenarg{\Timelike}{\alpha}[\littlewood\pfour\vvariables]:=Q^{\alpha\beta}[\littlewood\pfour\vvariables]\Timelike_\beta-\Timelike^\alpha(\littlewood\pfour\vvariables)^2$ on the space-time region bounded by $\Sigma_{0}$ and $\Sigma_{t}$. Then by Lemma \ref{basicenergy} with $\pfour\vvariables$ and $\littlewood\pfour\vvariables$ in a role of $\varphi$, we have, respectively,
\begin{subequations}
	\begin{align}
\mathbb{E}[\pfour\vvariables](t)\lesssim &\mathbb{E}[\pfour\vvariables](0)
+\int_{0}^{t}\norm{\pfour\vvariables}_{L_x^\infty(\Sigma_{\tau})}\mathbb{E}[\pfour\vvariables](\tau)\diff\tau\\
\notag&+\int_{0}^{t}\norm{\boxg\pfour\vvariables}_{L^2_x(\Sigma_{\tau})}\left[\mathbb{E}[\pfour\vvariables](\tau)\right]^{1/2}\diff\tau,\\
\label{proof6.1.1}\mathbb{E}[\littlewood\pfour\vvariables](t)\lesssim &\mathbb{E}[\littlewood\pfour\vvariables](0)
+\int_{0}^{t}\norm{\pfour\vvariables}_{L_x^\infty(\Sigma_{\tau})}\mathbb{E}[\littlewood\pfour\vvariables](\tau)\diff\tau\\
\notag&+\int_{0}^{t}\norm{\boxg\littlewood\pfour\vvariables}_{L^2_x(\Sigma_{\tau})}\left[\mathbb{E}[\littlewood\pfour\vvariables](\tau)\right]^{1/2}\diff\tau.
\end{align}
\end{subequations}
Then, we use equation (\ref{boxg2}) to substitute for $\boxg\pfour\vvariables$, and we use equation (\ref{lwpboxg}) $\boxg\littlewood\pfour\vvariables$ to substitute for the right-hand side of (\ref{proof6.1.1}). Multiplying (\ref{proof6.1.1}) by $\upnu^{2(N-2)}$, summing over $\upnu$ and using (\ref{Littlewoodsobolev}) , estimates (\ref{remainderestimates}) and H\"older's inequality, we have
\begin{align}\label{proof6.1.2}
\sobolevnorm{\pfour^2\vvariables}{N-2}{\St}^2\lesssim&\sobolevnorm{\pfour^2\vvariables}{N-2}{\Sigma_{0}}^2+\int_{0}^{t}\norm{\pfour\vvariables}_{L_x^\infty(\Sigma_{\tau})}\sobolevnorm{\pfour^2\vvariables}{N-2}{\Sigma_{\tau}}^2\diff\tau\\
\notag&+\int_{0}^{t}\left\{\sobolevnorm{\p(\vectvort,\DivGradEnt)}{N-2}{\Sigma_{\tau}}\sobolevnorm{\pfour^2\vvariables}{N-2}{\Sigma_{\tau}}\right.\\
\notag&+\left.\left(\norm{\pfour(\vvariables,\vvort,\vgradientEnt)}_{L_x^\infty(\Sigma_{\tau})}+1\right)\left(\sobolevnorm{\pfour(\vvariables,\vvort,\vgradientEnt)}{N-1}{\Sigma_{\tau}}+1\right)\sobolevnorm{\pfour^2\vvariables}{N-2}{\Sigma_{\tau}}\right\}\diff\tau\\
\notag&\lesssim P_N(0)+\int_{0}^{t}\norm{\pfour(\vvariables,\vvort,\vgradientEnt)}_{L_x^\infty(\Sigma_{\tau})}\diff\tau+\int_{0}^{t}\left(\norm{\pfour(\vvariables,\vvort,\vgradientEnt)}_{L_x^\infty(\Sigma_{\tau})}+1\right)P_N(\tau)\diff\tau.
\end{align}
For $\sobolevnorm{\pfour\vvariables}{N-1}{\St}$, we first have
\begin{align}\label{proof6.1.2.4}
\sobolevnorm{\pfour\vvariables}{N-1}{\St}^2\lesssim\norm{\pfour\vvariables}_{L^2_x(\St)}^2+\sobolevnorm{\pfour^2\vvariables}{N-2}{\St}^2.
\end{align}
Then, by the fundamental theorem of Calculus in time, Minkowski integral inequality, and smallness of $\Tstar$, we have 
\begin{align}\label{proof6.1.2.5}
\norm{\pfour\vvariables}_{L^2_x(\St)}^2=\int_{\St}\left\{\pfour\vvariables(0,x)+\int_0^t\p_t\pfour\vvariables(\tau,x)\diff\tau\right\}^2\diff x\lesssim P_N(0)+\sobolevnorm{\pfour^2\vvariables}{N-2}{\St}^2.
\end{align}
Similarly, we have
\begin{align}\label{proof6.1.2.6}
\sobolevnorm{\vvariables}{N}{\St}\lesssim P_N(0)+\sobolevnorm{\pfour^2\vvariables}{N-2}{\St}^2.
\end{align}
Therefore, by adding (\ref{proof6.1.2})-(\ref{proof6.1.2.5}), we have
\begin{align}\label{proof6.1.7}
\sum\limits_{k=0}^2\sobolevnorm{\pfour^k\vvariables}{N-k}{\St}^2\lesssim P_N(0)+\int_{0}^{t}\norm{\pfour(\vvariables,\vvort,\vgradientEnt)}_{L^\infty_x(\Sigma_{\tau})}\diff\tau+\int_{0}^{t}\left(\norm{\pfour(\vvariables,\vvort,\vgradientEnt)}_{L^\infty_x(\Sigma_{\tau})}+1\right)P_N(\tau)\diff\tau.
\end{align}
Now we derive top-order estimates for $\vectvort$ and $\DivGradEnt$. We apply the energy estimates (\ref{energytransport}) with $\pfour(\vectvort,\DivGradEnt)$ and $\littlewood\pfour(\vectvort,\DivGradEnt)$ in a role of $\varphi$ respectively to obtain,
\begin{subequations}
	\begin{align}
\norm{\pfour(\vectvort,\DivGradEnt)}^2_{L^2_x(\St)}\lesssim&\norm{\pfour(\vectvort,\DivGradEnt)}^2_{L^2_x(\Sigma_{0})}+\int_{0}^t\norm{\pfour\vvariables}_{L_x^\infty(\Sigma_{\tau})}\norm{\pfour(\vectvort,\DivGradEnt)}^2_{L^2_x(\Sigma_{\tau})}\diff\tau\\
\notag&+\int_0^t\norm{\pfour(\vectvort,\DivGradEnt)}_{L^2_x(\Sigma_{\tau})}\norm{\materialderivative\pfour(\vectvort,\DivGradEnt)}_{L^2_x(\Sigma_{\tau})}\diff\tau,\\
\label{proof6.1.3}\norm{\littlewood\pfour(\vectvort,\DivGradEnt)}^2_{L^2_x(\St)}\lesssim&\norm{\littlewood\pfour(\vectvort,\DivGradEnt)}^2_{L^2_x(\Sigma_{0})}+\int_{0}^t\norm{\pfour\vvariables}_{L^\infty_x(\Sigma_{\tau})}\norm{\littlewood\pfour(\vectvort,\DivGradEnt)}^2_{L^2_x(\Sigma_{\tau})}\diff\tau\\
\notag&+\int_0^t\norm{\littlewood\pfour(\vectvort,\DivGradEnt)}_{L^2_x(\Sigma_{\tau})}\norm{\materialderivative\littlewood\pfour(\vectvort,\DivGradEnt)}_{L^2_x(\Sigma_{\tau})}\diff\tau.
\end{align} 
\end{subequations}
We use equations (\ref{bc2})-(\ref{bd2}) to substitute for $\materialderivative\pfour(\vectvort,\DivGradEnt)$, and we use equations (\ref{lwpbc}) and (\ref{lwpbd}) for $\materialderivative\littlewood\pfour(\vectvort,\DivGradEnt)$ to substitute for the right-hand side of (\ref{proof6.1.3}). Multiplying (\ref{proof6.1.3}) by $\upnu^{2(N-2)}$, summing over $\upnu$ and using (\ref{Littlewoodsobolev}), using estimates (\ref{remainderestimates}) and elliptic estimates (\ref{elliptic1})-(\ref{elliptic2}), we have
\begin{align}\label{proof6.1.4}
\sobolevnorm{\pfour(\vectvort,\DivGradEnt)}{N-2}{\St}^2\lesssim&\sobolevnorm{\pfour(\vectvort,\DivGradEnt)}{N-2}{\Sigma_{0}}^2+\int_{0}^t\norm{\pfour\vvariables}_{L^\infty_x(\Sigma_{\tau})}\sobolevnorm{\pfour(\vectvort,\DivGradEnt)}{N-2}{\Sigma_{\tau}}^2\diff\tau\\
\notag&+\int_{0}^{t}\left\{\sobolevnorm{\p(\vectvort,\DivGradEnt)}{N-2}{\Sigma_{\tau}}\sobolevnorm{\pfour(\vectvort,\DivGradEnt)}{N-2}{\Sigma_{\tau}}\right.\\
\notag&+\left.\left(\norm{\pfour(\vvariables,\vvort,\vgradientEnt)}_{L^\infty_x(\Sigma_{\tau})}+1\right)\left(\sobolevnorm{\pfour(\vvariables,\vvort,\vgradientEnt)}{N-1}{\Sigma_{\tau}}+1\right)\sobolevnorm{\pfour(\vectvort,\DivGradEnt)}{N-2}{\Sigma_{\tau}}\diff\tau\right\}\\
\notag&\lesssim P_N(0)+\int_{0}^{t}\norm{\pfour(\vvariables,\vvort,\vgradientEnt)}_{L^\infty_x(\Sigma_{\tau})}\diff\tau+\int_{0}^{t}\left(\norm{\pfour(\vvariables,\vvort,\vgradientEnt)}_{L^\infty_x(\Sigma_{\tau})}+1\right)P_N(\tau)\diff\tau.
\end{align} 
For $\sobolevnorm{\vectvort,\DivGradEnt}{N-1}{\St}$, using the same method as in (\ref{proof6.1.2.4})-(\ref{proof6.1.7}), by (\ref{proof6.1.4}), we have
\begin{align}\label{proof6.1.6}
\sobolevnorm{\vectvort,\DivGradEnt}{N-1}{\St}^2\lesssim P_N(0)+\int_{0}^{t}\norm{\pfour(\vvariables,\vvort,\vgradientEnt)}_{L^\infty_x(\Sigma_{\tau})}\diff\tau+\int_{0}^{t}\left(\norm{\pfour(\vvariables,\vvort,\vgradientEnt)}_{L^\infty_x(\Sigma_{\tau})}+1\right)P_N(\tau)\diff\tau.
\end{align}
Combining (\ref{proof6.1.2}), (\ref{proof6.1.7}), (\ref{proof6.1.4}), (\ref{proof6.1.6}) and elliptic estimates (\ref{elliptic1})-(\ref{elliptic2}), we have,
\begin{align}\label{proof6.1.8}
P_N(t)\lesssim P_N(0)+\int_{0}^{t}\norm{\pfour(\vvariables,\vvort,\vgradientEnt)}_{L^\infty_x(\Sigma_{\tau})}\diff\tau+\int_{0}^{t}\left(\norm{\pfour(\vvariables,\vvort,\vgradientEnt)}_{L^\infty_x(\Sigma_{\tau})}+1\right)P_N(\tau)\diff\tau.
\end{align}
By bootstrap assumptions (\ref{waveba})-(\ref{transportba}), H\"older inequality in time and Gr\"onwall's inequality, we obtain the desired result.
\end{proof}
\subsection{Schauder Estimates}\label{Schauder05}
In this section, we bound the H\"older norms of the modified fluid variables $\modivort, \DivGradEnt$ and the derivatives of vorticity and entropy gradient. Moreover, in Proposition \ref{Schauder0}, we reduce the proof of the improvement of bootstrap assumption (\ref{transportba}) to the proof of the improvement of bootstrap assumption (\ref{waveba}).
\begin{theorem}[Improvements of the bootstrap assumptions for the vorticity and entropy gradient]\label{Schauder0}
	Let $\delta$ and $\delta_1$ be as in Section \ref{ChoiceofParameters}. Under the initial data and bootstrap assumptions of Section \ref{sectionmainthm}, assume the improved estimates in Theorem \ref{MainEstimates} holds for $\pfour\vvariables$, that is,
	\begin{align}
	\label{preimprove}\twonorms{\pfour\vvariables}{t}{2}{x}{\infty}{[0,\Tstar]\times\mathbb{R}^3}^2+\sum\limits_{\upnu\geq2}\upnu^{2\delta_1}\twonorms{\littlewood\pfour\vvariables}{t}{2}{x}{\infty}{[0,\Tstar]\times\mathbb{R}^3}^2\lesssim\Tstar^{2\delta},
	\end{align} 
	then the following estimates hold:

		\begin{align}
\label{improvedboot2}\sum\limits_{\upnu\geq2}\upnu^{\delta_1}\twonorms{\littlewood(\p\vvort,\p\vgradientEnt)}{t}{2}{x}{\infty}{[0,\Tstar]\times\mathbb{R}^3}^2\lesssim \Tstar^{2\delta}.
		\end{align}
\end{theorem}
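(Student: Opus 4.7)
The plan is to prove Theorem \ref{Schauder0} via the transport-Schauder argument outlined in Subsection \ref{introsectionschauder}. It proceeds in three stages: (i) a pointwise-in-time Schauder estimate for the spatial div-curl system (\ref{newdivcurl}) of Proposition \ref{NewDivCurl}, controlling $\holdernorm{\p\vvort,\p\vgradientEnt}{x}{0}{\delta_1}{\Sigma_t}$ by $\holdernorm{\pfour\vvariables,\vectvort,\DivGradEnt}{x}{0}{\delta_1}{\Sigma_t}$; (ii) a Gr\"onwall closure coupling (i) with the transport equations (\ref{Bvort})-(\ref{BS}) and (\ref{Bmodivort})-(\ref{BDiventGrad}), fed by the improved wave-part Strichartz bound (\ref{preimprove}); and (iii) translation of the resulting $L^2_tC^{0,\delta_1}_x$ bound into the dyadic form (\ref{improvedboot2}) via the Littlewood-Paley characterization (\ref{Littlewoodholder}).

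For stage (i), I begin with the algebraic identity in Lemma \ref{schauderalgebraic} and invert the frozen elliptic symbol, which satisfies $(G^{-1})^{ab}(x_l)\xi_a\xi_b\gtrsim|\xi|^2$ by Lemma \ref{Positivedefinite}. The operator $\p_j\littlewood$ applied to the Fourier representation of $\phi_l\upeta_i$ is then a Fourier multiplier with symbol $\psi(|\xi|/\upnu)\cdot i\xi_j\cdot\xi[(G^{-1})^{ab}(x_l)\xi_a\xi_b]^{-1}$, which is frequency-localized at scale $\upnu$ and $0$-homogeneous away from the origin; standard scaling then yields a convolution kernel at scale $\upnu^{-1}$ whose $L^1_x$ norm is uniformly bounded in $\upnu$ and the freezing point $x_l$. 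This produces
\begin{align*}
\upnu^{\delta_1}\norm{\littlewood\p(\phi_l\upeta_i)}_{L^\infty_x(\Sigma_t)}\lesssim\upnu^{\delta_1}\norm{\littlewood(\underline{F}^l,\underline{H}^l)}_{L^\infty_x(\Sigma_t)},
\end{align*}
with the quasilinear coefficient error $(G^{-1})^{ab}(x)-(G^{-1})^{ab}(x_l)$ appearing in (\ref{Frep}) absorbed by the smallness (\ref{Schauder15}) of the localization radius $\delta_2$. Summing over the bounded-overlap cover $\{\phi_l\}$ as in (\ref{pupeta}), taking $\sup_{\upnu\geq 2}$, and adjoining the $L^\infty_x$ piece through (\ref{Littlewoodholder}) delivers the claimed Schauder bound with the right-hand side dictated by the definitions of $F_\vort,F_\gradEnt,\Harg{\vort}{},\Harg{\gradEnt}{}$.

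For stage (ii), the transport equations (\ref{Bvort})-(\ref{BS}) rewrite $\p_t\vvort$ and $\p_t\vgradientEnt$ as smooth functions of the state applied to $\p\vvort,\p\vgradientEnt,\pfour\vvariables$, so stage (i) upgrades to $\holdernorm{\pfour\vvort,\pfour\vgradientEnt}{x}{0}{\delta_1}{\Sigma_t}\lesssim\holdernorm{\pfour\vvariables,\vectvort,\DivGradEnt}{x}{0}{\delta_1}{\Sigma_t}$. Integrating the transport equations (\ref{Bmodivort})-(\ref{BDiventGrad}) for $\modivort,\DivGradEnt$ along the material flow and using the initial data bound $\holdernorm{\vectvort,\DivGradEnt}{}{0}{\alpha}{\Sigma_{0}}\leq D$ together with $\delta_1\leq\alpha$ produces
\begin{align*}
\holdernorm{\vectvort,\DivGradEnt}{x}{0}{\delta_1}{\Sigma_t}\lesssim D+\int_0^t\bigl(1+\holdernorm{\pfour\vvariables}{x}{0}{\delta_1}{\Sigma_\tau}\bigr)\holdernorm{\pfour\vvariables,\pfour\vvort,\pfour\vgradientEnt}{x}{0}{\delta_1}{\Sigma_\tau}\,d\tau,
\end{align*}
and Gr\"onwall's inequality closes the coupled system. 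The hypothesis (\ref{preimprove}), combined with (\ref{Littlewoodholder}) and the elementary bound $\sup_\upnu a_\upnu\leq\bigl(\sum_\upnu a_\upnu^2\bigr)^{1/2}$ applied dyadically, gives $\holdertwonorms{\pfour\vvariables}{t}{2}{x}{0}{\delta_1}{[0,\Tstar]\times\mathbb{R}^3}\lesssim\Tstar^\delta$. Cauchy-Schwarz in time then yields $\int_0^t\holdernorm{\pfour\vvariables}{x}{0}{\delta_1}{\Sigma_\tau}\,d\tau\lesssim\Tstar^{1/2+\delta}\ll 1$, so the Gr\"onwall integrating factor is bounded, and taking $L^2_t$-norms (using $\delta<1/2$) gives $\holdertwonorms{\p\vvort,\p\vgradientEnt}{t}{2}{x}{0}{\delta_1}{[0,\Tstar]\times\mathbb{R}^3}\lesssim\Tstar^\delta$. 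For stage (iii), (\ref{Littlewoodholder}) implies $\norm{\littlewood F}_{L^\infty_x(\Sigma_t)}\leq C\upnu^{-\delta_1}\holdernorm{F}{x}{0}{\delta_1}{\Sigma_t}$ for $\upnu\geq 2$; squaring, integrating in $t$, multiplying by $\upnu^{\delta_1}$, and summing the convergent dyadic series $\sum_{\upnu\geq 2}\upnu^{-\delta_1}<\infty$ produces precisely (\ref{improvedboot2}).

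The main obstacle is stage (i). In the non-relativistic setting of \cite{3DCompressibleEuler}, the analogous Hodge system on $\Sigma_t$ has constant Euclidean coefficients and reduces to a flat Riesz-transform estimate; here the divergence equation (\ref{Gab}) is genuinely quasilinear with symbol $(G^{-1})^{ab}(v)\xi_a\xi_b$. One must freeze coefficients on balls of radius $\delta_2$, absorb the coefficient-variation error by keeping $\delta_2\norm{\p G^{-1}}_{L^\infty_x}$ small through (\ref{Schauder15}), and then verify the $L^1_x$ kernel bound for the frozen-coefficient, Littlewood-Paley localized multiplier uniformly in the freezing point $x_l$ and the dyadic scale $\upnu$. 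These are precisely the technical steps not present in the non-relativistic analysis.
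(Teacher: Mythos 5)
Your proposal is correct and follows essentially the same route as the paper: the spatial div-curl system of Proposition \ref{NewDivCurl} with frozen coefficients and the partition of unity $\{\phi_l\}$, the Littlewood--Paley characterization (\ref{Littlewoodholder}) of $C^{0,\delta_1}_x$ to get the Schauder bound (\ref{Schauder4}), the transport estimate of Lemma \ref{schauder3} applied to (\ref{Bmodivort})--(\ref{BDiventGrad}) plus Gr\"onwall to control $\holdernorm{\vectvort,\DivGradEnt}{x}{0}{\delta_1}{\St}$, and finally (\ref{preimprove}) together with the dyadic decay $\norm{\littlewood F}_{L^\infty_x}\lesssim\upnu^{-\delta_1}\holdernorm{F}{x}{0}{\delta_1}{\St}$ to produce (\ref{improvedboot2}). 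The only (cosmetic) deviation is that you justify the frozen-coefficient multiplier bound by uniform $L^1$ kernel estimates for the frequency-localized $0$-homogeneous symbol, whereas the paper invokes the $C^{0,\delta_1}\to C^{0,\delta_1}$ boundedness of operators in $OPS^0_{1,0}$; these are equivalent, and your handling of the Gr\"onwall integrating factor via (\ref{preimprove}) is, if anything, slightly more explicit than the paper's.
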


We emphasize that the proof of the Strichartz estimates (\ref{estimateswavevariables}) is independent of (\ref{improvedboot2}). 

We will prove Theorem \ref{Schauder0} in Section \ref{Schauder6}. In this subsection, we derive a Schauder type estimate in the following lemma:
\begin{lemma}\label{Schaudertypeestimate}
		Let $\delta_1$ be as in Section \ref{ChoiceofParameters}. Under the initial data and bootstrap assumptions of Section \ref{sectionmainthm}, the following estimates hold:
\begin{align}
 \label{Schauder4}\holdernorm{\pfour\vvort,\pfour\vgradientEnt}{x}{0}{\delta_1}{\St}\lesssim\holdernorm{\pfour\vvariables,\vectvort,\DivGradEnt}{x}{0}{\delta_1}{\St}. 
\end{align}
\end{lemma}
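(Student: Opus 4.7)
The plan is to split $\pfour=(\p_t,\p)$ and handle the time and spatial components separately. For the time component, the identity $\p_t=\materialderivative-\frac{v^a}{v^0}\p_a$ combined with the transport equations (\ref{Bvort})--(\ref{BS}) gives $\p_t\vort^\alpha=\linsmoothfunction(\vvariables,\vvort,\vgradientEnt)[\pfour\vvariables]-\frac{v^a}{v^0}\p_a\vort^\alpha$, and similarly for $\gradEnt^\sharp$. Under the bootstrap assumptions of Section \ref{bootstrap}, the quantities $\vvariables,\vvort,\vgradientEnt$ and $v^a/v^0$ admit uniform H\"older control (propagating the $C^{0,\alpha}$ data through (\ref{Bvort})--(\ref{BS}), combined with Sobolev embedding applied to the energy estimates of Prop.\ \ref{EnergyandEllipticEstimates}). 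Hence it suffices to prove the spatial estimate $\holdernorm{\p\vvort,\p\vgradientEnt}{x}{0}{\delta_1}{\St}\lesssim\holdernorm{\pfour\vvariables,\vectvort,\DivGradEnt}{x}{0}{\delta_1}{\St}$, after which the time derivatives are absorbed into the right-hand side.

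For the spatial part I would use the frequency representation of Lemma \ref{schauderalgebraic}. Because $(G^{-1})^{ab}(x_l)\xi_a\xi_b\gtrsim|\xi|^2$ by Lemma \ref{Positivedefinite}, dividing and multiplying by $\xi_j$ produces
\begin{align*}
\xi_j\widehat{(\phi_l\upeta_i)}=C\,m^{(j,i)}(\xi,x_l)\,\hat{\underline{F}}^l+\sum_{k\neq i}C\,n^{(j,i,k)}(\xi,x_l)\,\hat{\underline{H}}^l_{ki},
\end{align*}
with $m^{(j,i)}=\xi_j\xi_i/((G^{-1})^{ab}(x_l)\xi_a\xi_b)$ and $n^{(j,i,k)}=\xi_j(G^{-1})^{ak}(x_l)\xi_a/((G^{-1})^{ab}(x_l)\xi_a\xi_b)$. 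These are homogeneous order-zero symbols smooth away from $\xi=0$, satisfying Mikhlin--H\"ormander bounds $|\p_\xi^\beta m|+|\p_\xi^\beta n|\lesssim|\xi|^{-|\beta|}$ with constants that are uniform in $x_l$, depending only on $\|v\|_{L^\infty}$ (controlled by (\ref{databa})). The corresponding Fourier multiplier operators are therefore bounded on $C^{0,\delta_1}(\St)$ by standard pseudodifferential theory; composing with the frequency projection $\littlewood$ and invoking the H\"older characterization (\ref{Littlewoodholder}), one obtains
\begin{align*}
\holdernorm{\p(\phi_l\upeta_i)}{x}{0}{\delta_1}{\St}\lesssim\holdernorm{\underline{F}^l}{x}{0}{\delta_1}{\St}+\holdernorm{\underline{H}^l}{x}{0}{\delta_1}{\St}.
\end{align*}

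Unpacking $\underline{F}^l,\underline{H}^l$ via (\ref{Frep})--(\ref{Habrep}) and the explicit form of $F,H$ from Prop.\ \ref{NewDivCurl}, the dominant pieces $\phi_lF$ and $\phi_lH$ contribute at most $\holdernorm{\pfour\vvariables,\vectvort,\DivGradEnt}{x}{0}{\delta_1}{\St}$ (the coefficients $\linsmoothfunction(\vvariables,\vvort,\vgradientEnt)$ being H\"older-bounded under the bootstrap); the cutoff-derivative pieces $(\p\phi_l)\upeta$ are absorbed by the propagated H\"older control of $\vvort,\vgradientEnt$; and the variable-coefficient error $[(G^{-1})(x)-(G^{-1})(x_l)]\p(\phi_l\upeta_b)$ is made small on the support of $\phi_l$ by taking $\delta_2$ sufficiently small in terms of $\|\p G^{-1}\|_{L^\infty(\St)}\lesssim\|\pfour\vvariables\|_{L^\infty(\St)}$, which is bootstrap-controlled, so that it can be absorbed into the left-hand side. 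Summing over $l$ using the finite-overlap property of the partition of unity yields (\ref{Schauder4}). The main obstacle is precisely this last absorption carried out in the H\"older topology rather than in $L^2$: one must propagate a uniform-in-$t$ H\"older modulus of continuity for $G^{-1}$, a difficulty absent in the non-relativistic constant-coefficient setting of \cite{3DCompressibleEuler} and which is what necessitates the pseudodifferential reformulation above.
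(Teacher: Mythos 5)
Your proposal is correct and follows essentially the same route as the paper: freezing $(G^{-1})^{ab}$ at the lattice points of the partition of unity, inverting the frozen-coefficient div-curl system in frequency space via order-zero symbols bounded on $C_x^{0,\delta_1}$ through the Littlewood--Paley characterization, absorbing the $[(G^{-1})(x)-(G^{-1})(x_l)]$ error by taking $\delta_2$ small relative to the bootstrap-controlled $\|\p G^{-1}\|_{L_x^\infty}$, and summing over $l$ by finite overlap. The only addition is your explicit reduction of the $\p_t$-components via $\p_t=\materialderivative-\frac{v^a}{v^0}\p_a$ and the transport equations, which the paper leaves implicit but which is the correct way to pass from the spatial estimate to the full $\pfour$ statement.
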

\begin{proof}[Proof of Lemma \ref{Schaudertypeestimate}]
By Littlewood-Paley theory (\ref{Littlewoodholder}) and (\ref{Gxi}) where $\upeta, F, G, H$ are the same as in Proposition \ref{NewDivCurl} and $\underline{F}, \underline{H}$ are defined in Lemma \ref{schauderalgebraic},
	\begin{align}\label{Schauder1}
	\semiholdernorm{\p_s\left(\phi_l\upeta_i\right)}{x}{0}{\delta_1}{\St}
	\approx&\sup\limits_{\upnu\geq2}\norm{\upnu^{\delta_1}\littlewood(\p_s\phi_l\upeta_i)}_{L_x^\infty}\\
	\notag=&\sup\limits_{\upnu\geq2}\norm{C\upnu^{\delta_1}\mathcal{F}^{-1}\left\{\psi(\upnu^{-1}\xi) \frac{\xi_s}{(G^{-1})^{ab}(x_l)\xi_a\xi_b}\left(\xi_i\hat{\underline{F}}^l+\sum\limits_{k\neq i}(G^{-1})^{jk}(x_l)\xi_j\hat{\underline{H}}^l_{ki}\right)\right\}}_{L_x^\infty}\\
	\notag\leq& C\holdernorm{\mathcal{F}^{-1}\left\{ \frac{\xi_s}{(G^{-1})^{ab}(x_l)\xi_a\xi_b}\xi_i\hat{\underline{F}}^l\right\}}{x}{0}{\delta_1}{\St}
	\\
	\notag&+\sum\limits_{k\neq i}\holdernorm{\mathcal{F}^{-1}\left\{ \frac{\xi_s}{(G^{-1})^{ab}(x_l)\xi_a\xi_b}(G^{-1})^{jk}(x_l)\xi_j\hat{\underline{H}}^l_{ki}\right\}}{x}{0}{\delta_1}{\St},
	\end{align}
	where $\mathcal{F}^{-1}$ is the inverse Fourier transform operator and $\psi$ is defined in Section \ref{definitionlittlewood}.\\
	Now let's consider the first term in the right-hand side of last line of (\ref{Schauder1}). For each fixed $s,i=1,2,3$, we define function $p_{s,i}(x,\xi)$ as follows:
	\begin{align}
	p_{s,i}(x,\xi):=\frac{\xi_s\xi_i}{(G^{-1})^{ab}(x_l)\xi_a\xi_b}.
	\end{align} 	
	The associated pseudodifferential operator $p(x,D;s,i)$ is defined by using Fourier integral representation as follows:
	\begin{align}
	p_{s,i}(x,D)f(x):=\int p_{s,i}(x,\xi)\hat{f}(\xi)e^{ix\xi}\diff\xi.
	\end{align}	
	By direct computation and positive definiteness of $G$ which is showed in Lemma \ref{Positivedefinite}, we have 
	\begin{align}
	\abs{D^\beta_xD^\alpha_\xi p_{s,i}(x,\xi)}\leq C_{\alpha\beta}(1+\abs{\xi}^2)^{\frac{-|\alpha|}{2}}.
	\end{align}	
	So $p_{s,i}(x,\xi)$ is in the H\"ormander class $S^0_{1,0}$ and $p_{s,i}(x,D)$ belongs to $OPS^0_{1,0}$. By the theory of pseudodifferential operators, we have $p_{s,i}(x,D): C_x^{0,\delta_1}\rightarrow C_x^{0,\delta_1}$. We refer reader to \cite[Chapter 18]{Theanalysisoflinearpartialdifferentialoperatorsthree} for explicit definition of H\"ormander class and \cite[Proposition 2.1.D]{pseudodifferentialoperatorsandnonlinearpde} for the bounds of the operator $p_{s,i}(x,D)$. Therefore,
	\begin{align}\label{Fa1}
	\holdernorm{\mathcal{F}^{-1}\left\{ \frac{\xi_s}{(G^{-1})^{ab}(x_l)\xi_a\xi_b}\xi_i\hat{\underline{F}}^l\right\}}{x}{0}{\delta_1}{\St}\leq C\holdernorm{\underline{F}^l}{x}{0}{\delta_1}{\St}.
	\end{align}	
	Similarly, we have
	\begin{align}\label{Ha1}
	\sum\limits_{k\neq i}\holdernorm{\mathcal{F}^{-1}\left\{ \frac{\xi_s}{(G^{-1})^{ab}(x_l)\xi_a\xi_b}(G^{-1})^{jk}(x_l)\xi_j\hat{\underline{H}}^l_{ki}\right\}}{x}{0}{\delta_1}{\St}\leq C\sum\limits_{k\neq i}\holdernorm{\underline{H}^l_{ki}}{x}{0}{\delta_1}{\St}.
	\end{align}	
	Combining (\ref{Schauder1}), (\ref{Fa1}) and (\ref{Ha1}),  for any $l,s,i$, we have
	\begin{align}\label{holderbound}
	\semiholdernorm{\p_s\left(\phi_l\upeta_i\right)}{x}{0}{\delta_1}{\St}\leq C\left(\holdernorm{\underline{F}^l}{x}{0}{\delta_1}{\St}+\sum\limits_{k\neq i}\holdernorm{\underline{H_{ki}^l}}{x}{0}{\delta_1}{\St}\right),
	\end{align}
	where the constant $C$ is independent of $l,s,i$.
	By (\ref{Frep}),
	\begin{align}\label{Fanaysis}
	\holdernorm{\underline{F}^l}{x}{0}{\delta_1}{\St}\leq&\holdernorm{\phi_lF}{x}{0}{\delta_1}{\St}+\holdernorm{G^{-1}}{x}{0}{\delta_1}{\St}\holdernorm{\p\phi_l}{x}{0}{\delta_1}{\St}\holdernorm{\upeta}{x}{0}{\delta_1}{\St}\\
	\notag&+\sup\limits_{x,y\in B(x_l,\delta_2)}\abs{x-y}^{1-\delta_1}\onenorm{\p G^{-1}}{x}{\infty}{\St}\holdernorm{\p(\phi_l\upeta)}{x}{0}{\delta_1}{\St}.
	\end{align}	
	By (\ref{Habrep}),
	\begin{equation}
	\label{Hanalysis}
	\holdernorm{\underline{H}^l}{x}{0}{\delta_1}{\St}\leq\holdernorm{\phi_l H}{x}{0}{\delta_1}{\St}+2\holdernorm{\p\phi_l}{x}{0}{\delta_1}{\St}\holdernorm{\upeta}{x}{0}{\delta_1}{\St}.
	\end{equation}	
	Let $\delta_2$ be small such that for all $x\in B(x_l,\delta_2)$, for $C$ as in (\ref{holderbound}),
	\begin{equation}\label{Schauder2}
	C\cdot\sup\limits_{x,y\in B(x_l,\delta_2)}\abs{x-y}^{1-\delta_1}\onenorm{\p G^{-1}}{x}{\infty}{\St}<\frac{1}{4}.
	\end{equation}	
	Combining (\ref{holderbound}),(\ref{Fanaysis}), (\ref{Hanalysis}) and (\ref{Schauder2}), we have 
	\begin{align}
	C^{-1}\semiholdernorm{\p\left(\phi_l\upeta_i\right)}{x}{0}{\delta_1}{\St}\leq&\holdernorm{\phi_lF}{x}{0}{\delta_1}{\St}+\holdernorm{G^{-1}}{x}{0}{\delta_1}{\St}\holdernorm{\p\phi_l}{x}{0}{\delta_1}{\St}\holdernorm{\upeta}{x}{0}{\delta_1}{\St}+\holdernorm{\phi_l H}{x}{0}{\delta_1}{\St}\\
	\notag&+\holdernorm{\p\phi_l}{x}{0}{\delta_1}{\St}\holdernorm{\upeta}{x}{0}{\delta_1}{\St},
	\end{align}
	where constant $C$ is independent of $l,i$.
	Now notice that
	\begin{equation}
	\semiholdernorm{\p\upeta}{x}{0}{\delta_1}{\St}=\semiholdernorm{\p\left(\sum\limits_l\phi_l\upeta\right)}{x}{0}{\delta_1}{\St}\leq\sum\limits_l\semiholdernorm{\p(\phi_l\upeta)}{x}{0}{\delta_1}{\St}.
	\end{equation}	
	Notice that for each $x\in\St$, there are at most finitely many $l$'s such that $\phi_l(x)$ is non-zero.
	Hence, by definition of $G, F, H$,
	\begin{align}
	\semiholdernorm{\p\upeta}{x}{0}{\delta_1}{\St}&\lesssim\holdernorm{F}{x}{0}{\delta_1}{\St}+\holdernorm{G^{-1}}{x}{0}{\delta_1}{\St}\holdernorm{\upeta}{x}{0}{\delta_1}{\St}+\holdernorm{H}{x}{0}{\delta_1}{\St}+\holdernorm{\upeta}{x}{0}{\delta_1}{\St}\\
	\notag&\lesssim	\holdernorm{\pfour\vvariables,\vectvort,\DivGradEnt}{x}{0}{\delta_1}{\St}.
	\end{align}
	We now bound $\onenorm{\p\upeta}{x}{\infty}{\St}$. For any point $z\in\St$, there is a $y\in B(z,1)$ such that $\abs{\p\upeta(y)}\leq\onenorm{\p\upeta}{x}{2}{\St}$, thus
	\begin{align}
	\abs{\p\upeta(z)}\leq\abs{\p\upeta(y)}+2\semiholdernorm{\p\upeta}{x}{0}{\delta_1}{\St}\lesssim\onenorm{\p\upeta}{x}{2}{\St}+\semiholdernorm{\p\upeta}{x}{0}{\delta_1}{\St}.
	\end{align}
	Combining with Proposition \ref{EnergyandEllipticEstimates}, we have
	\begin{align}
	\holdernorm{\p\upeta}{x}{0}{\delta_1}{\St}\lesssim1+\holdernorm{\pfour\vvariables,\vectvort,\DivGradEnt}{x}{0}{\delta_1}{\St}.
	\end{align}
\end{proof}
	\subsection{Estimates for $\holdernorm{\modivort,\DivGradEnt}{x}{0}{\delta_1}{\St}$ via Transport Equations and the Proof of Theorem \ref{Schauder0}} \label{Schauder6}
	In this subsection, we first estimate the H\"older norm of the modified fluid variables. We then prove Theorem \ref{Schauder0}. We will use the following lemma, which is a standard estimate for transport equations with H\"older data and H\"older inhomogeneities.
\begin{lemma}\label{schauder3}
	Let $\phi$ be a scalar function. If $F\in C_x^{0,\alpha}(\Sigma_{\tau})$ with $\tau\in[0,t]$ and
	\begin{equation}
	\materialderivative\phi=F,
	\end{equation}	
	then 
	\begin{equation}
	\|\phi\|_{C_x^{0,\alpha}(\Sigma_{t})}\lesssim\|\phi\|_{C_x^{0,\alpha}(\Sigma_{0})}+\int^t_0\|F\|_{C_x^{0,\alpha}(\Sigma_{\tau})}\diff\tau.
	\end{equation}
\end{lemma}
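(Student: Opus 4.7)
The plan is to estimate $\phi$ along the integral curves of $\materialderivative = \partial_t + \frac{v^i}{v^0}\partial_i$ and then transfer the estimate from Lagrangian to Eulerian coordinates using bi-Lipschitz bounds for the flow. Concretely, let $\Phi_\tau: \mathbb{R}^3 \to \mathbb{R}^3$ denote the spatial flow of the $\St$-tangent vectorfield $\frac{v^i}{v^0}\partial_i$, so that $\Phi_0 = \mathrm{id}$ and $\frac{d}{d\tau}\Phi_\tau(x) = \frac{v^i}{v^0}(\tau, \Phi_\tau(x))\partial_i$. Integrating $\materialderivative \phi = F$ along characteristics yields
\begin{align*}
\phi(t, \Phi_t(x)) = \phi(0, x) + \int_0^t F(\tau, \Phi_\tau(x))\, d\tau.
\end{align*}
Taking the supremum over $x$ immediately gives the $L_x^\infty$ part of the bound, so the work is entirely on the H\"older seminorm.

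For the seminorm, given $y_1, y_2 \in \Sigma_t$, I set $x_i := \Phi_t^{-1}(y_i)$ and write
\begin{align*}
\phi(t, y_1) - \phi(t, y_2) = [\phi(0,x_1) - \phi(0,x_2)] + \int_0^t [F(\tau, \Phi_\tau(x_1)) - F(\tau, \Phi_\tau(x_2))]\, d\tau.
\end{align*}
To convert the right-hand side to a factor of $|y_1 - y_2|^\alpha$, I need the two-sided Lipschitz estimate $|\Phi_\tau(x_1) - \Phi_\tau(x_2)| \approx |y_1 - y_2|$ uniformly in $\tau \in [0,t]$. This follows from Gr\"onwall applied to $\frac{d}{d\tau}|\Phi_\tau(x_1) - \Phi_\tau(x_2)| \leq \|\partial(v/v^0)\|_{L_x^\infty(\Sigma_\tau)} |\Phi_\tau(x_1) - \Phi_\tau(x_2)|$, giving a Lipschitz constant of $\exp\!\left(\int_0^t \|\pfour\vvariables\|_{L_x^\infty(\Sigma_\tau)}\, d\tau\right)$, which is $\lesssim 1$ by the bootstrap assumption (\ref{waveba}) and H\"older's inequality in time. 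The same estimate applied to the inverse flow gives the lower bound, so $|x_1 - x_2|^\alpha \lesssim |y_1 - y_2|^\alpha$ and $|\Phi_\tau(x_1) - \Phi_\tau(x_2)|^\alpha \lesssim |y_1 - y_2|^\alpha$.

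Combining these, I bound the first bracket by $\|\phi\|_{\dot C_x^{0,\alpha}(\Sigma_0)}|y_1-y_2|^\alpha$ and the integrand by $\|F\|_{\dot C_x^{0,\alpha}(\Sigma_\tau)}|y_1 - y_2|^\alpha$, which after dividing by $|y_1-y_2|^\alpha$ and adding the $L^\infty$ bound yields the desired inequality. The only real obstacle is the bi-Lipschitz control on the flow, and it is only a mild one: since $\frac{v^i}{v^0}$ is a smooth function of $\vvariables$, its spatial gradient is pointwise controlled by $|\pfour\vvariables|$, and the bootstrap assumption on $\|\pfour\vvariables\|_{L_t^2 L_x^\infty}$ together with $\Tstar \ll 1$ makes the resulting Gr\"onwall factor $O(1)$, absorbed into the $\lesssim$.
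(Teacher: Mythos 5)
Your proposal is correct and follows essentially the same argument as the paper: integrate along the integral curves of $\materialderivative$, establish the two-sided Lipschitz control of the flow map via Gr\"onwall applied to $\|\p\materialderivative\|_{L_x^\infty}\approx\|\pfour\vvariables\|_{L_x^\infty}$, and use this to transfer the H\"older seminorms of the data and of $F$ to $\Sigma_t$. The only cosmetic difference is that you anchor the comparison points on $\Sigma_t$ and pull back by $\Phi_t^{-1}$, whereas the paper anchors them on $\Sigma_0$; both directions of the bi-Lipschitz bound are needed either way.
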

	\begin{proof}

	Note that $\materialderivative^0=1$. Let $\gamma$ be the integral curve of $\materialderivative$ such that
	\begin{equation}
	\gamma^i(0,x)=x^i,
	\end{equation}
	\begin{equation}
	\gamma^0(t,x)=t,
	\end{equation}	
	and 
	\begin{equation}
	\p_t\gamma^i(t,x)=\materialderivative^i(t,\gamma).
	\end{equation}	
	Then 
	\begin{equation}
	\begin{aligned}
	|\gamma^i(t,x)-\gamma^i(t,y)|
	&\leq|x^i-y^i|+\int^t_0\materialderivative^i(\tau,\gamma(\tau,x))-\materialderivative^i(\tau,\gamma(\tau,y))\diff\tau\\
	&\leq|x^i-y^i|+\int^t_0\|\p \materialderivative^i\|_{L^\infty_x(\Sigma_\tau)}|\gamma(\tau,x)-\gamma(\tau,y)|\diff\tau.
	\end{aligned}
	\end{equation}	
	By Gr\"onwall's inequality, we have
	\begin{equation}\label{5}
	\frac{|\gamma(t,x)-\gamma(t,y)|}{|x-y|}\lesssim \exp\left(\int^t_0\|\p \materialderivative\|_{L^\infty_x(\Sigma_\tau)}\diff\tau\right).
	\end{equation}	
	Note that $\norm{\p\materialderivative}_{L^\infty_x(\Sigma_{\tau})}\approx\norm{\p\vvariables}_{L^\infty_x(\Sigma_{\tau})}$. Similarly, considering $\bar{\gamma}(\tau):=\gamma(t-\tau)$ from $\Sigma_t$ to $\Sigma_0$, we have
	\begin{equation}\label{6}
	\frac{|x-y|}{|\gamma(t,x)-\gamma(t,y)|}\lesssim \exp\left(\int^t_0\|\p \materialderivative\|_{L^\infty_x(\Sigma_\tau)}\diff\tau\right).
	\end{equation}
		
	Now we consider $\phi$:
	\begin{equation}
	\p_t(\phi\circ\gamma)=F\circ\gamma,
	\end{equation}	
	then
	\begin{equation}
	\phi\circ\gamma(t,x)-\phi\circ\gamma(t,y)=\phi\circ\gamma(0,x)-\phi\circ\gamma(0,y)+\int_0^tF(\tau,\gamma(\tau,x))-F(\tau,\gamma(\tau,y))\diff\tau.
	\end{equation}	
	By (\ref{5}), (\ref{6}) and bootstrap assumption in Section \ref{bootstrap}, we have
	\begin{equation}
	\begin{aligned}
	\frac{|F(\tau,\gamma(\tau,x))-F(\tau,\gamma(\tau,y))|}{|\gamma(t,x)-\gamma(t,y)|^\alpha}
	&=\frac{|F(\tau,\gamma(\tau,x))-F(\tau,\gamma(\tau,y))|}{|\gamma(\tau,x)-\gamma(\tau,y)|^\alpha}\frac{|\gamma(\tau,x)-\gamma(\tau,y)|^\alpha}{|\gamma(t,x)-\gamma(t,y)|^\alpha}\\
	&\lesssim\|F\|_{C_x^{0,\alpha}(\Sigma_{\tau})}.
	\end{aligned}
	\end{equation}
	\end{proof}
	\begin{proof}[Proof of Theorem \ref{Schauder0}]
	Now we consider equations (\ref{Bmodivort}) and (\ref{BDiventGrad}):
	\begin{subequations}
		\begin{align}
	&\materialderivative \modivort^\alpha=\quadsmoothfunction(\vvariables)[\pfour\vvariables,(\pfour\vort,\pfour\vgradientEnt,\pfour\vvariables)]+\quadsmoothfunction(\vgradientEnt)[\pfour\vvariables,\pfour\vvariables]+\linsmoothfunction(\vvariables,\vvort,\vgradientEnt)[\pfour\vvariables,\pfour\vgradientEnt],\\
	&\materialderivative\DivGradEnt=\linsmoothfunction(\vvariables,\vgradientEnt)[\pfour\vort]+\quadsmoothfunction(\vvariables)[\pfour\vvariables,(\pfour\vgradientEnt,\pfour\vvariables)]+\quadsmoothfunction(\vgradientEnt)[\pfour\vvariables,\pfour\vvariables]+\linsmoothfunction(\vvariables,\vvort,\vgradientEnt)[\pfour\vvariables].
	\end{align}
	\end{subequations}
	By the Lemma \ref{schauder3}, we have
	\begin{align}
	\holdernorm{\vectvort,\DivGradEnt}{x}{0}{\delta_1}{\Sigma_{t}}
	\lesssim1+\int_0^t\left(\holdernorm{\pfour\vvariables}{x}{0}{\delta_1}{\Sigma_\tau}+1\right)\holdernorm{\pfour\vvariables,\pfour\vvort,\pfour\vgradientEnt}{x}{0}{\delta_1}{\Sigma_\tau}\diff\tau.
	\end{align}
	By (\ref{Schauder4}), bootstrap assumptions and Gr\"onwall's inequality, we have
	\begin{align}
	\label{CDleq1}\holdernorm{\vectvort,\DivGradEnt}{x}{0}{\delta_1}{\St}\lesssim1.
	\end{align}	
	Integrating (\ref{Schauder4}) in time,  we have 
	\begin{align}
	\int_0^t\holdernorm{\pfour\vvort,\pfour\vgradientEnt}{x}{0}{\delta_1}{\St}^2\diff t\lesssim\int_0^t1+\holdernorm{\pfour\vvariables,\vectvort,\DivGradEnt}{x}{0}{\delta_1}{\St}^2\diff t.
	\end{align}
	If (\ref{preimprove}) holds, using (\ref{CDleq1}), the bootstrap assumption (\ref{waveba}) and the standard results in Littlewood-Paley (\ref{Littlewoodholder}), (\ref{improvedboot2}) is obtained by following estimate:
	\begin{align}
	\int_0^t\holdernorm{\pfour\vvort,\pfour\vgradientEnt}{x}{0}{\delta_1}{\St}^2\diff t\lesssim\int_0^t1+\holdernorm{\pfour\vvariables,\vectvort,\DivGradEnt}{x}{0}{\delta_1}{\St}^2\diff t\leq\Tstar^{2\delta}+\Tstar\lesssim\Tstar^{2\delta}.
	\end{align}
\end{proof}
\section{Reduction of Strichartz Estimates and the Rescaled Solution}\label{sectionreduction}
In this section, we state our main estimates as Theorem \ref{MainEstimates}, which are the improvement of the bootstrap assumption (\ref{waveba}). We then provide a series of analytic reductions from the Strichartz estimates of Theorem \ref{MainEstimates} to the decay estimates of Theorem \ref{Spatiallylocalizeddecay}. We are quite terse in this section since the full proofs of these reductions are lengthy and difficult, yet standard. We refer readers to \cite{3DCompressibleEuler,AGeoMetricApproach,ImprovedLocalwellPosedness} for the detailed proofs.
\begin{theorem}[Improvement of the Strichartz-type bootstrap assumptions for the wave variables]\label{MainEstimates}
	If $\delta>0$ is sufficiently small as in Subseciton \ref{ChoiceofParameters}, then under the initial data and bootstrap assumptions of Section \ref{sectionmainthm}, the following estimates holds with a number $8\delta_0<\delta_1<N-2$:
	\begin{align}
	\label{estimateswavevariables}\twonorms{\pfour\vvariables}{t}{2}{x}{\infty}{[0,\Tstar]\times\mathbb{R}^3}^2+\sum\limits_{\upnu\geq2}\upnu^{2\delta_1}\twonorms{\littlewood\pfour\vvariables}{t}{2}{x}{\infty}{[0,\Tstar]\times\mathbb{R}^3}^2\lesssim\Tstar^{2\delta}.
	\end{align}
\end{theorem}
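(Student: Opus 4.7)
The plan is to prove Theorem \ref{MainEstimates} via the chain of reductions outlined in Sections \ref{reductionofstrichartz} and \ref{sectionstructure}: dyadic/Littlewood--Paley decomposition $\longrightarrow$ dyadic Strichartz estimate $\longrightarrow$ $\mathcal{T}\mathcal{T}^*$ reduction to a decay estimate $\longrightarrow$ reduction to a conformal energy estimate, the latter being controlled via the acoustic geometry set up in Section \ref{sectiongeometrysetup}. The key point is that once the geometric control is in place (Proposition \ref{mainproof} and Theorem \ref{BoundnessTheorem}), the reductions are essentially the same as in \cite{AGeoMetricApproach, 3DCompressibleEuler}. First I would split $\pfour\vvariables = P_{\leq 2}\pfour\vvariables + \sum_{\upnu\geq 2}\littlewood\pfour\vvariables$; the low-frequency piece is trivially controlled by Bernstein's inequality and the energy estimate \eqref{energyestimates}, so it suffices to bound each dyadic piece $\littlewood\pfour\vvariables$ in $L^2_t L^\infty_x$ with an $\upnu^{-\delta_1}\Tstar^\delta$-type bound.

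For the dyadic piece at frequency $\upnu \geq 2$, I would partition $[0,\Tstar]$ into $\lesssim \upnu^{8\varepsilon_0}$ subintervals $I_k = [t_{k-1}, t_k]$ of length $\lesssim \upnu^{-8\varepsilon_0}$. Using Lemma \ref{lwpequations} to write the commuted equation $\boxg\littlewood\pfour\variables = \remainder{\pfour\vvariables}$, one applies Duhamel's principle on each $I_k$ to reduce matters to a dyadic Strichartz estimate of the schematic form
\begin{align*}
\twonorms{P_\upnu \pfour\varphi}{t}{q}{x}{\infty}{I_k\times\mathbb{R}^3}\lesssim \upnu^{\frac{3}{2}-\frac{1}{q}}\left(\norm{\pfour\varphi}_{L^2_x(\Sigma_{t_{k-1}})}+\norm{\varphi}_{L^2_x(\Sigma_{t_{k-1}})}\right),
\end{align*}
for solutions $\varphi$ of the homogeneous geometric equation $\boxg\varphi = 0$ on $I_k$. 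The inhomogeneous source is handled via the remainder estimate \eqref{remainderestimates} combined with $H^{N-1}$ control of $\pfour\vvariables, \pfour\vvort, \pfour\vgradientEnt$ from Proposition \ref{EnergyandEllipticEstimates}. Next, after rescaling time and space by the large parameter $\uplambda = \upnu$ (as in Subsection \ref{SelfRescaling}) so that the rescaled bootstrap time becomes $\Trescale \sim \upnu^{1-8\varepsilon_0}$ and the rescaled data has frequency support near $1$, one runs a $\mathcal{T}\mathcal{T}^*$ duality argument to reduce the dyadic Strichartz estimate to the $L^2\!-\!L^\infty$ decay estimate \eqref{introdecayestimates} of Theorem \ref{Spatiallylocalizeddecay}, for solutions of $\boxg\varphi = 0$ on $[0,\Trescale]$ with data supported in a Euclidean ball.

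The decay estimate is in turn reduced to the conformal energy bound \eqref{introconformalestimates}: by Littlewood--Paley calculus, Bernstein's inequality and product estimates in the frame $\{L, \underline L, e_A\}$, one controls $\norm{P_1\Timelike\varphi}_{L^\infty_x(\Sigma_t)}$ in terms of the conformal energy $\Energyconformal[\varphi](t)$ of an appropriately modified quantity, uniformly in $t\in[1,\Trescale]$. Combining Theorem \ref{BoundnessTheorem} with the $\int d(t)^{q/2}\diff t \lesssim 1$ integrability then yields the required decay, and summing the dyadic estimates weighted by $\upnu^{2\delta_1}$ against the smallness $\Tstar^{2\delta}$ acquired from interpolation on each $I_k$ closes the bound \eqref{estimateswavevariables}.

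The main obstacle, and the only place where the relativistic Euler structure genuinely enters beyond what is done in the quasilinear wave literature, is obtaining the geometric control required as input to Theorem \ref{BoundnessTheorem}. This entails constructing the acoustical function $u$ solving $(\gfour^{-1})^{\alpha\beta}\p_\alpha u \p_\beta u = 0$, setting up the null frame, and controlling the Ricci coefficients $\gtr\upchi, \hchi, \upzeta$ together with the mass aspect function $\mass$, the modified mass aspect $\modmass$, the conformal factor $\conformalfactor$, and the modified torsion $\angnabla\conformalfactor + \upzeta$. The crucial new ingredient relative to the $3D$ non-relativistic case is that $\modivort$ and $\DivGradEnt$ appear as source terms on the right-hand sides of the wave equations \eqref{waveEQ}, and hence on the right-hand side of the Raychaudhuri-type equation \eqref{introRaychau} via the decomposition \eqref{introRicci}; these sources are not apparently of the correct regularity until one exploits the transport equations \eqref{Bmodivort}--\eqref{BDiventGrad} together with the spatial div-curl system \eqref{newdivcurl} and the $L^2$ elliptic and Schauder estimates of Section \ref{section4}. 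With those estimates in hand, the remainder of the geometric analysis (Proposition \ref{mainproof}) follows essentially as in \cite[Section 10]{3DCompressibleEuler}, and the full proof of Theorem \ref{MainEstimates} is then assembled by tracking the bookkeeping in the above reductions.
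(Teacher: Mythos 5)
Your proposal follows essentially the same chain of reductions as the paper: Littlewood--Paley decomposition and partitioning of $[0,\Tstar]$, Duhamel reduction to the frequency-localized Strichartz estimate (Theorem \ref{FrequencyStrichartz}), rescaling by the frequency, the $\mathcal{T}\mathcal{T}^*$ reduction to the decay estimates (Theorems \ref{DecayEstimates} and \ref{Spatiallylocalizeddecay}), the further reduction to the conformal energy bound (Theorem \ref{BoundnessTheorem}), and the control of the acoustic geometry (Proposition \ref{mainproof}) with the transport-div-curl/elliptic/Schauder estimates for $\vectvort,\DivGradEnt$ identified as the genuinely new input. This matches the paper's argument, which likewise carries out these steps by citing \cite{AGeoMetricApproach,3DCompressibleEuler,ImprovedLocalwellPosedness} for the standard reductions.
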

	We first reduce the proof of Theorem \ref{MainEstimates} to the proof of Strichartz estimates on small time intervals.
\subsection{Partitioning of the Bootstrap Time Interval}\label{partitioning}
 Let $\lambda$ be a fixed large number and let $0<\varepsilon_0<\frac{N-2}{5}$ be a fixed number as mentioned in Subsection \ref{ChoiceofParameters}. By the bootstrap assumptions, we can\footnote{The existence of such partition easily follows from the bootstrap assumptions, see \cite[Remark 1.3]{ImprovedLocalwellPosedness}.} partition $[0,\Tstar]$ into disjoint union of sub-intervals $I_k:=[t_{k-1},t_k]$ of total number $\lesssim\lambda^{8\varepsilon_0}$ with the properties that $\abs{I_k}\leq\lambda^{-8\varepsilon_0}\Tstar$ and 
\begin{subequations}
	\begin{align}
	\twonorms{\pfour\vvariables}{t}{2}{x}{\infty}{I_k\times\mathbb{R}^3}^2+\sum\limits_{\upnu\geq2}\upnu^{2\delta_0}\twonorms{\littlewood\pfour\vvariables}{t}{2}{x}{\infty}{I_k\times\mathbb{R}^3}^2&\lesssim\lambda^{-8\varepsilon_0},\\
		\twonorms{\p\vvort,\p\vgradientEnt}{t}{2}{x}{\infty}{I_k\times\mathbb{R}^3}^2+\sum\limits_{\upnu\geq2}\upnu^{2\delta_0}\twonorms{\littlewood(\p\vvort,\p\vgradientEnt)}{t}{2}{x}{\infty}{I_k\times\mathbb{R}^3}^2&\lesssim\lambda^{-8\varepsilon_0}.
\end{align}
\end{subequations}

Now we reduce Theorem \ref{MainEstimates} to a frequency localized estimate.
\begin{theorem}[Frequency localized Strichartz estimate]\label{FrequencyStrichartz}
	Let $\varphi$ be a solution of
	\begin{align}
	\boxg\varphi=0
	\end{align}
	on the time interval $I_k$. Then for any $q>2$ sufficiently close to $2$ and any $\tau\in[t_{k},t_{k+1}]$, under the bootstrap assumptions, we have the following estimate:
	\begin{align}\label{Stri2}
	\twonorms{P_\lambda\pfour\varphi}{t}{q}{x}{\infty}{[\tau,t_{k+1}]\times\mathbb{R}^3}\lesssim\lambda^{\frac{3}{2}-\frac{1}{q}}\norm{\pfour\varphi}_{L_x^2(\Sigma_{\tau})}.
	\end{align}
\end{theorem}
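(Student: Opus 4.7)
The plan is to follow the well-established reduction scheme in \cite{AGeoMetricApproach, Sharplocalwell-posednessresultsforthenonlinearwaveequation, ImprovedLocalwellPosedness}: rescale to the unit frequency scale, apply a $\mathcal{T}\mathcal{T}^*$ argument to reduce the Strichartz estimate to a dispersive decay estimate, and then reduce the decay estimate to a conformal energy bound. The fact that we are working on the short interval $I_k$ with $|I_k|\lesssim\lambda^{-8\varepsilon_0}\Tstar$ is crucial: after rescaling by $\lambda$, the rescaled interval has length $\lesssim\lambda^{1-8\varepsilon_0}$, which is exactly the length on which control of the rescaled acoustic geometry is feasible.

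First I would perform the self-similar rescaling described in Subsection \ref{SelfRescaling}: set $\vvariables_{(\uplambda)}(t,x):=\vvariables(t_k+\lambda^{-1}t,\lambda^{-1}x)$ and $\varphi_{(\uplambda)}(t,x):=\varphi(t_k+\lambda^{-1}t,\lambda^{-1}x)$. The equation $\boxg\varphi=0$ becomes $\square_{\gfour(\vvariables_{(\uplambda)})}\varphi_{(\uplambda)}=0$ on $[0,\Trescale]\times\mathbb{R}^3$ with $\Trescale\lesssim\lambda^{1-8\varepsilon_0}\Tstar$, and the Littlewood--Paley projector $\littlewood[\lambda]$ is transformed into $P_1$. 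Since the partition of $I_k$ was chosen so that $\twonorms{\pfour\vvariables}{t}{2}{x}{\infty}{I_k}^2+\sum_{\upnu\geq2}\upnu^{2\delta_0}\twonorms{\littlewood\pfour\vvariables}{t}{2}{x}{\infty}{I_k}^2\lesssim\lambda^{-8\varepsilon_0}$, after rescaling these quantities get the scaling weight needed so that the rescaled bootstrap assumptions hold uniformly in $\lambda$ on $[0,\Trescale]$. Thus \eqref{Stri2} is equivalent to the unit-frequency statement
\begin{align*}
\twonorms{P_1\pfour\varphi_{(\uplambda)}}{t}{q}{x}{\infty}{[\tau_{(\uplambda)},\Trescale]\times\mathbb{R}^3}\lesssim\onenorm{\pfour\varphi_{(\uplambda)}}{x}{2}{\Sigma_{\tau_{(\uplambda)}}}.
\end{align*}

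Next I would apply the abstract $\mathcal{T}\mathcal{T}^*$ argument (the Keel--Tao--Smith--Tataru framework, as used in \cite[Section 3]{AGeoMetricApproach}). Viewing the solution map $f\mapsto P_1\pfour\varphi_{(\uplambda)}$ as an operator $\mathcal{T}\colon L^2_x(\Sigma_1)\to L^q_tL^\infty_x$, it suffices to prove that the composition $\mathcal{T}\mathcal{T}^*$ is bounded on $L^{q'}_tL^1_x$, and by duality plus the Christ--Kiselev lemma this reduces to the pointwise $L^2$--$L^\infty$ dispersive bound
\begin{align*}
\onenorm{P_1\Timelike\varphi_{(\uplambda)}(t)}{x}{\infty}{\St}\lesssim\left(\frac{1}{(1+|t-1|)^{2/q}}+d(t)\right)\left(\onenorm{\pfour\varphi_{(\uplambda)}}{x}{2}{\Sigma_1}+\onenorm{\varphi_{(\uplambda)}}{x}{2}{\Sigma_1}\right),
\end{align*}
valid for $\varphi_{(\uplambda)}$ satisfying $\square_{\gfour}\varphi_{(\uplambda)}=0$ with $\varphi_{(\uplambda)}(1,\cdot)$ frequency-localized at $1$, together with $\|d\|_{L^{q/2}_t[0,\Trescale]}\lesssim1$. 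This is precisely the content of Theorem \ref{DecayEstimates} below.

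Finally I would reduce the above decay estimate to its spatially localized version, Theorem \ref{Spatiallylocalizeddecay}, by a standard partition of unity on $\Sigma_1$ into balls of radius $R$ (where $R$ is the size of the domain of dependence relevant to the acoustic null cones), followed by Bernstein and Sobolev embedding to upgrade $L^2$ data into data on each ball. The spatially localized decay estimate is where the acoustic geometry of Section \ref{sectiongeometrysetup} enters crucially, and is ultimately obtained from the boundedness of the conformal energy $\Energyconformal[\varphi]$ stated in Theorem \ref{BoundnessTheorem}. I would emphasize that none of these reductions is sensitive to the specific structure of the relativistic Euler system; they depend only on the rescaled Lorentzian metric $\gfour(\vvariables_{(\uplambda)})$ and its bootstrap-level control.

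The hard part will not be the reductions themselves, which are essentially mechanical once the rescaling is set up correctly, but rather verifying that the rescaled bootstrap assumptions and the rescaled regime of hyperbolicity $\Domain$ are preserved on the entire rescaled interval $[0,\Trescale]$ with $\Trescale\sim\lambda^{1-8\varepsilon_0}$; this requires the $\lambda^{-8\varepsilon_0}$ gain in $|I_k|$ to exactly compensate the $\lambda$-growth coming from the rescaling of the time derivative, and is where the specific choice of $\varepsilon_0$ in Subsection \ref{ChoiceofParameters} is used. Since the reduction is identical to the one in \cite[Section 3]{AGeoMetricApproach} and \cite[Section 6]{3DCompressibleEuler}, and only the relevant geometric input (Theorem \ref{BoundnessTheorem}) requires the new analysis of the relativistic acoustic geometry carried out in Sections \ref{sectiongeometrysetup}--\ref{connectioncoefficientsandpdes}, I would simply state the reductions and refer the reader to those references for the technical details.
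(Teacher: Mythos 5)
Your proposal follows essentially the same chain of reductions as the paper: rescale by $\lambda$ to the unit-frequency statement on $[0,\Trescale]$ (Theorem \ref{RescaledStrichartz}), apply a $\mathcal{T}\mathcal{T}^*$ argument to reduce to the dispersive decay estimate (Theorem \ref{DecayEstimates}), localize in space via Bernstein, partition of unity, and Sobolev embedding (Theorem \ref{Spatiallylocalizeddecay}), and feed in the conformal energy bound (Theorem \ref{BoundnessTheorem}); the paper likewise treats these reductions as standard and cites \cite{AGeoMetricApproach,ImprovedLocalwellPosedness,3DCompressibleEuler} for the details. Your identification of the rescaled bootstrap assumptions and the role of the $\lambda^{-8\varepsilon_0}$ interval length matches the paper's Subsection \ref{SelfRescaling} and the estimate (\ref{bt1}).
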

\subsubsection{Dicussion of the reduction to Theorem \ref{FrequencyStrichartz} from Theorem \ref{MainEstimates}}
The proof reducing Theorem \ref{MainEstimates} to Theorem \ref{FrequencyStrichartz} is exactly the same as the proof of \cite[Theorem 7.2]{3DCompressibleEuler}. The reason is that the proof relies only on: \textbf{1)} Duhamel's principle; \textbf{2)} top order energy estimates (\ref{energyestimates}) which is the same as in \cite{3DCompressibleEuler}, and \textbf{3)} Littlewood-Paley estimates for the inhomogeneous terms in a frequency-projected version of the wave equations, and the wave equations in this paper have an identical schematic form to the ones in \cite{3DCompressibleEuler}.

\subsection{Rescaled Quantities and Rescaled Relativistic Euler Equations}\label{SelfRescaling}
In this subsection, in order to do further reductions, we consider the following coordinate change $(t,x)\mapsto(\lambda(t-t_k),\lambda x)$. Let
\begin{align}\label{Trescale}
\Trescale:=\lambda(t_{k+1}-t_k).
\end{align}
Note that by construction, 
\begin{align}
0\leq\Trescale\leq\lambda^{1-8\varepsilon_0}\Tstar.
\end{align}
\begin{definition}[Rescaled quantities]\label{rescaledquantities}
	First we define the following variables:
\begin{subequations}
		\begin{align}
	\vvariables_{(\lambda)}(t,x):=&\vvariables(t_k+\lambda^{-1}t,\lambda^{-1}x),\\
	\vvort_{(\lambda)}(t,x):=&\vvort(t_k+\lambda^{-1}t,\lambda^{-1}x),\\
	\vgradientEnt^\kappa_{(\lambda)}(t,x):=&\vgradientEnt^\kappa(t_k+\lambda^{-1}t,\lambda^{-1}x),\\
	\temp_{(\lambda)}:=&\temp(t_k+\lambda^{-1}t,\lambda^{-1}x),\\
	\speed_{(\lambda)}:=&\speed(t_k+\lambda^{-1}t,\lambda^{-1}x),\\
		\modivort_{(\lambda)}^\alpha:=&\text{vort}^\alpha((\vort_\flat)_{(\lambda)})+\speed_{(\lambda)}^{-2}\antisymmetic^{\alpha\beta\gamma\delta}((v_\flat)_{(\lambda)})_\beta(\p_\gamma\lnenth_{(\lambda)})(\vort_\flat)_{(\lambda)\delta}+(\temp_{(\lambda)}-(\temp_{;\lnenth})_{(\lambda)})(\gradEnt^\sharp)_{(\lambda)}^\alpha(\p_\kappa (v_\flat)_{(\lambda)}^\kappa)\\
	\notag&+(\temp_{(\lambda)}-(\temp_{;\lnenth})_{(\lambda)})(v_\flat)_{(\lambda)}^\alpha((\gradEnt^\sharp)_{(\lambda)}^\kappa\p_\kappa\lnenth_{(\lambda)})-(\temp_{(\lambda)}-(\temp_{;\lnenth})_{(\lambda)})(\gradEnt^\sharp)_{(\lambda)}^\kappa\left((\minkowski^{-1})^{\alpha\lambda}\p_\lambda (v_\flat)_{_{(\lambda)}\kappa}\right),\\
	\DivGradEnt_{(\lambda)}:=&\frac{1}{n}(\p_\kappa(\gradEnt^\sharp)_{(\lambda)}^\kappa)+\frac{1}{n}((\gradEnt^\sharp)_{(\lambda)}^\kappa\p_\kappa\lnenth_{(\lambda)})-\frac{1}{n}\speed^{-2}((\gradEnt^\sharp)_{(\lambda)}^\kappa\p_\kappa\lnenth_{(\lambda)}).
	\end{align}
\end{subequations}

	Then we define the following rescaled tensor fields:
\begin{subequations}
		\begin{align}
	\label{frequencyrescaledg}(\gfour_{(\lambda)})_{\alpha\beta}(t,x):=&\gfour_{\alpha\beta}(\vvariables_{(\lambda)}(t,x)),\\
	\label{frequencyrescaledgt}(g_{(\lambda)})_{\alpha\beta}(t,x):=&g_{\alpha\beta}(\vvariables_{(\lambda)}(t,x)),\\
	\materialderivative_{(\lambda)}^\alpha(t,x):=&\materialderivative^\alpha(\vvariables_{(\lambda)}(t,x)).
	\end{align}
\end{subequations}
\end{definition}

\begin{remark}\label{newsigma0}
	We note that after rescaling, the new initial constant-time hypersurface $\Sigma_{0}$ corresponds to the constant-time hypersurface that was denoted by $\Sigma_{t_k}$ for some $k$ in Section \ref{sectionintro}-\ref{section4}.
\end{remark}
The following proposition provides the equations satisfied by the rescaled quantities. We omit the straightforward proof.
\begin{proposition}[The rescaled geometric wave-transport formulation of the relativistic Euler equations]
	For $\variables\in\{v^0,v^1,v^2,v^3,h,s\}$, after rescaling, we have the following equations inherited from Prop.\ref{WaveEqunderoringinalacoustical}:\\
	\textbf{Wave equations}	
	\begin{align}
	\square_{\gfour_{(\lambda)}}\variables_{(\lambda)}=\lambda^{-1}\linsmoothfunction(\vvariables_{(\lambda)})[\vectvort_{(\lambda)},\DivGradEnt_{(\lambda)}]+\quadsmoothfunction(\vvariables_{(\lambda)})[\pfour\vvariables_{(\lambda)},\pfour\vvariables_{(\lambda)}].
	\end{align}
	\textbf{Transport equations}
	\begin{subequations}
		\begin{align}
		\label{Bvort2}&\materialderivative_{(\lambda)}\vort_{(\lambda)}^\alpha=\linsmoothfunction(\vvariables_{(\lambda)},\vvort_{(\lambda)},\vgradientEnt_{(\lambda)})[\pfour\vvariables_{(\lambda)}],\\ &\materialderivative_{(\lambda)}(\gradEnt^\sharp)_{(\lambda)}^\alpha=\linsmoothfunction(\vvariables_{(\lambda)},\vgradientEnt_{(\lambda)})[\pfour\vvariables_{(\lambda)}].
		\end{align}
	\end{subequations}
	\textbf{Transport-Div-Curl system}
\begin{subequations}\label{DivCurlSystem2}
		\begin{align}
	\label{Bmodivort2}\materialderivative_{(\lambda)} \modivort_{(\lambda)}^\alpha=&\quadsmoothfunction(\vvariables_{(\lambda)})[\pfour\vvariables_{(\lambda)},(\pfour\vort_{(\lambda)},\pfour\vgradientEnt_{(\lambda)},\pfour\vvariables_{(\lambda)})]+\linsmoothfunction(\vvariables_{(\lambda)},\vvort_{(\lambda)},\vgradientEnt_{(\lambda)})[\pfour\vvariables_{(\lambda)},\pfour\vgradientEnt_{(\lambda)}],\\
	\label{BDiventGrad2}\materialderivative_{(\lambda)}\DivGradEnt_{(\lambda)}=&\linsmoothfunction(\vgradientEnt_{(\lambda)},\vvariables_{(\lambda)})[\pfour\vort_{(\lambda)}]\\
	\notag&+\quadsmoothfunction(\vvariables_{(\lambda)})[\pfour\vvariables_{(\lambda)},(\pfour\vgradientEnt_{(\lambda)},\pfour\vvariables_{(\lambda)})]+\linsmoothfunction(\vvariables_{(\lambda)},\vvort_{(\lambda)},\vgradientEnt_{(\lambda)})[\pfour\vvariables_{(\lambda)},\pfour\vgradientEnt_{(\lambda)}],\\
	\textnormal{vort}^\alpha(S_{(\lambda)}^\sharp)=&0,\\
	\p_\alpha\vort_{(\lambda)}^\alpha=&\linsmoothfunction(\vort_{(\lambda)})[\pfour\vvariables_{(\lambda)}].
	\end{align}
\end{subequations}
\end{proposition}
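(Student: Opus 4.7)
The plan is to establish the rescaled system by pulling back the original geometric wave-transport formulation in Proposition \ref{WaveEqunderoringinalacoustical} through the diffeomorphism $(t,x) \mapsto (\lambda(t-t_k), \lambda x)$ and then carefully tracking powers of $\lambda$. The fundamental scaling identity is the chain rule: for any scalar function $f$, one has $\pfour f_{(\lambda)}(t,x) = \lambda^{-1}(\pfour f)(t_k + \lambda^{-1}t, \lambda^{-1}x)$, or equivalently $(\pfour f)|_{(\lambda)} = \lambda \pfour f_{(\lambda)}$, where $|_{(\lambda)}$ denotes evaluation at $(t_k + \lambda^{-1}t, \lambda^{-1}x)$. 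Because $\square_\gfour$ is a second-order operator and $(\gfour_{(\lambda)})_{\alpha\beta}(t,x) = \gfour_{\alpha\beta}(\vvariables)|_{(\lambda)}$ by (\ref{frequencyrescaledg}), an immediate consequence is $\square_{\gfour_{(\lambda)}} \variables_{(\lambda)} = \lambda^{-2}(\square_\gfour \variables)|_{(\lambda)}$. Similarly, since $\materialderivative_{(\lambda)} = (v_{(\lambda)}^\alpha/v_{(\lambda)}^0)\pfour_\alpha$ involves one derivative, we obtain $\materialderivative_{(\lambda)} f_{(\lambda)} = \lambda^{-1}(\materialderivative f)|_{(\lambda)}$.

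The next ingredient is the scaling of the modified fluid variables. Inspecting Definition \ref{modifiedfluidvariables}, every term in the expressions for $\modivort^\alpha$ and $\DivGradEnt$ contains exactly one space-time derivative. Since Definition \ref{rescaledquantities} defines $\modivort_{(\lambda)}^\alpha$ and $\DivGradEnt_{(\lambda)}$ by applying those same formulas to the already-rescaled variables with derivatives taken in the new coordinates, the single derivative in each term supplies one factor of $\lambda^{-1}$ relative to the original, yielding $\modivort_{(\lambda)}^\alpha = \lambda^{-1}(\modivort^\alpha)|_{(\lambda)}$ and $\DivGradEnt_{(\lambda)} = \lambda^{-1}(\DivGradEnt)|_{(\lambda)}$. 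In contrast, $\vvariables_{(\lambda)}$, $\vvort_{(\lambda)}$, and $\vgradientEnt_{(\lambda)}$ are defined by direct evaluation without applying any defining formulas in rescaled coordinates, so they carry no extra factor of $\lambda$.

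With these scalings in hand, the rescaled equations follow by substituting into the original system and counting powers of $\lambda$. For the wave equation, starting from $\square_\gfour \variables = \linsmoothfunction(\vvariables)[\vectvort, \DivGradEnt] + \quadsmoothfunction(\vvariables)[\pfour\vvariables, \pfour\vvariables]$, evaluating both sides at $|_{(\lambda)}$, multiplying by $\lambda^{-2}$, and converting every quantity to its rescaled version, the linear term produces $\lambda^{-2} \cdot \lambda = \lambda^{-1}$ while the quadratic term produces $\lambda^{-2} \cdot \lambda^2 = 1$, which matches the stated formula. For the transport equations (\ref{Bvort2}) and its entropy analogue, the $\lambda^{-1}$ coming from $\materialderivative_{(\lambda)}$ exactly cancels the $\lambda$ from the single derivative on the right-hand side, producing an equation in purely rescaled quantities. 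For (\ref{Bmodivort2}) and (\ref{BDiventGrad2}), applying $\materialderivative_{(\lambda)}$ to $\lambda^{-1}(\modivort^\alpha)|_{(\lambda)}$ gives an overall factor of $\lambda^{-2}$, which cancels the $\lambda^2$ supplied by the two-derivative schematic structures on the right-hand side. Finally, the constraint equations (\ref{vortS})--(\ref{Divvort}) transform analogously: $\textnormal{vort}^\alpha(S_{(\lambda)}^\sharp) = 0$ is scale-invariant after absorbing $\lambda^{-1}$, and the Cartesian divergence constraint reduces to its rescaled form by the same one-derivative cancellation as in the transport equations.

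The only minor subtlety, which is essentially bookkeeping, is that the original right-hand sides in (\ref{Bmodivort})--(\ref{BDiventGrad}) contain a mixture of schematic terms with different numbers of derivatives (e.g.\ a purely linear $\linsmoothfunction[\pfour\vvariables]$ alongside quadratic $\quadsmoothfunction[\pfour\vvariables, \pfour\vvariables]$-type terms); upon rescaling the terms with fewer derivatives acquire an additional factor of $\lambda^{-1} \leq 1$ and are either subleading or absorbed into the schematic notation without affecting the $\linsmoothfunction$/$\quadsmoothfunction$ form. No essential difficulty arises: the proof is a direct chain-rule computation, and the main care required is in verifying the scaling $\modivort_{(\lambda)} = \lambda^{-1}(\modivort)|_{(\lambda)}$ (and analogously for $\DivGradEnt_{(\lambda)}$), which is what ensures that the linear source term in the wave equation comes with the stated prefactor of $\lambda^{-1}$ rather than $\lambda^0$.
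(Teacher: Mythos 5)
Your proof is correct and is precisely the straightforward chain-rule/power-counting computation that the paper omits: the key observations — that $\square_{\gfour_{(\lambda)}}\variables_{(\lambda)}=\lambda^{-2}(\square_{\gfour}\variables)|_{(\lambda)}$, that $\modivort_{(\lambda)},\DivGradEnt_{(\lambda)}$ pick up exactly one factor of $\lambda^{-1}$ because every term in Definition \ref{modifiedfluidvariables} carries exactly one derivative, and that the lower-derivative schematic terms only gain harmless factors of $\lambda^{-1}\leq 1$ — are exactly what is needed, and your bookkeeping of the resulting $\lambda^{-1}$ prefactor on the linear source term in the wave equation is right.
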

\begin{remark}
	For notation convenience,  in the remainder of the article, we drop the sub and super scripts $(\lambda)$ except for the rescaled time $\Trescale$.
\end{remark}
\subsubsection{Consequences of the bootstrap assumptions}
After rescaling in Subsection \ref{SelfRescaling}, assuming bootstrap assumptions (\ref{waveba})-(\ref{transportba}), as in \cite[Section 10.2.1]{3DCompressibleEuler}, by standard computation based on Littlewood-Paley calculus, we have the following consequences of the bootstrap assumptions:
\\

\noindent\textbf{Estimates by using bootstrap assumptions of variables}
\begin{align}
\label{bt1}\twonorms{\pfour\vvariables,\pfour\vVort,\pfour\vgradientEnt,\vectvort,\DivGradEnt}{t}{2}{x}{\infty}{\region}+\lambda^{\delta_0}\sqrt{\sum\limits_{\upnu>2}\upnu^{2\delta_0}\twonorms{\littlewood\left(\gensmoothfunction(\vvariables,\vVort,\vgradientEnt)(\pfour\vvariables,\pfour\vVort,\pfour\vgradientEnt,\vectvort,\DivGradEnt)\right)}{t}{2}{x}{\infty}{\region}^2}\lesssim\lambda^{-1/2-4\varepsilon_0}
.\end{align}
\subsection{Further Reduction of the Strichartz Estimates}
By the rescaling in the Section \ref{SelfRescaling} and direct computation, to prove Theorem \ref{FrequencyStrichartz}, it is equivalent to show the following Strichartz estimate on $[0,\Trescale]$ with respect to LP projection on the frequency domain $\{1/2\leq\abs{\xi}\leq 2\}$.
\begin{theorem}\label{RescaledStrichartz}
Under the bootstrap assumptions. For any solution $\varphi$ of $\boxg\varphi=0$ on the slab $[0,\Trescale]\times\mathbb{R}^3$, the following estimates holds:
\begin{align}
\twonorms{P_1\pfour\varphi}{t}{q}{x}{\infty}{[0,\Trescale]\times\mathbb{R}^3}\lesssim\norm{\pfour\varphi}_{L_x^2(\Sigma_{0})},
1\end{align}
where $q>2$ is sufficiently close to 2 and $\gfour$ is the rescaled metric $\gfour_{(\lambda)}$ defined in (\ref{frequencyrescaledg}).
\end{theorem}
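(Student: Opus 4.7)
The plan is to establish Theorem \ref{RescaledStrichartz} through a by-now standard chain of reductions from the low-regularity theory of quasilinear wave equations, terminating in a boundedness estimate for a conformal energy whose proof depends on sharp control of the acoustic geometry. The three-step skeleton Strichartz $\Leftarrow$ decay $\Leftarrow$ conformal energy is identical to the one used in Klainerman-Rodnianski \cite{ImprovedLocalwellPosedness}, Wang \cite{AGeoMetricApproach}, and Disconzi-Luo-Mazzone-Speck \cite{3DCompressibleEuler}; what is new here is the handling of the relativistic transport-div-curl structure in the inputs to the geometric analysis.

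First I would apply an abstract $\mathcal{T}\mathcal{T}^*$ duality argument to the operator $\mathcal{T}: L^2_x(\Sigma_0) \to L^q_t L^\infty_x([0,\Trescale]\times\mathbb{R}^3)$ sending Cauchy data $\pfour\varphi|_{\Sigma_0}$ to the frequency-projected solution $P_1\pfour\varphi$. Boundedness of $\mathcal{T}$ is equivalent to boundedness of $\mathcal{T}\mathcal{T}^*$, which in turn reduces to the pointwise frequency-localized dispersive estimate
\begin{align*}
\norm{P_1\Timelike\varphi}_{L^\infty_x(\Sigma_t)} \lesssim \left((1+|t-1|)^{-2/q} + d(t)\right)\left(\norm{\pfour\varphi}_{L^2_x(\Sigma_1)} + \norm{\varphi}_{L^2_x(\Sigma_1)}\right),
\end{align*}
where $\norm{d}_{L^{q/2}_t([0,\Trescale])} \lesssim 1$ and $\varphi$ solves $\boxg\varphi = 0$; this is Theorem \ref{DecayEstimates}. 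A Bernstein inequality together with a finite partition of unity of $\Sigma_1$ by Euclidean balls $B_R \subset \intregion \cap \Sigma_1$ then reduces Theorem \ref{DecayEstimates} to the spatially localized Theorem \ref{Spatiallylocalizeddecay}, and further product estimates and Littlewood-Paley decomposition reduce the latter to the polynomial growth bound
\begin{align*}
\Energyconformal[\varphi](t) \lesssim_\varepsilon (1+t)^{2\varepsilon}\left(\norm{\pfour\varphi}^2_{L^2_x(\Sigma_1)} + \norm{\varphi}^2_{L^2_x(\Sigma_1)}\right),
\end{align*}
which is Theorem \ref{BoundnessTheorem}.

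The main obstacle is proving Theorem \ref{BoundnessTheorem}, which requires sharp control of the acoustic geometry attached to the eikonal function $u$ solving $\gfour^{\alpha\beta}\p_\alpha u\,\p_\beta u = 0$. Concretely, I would need to control the null connection coefficients $\gtr\upchi$, $\hchi$, and the torsion-type coefficient $\upzeta$, the modified mass aspect $\modmass$, the conformal factor $\conformalfactor$, and the modified torsion $\angnabla\conformalfactor + \upzeta$ at the regularity dictated by the bootstrap assumptions, via their transport and Hodge-type equations along the null cones $\coneu$. The genuinely new ingredient compared to the quasilinear wave case is the treatment of $\Ricfour{\Lunit}{\Lunit}$ in the Raychaudhuri equation: by the decomposition (\ref{introRicci}), $\Ricfour{\alpha}{\beta}$ involves $\boxg\gfour_{\alpha\beta}(\vvariables)$, which I would substitute using the wave equations (\ref{waveEQ}), producing source terms containing $\vectvort$ and $\DivGradEnt$. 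It is precisely the transport-div-curl structure (\ref{DivCurlSystem2}) together with the elliptic and Schauder estimates of Section \ref{section4} that makes these source terms controllable in the required norms; without this, the geometric PDEs for the connection coefficients would lose a derivative. Modulo this transport-part input, the verification of the geometric estimates proceeds exactly as in \cite[Section 10]{3DCompressibleEuler} and \cite[Section 7]{AGeoMetricApproach}, so in this paper I would supply the new transport-part analysis in full and state the remaining geometric conclusions with citations.
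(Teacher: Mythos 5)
Your proposal follows exactly the paper's chain of reductions: a $\mathcal{T}\mathcal{T}^*$ duality argument to the decay estimate (Theorem \ref{DecayEstimates}), Bernstein inequalities plus a partition of unity to the spatially localized version (Theorem \ref{Spatiallylocalizeddecay}), product estimates and Littlewood--Paley theory to the conformal energy bound (Theorem \ref{BoundnessTheorem}), with the acoustic geometry controlled via the Ricci decomposition, the substitution of the wave equations, and the transport-div-curl input for $\vectvort,\DivGradEnt$. The only slip is cosmetic: the displayed estimate you attribute to Theorem \ref{DecayEstimates} (with $\Sigma_1$ data and the $(1+\abs{t-1})^{-2/q}$ weight) is in fact the statement of the spatially localized Theorem \ref{Spatiallylocalizeddecay}, whereas the unlocalized decay estimate is phrased with $L_x^1(\Sigma_0)$ norms of up to three derivatives of the data.
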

The proof of Theorem \ref{RescaledStrichartz} crucially relies on the following decay estimate. 
\begin{theorem}[Decay estimate]\label{DecayEstimates}
	There exists a large number $\Lambda$ such that for any $\lambda\geq\Lambda$ and any solution $\varphi$ of the equation $\boxg\varphi=0$ on $[0,\Trescale]\times\mathbb{R}^3$, there is a function $d(t)$ satisfying
	\begin{align}
	\norm{d}_{L_t^{\frac{q}{2}}([0,\Trescale])}\lesssim1
	\end{align}
	for $q>2$ sufficiently close to 2 such that for any $t\in[0,\Trescale]$, the following decay estimate holds\footnote{$\Timelike_{\flat}=-\diff t$, see Definition \ref{timelike} for the definition of $\Timelike$.}:
	\begin{align}
	\norm{P_1\Timelike\varphi}_{L_x^\infty(\St)}\lesssim\left(\frac{1}{(1+t)^{\frac{2}{q}}}+d(t)\right)\left(\sum\limits_{m=0}^3\norm{\p^m\varphi}_{L_x^1(\Sigma_{0})}+\sum\limits_{m=0}^2\norm{\p^m\p_t\varphi}_{L_x^1(\Sigma_{0})}\right).
	\end{align}
	
\end{theorem}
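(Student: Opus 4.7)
The plan is to deduce Theorem \ref{DecayEstimates} from the spatially localized decay estimate Theorem \ref{Spatiallylocalizeddecay}, which itself rests on the conformal energy boundedness theorem (Theorem \ref{BoundnessTheorem}) proved via control of the acoustic geometry in Section \ref{connectioncoefficientsandpdes}. The logical chain therefore runs: control of the acoustic geometry $\Rightarrow$ conformal energy bound $\Rightarrow$ spatially localized decay $\Rightarrow$ the decay estimate above.

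First I would carry out the spatial localization step. Using a partition of unity $\{\chi_l\}$ on $\Sigma_{0}$ subordinate to a covering of $\mathbb{R}^3$ by Euclidean balls of radius $R$, I decompose the data into localized pieces and solve $\boxg\varphi_l = 0$ for each. By finite speed of propagation (which applies because $\gfour$ is Lorentzian with uniformly bounded coefficients under the bootstrap assumptions), each $\varphi_l(t,\cdot)$ remains supported in a ball of radius $\lesssim R + t$, so at any fixed $(t,x)$ only $O(1)$ summands contribute to $P_1\Timelike\varphi(t,x)$. The $L^1\to L^\infty$ structure of the estimate arises by applying Bernstein's inequality $\norm{P_1 f}_{L^\infty_x} \lesssim \norm{P_1 f}_{L^2_x}$ together with $\norm{\chi_l g}_{L^2_x(\mathbb{R}^3)} \lesssim \norm{g}_{L^1_x(\mathbb{R}^3)}$ on each unit-scale ball, thereby converting the $L^2$-based data norms appearing in Theorem \ref{Spatiallylocalizeddecay} to the $L^1$-based norms on the right-hand side above; the high derivative counts $\sum_{m=0}^3\norm{\p^m\varphi}_{L^1_x(\Sigma_0)}$ and $\sum_{m=0}^2\norm{\p^m\p_t\varphi}_{L^1_x(\Sigma_0)}$ reflect the cost of Sobolev embedding after localization and the need to upgrade $L^1$ control of the data to $L^2$ control of derivatives of $\varphi_l$ needed as input for the localized theorem.

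The heart of the argument is therefore the localized decay estimate of Theorem \ref{Spatiallylocalizeddecay}. For this I would use the geometric framework of Section \ref{sectiongeometrysetup}: construct an acoustical function $u$ solving $\gfour^{\alpha\beta}\p_\alpha u\,\p_\beta u = 0$, build the null frame $\{\Lunit,\uLunit,e_A\}$, and define the conformal energy $\Energyconformal[\varphi](t)$ of Section \ref{definitionofconformalenergy}. Theorem \ref{BoundnessTheorem} supplies $\Energyconformal[\varphi](t) \lesssim (1+t)^{2\varepsilon}$. Translating this growth into a pointwise $(1+t)^{-2/q}$ decay for $P_1\Timelike\varphi$ proceeds by splitting into the interior and exterior regions of the null cone foliation and applying Sobolev embedding on the spheres $S_{t,u}$ (whose acoustical area scales like $(t-u)^2$), using the transport equations for $\gtr\upchi$ and $\hchi$ to convert the weighted $L^2$ bounds furnished by $\Energyconformal[\varphi]$ into $L^\infty$ bounds. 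The exceptional function $d(t)$ absorbs pointwise errors proportional to frequency-projected pieces of connection coefficients such as $\norm{P_1\gtr\upchi}_{L^\infty_x}$ and $\norm{P_1\upzeta}_{L^\infty_x}$, which by Proposition \ref{mainproof} are controlled in $L^{q/2}_t$ but not pointwise in time; Bernstein's inequality on the fixed frequency annulus of $P_1$ then upgrades the resulting $L^2_x$ estimates to $L^\infty_x$.

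The main obstacle is the conformal energy bound itself, whose proof demands sharp control of the connection coefficients $\gtr\upchi,\hchi,\uzeta$, the modified mass aspect $\modmass$, and the modified torsion $\angnabla\conformalfactor + \upzeta$. Their transport and div-curl equations involve $\Ricfour{\Lunit}{\Lunit}$, which through the decomposition (\ref{introRicci}) of the Ricci tensor couples to $\boxg\gfour_{\alpha\beta}$ and hence, via the wave equations (\ref{waveEQ}), to the modified fluid variables $\modivort,\DivGradEnt$. The key relativistic novelty is that, unlike in \cite{3DCompressibleEuler}, these variables satisfy a space-time rather than spatial div-curl system, so the required geometric control is predicated on the rewritten spatial elliptic system of Proposition \ref{NewDivCurl} together with the $L^2$ elliptic estimates of Proposition \ref{EllipticestimatesinL2} and the Schauder estimates of Lemma \ref{Schaudertypeestimate}. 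Once these inputs are in place the remainder of the conformal energy argument proceeds in the same fashion as \cite[Section 11]{3DCompressibleEuler} and \cite[Section 7]{AGeoMetricApproach}, and I would not reproduce it in detail.
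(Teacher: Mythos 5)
Your overall route is the one the paper takes: Theorem \ref{DecayEstimates} is obtained from the spatially localized Theorem \ref{Spatiallylocalizeddecay} by partition of unity, Bernstein's inequality, and the Sobolev embedding $W^{2,1}\hookrightarrow L^2$, with the localized theorem resting on the conformal energy bound (Theorem \ref{BoundnessTheorem}) and hence on control of the acoustic geometry; the paper itself defers the details of this reduction to \cite[Section 8.5]{ImprovedLocalwellPosedness}.

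Two points in your localization step need repair. First, Theorem \ref{Spatiallylocalizeddecay} assumes that $\varphi(1,\cdot)$ --- not $\varphi(0,\cdot)$ --- is supported in a Euclidean ball $B_R$, and its right-hand side is expressed in $L^2$ norms on $\Sigma_1$; accordingly the partition of unity must be applied on $\Sigma_1$ (the paper's footnote places the balls at the points $\upgamma_{\tip}(1)$), and one then returns to the $\Sigma_0$ data by energy estimates combined with $W^{2,1}\hookrightarrow L^2$ on unit-scale balls --- this is precisely where the derivative counts $\sum_{m=0}^{3}$ and $\sum_{m=0}^{2}$ in the $L^1$ norms come from. If you localize on $\Sigma_0$ instead, the pieces at time $1$ are no longer supported in $B_R$, so the hypotheses of the theorem you invoke are not met as stated. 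Second, your claim that only $O(1)$ summands contribute to $P_1\Timelike\varphi(t,x)$ at a fixed $(t,x)$ is false for large $t$: by finite speed of propagation each $\varphi_l(t,\cdot)$ is supported in a ball of radius $\lesssim R+t$, so on the order of $(1+t)^3$ pieces overlap at $x$. The sum over the partition is instead controlled because the localized $L^1$ data norms are $\ell^1$-summable, $\sum_l\norm{\p^m(\chi_l\varphi)}_{L^1_x}\lesssim\sum_{m'\leq m}\norm{\p^{m'}\varphi}_{L^1_x}$, and the exceptional functions $d_l$ are combined via Minkowski's inequality in $L^{q/2}_t$ using the uniform bound $\norm{d_l}_{L^{q/2}_t}\lesssim 1$. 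With these corrections the argument is the standard one and matches the paper's intended proof.
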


The proof of Theorem \ref{RescaledStrichartz} using Theorem \ref{DecayEstimates} is based on a $\mathcal{T}\mathcal{T}^*$ argument\footnote{This argument comes from functional analysis, which does not require the structure of the relativistic Euler equations.} (see \cite[Section 8.6]{ImprovedLocalwellPosedness} and \cite[Section 8.30]{RoughsolutionstotheEinsteinvacuumequations}).

Theorem \ref{DecayEstimates} can be further reduced to the following spatially localized version.
\begin{theorem}[Spatially localized version of decay estimate]\label{Spatiallylocalizeddecay}
	There exists a large number $\Lambda$ such that for any $\lambda\geq\Lambda$ and any solution $\varphi$ of the equation $\boxg\varphi=0$ on $[0,\Trescale]\times\mathbb{R}^3$ with $\varphi(1,x)$ supported in the Euclidean ball $B_{R}$, where radius $R$ is a fixed radius\footnote{The radius $R$ will be used in the following sections as well with the same definition. The existence of such an $R$ is guaranteed by properties of $g$, namely, (\ref{gab2}), that ensures $g$ is comparable to the Euclidean metric on $\St$ under the bootstrap assumptions.} such that
	\begin{align}
	B_R(p)&\subset B_{\frac{1}{2}}(p,g_{(\lambda)}),&
	&\forall p\in\Sigma_t, 0\leq t\leq\Trescale,
	\end{align}
	where $B_{\rho}(p,g_{(\lambda)})$ is the geodesic ball\footnote{The notation $B_{\rho}(p,g_{(\lambda)})$ will be used in the remainder of the article. This is consistent with the notation that is used in \cite{3DCompressibleEuler} and \cite{AGeoMetricApproach}} centered at $p$ with radius $\rho$ and $g_{(\lambda)}$ is the rescaled induced metric of $\gfour$ on $\St$ (defined in (\ref{frequencyrescaledgt})), there is a function $d(t)$ satisfying
	\begin{align}
	\norm{d}_{L_t^{\frac{q}{2}}([0,\Trescale])}\lesssim1
	\end{align}
	for $q>2$ sufficiently close to 2 such that for any $t\in[0,\Trescale]$, there holds
	\begin{align}\label{Decay}
	\norm{P_1\Timelike\varphi}_{L_x^\infty(\St)}\lesssim\left(\frac{1}{(1+\abs{t-1})^{\frac{2}{q}}}+d(t)\right)\left(\norm{\pfour\varphi}_{L_x^2(\Sigma_1)}+\norm{\varphi}_{L_x^2(\Sigma_1)}\right).
	\end{align}
\end{theorem}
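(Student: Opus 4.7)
The plan is to deduce Theorem~\ref{Spatiallylocalizeddecay} from the conformal energy estimate stated in Theorem~\ref{BoundnessTheorem} by combining it with the acoustic null geometry set up in Section~\ref{sectiongeometrysetup}, Bernstein's inequality, and a partition of $\St$ based on the acoustical function $u$. First, I would split the time interval $[0,\Trescale]$ into the ``non-dispersive'' piece $[0,1]$, on which the result follows from the coercive basic energy estimate of Lemma~\ref{basicenergy} (running the evolution backward from $\Sigma_1$), standard Bernstein $\norm{P_1 f}_{L^\infty_x}\lesssim\norm{P_1 f}_{L^2_x}$, and the fact that $\varphi(1,\cdot)$ is localized in $B_R\subset\intregion\cap\Sigma_1$, so that by finite propagation speed $\varphi(t,\cdot)$ remains in a slab of bounded $g_{(\lambda)}$-diameter. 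The substantive content of the theorem is therefore on $[1,\Trescale]$.

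On $[1,\Trescale]$ I would foliate the future of $\support(\varphi(1,\cdot))$ by the acoustic null cones $\coneu$. Using the decomposition $\St=(\St\cap\intregion)\cup(\St\cap\extregion)$ determined by a fixed null cone, the plan is to treat the two regions separately. In the \emph{exterior} region $\{\rgeo\gtrsim 1\}$, the geodesic sphere $\stu$ at radius $\rgeo\sim t$ has area $\sim t^2$; writing $\Timelike\varphi$ in the null frame $\{\Lunit,\uLunit,e_1,e_2\}$, the weights $u^2$ and $(2t-u)^2$ appearing in $\Energyconformal[\varphi]$ (see Subsection~\ref{definitionofconformalenergy}) produce a gain of $(1+|t-1|)^{-2}$ when we pass from the conformal energy density to the plain $L^2$ density. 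In the \emph{interior} region, where $\rgeo\lesssim 1$, I would extract decay by a variant of the Klainerman--Sobolev-type argument adapted to the rough acoustic geometry: covering $\intregion\cap\St$ by finitely many $g_{(\lambda)}$-geodesic balls of radius $\sim 1$, applying $\norm{P_1 f}_{L^\infty_x(B)}\lesssim\norm{f}_{L^2_x(B)}+\norm{f}_{L^2_x(B^c)}\cdot\text{(tail)}$, and controlling the Littlewood-Paley tails by product estimates with smooth cutoffs. Combining the two regions and using the conformal energy bound $\Energyconformal[\varphi](t)\lesssim(1+t)^{2\varepsilon}(\norm{\pfour\varphi}^2_{L^2_x(\Sigma_1)}+\norm{\varphi}^2_{L^2_x(\Sigma_1)})$ produces
\begin{align*}
\norm{P_1\Timelike\varphi}_{L^\infty_x(\St)}\lesssim (1+t)^{-1+\varepsilon}\bigl(\norm{\pfour\varphi}_{L^2_x(\Sigma_1)}+\norm{\varphi}_{L^2_x(\Sigma_1)}\bigr)+\text{(error)}.
\end{align*}
Picking $\varepsilon$ so that $1-\varepsilon\geq 2/q$ for $q>2$ sufficiently close to $2$ gives the leading rate $(1+|t-1|)^{-2/q}$.

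The ``error'' piece is where the rough function $d(t)$ enters, and it is the main technical obstacle. It arises from two essentially independent sources: the discrepancy between $\Timelike$, $\Lunit$, and $\p_t$ is governed by first derivatives of $\vvariables$, which are only controlled in mixed norms $L^2_tL^\infty_x$ through the bootstrap assumptions~(\ref{waveba})--(\ref{transportba}); and commutator errors $[P_1,\text{cutoff}]$ and $[P_1,\Timelike]$ likewise produce factors of $\pfour\vvariables$ (and of $\p\vvort,\p\vgradientEnt$ via the decomposition of $\Ricfour{\Lunit}{\Lunit}$ used in controlling the acoustic geometry, see~(\ref{introRicci})). Schematically, each such error has the form $\gensmoothfunction(\vvariables)\cdot(\pfour\vvariables,\p\vvort,\p\vgradientEnt)$ multiplied by a factor $\lesssim \norm{\pfour\varphi}_{L^2_x(\Sigma_1)}+\norm{\varphi}_{L^2_x(\Sigma_1)}$, and the plan is to take
\begin{align*}
d(t):=\norm{\pfour\vvariables,\p\vvort,\p\vgradientEnt}_{L^\infty_x(\St)}.
\end{align*}
By~(\ref{bt1}) (and the analogous consequence of the transport bootstrap assumption), this choice satisfies $\norm{d}_{L^2_t([0,\Trescale])}\lesssim 1$, and for $q$ sufficiently close to $2$ the embedding $L^2_t\hookrightarrow L^{q/2}_t$ on $[0,\Trescale]$ (using that $\Trescale\lesssim \lambda^{1-8\varepsilon_0}\Tstar$ is effectively bounded for this purpose, cf.~Subsection~\ref{SelfRescaling}) delivers the required $\norm{d}_{L^{q/2}_t}\lesssim 1$. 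The delicate point, and the place where the new div-curl/Schauder analysis of Section~\ref{section4} for the space-time vorticity and entropy gradient is essential, is that the commutator errors genuinely involve $\pfour\vvort$ and $\pfour\vgradientEnt$ through the decomposition of $\Ricfour{\Lunit}{\Lunit}$: without the transport-Schauder and $L^2$-elliptic control of Proposition~\ref{EllipticestimatesinL2} and Theorem~\ref{Schauder0}, one cannot close the bookkeeping for $d(t)$ at the required regularity.
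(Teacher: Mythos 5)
Your high-level route — deduce Theorem \ref{Spatiallylocalizeddecay} from the conformal energy bound of Theorem \ref{BoundnessTheorem} via Bernstein, product estimates, and an interior/exterior split along the null foliation — is the same route the paper takes (the paper defers the details to Klainerman--Rodnianski and Wang). However, your execution has a genuine gap in the exterior region. First, a factual slip: the conformal energy of Definition \ref{definitionofconformalenergy} carries the weights $t^2$ (interior) and $\rgeo^2$ (exterior), not $u^2$ and $(2t-u)^2$; those are the Morawetz multiplier weights. More seriously, $\eEnergyconformal[\varphi]$ controls only $\rgeo^2\abs{\Dfour_\Lunit\varphi}^2+\rgeo^2\abs{\angnabla\varphi}^2+\abs{\varphi}^2$: the transversal derivative $\uLunit\varphi$ carries \emph{no} favorable weight in the wave zone $0\leq u\lesssim t/2$. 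Since $\Timelike=\tfrac{1}{2}(\Lunit+\uLunit)$, your claim that the weights ``produce a gain of $(1+\abs{t-1})^{-2}$ when we pass from the conformal energy density to the plain $L^2$ density'' simply does not apply to $\Timelike\varphi$ there. This is exactly the delicate point of the reduction: the non-decaying contribution of $\uLunit\varphi$ near the outgoing cones is controlled only in a time-averaged sense (via the Morawetz-type estimate and the characteristic fluxes), and it is precisely this contribution that the function $d(t)$ with $\norm{d}_{L_t^{q/2}}\lesssim 1$ is designed to absorb. Your choice $d(t):=\norm{\pfour\vvariables,\p\vvort,\p\vgradientEnt}_{L^\infty_x(\St)}$ captures only the coefficient/commutator errors; it cannot absorb a term that is quadratic in the \emph{solution} $\varphi$ rather than in the background, so the argument as written does not close in the exterior region.

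Two further corrections. The assertion that the commutator errors in this reduction involve $\pfour\vvort,\pfour\vgradientEnt$ ``through the decomposition of $\Ricfour{\Lunit}{\Lunit}$'' misplaces where the vorticity analysis is used: the Ricci decomposition (\ref{introRicci}) and the transport--div--curl estimates of Section \ref{section4} enter in the control of the acoustic geometry (Proposition \ref{mainproof}) and hence in the \emph{proof} of Theorem \ref{BoundnessTheorem}; the reduction of the decay estimate to the conformal energy bound involves only $\pfour\vvariables$ (through the coefficients of $\Timelike$, the cutoffs, and the null frame) and the geometric quantities already estimated. Finally, your justification of $\norm{d}_{L_t^{q/2}}\lesssim 1$ by treating $\Trescale$ as ``effectively bounded'' is misleading: $\Trescale$ can be as large as $\lambda^{1-8\varepsilon_0}\Tstar$, and the $L^2_t\to L^{q/2}_t$ step costs a factor $\Trescale^{2/q-1/2}$ that must be beaten by the $\lambda^{-1/2-4\varepsilon_0}$ smallness in (\ref{bt1}); this works out, but only after tracking the powers of $\lambda$, not because the interval is bounded.
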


	The proof of Theorem \ref{DecayEstimates} using Theorem \ref{Spatiallylocalizeddecay} can be done via an approach involving the Bernstein inequalities of LP projection, partition of unity\footnote{We take a sequence of Euclidean balls $\{B_I\}$ of radius $R$ such that their union covers $\mathbb{R}^3$. For each ball $B_R$, it is centered at $\upgamma_\tip(1)$ (defined in  Section \ref{solutionofu}) for some $\tip$.} of $\varphi$ and Sobolev embedding $W^{2,1}\hookrightarrow L^2$. We 
	refer readers to \cite[Section 8.5]{ImprovedLocalwellPosedness} for detailed proof.\\
	
To summarize, in this section we have reduced Theorem \ref{MainEstimates} to Theorem \ref{Spatiallylocalizeddecay}. To further reduce the estimates, we need to introduce the geometric setup. We will discuss the proof of Theorem \ref{Spatiallylocalizeddecay} in Section \ref{subsectionconformalenergy}.
\section{Geometric Setup and Conformal Energy}\label{sectiongeometrysetup}
In this section, we first construct acoustic geometry. It is deeply coupled with the relativistic Euler equations(via acoustic metric $\gfour$) and is crucial in our analysis. Then we provide the conformal energy and its estimates in Section \ref{subsectionconformalenergy}
\begin{definition}[Christoffel symbols]\label{Christoffelsymbols}
	We define the Christoffel symbols $\Chfour_{\alpha\kappa\lambda}$ and $\Chfour^\beta_{\kappa\lambda}$ with the rescaled metric $\gfour$ to be as follows:
	\begin{align}
	\Chfour_{\alpha\kappa\lambda}&:=\frac{1}{2}\left(\p_\kappa\gfour_{\alpha\lambda}+\p_\lambda\gfour_{\alpha\kappa}-\p_\alpha\gfour_{\kappa\lambda}\right),\\
	\Chfour^\beta_{\kappa\lambda}&:=\gfour^{\alpha\beta}\Chfour_{\alpha\kappa\lambda}.
	\end{align}
\end{definition}
\subsection{Construction of the Acoustical Function}\label{opticalfunction}

The goal of this subsection is to construct the geometry based on a solution $u$ to the acoustical eikonal equation\footnote{For more details of the geometric construction of $u$, we refer reader to \cite[Chapter 9 and Chapter 14]{GlobalStabiliityOfMinkowski}.}:
\begin{align}\label{eikonalequation}
\gfour^{\alpha\beta}\p_\alpha u\p_\beta u=0.
\end{align}

\subsubsection{Point $\tip$ and integral curve $\upgamma_\tip(t)$}\label{pointtip}
Let $\tip\in\Sigma_{0}$ be an arbitrarily fixed point\footnote{Note that in the original spacetime $[0,\Tstar]\times\mathbb{R}^3$, $\tip$ is a point at $\Sigma_{t_k}$ for some $k$.} in the rescaled space-time $[0,\Trescale]\times\mathbb{R}^3$, where $\Trescale$ is defined in (\ref{Trescale}). We let $\upgamma_\tip(t)$ denote the integral curve of the future-directed vectorfield $\Timelike$ emanating from the point $\tip$. We say $\upgamma_\tip(t)$ is the cone-tip axis. Specifically, the point $\tip$ depends on the partition of unity of $\Sigma_1$ used in the proof of Theorem \ref{DecayEstimates} using Theorem \ref{Spatiallylocalizeddecay}. More specifically, $\tip$ is going to be the tip of the cone (constructed in Subsection \ref{solutionofu}) such that $\upgamma_\tip(1)$ is the center of the Euclidean ball $B_R$ (as in Theorem \ref{Spatiallylocalizeddecay}). We note that the estimates, constants and parameters in Section \ref{sectiongeometrysetup}-\ref{connectioncoefficientsandpdes} are independent of $\tip$.
\subsubsection{The interior and exterior solution $u$}\label{solutionofu}

\paragraph{The interior solution $u$.}
We let $\{\left.\Lunit_\sangle\right|_\tip\}_{\sangle\in\Stwo}$ be the family of null vectors (parameterized by $\Stwo$, where the parameterization will be uniquely determined in the paragraph of the exterior solution $u$) in the tangent space $T_{\tip}\region$ and $\langle\left.\Lunit_\sangle\right|_\tip,\Timelike\rangle=-1$. To propagate $\Lunit_\sangle$ along the cone-tip axis $\upgamma_\tip(t)$, for any $\textbf{p}\in\upgamma_\tip(t)$ (as defined in Subsection \ref{pointtip}) and $\sangle\in\Stwo$, we define $\left.\Lunit_\sangle\right|_{\textbf{p}}$ by solving the parallel transport equation $\Dfour_\Timelike\Lunit_{\sangle}=0$. We note that for any $\textbf{p}\in\upgamma_\tip(t)$, $\langle\left.\Lunit_\sangle\right|_{\textbf{p}},\Timelike\rangle=-1$ since $\Timelike$ is geodesic. Then, for each $u\in[0,\Trescale]$ and $\sangle\in\Stwo$, there exists a unique null geodesic $\Upsilon_{u,\sangle}(t)$, where $t\in[u,\Trescale]$, emanating from $\textbf{p}=\upgamma_\tip(u)$ with $\left.\frac{\diff}{\diff t}\Upsilon_{u,\sangle}\right|_{t=u}=\Lunit_\sangle$ and $\Upsilon_{u,\sangle}^0(t)=t$. Specifically, $\Upsilon_{u,\sangle}(t)$ is constructed by solving the following ``geodesic" ODE system\footnote{See \cite[Section 9.4.1]{3DCompressibleEuler} for detailed explanation of this ODE system where $\Timelike$ coincide with $\materialderivative$.}:
\begin{subequations}
	\begin{align}
\label{geodesiceq}\ddot{\Upsilon}_{u,\sangle}^\alpha(t)=&-\left.\Chfour^\alpha_{\kappa\lambda}\right|_{\Upsilon_{u,\sangle}(t)}\dot{\Upsilon}_{u,\sangle}^\kappa(t)\dot{\Upsilon}_{u,\sangle}^\lambda(t)\\
\notag&+\frac{1}{2}\left.[\Lie_\Timelike\gfour]_{\kappa\lambda}\right|_{\Upsilon_{u,\sangle}(t)}(\dot{\Upsilon}_{u,\sangle}^\kappa(t)-\left.\Timelike^\kappa\right|_{\Upsilon_{u,\sangle}(t)})(\dot{\Upsilon}_{u,\sangle}^\lambda(t)-\left.\Timelike^\lambda\right|_{\Upsilon_{u,\sangle}(t)})\dot{\Upsilon}_{u,\sangle}^\alpha(t),\\
\Upsilon_{u,\sangle}^\alpha(u)=&\upgamma_\tip^\alpha(u), \ \ \ 
\dot{\Upsilon}_{u,\sangle}^\alpha(u)=\Lunit^\alpha_{\sangle},
\end{align} 
\end{subequations}
where $\ddot{\Upsilon}_{u,\sangle}^\alpha:=\frac{\diff^2}{\diff t^2}\Upsilon_{u,\sangle}^\alpha$, $\dot{\Upsilon}_{u,\sangle}^\alpha:=\frac{\diff}{\diff t}\Upsilon_{u,\sangle}^\alpha$, $\Chfour$ is the Christoffel symbol of $\gfour$ and $\Lie_\Timelike\gfour$ is the Lie derivative of $\gfour$ with respect to $\Timelike$. The curve $t\rightarrow\Upsilon_{u,\sangle}(t)$ is a non-affinely parameterized null geodesic such that the vectorfield $\Lunit_{u,\sangle}^\alpha:=\frac{\diff}{\diff t}\Upsilon_{u,\sangle}^\alpha$ is null and normalized by $\Lunit_{u,\sangle}^0=1$. In fact, this vectorfield coincides (in the interior region) with the vectorfield $\Lunit$ defined in (\ref{DefinitionofL}) below. We define the truncated null cone $\coneu$ to be $\coneu:=\bigcup\limits_{\sangle\in\Stwo,t\in[u,\Trescale]}\Upsilon_{u,\sangle}(t)$. We define the acoustical function $u$ by asserting that its level sets $\{u=u^\prime\}$ are $\mathcal{C}_{u^\prime}$. For $u\in[0,\Trescale]$ and $t\in[u,\Trescale]$, we let $\stu:=\coneu\bigcap\St$. For $u\neq t$, $\stu$ is a smooth surface diffeomorphic to $\Stwo$. We define $\intregion:=\bigcup\limits_{t\in[0,\Trescale], 0\leq u\leq t}\stu$. For each $\sangle\in\Stwo$ and $u\in[0,\Trescale]$, we define the angular coordinate functions $\{\sangle^A\}_{A=1,2}$ to be constant along each fixed null geodesics $\Upsilon_{u,\sangle}(t)$ and to coincide with standard angular coordinates on $\Stwo$ at the tip $\textbf{p}$, which corresponds to $t=u$.

\paragraph{The exterior solution $u$.}

Now we extend the foliation of space-time by null hypersurfaces to a neighborhood of $\bigcup\limits_{t\in[0,\Trescale], 0\leq u\leq t}\stu$ in $[0,\Trescale]\times\mathbb{R}^3$. Let $w_*=\frac{4}{5}\Trescale$. Using the arguments in \cite{NashMoser1}, we can guarantee\footnote{The existence of $w$-foliation for $w\in[0,\varepsilon]$ with a small $\varepsilon>0$ can be proven by Nash-Moser implicit function theorem and such foliation can be extended to $w_*$ by an argument of continuity (see \cite{NashMoser1}).} that there is a neighborhood $\neighborhood\in\Sigma_{0}$ contained in the geodesic ball $B_{\Trescale}(\tip,g_{(\lambda)})$, where $g_{(\lambda)}$ is the frequency rescaled induced metric of $\gfour$ on $\Sigma_{0}$ defined in (\ref{frequencyrescaledgt}), such that $\neighborhood$ can be foliated by the level sets $S_{0,-w}$ of a positive function $w$ taking all values in $[0,w_*]$ with $w(\tip)=0$ and each $S_{0,-w}$ for positive $w$ is diffeomorphic to $\mathbb{S}^2$. Fix a diffeomorphism $\sangle\rightarrow\Phi_\sangle(w_*)$ from $\Stwo$ to $S_{0,-w_*}$. Then, for each point $\textbf{p}=\Phi_\sangle(w_*)$, denoting the lapse $\lapse:=\left((g^{-1})^{cd}\p_cw\p_dw\right)^{-\frac{1}{2}}$ with $\lapse(\tip)=1$ and $\lapse\approx1$; see Proposition \ref{InitialCT1} (we note that the proof of Proposition \ref{InitialCT1} is independent of the construction of $u$), there is a unique integral curve\footnote{The existence and uniqueness of such integral curve are ensured by the estimates on $\Sigma_{0}$ in Proposition \ref{InitialCT1}. We refer readers to \cite[Subsection 9.4.2]{3DCompressibleEuler} for details.} 
$w\rightarrow\Phi_\sangle(w)$ of the vectorfield $\lapse^2(g^{-1})^{ic}\p_cw$ in $\Sigma_{0}$ with $\Phi_\sangle(w_*)=\textbf{p}$ and such that this integral curve can be extended to $\tip$, i.e. $\Phi_{\sangle}(0)=\tip$ (the extendibility of $\Phi_\sangle$ to $\tip$ follows by estimates (\ref{pNatSigma0}) and the fundamental theorem of Calculus). Denoting $\dot{\Phi}_\sangle:=\frac{\diff}{\diff w}\Phi_{\sangle}$, we then define $\spherenormal_\sangle|_\tip:=\dot{\Phi}_\sangle(0)$ and $\left.\Lunit_\sangle\right|_\tip:=\spherenormal_\sangle|_\tip+\Timelike|_\tip$. Note that the diffeomorphism $\sangle\rightarrow\left.\Lunit_\sangle\right|_\tip$ is uniquely determined by the vector field $\lapse^2(g^{-1})^{ic}\p_cw$, and it is precisely this diffeomorphism that appears in our construction of the interior solution described above, since $\langle\spherenormal_\sangle|_\tip+\Timelike|_\tip,\Timelike|_\tip\rangle=-1$ and $\langle\spherenormal_\sangle|_\tip+\Timelike|_\tip,\spherenormal_\sangle|_\tip+\Timelike|_\tip\rangle=0$ (because of $\lapse(\tip)=1$).
By such construction, for each $w\in(0,w_*]$ and any $\textbf{p}\in S_{0,-w}$, there exists a $\sangle\in\Stwo$ such that $\textbf{p}=\Phi_\sangle(w)$ and such that the outward unit normal (in $\Sigma_{0}$) to $S_{0,-w}$ at $\textbf{p}$ is $\spherenormal_{w,\sangle}:=\dot{\Phi}_\sangle(w)$. We set $\Lunit_{w,\sangle}:=\spherenormal_{w,\sangle}+\Timelike|_{\Phi_\sangle(w)}$, which is a null vector in $T_\textbf{p}\region$. Then, with $u=-w$, there is a unique (non-affinely parameterized) null geodesic $\Upsilon_{u,\sangle}$ emanating from $\textbf{p}$ and solving (\ref{geodesiceq}) with the initial condition $\left.\frac{\diff}{\diff t}\Upsilon_{u,\sangle}\right|_{t=0}=\Lunit_{-u,\sangle}$ and $\Upsilon_{u,\sangle}(0)=\textbf{p}$. We define the null cone $\coneu$ to be $\coneu:=\bigcup\limits_{\sangle\in\Stwo,t\in[0,\Trescale]}\Upsilon_{u,\sangle}(t)$. We define the acoustical function $u$ by asserting that its level sets $\{u=u^\prime\}$ are $\mathcal{C}_{u^\prime}$. Let $\stu:=\coneu\bigcap\St$. We note that $\coneu$'s are the outgoing truncated null cones, that is, $\coneu:=\bigcup\limits_{t\in[0,\Trescale]}\stu$. We define $\extregion:=\bigcup\limits_{t\in[0,\Trescale], u\in[-w_*,0)}\stu$.
For each $\sangle\in\Stwo$ and $u\in[-w_*,0)$, we define the angular coordinate functions $\{\sangle^A\}_{A=1,2}$ to be constant along null geodesics $\Upsilon_{u,\sangle}(t)$ and to coincide with the angular coordinates $\{\sangle^A\}_{A=1,2}$ on $\Sigma_{0}$ provided by the above construction; note that on $\Sigma_{0}\backslash\{\tip\}$, by construction, the angular coordinate functions $\{\sangle^A\}_{A=1,2}$ are constant along the integral curves of the vectorfield $\lapse^2(g^{-1})^{ic}\p_cw$.

We define the space-time region $\region$ as follows:
\begin{align}
\region=\intregion\bigcup\extregion.
\end{align}
By the constructions above, we have constructed the geometric coordinates $(t,u,\sangle^A)$ in $\region$.

See Figure \ref{figuregeometry} for the figure of the geometry.
\subsection{Geometric Quantities}\label{geometricquantities}

\begin{definition}[The radial variable]\label{D:rescaledquantities}
	Recall that
	\begin{align}
	\label{lengtht}0\leq\Trescale\leq\lambda^{1-8\varepsilon_0}\Tstar.
	\end{align}
	
	We define the geometric radial variable $\rgeo$ as follows:
	\begin{align}
	\label{DEFE:rgeo}\rgeo:=t-u.
	\end{align}
	
	Since we have that $t\in[0,\Trescale]$ and $u\in[-w_*,t]$ in $\region$, where $w_*:=\frac{4}{5}\Trescale$, we have
	\begin{align}\label{estimatesradial}
	0&\leq\rgeo<2\Trescale=2\lambda^{1-8\varepsilon_0}\Tstar,&
	-\frac{4}{5}\lambda^{1-8\varepsilon_0}T_*&\leq u\leq\lambda^{1-8\varepsilon_0}\Tstar.
	\end{align}
	We will silently use estimates (\ref{estimatesradial}) throughout the paper.
\end{definition}
\begin{definition}[Acoustic vectorfields and scalar functions]\label{D:Nullframe}
	
	We define the null vector field
	\begin{align}
	\Lgeo&:=-\gfour^{\alpha\beta}\partial_\beta u\partial_\alpha.
	\end{align}	
	Note that by (\ref{eikonalequation}), we have $\Dfour_{\Lgeo}\Lgeo=0$.	
	
	We define the null lapse $\nulllapse$ as follows:
	\begin{align}
	\nulllapse &:=(\sqrt{\gt^{ij}\partial_i u\partial_j u})^{-1}.
	\end{align}
	
	We define the vector field $\spherenormal$ as follows:
	\begin{align}
	\spherenormal&:=-\nulllapse\gt^{ij}\partial_iu\partial_j.
	\end{align}
	Note that $\left.\spherenormal\right|_{\Sigma_{0}}=\spherenormal_\sangle$, where $\spherenormal_\sangle$ is define in Section \ref{opticalfunction}.
	
	We define the principal null vector fields $\Lunit$ and $\uLunit$ as follows:
	\begin{align}\label{DefinitionofL}
	\Lunit&:=\Timelike+\spherenormal,
	&
	\uLunit&:=\Timelike-\spherenormal.
	\end{align}	
	Notice that by Definition \ref{D:inducegonconstanttime} and (\ref{eikonalequation}), we have identity $\Timelike u=\abs{\nabla u}_g=\frac{1}{\nulllapse}$. Then we have
	\begin{align}
	\Lunit=\nulllapse\Lgeo.
	\end{align}
	
	We have following basic properties:
	\begin{align}
	\langle\Timelike,\Timelike\rangle&=-1,
	&
	\langle\spherenormal,\spherenormal\rangle&=1,\\
	\langle\Timelike,\spherenormal\rangle&=0,
	&
	\langle\Lunit,\Lunit\rangle&=0,\\
	\langle\Lunit,\uLunit\rangle&=-2,
	&
	\langle\uLunit,\uLunit\rangle&=0.
	\end{align}
	
	We define $\gsphere$ to be as follows:
	\begin{align}\label{decompositionofmetric}
	(\gsphere^{-1})^{\alpha\beta}:=\gfour^{\alpha\beta}+\frac{1}{2}\Lunit^\alpha\uLunit^\beta+\frac{1}{2}\uLunit^\alpha\Lunit^\beta.
	\end{align}
	It is easy to check that $\gsphere$ is an induced metric of $\gfour$ on $\stu$. We denote a pair of unit orthogonal spherical vectorfields\footnotemark
	\footnotetext{In the rest of the paper, we automatically sum over $A$ if there are two $A$'s in the expression.} on $\stu$ by $\{e_A\}_{A=1,2}$ such that $(\gsphere^{-1})^{\alpha\beta}=\sum\limits_{A=1,2}e_A^\alpha e_A^\beta$. We call $\Lunit,\uLunit,e_1,e_2$ a null frame for the geometry. 
	 
	We denote the Levi-Civita connection on $\stu$ with respect to $\gsphere$ by $\angnabla$.
	
	 As we discussed in Section \ref{opticalfunction}, the angular coordinate functions $\{\sangle^A\}_{A=1,2}$ satisfy the equation $\Lunit(\sangle^A)=0$ along null cones. By this construction of angular coordinates, with respect to geometric coordinates $(t,u,\sangle^A)$, we have
	\begin{align}\label{framework}
	\Lunit&=\frac{\p}{\p t},&  \spherenormal&=-\nulllapse^{-1}\frac{\p}{\p u}+Y^A\deriasphere.
	\end{align}
	By construction in Section \ref{solutionofu}, $Y^A=0$ on $\Sigma_{0}$. 
\end{definition}

\label{figuregeometry}\begin{figure}
	\centering
	\includegraphics[width=12cm]{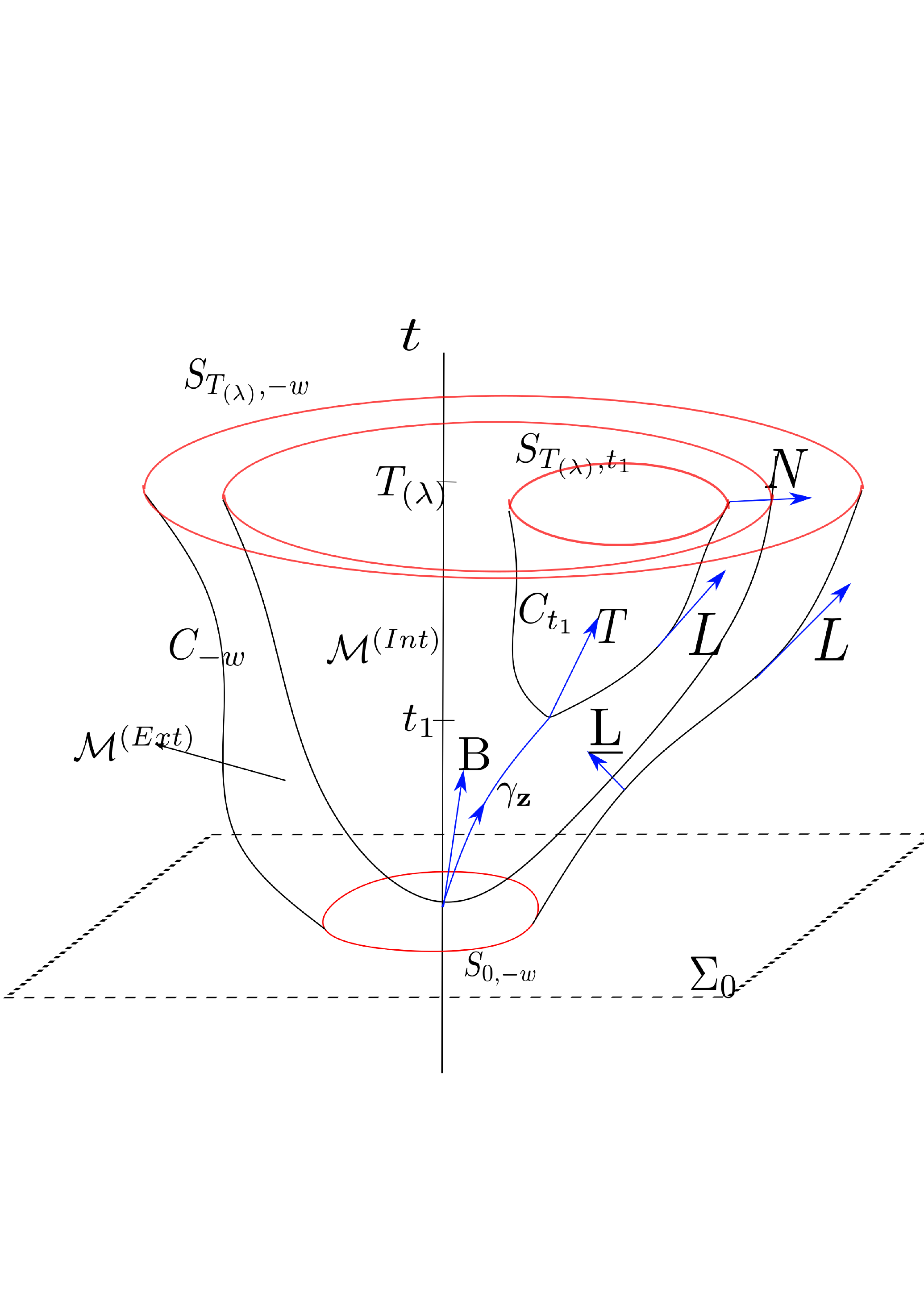}
	\caption{The geometric construction out of acoustical function}
\end{figure}

\begin{definition}[$\stu$-tangent tensorfields]\label{D:stutensorfields}
	
	We define the $\gfour$-orthogonal projection $\sphereproject$ onto $\stu$, where $\delta^\alpha_\beta$ is the Kronecker delta, as follows:
	\begin{align}
	\sphereproject^\alpha_\beta:=\delta^\alpha_\beta+\frac{1}{2}\Lunit^\alpha\uLunit_\beta+\frac{1}{2}\uLunit^\alpha\Lunit_\beta,
	\end{align}
	
	We use the notation $\sgabs{\upxi}$ to denote the norm of the $\stu$-tangent tensorfield $\upxi$ with respect to $\gsphere$, that is,
	\begin{align}
	\sgabs{\upxi}^2:=\gsphere(\upxi,\upxi).
	\end{align}
	
	We use $\gtr\upxi$ to denote the trace of a $\binom{0}{2}$ $\stu$-tangent tensorfield $\upxi$ with respect to $\gsphere$:
	\begin{align}
	\gtr\upxi:=\gsphere^{AB}\upxi_{AB}.
	\end{align}
	
	We define $\hat{\upxi}$ to be the trace-free part of the $\binom{0}{2}$ $\stu$-tangent tensorfield $\upxi$:
	\begin{align}
	\hat{\upxi}:=\upxi-\frac{1}{2}(\gtr\upxi)\gsphere.
	\end{align}
\end{definition}

\subsection{Conformal Energy}\label{subsectionconformalenergy}
We set up the conformal energy method that was introduced by Wang in \cite{AGeoMetricApproach}. In order to give the definition of our conformal energy, we define two smooth cut-off functions $\underline{\upvarpi}$ and $\upvarpi$ that depend only on $t$ and $u$ as follows:\\
\begin{align*}
\underline{\upvarpi}&=
\begin{cases}
1& \text{on } 0\leq u\leq t\\
0& \text{on } u\leq-\frac{t}{4}
\end{cases} , &
\upvarpi&=
\begin{cases}
1& \text{on } 0\leq u\leq \frac{1}{2}t\\
0& \text{if } u\geq\frac{3}{4}t\ \text{or}\ u\leq-\frac{t}{4}
\end{cases}.
\end{align*}
\begin{definition}\cite[section 4. Conformal energy]{AGeoMetricApproach}\label{definitionofconformalenergy}
	For any scalar $\varphi$ vanishing outside $\intregion$, we define the conformal energy $\mathfrak{C}[\varphi]$as follows:
	\begin{align}
	\Energyconformal[\varphi](t)=\iEnergyconformal[\varphi](t)+\eEnergyconformal[\varphi](t),
	\end{align}
	where
	\begin{subequations}
		\begin{align}
		\iEnergyconformal[\varphi](t)&=\int_{\St}(\underline{\upvarpi}-\upvarpi)t^2\left(\abs{\Dfour\varphi}^2+\abs{\rgeo^{-1}\varphi}^2\right)\diff\tvol,\\
		\eEnergyconformal[\varphi](t)&=\int_{\St}\upvarpi\left(\rgeo^2\abs{\Dfour_\Lunit\varphi}^2+\rgeo^{2}\abs{\angnabla\varphi}^2+\abs{\varphi}^2\right)\diff\tvol.
		\end{align}
	\end{subequations}
\end{definition}

The main result of $\Energyconformal[\varphi](t)$ is the following:
\begin{theorem}[Boundedness theorem]\label{BoundnessTheorem}
	Let $\varphi$ be any solution of $\boxg\varphi=0$ on $[0,\Trescale]\times\mathbb{R}^3$ with $\varphi(1)$ supported in $B_R\subset\intregion\bigcap\Sigma_{1}$. Then under bootstrap assumptions, for $t\in[1,\Trescale]$, the conformal energy of $\varphi$ satisfies the estimate:
	\begin{align}
	\Energyconformal[\varphi](t)\lesssim(1+t)^{2\varepsilon}\left(\norm{\pfour\varphi}^2_{L_x^2(\Sigma_{1})}+\norm{\varphi}^2_{L_x^2(\Sigma_{1})}\right),
	\end{align}
	where $\varepsilon>0$ is an arbitrary small number.
\end{theorem}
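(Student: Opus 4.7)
The plan is to adopt the multiplier approach of Wang \cite{AGeoMetricApproach} and combine estimates obtained from two different wave equations: the original $\boxg\varphi=0$ and a conformal variant $\square_{\rescaledgfour}(e^{-\conformalfactor}\varphi)=\cdots$, where $\rescaledgfour$ is conformal to $\gfour$ with a carefully chosen factor $\conformalfactor$ so that the null expansion scalar of the conformal metric equals $\gtr\upchi+\Chfour_{\Lunit}$. Two equations are needed because $\rescaledgfour$ provides additional smoothness along null hypersurfaces that is essential for controlling the exterior portion of $\Energyconformal$, but its transversal regularity is insufficient; so the original equation must be used first to obtain a uniform Morawetz-type bound that will then be fed back into the conformal-equation analysis.

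First, I would apply the divergence theorem to the modified current associated to the multiplier $X=f\spherenormal$ (for a carefully chosen weight $f$) on a space-time region bounded by portions of $\St$ and the acoustic null cones $\coneu$. The divergence identity \eqref{DalphaJalpha} produces bulk terms involving the deformation tensor $\deform{X}$, which in turn decompose into the null-frame connection coefficients $\upchi,\uchi,\upzeta$. Combined with absorption of cross terms using a Hardy-type inequality along the null hypersurfaces and the energy estimates for fluid variables along null hypersurfaces from Section \ref{energyalongnullhypersurfaces}, this yields a preliminary Morawetz estimate providing a uniform bound for the standard energy of $\varphi$ along both the constant-time slices and the null cones, which will play the role of a bulk-absorbable reservoir in the subsequent arguments.

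Next, for the conformal wave equation I would use the $\rgeo^p\Lunit$ family of multipliers in the region $\{\tau_1\leq u\leq\tau_2\}\cap\{\rgeo\geq R\}$, with $1\leq\tau_1<\tau_2<\Trescale$, to obtain an $r$-weighted energy inequality that controls the exterior portion $\eEnergyconformal$ and provides integrated decay along each null slice. The error terms produced by $\Dfour(\rgeo^p\Lunit)$ decompose into pieces involving $\Restrace\reschi=\gtr\upchi+\Chfour_{\Lunit}$, the modified torsion $\angnabla\conformalfactor+\upzeta$, and the modified mass aspect $\modmass=2\anglap\conformalfactor+\mass-\gtr\upchi k_{\spherenormal\spherenormal}+\tfrac{1}{2}\gtr\upchi\Chfour_{\uLunit}$, all of which have been arranged precisely to possess favorable regularity (this is the entire point of the conformal renormalization, as discussed in Subsection \ref{introcontrol}). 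For the interior portion $\iEnergyconformal$, I would follow the physical-space approach of \cite{Anewphysical-spaceapproachtodecayforthewaveequationwith}, using a $t^2$-weighted energy in each interior spatial-null slice, and then stitch the interior and exterior estimates together across the transition region $\{\tfrac{1}{2}t\leq u\leq\tfrac{3}{4}t\}$ using the cut-off functions $\upvarpi,\underline{\upvarpi}$; the Morawetz estimate from the previous step absorbs the bulk cross-terms that arise from differentiating these cut-offs. A Gr\"onwall argument in $t$ closes the estimate with the loss $(1+t)^{2\varepsilon}$.

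The main obstacle is the sharp control of the geometric quantities $\gtr\upchi,\hchi,\upzeta,\modmass,\angnabla\conformalfactor+\upzeta$ and related curvature components in mixed space-time norms; these are what enter the deformation tensors and the error bulk integrals. Such bounds are produced by analyzing the null-structure transport and Hodge-type equations (the Raychaudhuri equation \eqref{introRaychau}, the Codazzi equations, and the evolution of $\conformalfactor$) together with the decomposition \eqref{introRicci} of the Ricci curvature, in which the wave-source term $\boxg\gfour_{\alpha\beta}(\vvariables)$ is substituted using \eqref{waveEQ}, introducing exactly the $\linsmoothfunction(\vvariables)[\vectvort,\DivGradEnt]$ contributions that are then controlled via the transport-Schauder and $L^2$ elliptic estimates of Section \ref{section4}. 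This is the content of Proposition \ref{mainproof} and follows the template of \cite[Section 10]{3DCompressibleEuler} and \cite[Section 7]{AGeoMetricApproach}; once those geometric estimates are in hand, Theorem \ref{BoundnessTheorem} follows from the multiplier argument sketched above essentially verbatim from \cite[Section 11]{3DCompressibleEuler}, which is why the present paper limits itself to a discussion of the proof.
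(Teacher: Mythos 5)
Your proposal follows essentially the same route as the paper's own discussion of Theorem \ref{BoundnessTheorem}: a Morawetz-type estimate for the original equation with the multiplier $X=f\spherenormal$, then $\rgeo^p\Lunit$-type multipliers applied to the conformal wave equation in the exterior region $\{\tau_1\leq u\leq\tau_2\}\cap\{\rgeo\geq R\}$, the physical-space decay argument of \cite{Anewphysical-spaceapproachtodecayforthewaveequationwith} in the interior, and all of it conditional on the geometric control of Proposition \ref{mainproof} obtained via the Raychaudhuri equation and the Ricci decomposition \eqref{introRicci} with \eqref{waveEQ} substituted in. The structure, the choice of multipliers, the role of the renormalized quantities $\Restrace\reschi$, $\modtorsion$, and $\modmass$, and the deferral of details to \cite[Section 11]{3DCompressibleEuler} and \cite[Section 7]{AGeoMetricApproach} all match the paper.
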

\subsubsection{Reduction of Theorem \ref{Spatiallylocalizeddecay}}
The proof of Theorem \ref{Spatiallylocalizeddecay} by using Theorem \ref{BoundnessTheorem} is done via product estimates and the Berstein inequality of Littlewood-Paley theory. We refer reader to \cite[Section 8]{ImprovedLocalwellPosedness} and \cite[Section 4]{AGeoMetricApproach} for the detailed proof.
\subsubsection{Discussion of the proof of Theorem \ref{BoundnessTheorem}}\label{discussionofconformalenergy}
Logically speaking, the proof of Theorem \ref{BoundnessTheorem} relies on the control of the acoustic geometry derived in Proposition \ref{mainproof}. However, Proposition \ref{mainproof} is logically independent of Theorem \ref{BoundnessTheorem}, so here we discuss the proof of Theorem \ref{BoundnessTheorem}, assuming that we have already controlled the acoustic geometry. 
In order to obtain an estimate for the conformal energy, we choose $\Theta$ to be as follows:
\begin{align}
\Theta:=\rgeo^{-1} f:=\rgeo^{-1}\left(\beta-\frac{\beta}{(1+\rgeo)^\alpha}\right),
\end{align}
where $\beta\alpha=2$ and $\rgeo$ is as in (\ref{DEFE:rgeo}).
We introduce the modified weighted energy:
\begin{align}
\tilde{Q}[\varphi](t)=\int_{\Sigma_t}\modifiedcurrent{X}_\mu[\varphi]\Timelike^\mu \diff x,
\end{align}
where $\modifiedcurrent{X}_\mu[\varphi]$ is the modified energy current:
\begin{align}
\modifiedcurrent{X}_\mu[\varphi]=Q_{\mu\nu}[\varphi]X^\mu+\frac{1}{2}\Theta\p_\mu(\varphi^2)-\frac{1}{2}\varphi^2\p_\mu\Theta.
\end{align}

By choosing $X=f\spherenormal$ and using the divergence theorem for the modified current on an appropriate region, one can derive a Morawetz-type energy estimate. This provides the uniform bounds for the standard energy of $\varphi$ along a union of a portion of the constant-time hypersurfaces and null cones. \\
\indent
Then we consider the conformal wave equation $\square_{\rescaledgfour}(e^{-\conformalfactor}\varphi)=\cdots$, where $\varphi$ satisfies $\boxg\varphi=0$, $\conformalfactor$ and $\rescaledgfour$ are defined in Definition \ref{conformalchangemetric}. We use the multiplier approach with $\rgeo^p\Lunit$ type vectorfields in the region $\{\tau_1\leq u\leq\tau_2\}\bigcap\{\rgeo\geq R\}$ where $1\leq\tau_1<\tau_2<\Trescale$ to control the conformal energy in the exterior region and to provide energy decay along null slices. Finally we control the conformal energy in the interior region with the help of the argument in \cite{Anewphysical-spaceapproachtodecayforthewaveequationwith}, where energy decay is obtained in each spatial-null slice.\\
\indent
The proof of Theorem \ref{BoundnessTheorem} closes the reduction of the Strichartz estimate. One could follow the steps listed in \cite[Section 11]{3DCompressibleEuler} to obtain the estimates of conformal energy with the control of Ricci coefficients given in Section \ref{connectioncoefficientsandpdes}. One could go through the details of the argument in \cite[Section 7]{AGeoMetricApproach}. Also reader could look into \cite[Section 3]{ACommutingVectorfieldsApproachtoStrichartz} for initial ideas.

\section{Energy along Acoustic Null Hypersurfaces and Control of the Acoustic Geometry}\label{connectioncoefficientsandpdes}
In this section, we prove the energy estimates along acoustic null hypersurfaces, which is necessary for obtaining the mixed-norm estimates in Proposition \ref{mainproof}. Then we introduce the notation for many geometric quantities, followed by their bootstrap assumptions. Their improved estimates are in Subsection \ref{sectionRestatement}.  The proof of estimates for geometric quantities is obtained by transport equation and div-curl estimates for the acoustic quantities, decomposition of Ricci curvature components, trace and Sobolev inequalities. We omit the proof of these estimates since they are the same as in \cite[Section 10]{3DCompressibleEuler}. 
\subsection{Energy Estimates along Acoustic Null Hypersurfaces}\label{energyalongnullhypersurfaces}
In this subsection, we define acoustic null fluxes and derive energy estimates along acoustic null hypersurfaces. These estimates are necessary for deriving the mixed-norm estimates for the acoustical function quantities in Proposition \ref{mainproof}.
\begin{definition}[Acoustic null fluxes]\label{D:acouticnullfluxes}
	For functions $\varphi$ defined on $\coneu$, we define the acoustic null fluxes $\mathcal{F}_{(wave)}[\varphi;\coneu]$ and $\mathcal{F}_{(transport)}[\varphi;\coneu]$ as follows:
	\begin{align}
	\mathcal{F}_{(wave)}[\varphi;\coneu]&:=\int_{\coneu}\left((\Lunit\varphi)^2+\sgabs{\angnabla\varphi}^2\right)\diff\spherevol\diff t,\\
	\mathcal{F}_{(transport)}[\varphi;\coneu]&:=\int_{\coneu}\varphi^2\diff\spherevol\diff t,
	\end{align}
	where $\diff\spherevol$ is the volume form of reduced metric $\gsphere$ on the $\stu$-sphere from $\gfour$.
\end{definition}
\begin{proposition}[Energy estimates along acoustic null hypersurfaces]\label{EnergyNullFluxes}
	Under the initial data, bootstrap assumptions and the standard energy estimates Proposition \ref{EnergyandEllipticEstimates}, we have the following estimates along null hypersurfaces $\coneu$ for $u\in[-w_*,\Trescale]$:
	\begin{align}
	\label{coneenergy1}\mathcal{F}_{(wave)}[\pfour\vvariables;\coneu]+\sum\limits_{\upnu>1}\upnu^{2(N-2)}\mathcal{F}_{(wave)}[\littlewood\pfour\vvariables;\coneu]\lesssim\lambda^{-1},\\
	\label{coneenergy2}\mathcal{F}_{(transport)}[\pfour(\vectvort,\DivGradEnt);\coneu]+\sum\limits_{\upnu>1}\upnu^{2(N-2)}\mathcal{F}_{(transport)}[\littlewood \pfour(\vectvort,\DivGradEnt);\coneu]\lesssim\lambda^{-1}.
	\end{align}
\end{proposition}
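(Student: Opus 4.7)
\textbf{Proof plan for Proposition \ref{EnergyNullFluxes}.}

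The plan is to run the standard multiplier--divergence-theorem machinery on a spacetime region whose boundary contains the null hypersurface $\coneu$, so that the resulting boundary integrand on $\coneu$ is coercive in exactly the quantities appearing in $\mathcal{F}_{(wave)}$ and $\mathcal{F}_{(transport)}$. Specifically, for each $u\in[-w_*,\Trescale]$ I will fix $t\in[[u]_+,\Trescale]$ and work on the lens-shaped region $\region_u^t$ bounded below by a portion of $\Sigma_0$, on the side by $\coneu$, and above by a portion of $\Sigma_t$ (with the cone-tip portion handled separately, exactly as in \cite[Section 10]{3DCompressibleEuler}). After applying the divergence theorem and integrating out the bulk terms, the null flux will be bounded by initial data on $\Sigma_0$ together with bulk integrals that the already-proven Proposition~\ref{EnergyandEllipticEstimates} and the rescaled bootstrap consequence \eqref{bt1} handle. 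After rescaling, the data piece is $\sobolevnorm{\pfour\vvariables, \pfour(\vectvort,\DivGradEnt)}{N-1}{\Sigma_0}^2\lesssim\lambda^{-1}$, which is where the $\lambda^{-1}$ on the right-hand side comes from.

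For the wave flux \eqref{coneenergy1}, I take $\varphi\in\{\pfour\variables,\littlewood\pfour\variables\}$ and work with the energy current $\Jenarg{\Timelike}{\alpha}[\varphi]=Q^{\alpha\beta}[\varphi]\Timelike_\beta$ from Definition~\ref{energymomentum}. The divergence identity \eqref{DalphaJalpha} together with the commuted equations \eqref{boxg2} and \eqref{lwpboxg} expresses $\Dfour_\alpha\Jenarg{\Timelike}{\alpha}[\varphi]$ as a product of $\boxg\varphi$ and $\Timelike\varphi$ plus a quadratic form in $\pfour\varphi$ with $\deform{\Timelike}$-valued coefficients. The key coercivity point: decomposing $Q^{\alpha\beta}[\varphi]$ in the null frame and using $\langle\Timelike,\Lunit\rangle=\langle\Timelike,\uLunit\rangle=-1$ and \eqref{decompositionofmetric}, the boundary integrand on $\coneu$ is $\tfrac12\left((\Lunit\varphi)^2+\sgabs{\angnabla\varphi}^2\right)$ up to harmless factors, which matches $\mathcal{F}_{(wave)}$. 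The bulk integrals are controlled by \eqref{bt1} and Cauchy--Schwarz exactly as in Lemma~\ref{basicenergy}; the $\boxg\varphi$ contributions from \eqref{remainder1}--\eqref{remainderestimates} are absorbed using the already-established Proposition~\ref{EnergyandEllipticEstimates}. Multiplying the $\littlewood$-version by $\upnu^{2(N-2)}$ and summing in $\upnu$ gives the Littlewood--Paley part of \eqref{coneenergy1} via \eqref{Littlewoodsobolev} and \eqref{remainderestimates}.

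For the transport flux \eqref{coneenergy2}, I use the simpler current $J^\alpha:=\varphi^2\,\Timelike^\alpha$ with $\varphi\in\{\pfour\modivort^\alpha,\pfour\DivGradEnt,\littlewood\pfour\modivort^\alpha,\littlewood\pfour\DivGradEnt\}$, whose divergence is $2\varphi\Timelike\varphi+\varphi^2\Dfour_\alpha\Timelike^\alpha$. Because $\Timelike=\materialderivative+(\Timelike-\materialderivative)$ with $\Timelike-\materialderivative$ being a first-order-in-$\pfour\vvariables$ correction, the principal part of $\Timelike\varphi$ is $\materialderivative\varphi$, which the commuted equations \eqref{bc2}--\eqref{bd2} and \eqref{lwpbc}--\eqref{lwpbd} express in terms of controllable remainders, while the correction is absorbed using \eqref{bt1}. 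The flux on $\coneu$ is then $\int_{\coneu}\varphi^2\,\gfour(\Timelike,\uLunit)\,\diff\spherevol\diff t=\int_{\coneu}\varphi^2\,\diff\spherevol\diff t$ up to a fixed positive factor, which is precisely $\mathcal{F}_{(transport)}$. The remainder estimates \eqref{remainderestimates} again handle the Littlewood--Paley version after multiplication by $\upnu^{2(N-2)}$ and summation.

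The main obstacle is the coupling: the bulk inhomogeneities in the wave estimate contain $\p(\vectvort,\DivGradEnt)$-type terms (via \eqref{remainderestimates}), while the transport estimate bulk contains $\pfour^2\vort,\pfour^2\vgradientEnt$-type terms (via \eqref{bc2}--\eqref{bd2}). Rather than close the two fluxes simultaneously via Gr\"onwall, I will break the circle by inputting the already-established Proposition~\ref{EnergyandEllipticEstimates}, whose $H^{N-1}$ bounds on $\Sigma_\tau$ for $\pfour\vvariables,\pfour(\vvort,\vgradientEnt),\vectvort,\DivGradEnt$ are exactly what is needed to bound the bulk integrals independently of the unknown fluxes. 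The only remaining subtlety is the small region near the cone tip $\upgamma_\tip(u)$ when $0\le u\le\Trescale$, where the null frame degenerates; this is handled by a separate estimate on a small Euclidean ball and a limiting argument, exactly as in \cite[Section~10]{3DCompressibleEuler}, which applies in our setting without modification because the scaling of the data and the bootstrap consequences are the same.
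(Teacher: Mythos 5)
Your treatment of the wave flux \eqref{coneenergy1} is essentially the paper's argument: the same current $Q^{\alpha\beta}[\varphi]\Timelike_\beta$, the same lens-shaped region bounded by $\coneu$, $\Sigma_{t_0}$ ($t_0=\max\{0,u\}$) and $\Sigma_t$, the same coercivity computation $Q^{\alpha\beta}[\varphi]\Timelike_\beta\Lunit_\alpha=(\Lunit\varphi)^2+\sgabs{\angnabla\varphi}^2$, and the same use of \eqref{boxg2}, \eqref{lwpboxg}, \eqref{remainderestimates} and Proposition~\ref{EnergyandEllipticEstimates} to close the bulk without a Gr\"onwall loop. No issues there.

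The transport flux \eqref{coneenergy2} contains a genuine gap. You take the current $\varphi^2\Timelike^\alpha$ and justify it by asserting that $\Timelike-\materialderivative$ is ``a first-order-in-$\pfour\vvariables$ correction.'' It is not: from \eqref{T} and the definition of $\materialderivative$, one has $\materialderivative^\alpha=\Timelike^\alpha+\gensmoothfunction(\vvariables)$, i.e.\ the difference is a \emph{zeroth-order}, generically $O(1)$, spatial vectorfield whose coefficients are smooth functions of the solution (it vanishes only in special states). Consequently the bulk term $2\varphi(\Timelike-\materialderivative)\varphi$ with $\varphi=\pfour\modivort^\beta$ contains $\gensmoothfunction(\vvariables)\,\p_a\pfour\modivort^\beta$, a quantity at the regularity level of three derivatives of the vorticity; it lies only in $H^{N-3}(\Sigma_\tau)$ with $N<5/2$ and is controlled neither by \eqref{bt1} nor by Proposition~\ref{EnergyandEllipticEstimates}, so it cannot be ``absorbed'' as claimed. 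The only way to handle it is to write $2\varphi\, f^a\p_a\varphi=f^a\p_a(\varphi^2)$ and integrate by parts, but that moves an $O(1)$, sign-indefinite term $-\varphi^2 f^\alpha\Lunit_\alpha$ onto the null boundary; algebraically this exactly reconstitutes the current $\varphi^2\materialderivative^\alpha$. That is what the paper does from the outset: it uses $\Jenarg{\materialderivative}{\alpha}=\abs{\pfour\modivort^\beta}^2\materialderivative^\alpha$, so that the dangerous derivative appearing in the divergence is $\materialderivative\varphi$, which is given directly by the commuted transport equations \eqref{bc2}--\eqref{bd2} and \eqref{lwpbc}--\eqref{lwpbd}, while coercivity of the null flux follows from $-\materialderivative^\alpha\Lunit_\alpha\approx 1$. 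You should replace $\Timelike$ by $\materialderivative$ in the transport current (or explicitly carry out the integration by parts and verify that the combined null flux remains coercive, which amounts to the same thing). A minor related slip: the flux through $\coneu$ is the contraction with the conormal, proportional to $\Lunit_\flat$, so the relevant quantity is $\gfour(\Timelike,\Lunit)$ rather than $\gfour(\Timelike,\uLunit)$; both equal $-1$, so this does not affect the outcome.
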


\begin{figure} \centering    
	\subfigure[When $t_0>0$.] {
		\label{fignullflux1}     
		\includegraphics[width=0.4\columnwidth]{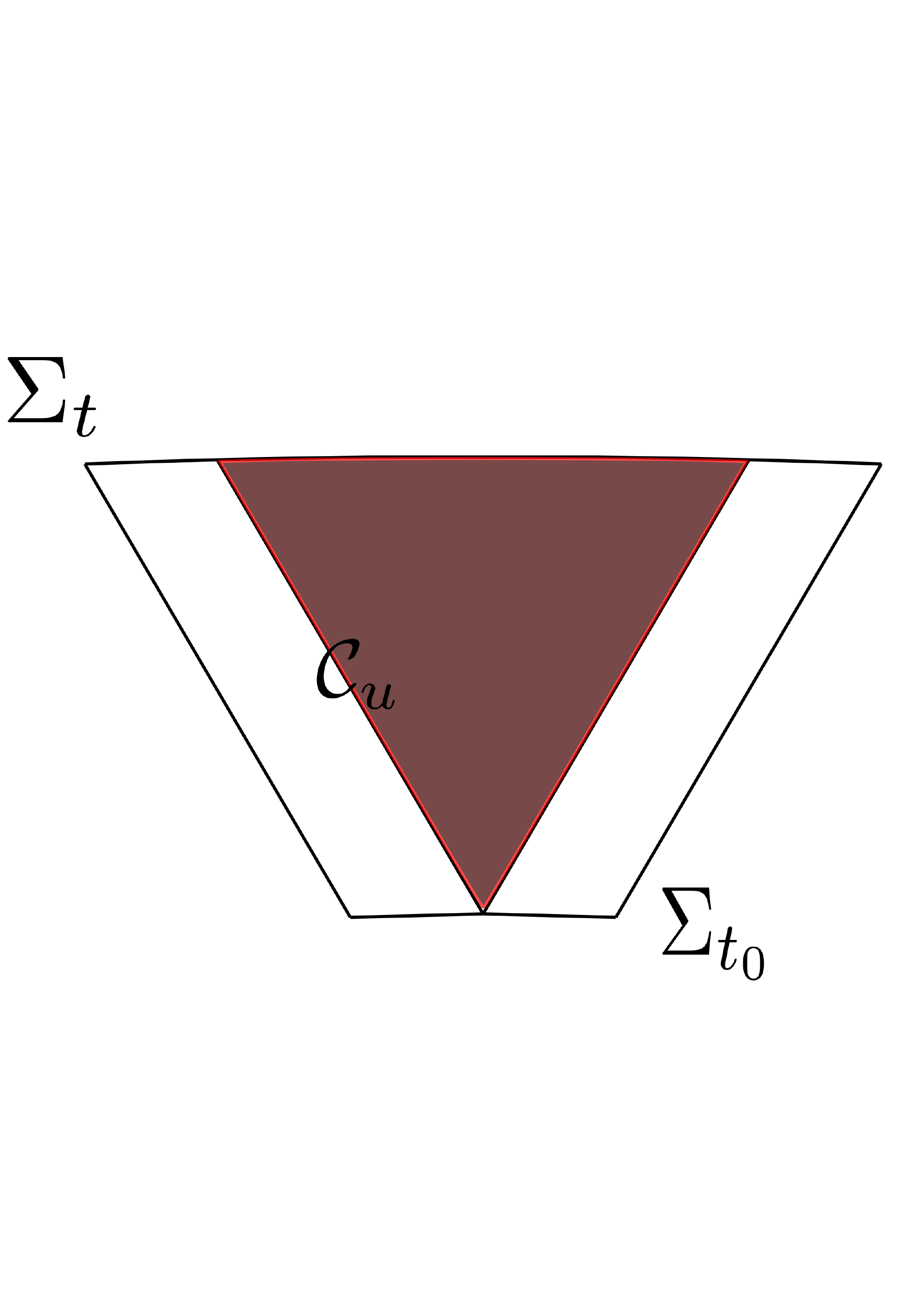}  
	}     
	\subfigure[When $t_0=0$.] { 
		\label{fignullflux2}     
		\includegraphics[width=0.4\columnwidth]{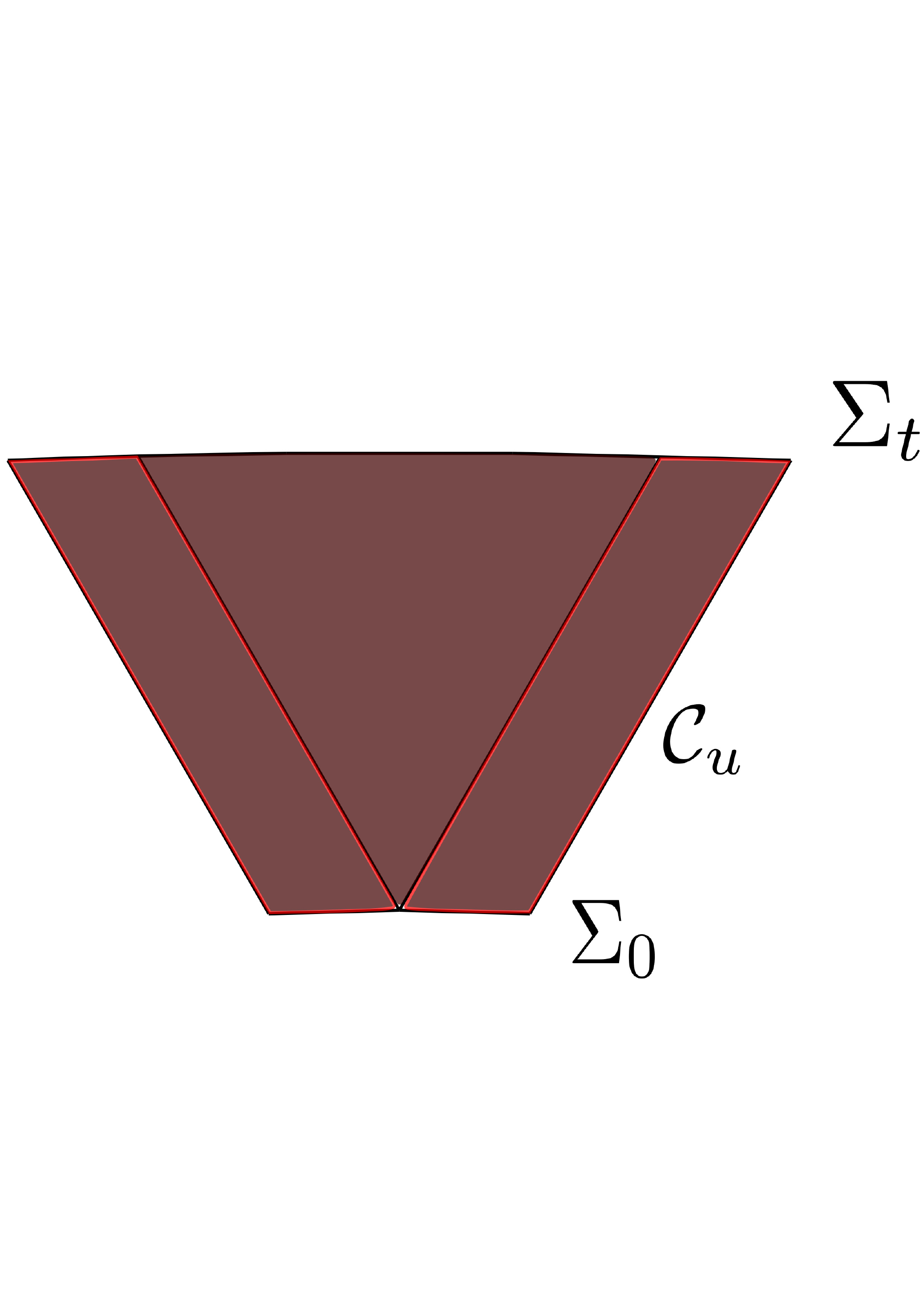}     
	}    
	\caption{The regions that the divergence theorem is applied on.}     
	\label{fignullflux}     
\end{figure}

\begin{proof}[Proof of Proposition \ref{EnergyNullFluxes}]
	
	We first prove (\ref{coneenergy1}). We apply the divergence theorem to the energy current  $\Jenwithlowerarg{\Timelike}{\alpha}[\varphi]:=Q^{\alpha\beta}[\varphi]\Timelike_\beta$, where $Q_{\alpha\beta}$ is the energy momentum tensor defined in (\ref{energymomentum}), over the region bounded by $\coneu$, $\Sigma_{t_0}$ and $\St$ where $t_0:=\max\{0,u\}$ (see figure \ref{fignullflux} for the region that divergence theorem applied). Notice that $\Dfour_\alpha\Jenarg{\Timelike}{\alpha}[\varphi]=\boxg\varphi(\Timelike\varphi)+\frac{1}{2}Q^{\mu\nu}[\varphi]\deform{\Timelike}_{\mu\nu}$ where $\deform{\Timelike}_{\mu\nu}$ is define in (\ref{deformtensor}). The same proof of Lemma \ref{coeciveness} reveals the coercivity $\Jenwithlowerarg{\Timelike}{\alpha}[\varphi]=Q^{00}\approx\abs{\pfour\varphi}^2$. It is straightforward to check that $\Jenwithlowerarg{\Timelike}{\alpha}[\varphi]\Lunit_\alpha=\left((\Lunit\varphi)^2+\sgabs{\angnabla\varphi}^2\right)$. Thus we have
	\begin{align}
	\mathcal{F}_{(wave)}[\varphi;\coneu]=\int_{\St}\abs{\pfour\varphi}^2\diff\tvol-\int_{\Sigma_{t_0}}\abs{\pfour\varphi}^2\diff\tvol+\int_{\bigcup\limits_{u^\prime\geq u}\mathcal{C}_{u^\prime}}\left(\boxg\varphi(\Timelike\varphi)+\frac{1}{2}Q^{\mu\nu}[\varphi]\deform{\Timelike}_{\mu\nu}\right)\diff\gvol.
	\end{align}
	
	Note that $\vvariables$ are rescaled quantities defined in Definition \ref{rescaledquantities}. By bootstrap assumptions (\ref{waveba}), we have $\abs{\Timelike\varphi}\lesssim\abs{\pfour\varphi}$, $\abs{Q^{\mu\nu}[\varphi]}\lesssim\abs{\pfour\varphi}^2$ and $\abs{\deform{\Timelike}_{\mu\nu}}\lesssim\abs{\pfour\vvariables}+1$. Next we substitute $\pfour\vvariables$ and $\littlewood\pfour\vvariables$ for $\varphi$,  and use equation (\ref{boxg2}) and equation (\ref{lwpboxg}) to substitute $\boxg\pfour\vvariables$ and $\boxg\littlewood\pfour\vvariables$. By Cauchy-Schwarz inequality, estimate $\int_{\bigcup\limits_{u^\prime\geq u}\mathcal{C}_{u^\prime}}\boxg\varphi(\Timelike\varphi)\diff\gvol\leq\int_0^{\Trescale}\int_{\Sigma_{\tau}}\abs{\boxg\varphi}\abs{\Timelike\varphi}\diff \tvol\diff\tau$, and energy estimates in Proposition \ref{energyestimates}, we obtain the desired estimates.
	
	To prove (\ref{coneenergy2}), for each $\beta=0,1,2,3$, we apply divergence theorem for energy fluxes
	$\Jenarg{\materialderivative}{\alpha}:=\abs{\pfour\modivort^\beta}^2\materialderivative^\alpha$ over the region bounded by $\coneu$, $\Sigma_{t_0}$ and $\St$ as follows:
	\begin{align}
	-\int_{\coneu}\abs{\pfour\modivort^\beta}^2\materialderivative^\alpha\Lunit_\alpha\diff\spherevol\diff t=\int_{\Sigma_{t_0}}\abs{\pfour\modivort^\beta}^2\materialderivative^\alpha\Timelike_\alpha\diff\tvol-\int_{\St}\abs{\pfour\modivort^\beta}^2\materialderivative^\alpha\Timelike_\alpha\diff\tvol-\int_{\bigcup\limits_{u^\prime\geq u}\mathcal{C}_{u^\prime}}\Dfour_\alpha(\abs{\pfour\modivort^\beta}^2\materialderivative^\alpha)\diff\gvol.
	\end{align}
	
	By the fact that $\materialderivative^\alpha=\Timelike^\alpha+\gensmoothfunction(\vvariables)$ and $\materialderivative$ is timelike, we have $\materialderivative^\alpha\Lunit_\alpha\approx-1$ and $\materialderivative^\alpha\Timelike_\alpha\approx-1$. By commuted equation (\ref{bc2}), we have 
	\begin{align}
	\Dfour_\alpha(\abs{\pfour\modivort^\beta}^2\materialderivative^\alpha)&=2\abs{\pfour\modivort^\beta}\materialderivative(\pfour\modivort^\beta)+\abs{\pfour\modivort^\beta}^2\left(\p_\alpha\materialderivative^\alpha-\Chfour_\beta\materialderivative^\beta\right)\\
	\notag&\lesssim\abs{\pfour\modivort^\beta}\materialderivative(\pfour\modivort^\beta)+\abs{\pfour\modivort^\beta}^2\norm{\pfour\vvariables}_{L^\infty_x(\Sigma_{\tau})}\\
	\notag&\lesssim\abs{\pfour\modivort^\beta}\pfour^2\vvariables(\pfour\vort,\pfour\gradEnt)+\abs{\pfour\modivort^\beta}\pfour\vvariables\cdot(\pfour^2\vvariables,\pfour^2\vort,\pfour^2\gradEnt)+(\abs{\pfour\modivort^\beta}^2+\abs{\pfour\vvariables}^2)\norm{\pfour\vvariables}_{L^\infty_x(\Sigma_{\tau})},
	\end{align}
	where $\Chfour$ is the contracted Christoffel symbols of $\gfour$ defined as $\Chfour_\alpha:=\gfour^{\kappa\lambda}\Gamma_{\alpha\kappa\lambda}=\gfour^{\kappa\lambda}\gfour_{\alpha\beta}\Gamma^\beta_{\kappa\lambda}$. Using the rescaled bootstrap assumptions (\ref{bt1}), energy estimates in Proposition \ref{EnergyandEllipticEstimates}, note that $\modivort,\DivGradEnt$ are rescaled quantities defined in Definition \ref{rescaledquantities}, we have 
	\begin{align}
	\mathcal{F}_{(transport)}[\pfour\vectvort;\coneu]\lesssim\lambda^{-1}.
	\end{align}
	
	The proofs for $\pfour\DivGradEnt$ and $\littlewood \pfour(\vectvort,\DivGradEnt)$ are of the same fashion. 
\end{proof}
\begin{remark}
	We refer readers to \cite[Proposition 6.1]{3DCompressibleEuler} for the energy estimate along null cones for 3D non-relativistic compressible Euler case where $\materialderivative$ coincides with $\Timelike$. We note that in \cite{3DCompressibleEuler,AGeoMetricApproach}, authors derive energy estimate along null cones before rescaling (with respect to $\lambda$). However, the results coincide with ours when energy estimates along null cones are rescaled. That is, our estimate is sufficient to obtain the mix-norm estimates for fluid variables, which serves the same as in \cite{3DCompressibleEuler,AGeoMetricApproach}.
	
\end{remark}
\subsection{Connection Coefficients}
In this subsection, we define connection coefficients. We will derive estimates for them in Proposition \ref{mainproof} as a part of controlling the acoustic geometry.
\subsubsection{Levi-Civita connections, angular operators and curvatures}\label{D:levi-civita2}
If $\upxi$ is a space-time tensor, then $\sphereproject\xi$ denotes its $\gfour$-orthogonal projection onto $\stu$. Then we define $\angD_V\upxi:=\sphereproject\Dfour_V\upxi$ where $V$ is a vector and $\Dfour_V\upxi$ is the covariant derivative of $\upxi$ in the $V$ direction. Note that $\angD_V\upxi=\angnabla_V\upxi$ when both $V$ and $\upxi$ are $\stu$-tangent. Also we define $\anglap:=\angnabla\cdot\angnabla$\\
\indent
We let $\Riemfour{\alpha}{\beta}{\gamma}{\delta}$ denote the Riemann curvature tensor of $\gfour$ and $\Ricfour{\alpha}{\beta}:=\gfour^{\gamma\delta}\Riemfour{\gamma}{\alpha}{\delta}{\beta}$. We use the notation that $\langle\Dfour_X\Dfour_YW-\Dfour_Y\Dfour_XW,Z\rangle=\Riemfour{Z}{W}{X}{Y}+\langle\Dfour_{[X,Y]}W,Z\rangle$ where $X,Y,W,Z$ are vectorfields, $[\cdot,\cdot]$ is the Lie bracket and $\langle\cdot,\cdot\rangle:=\gfour(\cdot,\cdot)$. 
\begin{definition}[Connection coefficients]
	\label{D:DEFSOFCONNECTIONCOEFFICIENTS}
	We define the second fundamental form $k$ of $\Sigma_t$
	to be the $\binom{0}{2}$ $\Sigma_t$-tangent tensor
	such that the following relation holds for all $\Sigma_t$-tangent 
	vectorfields $X$ and $Y$:
	\begin{align} 
	k(X,Y)
	& :=
	-
	\gfour(\Dfour_X \Timelike, Y).
	\label{E:SECONDFUNDOFSIGMAT} 
	\end{align}
	
	Let $\{e_A\}_{A=1,2}$ be the pair of unit orthogonal spherical vectorfields on $\stu$ from Definition \ref{D:Nullframe} .
	We define the second fundamental form $\spheresecondfund$ of $\stu$,
	the null second fundamental form $\upchi$ of $\stu$, 
	and $\underline{\upchi}$ 
	to be the following type $\binom{0}{2}$ $\stu$-tangent tensors:
	\begin{subequations}
		\begin{align}  \label{E:SECONDFUNDOFSPHERESINSIGMAT}
		\spheresecondfund_{AB}
		& := \gfour(\Dfour_A \spherenormal,e_B),
		\\
		\upchi_{AB}
		& := \gfour(\Dfour_A \Lunit, e_B),
		&
		\underline{\upchi}_{AB}
		& := \gfour(\Dfour_A \uLunit, e_B).
		\end{align}
	\end{subequations}
	
	We define the torsion $\upzeta$ and $\underline{\upzeta}$ to be the following
	$\stu$-tangent one-forms:
	\begin{align} \label{E:TORSION}
	\upzeta_A
	& := \frac{1}{2}
	\gfour(\Dfour_{\uLunit} \Lunit, e_A),
	&
	\underline{\upzeta}_A
	& := \frac{1}{2}
	\gfour(\Dfour_{\Lunit} \uLunit, e_A).
	\end{align}
	
\end{definition}

\subsection{Conformal Metric, Initial Conditions on $\Szero$ and on the Cone-tip Axis for the Acoustical Function $u$}
\begin{definition}[Conformal factor and the modified null second fundamental form and acoustical quantities]\label{conformalchangemetric}
	In interior region $\intregion$, we define $\conformalfactor$ to be the solution to the following transport initial value problem:
	\begin{subequations}
		\begin{align}
		\label{Lunitsigma}\Lunit\conformalfactor(t,u,\sangle)&=\frac{1}{2}[\Chfour_{\Lunit}](t,u,\sangle)&
		u\in[0,\Trescale],t\in[u,\Trescale],\sangle\in\Stwo,\\
		\label{initialsigma}\conformalfactor(u,u,\sangle)&=0,&
		u\in[0,\Trescale], \sangle\in\Stwo,
		\end{align}
	\end{subequations}
	where $\Chfour_{\Lunit}:=\Chfour_\alpha\Lunit^\alpha$ and $\Chfour_\alpha:=\gfour^{\kappa\lambda}\Chfour_{\alpha\kappa\lambda}=\gfour^{\kappa\lambda}\gfour_{\alpha\beta}\Chfour^\beta_{\kappa\lambda}$.
	\\ 	We define the conformal space-time metric and the Riemannian metric that induces on $S_{t,u}$ as follows:
	\begin{align}\label{rescaledgfour}
	\rescaledgfour&:=e^{2\conformalfactor}\gfour,&\congsphere:=e^{2\conformalfactor}\gsphere
	.\end{align}
	
	We define the null second fundamental forms of the metric to be the following symmetric $S_{t,u}$-tangent tensors:
	\begin{align}\label{rescaledchi}
	\reschi&:=\frac{1}{2}\angLie_{\Lunit}\congsphere,& \resuchi:=\frac{1}{2}\angLie_{\uLunit}\congsphere 
	.\end{align}
	
	Using (\ref{rescaledgfour})-(\ref{rescaledchi}), it follows that $\upchi,\uchi$ and $\reschi,\resuchi$ are related by:
	\begin{subequations}
		\begin{align}
		\reschi&=e^{2\conformalfactor}\left(\upchi+(\Lunit\conformalfactor)\gsphere\right),&\resuchi&=e^{2\conformalfactor}\left(\uchi+(\uLunit\conformalfactor)\gsphere\right),\\
		\Restrace\reschi&=\gtr\upchi+2\Lunit\conformalfactor=\gtr\upchi+\Chfour_{\Lunit},&\Restrace\resuchi&=\gtr\uchi+2\uLunit\conformalfactor,\\
		\upchi&=\frac{1}{2}\left(\Restrace\reschi-\Chfour_{\Lunit}\right)\gsphere+\hchi,&\uchi&=\frac{1}{2}\left(\Restrace\resuchi-2\uLunit\conformalfactor\right)\gsphere+\huchi
		.\end{align}
	\end{subequations}

	We define the following:
	\begin{align}\label{tracechismall}
	\Restrace\chismall:=\gtr\upchi+\Chfour_{\Lunit}-\frac{2}{\rgeo}=\Restrace\reschi-\frac{2}{\rgeo}
	.\end{align}
	We note that the first equality in (\ref{tracechismall}) holds in the whole region $\region$, while the second equality holds only in the interior region $\intregion$.
\end{definition}
In the following definition, we define mass aspect function $\mass$ and its modified version $\modmass$, as well as modified torsion $\modtorsion$. These objects are defined to avoid loss of regularity of the acoustical eikonal function.
\begin{definition}[Mass aspect function] \label{D:mass}
We define the mass aspect function $\mass$ to be the following scalar function
\begin{align}
\mass:=\uLunit\gtr\upchi+\frac{1}{2}\gtr\upchi\gtr\uchi
	.\end{align}
	
	We now define the modified mass aspect function $\modmass$ to be the following scalar function:
	\begin{align}
	\modmass:=2\anglap\conformalfactor+\uLunit\gtr\upchi+\frac{1}{2}\gtr\upchi\gtr\uchi-\gtr\upchi k_{\spherenormal\spherenormal}+\frac{1}{2}\gtr\upchi\Chfour_{\uLunit}
	.\end{align}
	
	In $\intregion$, we define $\hodgemass$ to be the $S_{t,u}$-tangent one-form that satisfies the following Hodge system on $S_{t,u}$:
	\begin{align}
	\label{hodgemass}\angdiv\hodgemass&=\frac{1}{2}(\modmass-\umodmass)&
	\angcurl\hodgemass&=0
	.\end{align}
	
	In $\intregion$, we define the modified torsion $\modtorsion$ to be the following $S_{t,u}$-tangent one-form:
	\begin{align}
	\label{defmodtorsion}\modtorsion:=\upzeta+\angnabla\conformalfactor
	.\end{align}
\end{definition}
\begin{definition}[Norms of $\stu$-tangent tensorfields]\label{D:norms}
	Let $\esphere=\esphere(\sangle)$ be the canonical metric on $\Stwo$ and $\{\asphere\}_{A=1,2}$ are local angular coordinates on $\Stwo$. We can also view $\{\asphere\}_{A=1,2}$ on $\stu$ as the image of the canonical isomorphism from $\Stwo$ to $\stu$. For $p\in[1,\infty)$, We define the Lebesgue norms $L^p_{\sangle}$ and $L^p_{\gsphere}$ for $\stu$-tangent tensorfields $\xi$ as follows:
	\begin{align}
	\onenorm{\xi}{\sangle}{p}{\stu}&:=\left(\int_{\sangle\in\Stwo}\sgabs{\xi}^p\diff\flatspherevol\right)^{1/p},&
	\onenorm{\xi}{\gsphere}{p}{\stu}&:=\left(\int_{\sangle\in\Stwo}\sgabs{\xi}^p\diff\spherevol\right)^{1/p}.	
	\end{align}
	
	Since we can have a parallel transport along $\Stwo$ that preserves the tensor products and contractions, that is, $\Phi^m_n(\sangletwo;\sangleone)$ is the parallel transport with respect to $\esphere$ from the vector space of type $\binom{m}{n}$ tensors at $\sangletwo\in\Stwo$ to the vector space of type $\binom{m}{n}$ tensors at $\sangleone\in\Stwo$, for $\xi=\xi(\sangle)$ a type $\binom{m}{n}$ tensorfield on $\Stwo$, we define the $L^\infty$ norm $L_\sangle^\infty$ and H\"older norm $C_\sangle^{0,\alpha}$ of $\xi$ as follows:
	\begin{align}
	\onenorm{\xi}{\sangle}{\infty}{\stu}&:=\mathrm{ess}\sup\limits_{\sangle\in\Stwo}\sgabs{\xi},\\
	\holdernorm{\xi}{\sangle}{0}{\alpha}{\stu}&:=\onenorm{\xi}{\sangle}{\infty}{\stu}+\sup\limits_{0<d_{\esphere}(\sangleone,\sangletwo)<\frac{\pi}{2}}\frac{|\xi(t,u,\sangleone)-\Phi^m_n(\sangletwo;\sangleone)\circ\xi(t,u,\sangletwo)|_{\gsphere(t,u,\sangleone)}}{d_{\esphere}^\alpha(\sangleone,\sangletwo)}.
	\end{align}
\end{definition}

In the following two propositions, we list the estimates of the initial foliation. These estimates are crucial for the well-defined geometric setup in Section \ref{opticalfunction} and controlling the acoustic geometry.
\begin{proposition}\cite[Proposition 9.8]{3DCompressibleEuler}, \cite[Proposition 4.3. Existence and properties of the initial foliation]{AGeoMetricApproach}.\label{InitialCT1}
	
	On $\Szero$, there exists a function $w=w(x)$ on the domain $0\leq w\leq w_*:=\frac{4}{5}\Trescale$, such that $w(\tip)=0$ and each level set $S_{0,w}$ is diffeomorphic to $\Stwo$ and 
	\begin{align}\label{initialtracecondition}
	\gtr\spheresecondfund+k_{\spherenormal\spherenormal}&=\frac{2}{\lapse w}+\ggtr k-\Chfour_{\Lunit},& \lapse(\tip)&=1
	.\end{align}
	By (\ref{tracechismall}), $\rgeo(0,-u)=w$, and the fact that $\upchi_{AB}=\spheresecondfund_{AB}-k_{AB}$, (\ref{initialtracecondition}) is equivalent to 
	\begin{align}
	\label{zinitial}\Restrace\chismall|_{\Szero}&=\frac{2(1-\lapse)}{\lapse w},& \text{for}\ 0&\leq w\leq w_*
	,\end{align}
	where we define the lapse $a$ as follows:
	\begin{align}
	a=\left(\sqrt{g^{cd}\p_cw\p_dw}\right)^{-1}.
	\end{align}
	Note that $\derinormal=\lapse\spherenormal|_{\Sigma_{0}}$.
Then on $\initialSzero:=\bigcup\limits_{0\leq w\leq w_*}S_{0,w}$, for $0<1-\frac{2}{q_{*}}<N-2$, there hold
	\begin{subequations}
		\begin{align}
		\label{lapseminusone}\abs{\lapse-1}&\lesssim\lambda^{-4\varepsilon_0}\leq\frac{1}{4},& \holdertwonorms{w^{-1/2}(\lapse-1)}{w}{\infty}{\sangle}{0}{1-\frac{2}{q_{*}}}{\initialSzero}&\lesssim\lambda^{-1/2},&
		\volume:=\frac{\sqrt{\det\gsphere}}{\sqrt{\det\esphere}}&\approx w^2
		.\end{align}
		\begin{align}
		\label{initialfoliationtwo}\twonorms{w^{\frac{1}{2}-\frac{2}{q_{*}}}(\hat{\spheresecondfund},\angnabla\ln\lapse)}{w}{\infty}{\gsphere}{q_{*}}{\initialSzero}, \twonorms{\angnabla\ln\lapse}{w}{2}{\sangle}{\infty}{\initialSzero}, \twonorms{\hchi}{w}{2}{\sangle}{\infty}{\initialSzero}&\lesssim\lambda^{-1/2} 
		.\end{align}
		\begin{align}
		\label{initialfoliation3}\max\limits_{A,B=1,2}\norm{w^{-2}\gsphere\left(\deriasphere,\deribsphere\right)-\esphere\left(\deriasphere,\deribsphere\right)}_{L^\infty_x(\initialSzero)}&\lesssim\lambda^{-4\varepsilon_0},\\
		\label{initialfoliation4}\max\limits_{A,B,C=1,2}\onenorm{\dericsphere\left(w^{-2}\gsphere\left(\deriasphere,\deribsphere\right)-\esphere\left(\deriasphere,\deribsphere\right)\right)}{\sangle}{q_{*}}{S_{0,w}}&\lesssim\lambda^{-4\varepsilon_0},\\
		\label{in5}\onenorm{w^{\frac{1}{2}-\frac{2}{q_{*}}}\angnabla\cphi}{\gsphere}{q_{*}}{S_{0,w}}&\lesssim\lambda^{-1/2}
		.\end{align}
		In addition,
		\begin{align}
		\label{in6}\norm{w\Restrace\chismall}_{L^\infty_x\left(\initialSzero\right)}&\lesssim\lambda^{-4\varepsilon_0},\\
		\label{in7}\twonorms{w^{3/2}\angnabla\Restrace\chismall}{w}{\infty}{\sangle}{p}{\initialSzero}+\holdertwonorms{w^{1/2}\Restrace\chismall}{w}{\infty}{\sangle}{0}{1-\frac{2}{p}}{\initialSzero}&\lesssim\lambda^{1/2}
		.\end{align}
		Finally,
		\begin{align}
		\label{pNatSigma0}\sum\limits_{i,j=1,2,3}\abs{w\sphereproject_j^a\p_a\spherenormal^i-\sphereproject_j^i}=\zero(w)\ \text{as} \ w\downarrow 0
		.\end{align}
	\end{subequations}
\end{proposition}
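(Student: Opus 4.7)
The proof follows the strategy of \cite[Proposition 4.3]{AGeoMetricApproach} and \cite[Proposition 9.8]{3DCompressibleEuler}, adapted to the relativistic acoustic geometry in which the induced metric $\gt$ on $\Szero$ is given by (\ref{gab2}) rather than being conformally flat. The plan is to construct the foliation function $w$ via a continuation argument based on the trace condition (\ref{initialtracecondition}), and then to derive the pointwise, Lebesgue, and H\"older estimates by combining transport estimates along $\derinormal$ with Hodge-elliptic estimates on the level sets $S_{0,w}$ and the rescaled bootstrap assumptions (\ref{bt1}).

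First I would interpret (\ref{initialtracecondition}) as a quasilinear prescription for the level-set function $w$. Writing $\spherenormal^i = \lapse\,(\gt^{-1})^{ij}\p_j w$ and using the identity $\gtr\spheresecondfund = \angdiv\spherenormal$, condition (\ref{initialtracecondition}) becomes a second-order PDE for $w$ whose principal symbol is elliptic on each $S_{0,w}$ since $\gt^{-1}$ is uniformly Riemannian by (\ref{gab2}) and the bootstrap assumption (\ref{databa}). Starting from $\tip$ with the boundary conditions $w(\tip)=0$ and $\lapse(\tip)=1$, I would first build a model foliation for $w \leq \varepsilon$ out of small $\gt$-geodesic spheres, which solve (\ref{zinitial}) up to $\mathcal O(w)$ errors. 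A standard continuation/fixed-point argument on the space of $S^2$-parametrized foliations, powered by the $H^{N-1}$ control of $\gt$ inherited from Proposition \ref{EnergyandEllipticEstimates}, then extends $w$ to $[0,w_*]$.

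Next, the estimates proceed in a fixed order. The pointwise bound $\abs{\lapse-1}\lesssim\lambda^{-4\varepsilon_0}$ and the volume comparison $\volume\approx w^2$ follow by integrating the $\derinormal$-transport equation for $\lapse$ (derived from (\ref{zinitial})) along radial curves from $\tip$ and controlling the source by (\ref{bt1}); the comparable H\"older version in (\ref{lapseminusone}) is obtained via commutator estimates with a radial cutoff. Estimate (\ref{initialfoliationtwo}) for $\hat{\spheresecondfund}$ and $\angnabla\ln\lapse$ comes from the Hodge decomposition on $S_{0,w}$ combined with the Gauss--Codazzi relation, where the Riemann curvature of $\gt$ is replaced by $\pfour\vvariables$ and $\pfour^2\vvariables$ terms by using (\ref{introRicci}) and the relativistic Euler equations (\ref{waveEQ}) to trade $\boxg\gfour(\vvariables)$ for $\modivort,\DivGradEnt$ and quadratic terms. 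The metric-comparison estimates (\ref{initialfoliation3})--(\ref{initialfoliation4}) then reduce to the smallness of $\gt - \delta$ in appropriate norms via (\ref{gab2}) and the bootstrap, while (\ref{in5}) follows from the definition of $\cphi$ together with the preceding estimates. Finally, (\ref{in6})--(\ref{in7}) for $\Restrace\chismall$ on $\Szero$ are obtained from a Raychaudhuri-type equation along $\derinormal$, coupled with elliptic estimates for $\angnabla\Restrace\chismall$ that again rely on (\ref{introRicci}) together with the null-flux bound (\ref{coneenergy1}); the pointwise limit (\ref{pNatSigma0}) is a local Taylor expansion at $\tip$ using the ODE satisfied by $\spherenormal$ along $\derinormal$ together with $\lapse(\tip)=1$.

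The main obstacle is the low-regularity setting: since $\gt$ is controlled only in $H^{N-1}$ with $N$ slightly above $2$, the Hodge and Sobolev estimates on the foliation spheres must be carried out in fractional norms, and the sharp mixed norms $L^\infty_w C^{0,1-2/q_*}_\sangle$ appearing in (\ref{initialfoliationtwo}), (\ref{in5}), and (\ref{in7}) require a Littlewood--Paley decomposition on $S_{0,w}$ as developed in \cite[Section~4]{AGeoMetricApproach}. Once one verifies that (\ref{gab2}) yields a uniformly Riemannian $\gt$ with the same level of Sobolev and Strichartz-type control as in the non-relativistic compressible Euler case, these technical arguments transfer essentially verbatim; I would therefore import the relevant lemmas from \cite{AGeoMetricApproach,3DCompressibleEuler} rather than reproduce them. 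The genuinely new aspect to check is that the relativistic divergence structure in (\ref{introRicci}) and Proposition \ref{NewDivCurl} still yields Ricci components of the form $\linsmoothfunction(\vvariables)[\vectvort,\DivGradEnt]+\quadsmoothfunction(\vvariables)[\pfour\vvariables,\pfour\vvariables]$, which is precisely what (\ref{waveEQ}) provides.
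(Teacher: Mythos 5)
Your overall strategy matches what the paper actually does for this proposition, which is essentially to defer to \cite[Appendix C]{AGeoMetricApproach} and \cite[Proposition 9.8]{3DCompressibleEuler}: the paper's own ``proof'' is a one-paragraph discussion stating that existence of the foliation follows from the Nash--Moser implicit function theorem and that the estimates follow from the references once the constant-time energy estimates (\ref{energyestimates}) are in hand. Your sketch of the estimate hierarchy (lapse first, then $\hat{\spheresecondfund}$ and $\angnabla\ln\lapse$ via Hodge systems on $S_{0,w}$ with the Ricci decomposition (\ref{introRicci}) and the wave equation (\ref{waveEQ}) feeding in $\vectvort,\DivGradEnt$, then the metric comparison) is consistent with that outline, and you correctly identify the genuinely new point to check in the relativistic setting.

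Two concrete issues. First, for the existence step you invoke ``a standard continuation/fixed-point argument,'' but the prescription (\ref{initialtracecondition}) is a fully nonlinear condition on the foliation whose linearization loses derivatives at the regularity available here ($\gt$ controlled only through $H^{N-1}$ with $N$ barely above $2$); a naive contraction argument does not close, which is exactly why the paper and both cited references use the Nash--Moser implicit function theorem to produce the foliation for small $w$ before extending by continuity to $w_*$. As written, your existence argument has a gap. Second, your derivation of (\ref{in6})--(\ref{in7}) via ``a Raychaudhuri-type equation along $\derinormal$ coupled with the null-flux bound (\ref{coneenergy1})'' is misdirected: once (\ref{zinitial}) holds by construction, $\Restrace\chismall|_{\Szero}=2(1-\lapse)/(\lapse w)$ is an algebraic consequence of the lapse estimates, and the inputs needed are the constant-time energy estimates (\ref{energyestimates}) on the (pre-rescaling) slice $\Sigma_{t_k}$ — not the acoustic null fluxes of Proposition \ref{EnergyNullFluxes}, which concern the cones $\coneu$, are established later, and play no role on $\Szero$.
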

\begin{proposition}\cite[Lemma 9.9. Initial conditions on the cone-tip axis tied to the acoustical function]{3DCompressibleEuler}\label{InitialCT2} On any null cone $\coneu$ initiating from a point on the time axis $0\leq t=u\leq \Trescale$ there hold
	\begin{subequations}
		\begin{align}
		\label{initialcontipone}&\gtr\upchi-\frac{2}{\rgeo}, \rgeo\Restrace\chismall, \sgabs{\hchi}, \abs{\rgeo\sphereproject_j^a\p_a\Lunit^i-\sphereproject_j^i},\nulllapse-1,\sgabs{\upzeta},\conformalfactor,\\
		\notag&\rgeo\sgabs{\angnabla\gtr\upchi},\rgeo^2\sgabs{\angnabla\Restrace\chismall},\rgeo\sgabs{\angnabla\hchi},\rgeo\sgabs{\angnabla\nulllapse},\rgeo\sgabs{\angnabla\upzeta}, \rgeo\sgabs{\angnabla\conformalfactor},\\
		\notag&\rgeo^2\anglap\nulllapse,\rgeo^2\anglap\conformalfactor,\rgeo^2\mass,\rgeo^2\modmass\\
		\notag&=\zero(\rgeo) \ \text{as}\ t\downarrow u,\\
		\lim\limits_{t\downarrow u}\norm{\uzeta,k}_{L^{\infty}(\stu)}&<\infty
		.\end{align}
		
		Moreover,
		\begin{align}
		\label{initialcontip3}\lim\limits_{t\downarrow u} \rgeo^{-2}\gsphere\left(\deriasphere,\deribsphere\right)&=\esphere\left(\deriasphere,\deribsphere\right),\\
		\label{initialcontip4}\lim\limits_{t\downarrow u} \rgeo^{-2}\dericsphere\gsphere\left(\deriasphere,\deribsphere\right)&=\dericsphere\esphere\left(\deriasphere,\deribsphere\right)
		.\end{align}
		
	\end{subequations}
\end{proposition}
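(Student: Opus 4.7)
The plan is to exploit that, to leading order near the cone tip $\upgamma_\tip(u)$, all geometric quantities attached to $\coneu$ agree with their flat-space counterparts in the tangent space $T_{\upgamma_\tip(u)}\region$, with errors controlled by the smoothness of $\gfour$. First I would introduce a $\gfour$-normal coordinate chart $\{y^\alpha\}$ centered at $\upgamma_\tip(u)$, in which $\gfour_{\mu\nu}(y) = \gfour_{\mu\nu}(\upgamma_\tip(u)) + O(|y|^2)$ and $\Chfour^\beta_{\kappa\lambda}(\upgamma_\tip(u)) = 0$. In these coordinates the null geodesics $\Upsilon_{u,\sangle}(t)$ solving \eqref{geodesiceq} take the form $\Upsilon_{u,\sangle}^\alpha(t) = (t-u)\Lunit_\sangle^\alpha|_{\upgamma_\tip(u)} + O((t-u)^3)$, so $\rgeo = t-u$ coincides with the affine null parameter up to cubic corrections. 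The null frame $(\Lunit, \uLunit, e_1, e_2)$ extends smoothly into a neighborhood of $\upgamma_\tip(u)$ through its construction in Definition \ref{D:Nullframe} and agrees at the vertex with the flat null frame in $T_{\upgamma_\tip(u)}\region$.

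A standard Jacobi field computation along each generator, with the curvature of $\gfour$ as the source, then yields
\begin{align*}
\gsphere(t,u,\sangle)\left(\deriasphere, \deribsphere\right) = \rgeo^2\,\esphere_{AB}(\sangle) + O(\rgeo^3),
\end{align*}
which is \eqref{initialcontip3}; differentiating in $\sangle^C$ and using smoothness of the curvature (guaranteed by the bootstrap assumptions on $\pfour\vvariables$) gives \eqref{initialcontip4}. From $\upchi_{AB} = \tfrac12\Lunit\gsphere_{AB}$ one obtains $\upchi_{AB} = \rgeo\,\esphere_{AB} + O(\rgeo^2)$, hence $\gtr\upchi = 2/\rgeo + O(\rgeo)$, $\sgabs{\hchi} = O(\rgeo)$, and $|\rgeo\sphereproject_j^a\p_a\Lunit^i - \sphereproject_j^i| = O(\rgeo)$. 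The torsion $\upzeta$ and $\nulllapse - 1$ vanish at the flat light-cone vertex and are $O(\rgeo)$ by curvature-induced corrections; the conformal factor $\conformalfactor$ is $O(\rgeo)$ by integrating the transport equation \eqref{Lunitsigma} along generators using the initial condition \eqref{initialsigma}. Combined with smoothness of $\Chfour_\Lunit$ at $\upgamma_\tip(u)$, the identity $\Restrace\chismall = \gtr\upchi - 2/\rgeo + \Chfour_\Lunit$ gives $\rgeo\Restrace\chismall = O(\rgeo)$.

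For the higher-derivative and Laplacian terms $\rgeo\sgabs{\angnabla\gtr\upchi}$, $\rgeo^2\sgabs{\angnabla\Restrace\chismall}$, $\rgeo^2\anglap\nulllapse$, and $\rgeo^2\anglap\conformalfactor$, I would push the normal-coordinate expansion one order further and exploit that the angular frame vectors scale as $\rgeo$, so that $\angnabla$ absorbs one factor of $\rgeo^{-1}$ from $\gsphere^{-1}$, exactly compensated by the prefactor of $\rgeo$ or $\rgeo^2$ in each quantity. The mass aspect function $\mass = \uLunit\gtr\upchi + \tfrac12\gtr\upchi\gtr\uchi$ satisfies $-2/\rgeo^2 + 2/\rgeo^2 = 0$ in the flat model, so only curvature corrections survive and $\rgeo^2\mass = O(\rgeo^2) = O(\rgeo)$; the analogous cancellation for $\modmass$ follows by combining this with the expansions for $\anglap\conformalfactor$ and $k_{\spherenormal\spherenormal}$. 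The finiteness of $\lim_{t\downarrow u}\|\uzeta, k\|_{L^\infty(\stu)}$ is immediate, since both tensors equal their (bounded) tangent-space values at $\upgamma_\tip(u)$ up to $O(\rgeo)$ corrections.

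The main obstacle will be producing expansions sharp enough, and sufficiently uniform in $\sangle$, to handle the second-order quantities $\rgeo^2\anglap\conformalfactor$, $\rgeo^2\mass$, and $\rgeo^2\modmass$. This forces one to expand $\gfour$ to second order in normal coordinates and to track carefully how repeated angular derivatives compound with the curvature corrections in the Raychaudhuri and Codazzi equations driving $\gtr\upchi$, $\hchi$, $\upzeta$, and $\conformalfactor$ along each generator. Once the flat Minkowskian cancellations at the vertex are identified and the next-order remainders are shown to depend smoothly on $\sangle$ in a neighborhood of $\upgamma_\tip(u)$, all of the claimed $\zero(\rgeo)$ bounds follow by a unified scaling argument from the smoothness of $\gfour$ at the tip.
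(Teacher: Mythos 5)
Your overall strategy is the one actually used for this result: the paper itself gives no proof (the proposition is quoted from \cite[Lemma 9.9]{3DCompressibleEuler}, with the proof deferred to \cite[Appendix C]{AGeoMetricApproach}), and those proofs do proceed exactly as you propose, by comparing the cone near the tip with the Minkowskian light cone in $T_{\upgamma_\tip(u)}\region$ via normal coordinates, Jacobi fields, and transport along the generators. Your reliance on qualitative smoothness of $\gfour$ is also fine here, since the statement is qualitative (it is only used to kill vertex boundary terms) and the solutions are smooth. The problem is that two steps of your order counting do not close, and they are precisely the delicate points of the lemma.

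First, the expansion $\Upsilon_{u,\sangle}^\alpha(t)=(t-u)\Lunit_\sangle^\alpha+O((t-u)^3)$ is not justified: equation (\ref{geodesiceq}) is a \emph{non-affine} geodesic equation, and even in $\gfour$-normal coordinates its right-hand side at the vertex equals $\tfrac12\deform{\Timelike}(\spherenormal,\spherenormal)\Lunit=-k_{\spherenormal\spherenormal}\Lunit$, which is generically nonzero (the Christoffel term vanishes at the center of normal coordinates, but $\Lie_{\Timelike}\gfour$ is a tensor built from $\pfour\vvariables$ and does not). Hence $\ddot{\Upsilon}(u)\neq 0$, the correction is only $O((t-u)^2)$, and $t-u$ agrees with the affine parameter of $\Lgeo$ only to first order; correspondingly $\nulllapse-1$ is $O(\rgeo)$ but not automatically $O(\rgeo^2)$. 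Second, and more seriously, the inference ``$\gsphere=\rgeo^2\esphere+O(\rgeo^3)$, hence $\gtr\upchi=2/\rgeo+O(\rgeo)$'' is a non sequitur: since $\gtr\upchi=\p_t\log\sqrt{\det\gsphere}$, a relative error of size $\rgeo$ in the area element produces an $O(1)$ error in $\gtr\upchi$, not $O(\rgeo)$. The entire content of the sharp asymptotic for $\gtr\upchi-2/\rgeo$ — and a fortiori for $\rgeo^{2}\mass$ and $\rgeo^{2}\modmass$, which involve a further $\uLunit$-derivative of $\gtr\upchi$ — is the control of the $\rgeo^{3}$ coefficient of the area element, which forces you to expand the reparameterization between $t$ and the affine parameter (equivalently, $\nulllapse$ and $k_{\spherenormal\spherenormal}$ along the axis) to second order and exhibit the resulting cancellation. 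You defer this refinement to the ``second-order quantities,'' but it is already needed for the very first item of (\ref{initialcontipone}); as written, your argument yields only $\gtr\upchi-2/\rgeo=O(1)$, which is equivalent to $\rgeo\Restrace\chismall=\zero(\rgeo)$ but strictly weaker than the claim $\gtr\upchi-2/\rgeo=\zero(\rgeo)$. To repair this you should follow \cite[Appendix C]{AGeoMetricApproach} and carry the expansions of $\Upsilon_{u,\sangle}$, $\nulllapse$, and $\det\gsphere$ one order beyond what you wrote, verifying explicitly how the non-affine term in (\ref{geodesiceq}) enters each of them.
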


\begin{proof}[Discussion of the proof of Proposition \ref{InitialCT1} and Proposition \ref{InitialCT2}]

	The existence of such initial foliation can be proved by Nash-Moser implicit function theorem (see \cite{NashMoser1}). The proof of the estimates in Prop.\ref{InitialCT1} relies on the energy estimates (\ref{energyestimates}). The reason is that Prop.\ref{InitialCT1} yields a foliation and estimates on the hypersurface $\Sigma_{0}$ with respect to the rescaled coordinates, and this hypersurface corresponds to the hyperfaces $\Sigma_{t_k}$ for each $k$ with respect to original space-time (see Remark \ref{newsigma0} and Subsection \ref{solutionofu} for the description of $\Sigma_{0}$ in rescaled space-time). The point is that we need the energy estimates of (\ref{energyestimates}) to control the fluid along the ``old" $\Sigma_{t_k}$. We refer the reader to \cite[Appendix C]{AGeoMetricApproach} for the proof of the estimates in Prop.\ref{InitialCT1} and in Prop.\ref{InitialCT2}.
\end{proof}

\subsection{Restatement of Bootstrap Assumptions and Estimates for Quantities Constructed out of the Acoustical Eikonal Equation}\label{sectionRestatement}
In this section, we restate the consequence of bootstrap assumptions of fluid variables, vorticity, and entropy gradient that we obtained in (\ref{bt1}), followed by the bootstrap assumptions for acoustic geometry. Then we state the main estimates for the acoustical function quantities in Prop.\ref{mainproof}. The estimates in Prop.\ref{mainproof} are required by the conformal energy method to close the whole bootstrap argument in this article. We provide a discussion of the proof of Prop.\ref{mainproof} in Section \ref{finalproof2} via a bootstrap argument where the bootstrap assumptions are listed in Section \ref{BAGeo}. For the details of the proof, we refer readers to \cite[Section 10]{3DCompressibleEuler} and \cite[Section 5-6]{AGeoMetricApproach}.
\subsubsection{The fixed number $p$}\label{choiceofp}
In the rest of the article, $p$ denotes a fixed number with 
\begin{align}
0<\delta_0<1-\frac{2}{p}<N-2,
\end{align}
where $\delta_0$ is defined in Section \ref{ChoiceofParameters}.
\subsubsection{Bootstrap assumptions for geometric quantities} After rescaling in Subsection \ref{SelfRescaling}, we make several bootstrap assumptions for the quantities in acoustic geometry. These assumptions will be recovered and improved by estimates in Proposition \ref{mainproof}\label{BAGeo}\\

First we recall (\ref{bt1}):

\textbf{Estimates by using bootstrap assumptions of variables}
	\begin{align}
	\label{bt1.1}\twonorms{\pfour\vvariables,\pfour\vVort,\pfour\vgradientEnt,\vectvort,\DivGradEnt}{t}{2}{x}{\infty}{\region}+\lambda^{\delta_0}\sqrt{\sum\limits_{\upnu>2}\upnu^{2\delta_0}\twonorms{\littlewood\left(\gensmoothfunction(\vvariables,\vVort,\vgradientEnt)(\pfour\vvariables,\pfour\vVort,\pfour\vgradientEnt,\vectvort,\DivGradEnt)\right)}{t}{2}{x}{\infty}{\region}^2}\lesssim\lambda^{-1/2-4\varepsilon_0}
	.\end{align}
	
	Then we make few more assumptions for the quantities of acoustical geometry as follows:
	
	\textbf{Bootstrap assumptions for the acoustical function quatities}
	\begin{subequations}
		\begin{align}
		\label{bt2}\max\limits_{A,B=1,2}\norm{\rgeo^{-2}\gsphere\left(\deriasphere,\deribsphere\right)-\esphere\left(\deriasphere,\deribsphere\right)}_{L^\infty_x(\region)}&\lesssim\lambda^{-\varepsilon_0},\\
		\label{bt3}\max\limits_{A,B,C=1,2}\twonorms{\dericsphere\left(\rgeo^{-2}\gsphere\left(\deriasphere,\deribsphere\right)-\esphere\left(\deriasphere,\deribsphere\right)\right)}{t}{\infty}{\sangle}{p}{\coneu}&\lesssim\lambda^{-\varepsilon_0}
		.\end{align}
	\end{subequations}

	Also,
	\begin{align}
	\label{bt4}\holdertwonorms{\Restrace\chismall,\hchi,\upzeta}{t}{2}{\sangle}{0}{\delta_0}{\coneu}&\lesssim\lambda^{-1/2+2\varepsilon_0}
	.\end{align}
	
	Moreover,
	\begin{subequations}
		\begin{align}
		\label{bt5}\onenorm{\rgeo\left(\Restrace\chismall,\hchi,\upzeta\right)}{\sangle}{p}{\stu}&\leq 1,\\
		\label{bt6}\norm{\nulllapse-1}_{L^\infty_\sangle(\stu)}&\leq\frac{1}{2}
		.\end{align}
	\end{subequations}

	Finally, we assume that the following estimates hold in the interior region:
	\begin{align}
	\label{bt7}\holderthreenorms{\Restrace\chismall,\hchi}{t}{2}{u}{\infty}{\sangle}{0}{\delta_0}{\intregion}&\leq\lambda^{-1/2},&
	\twonorms{\upzeta}{t}{2}{x}{\infty}{\intregion}&\leq\lambda^{-1/2},& 
	\threenorms{\angnabla\conformalfactor}{u}{2}{t}{2}{\sangle}{\infty}{\intregion}&\leq 1
	.\end{align}

\subsubsection{Main estimates for the acoustical function quantities}
In the following proposition, we derive the estimates for the various acoustical function quantities. These estimates are sufficient to derive the boundness theorem of the conformal energy in Theorem \ref{BoundnessTheorem} and to recover and improve the bootstrap assumptions in Section \ref{BAGeo}.
\begin{proposition}\cite[section 10]{3DCompressibleEuler}, \cite[section 5-6. The main estimates for the acoustical function quantities]{AGeoMetricApproach}. Under the bootstrap assumptions we have the following estimates where $2<q\leq4$:\label{mainproof}\\ 
	\textbf{Estimates for connection coefficients}
	\begin{subequations}
		\begin{align}
		\label{mr1}\twonorms{\Restrace\chismall,\hchi,\upzeta}{t}{2}{\sangle}{p}{\coneu},\twonorms{\rgeo\angD_{\Lunit}\left(\Restrace\chismall,\hchi,\upzeta\right)}{t}{2}{\sangle}{p}{\coneu}&\lesssim\lambda^{-1/2},\\
		\label{mr2}\twonorms{\rgeo^{1/2}\left(\Restrace\chismall,\hchi,\upzeta\right)}{t}{\infty}{\sangle}{p}{\coneu}&\lesssim\lambda^{-1/2},\\
		\label{mr3}\twonorms{\rgeo\left(\Restrace\chismall,\hchi,\upzeta\right)}{t}{\infty}{\sangle}{p}{\coneu}&\lesssim\lambda^{-4\varepsilon_0},
		\end{align}
	\end{subequations}
	\begin{subequations}
		\begin{align}
		\label{mr4}\rgeo\Restrace\reschi&\approx 1,\\
		\label{mr5}\norm{\rgeo^{1/2}\Restrace\chismall}_{L^\infty(\region)}&\lesssim\lambda^{-1/2},\\
		\label{mr6}\threenorms{\rgeo^{3/2}\angnabla\Restrace\chismall}{t}{\infty}{u}{\infty}{\sangle}{p}{\region}&\lesssim\lambda^{-1/2},\\
		\label{mr7}\twonorms{\rgeo\left(\angnabla\Restrace\chismall,\angnabla\hchi\right)}{t}{2}{\sangle}{p}{\coneu}&\lesssim\lambda^{-1/2},\\
		\label{mr8}\holdertwonorms{\Restrace\chismall,\hchi,\upzeta}{t}{2}{\sangle}{0}{\delta_0}{\coneu}&\lesssim\lambda^{-1/2}
		.\end{align}
	\end{subequations}

	In addition, the null lapse $\nulllapse$ verifies the following:
	\begin{align}
	\label{mr9}\twonorms{\frac{\nulllapse^{-1}-1}{\rgeo}}{t}{2}{x}{\infty}{\region},\threenorms{\frac{\nulllapse^{-1}-1}{\rgeo^{1/2}}}{t}{\infty}{u}{\infty}{\sangle}{2p}{\region},\twonorms{\rgeo(\angD_{\Lunit},\angnabla)\left(\frac{\nulllapse^{-1}-1}{\rgeo}\right)}{t}{2}{\sangle}{p}{\coneu}&\lesssim\lambda^{-1/2}
	.\end{align}
	
	Moreover, we have:
	\begin{align}
	\label{mr10}\holderthreenorms{\lgensmoothfunction}{t}{\infty}{u}{\infty}{\sangle}{0}{\delta_0}{\region}&\lesssim 1
	.\end{align}
	
	Furthermore,
	\begin{align}
	\label{mr11}\holderthreenorms{\Restrace\chismall,\hchi,\gtr\upchi-\frac{2}{\rgeo}}{t}{\frac{q}{2}}{u}{\infty}{\sangle}{0}{\delta_0}{\region}&\lesssim\lambda^{\frac{2}{q}-1-4\varepsilon_0\left(\frac{4}{q}-1\right)},\\
	\label{mr115}\twonorms{\upzeta}{t}{2}{x}{\infty}{\region}&\lesssim\lambda^{-1/2-3\varepsilon_0}&,\twonorms{\upzeta}{t}{\frac{q}{2}}{x}{\infty}{\region}&\lesssim\lambda^{\frac{2}{q}-1-4\varepsilon_0\left(\frac{4}{q}-1\right)}
	.\end{align}
	\textbf{Improved estimates in the interior region}
	\begin{align}
	\label{mr12}\twonorms{\frac{\nulllapse^{-1}-1}{\rgeo}}{t}{2}{x}{\infty}{\intregion}&\lesssim\lambda^{-1/2-4\varepsilon_0},\\
	\label{mr13}\twonorms{\rgeo^{1/2}\left(\Restrace\chismall,\hchi,\upzeta\right)}{\sangle}{2p}{t}{\infty}{\coneu}&\lesssim\lambda^{-1/2},& \text{if} \ \coneu&\subset\intregion,\\
	\label{mr14}\holderthreenorms{\Restrace\chismall,\hchi,\gtr\upchi-\frac{2}{\rgeo}}{t}{2}{u}{\infty}{\sangle}{0}{\delta_0}{\intregion}&\lesssim\lambda^{-1/2-3\varepsilon_0},&
	\twonorms{\upzeta}{t}{2}{x}{\infty}{\intregion}&\lesssim\lambda^{-1/2-3\varepsilon_0}
	.\end{align}
	\textbf{Estimates for the geometric angular coordinate components of $\gsphere$}
	\begin{subequations}
		\begin{align}
		\label{mr15}\max\limits_{A,B=1,2}\norm{\rgeo^{-2}\gsphere\left(\deriasphere,\deribsphere\right)-\esphere\left(\deriasphere,\deribsphere\right)}_{L^\infty(\region)}&\lesssim\lambda^{-4\varepsilon_0}\\
		\label{mr16}\max\limits_{A,B,C=1,2}\twonorms{\dericsphere\left(\rgeo^{-2}\gsphere\left(\deriasphere,\deribsphere\right)-\esphere\left(\deriasphere,\deribsphere\right)\right)}{t}{\infty}{\sangle}{p}{\coneu}&\lesssim\lambda^{-4\varepsilon_0}
		.\end{align}
	\end{subequations}
	\textbf{Estimates for $\volume$ and $\nulllapse$}
	\begin{subequations}
		\begin{align}
		\label{mr17}\volume&:=\frac{\sqrt{\mathrm{det}\gsphere}}{\sqrt{\mathrm{det}\esphere}}\approx\rgeo^2,\\
		\label{mr18}\norm{\nulllapse-1}_{L^\infty(\region)}&\lesssim\lambda^{-4\varepsilon_0}<\frac{1}{4}
		.\end{align}
	\end{subequations}

	Furthermore,
	\begin{align}
	\label{mr19}\threenorms{\rgeo^{1/2}\angnabla\cphi}{t}{\infty}{u}{\infty}{\sangle}{p}{\region}, \twonorms{\angnabla\cphi}{t}{2}{\sangle}{p}{\coneu},
	\twonorms{\rgeo\Lunit\angnabla\cphi}{t}{2}{\sangle}{p}{\coneu}&\lesssim\lambda^{-1/2}
	.\end{align}
	\textbf{Estimates for $\mass$ and $\angnabla\upzeta$}
	\begin{align}
	\label{mr20}\twonorms{\rgeo\mass,\rgeo\angnabla\upzeta}{t}{2}{\sangle}{p}{\coneu}\lesssim\lambda^{-1/2}
	.\end{align}
	\textbf{Interior region estimates for $\conformalfactor$}
	\begin{subequations}
		\begin{align}
		\label{mr21}\twonorms{\rgeo^{1/2}\Lunit\conformalfactor}{t}{\infty}{\sangle}{2p}{\coneu}\lesssim\lambda^{-1/2-2\varepsilon_0}, \twonorms{\rgeo^{1/2}\angnabla\conformalfactor}{\sangle}{p}{t}{\infty}{\coneu}, \twonorms{\angnabla\conformalfactor}{t}{2}{\sangle}{p}{\coneu}&\lesssim\lambda^{-1/2},&  \text{if}\ \coneu&\subset\intregion,\\
		\label{mr22}\norm{\conformalfactor}_{L^\infty\left(\intregion\right)}&\lesssim\lambda^{-8\varepsilon_0},\\
		\label{mr23}\norm{\rgeo^{-1/2}\conformalfactor}_{L^\infty\left(\intregion\right)}&\lesssim\lambda^{-1/2-4\varepsilon_0}
		.\end{align}
	\end{subequations}
	\textbf{Interior region estimates for $\conformalfactor$, $\mass$, $\modtorsion$ and $\hodgemass$}
	\begin{subequations}
		\begin{align}
		\label{mr24}\holderthreenorms{\angnabla\conformalfactor}{u}{2}{t}{2}{\sangle}{0}{\delta_0}{\intregion},\threenorms{\rgeo\modmass,\rgeo\angnabla\modtorsion}{u}{2}{t}{2}{\sangle}{p}{\intregion}&\lesssim\lambda^{-4\varepsilon_0},\\
		\label{mr25}\threenorms{\rgeo^{\frac{3}{2}}\modmass}{u}{2}{t}{\infty}{\sangle}{p}{\intregion}&\lesssim\lambda^{-4\varepsilon_0}
		.\end{align}
	\end{subequations}

	In addition,
	\begin{align}
	\label{mr26}\threenorms{\rgeo\angnabla\hodgemass,\hodgemass}{t}{2}{u}{2}{\sangle}{p}{\intregion},\holderthreenorms{\hodgemass}{t}{2}{u}{2}{\sangle}{0}{\delta_0}{\intregion}&\lesssim\lambda^{-4\varepsilon_0}
	.\end{align}
	\textbf{Decomposition of $\angnabla\conformalfactor$ and corresponding estimates in the interior region}: In $\intregion$, we can decompose $\angnabla\conformalfactor$ as follows:
	\begin{align}
	\angnabla\conformalfactor=-\upzeta+(\modtorsion-\hodgemass)+\massone+\masstwo
	,\end{align} 
	where the following asymptotic conditions near the cone-tip axis are satisfied:
	\begin{align}
	\label{mr265}\rgeo\massone(t,u,\sangle),\rgeo\masstwo(t,u,\sangle)=\zero(\rgeo) \ \text{as}\ t\downarrow u
	.\end{align}
	
	Moreover
	\begin{subequations}
		\begin{align}
		\label{mr27}\twonorms{\modtorsion-\hodgemass}{t}{2}{x}{\infty}{\intregion},\twonorms{\massone}{t}{2}{x}{\infty}{\intregion}&\lesssim\lambda^{-1/2-3\varepsilon_0},\\
		\label{mr28}\threenorms{\masstwo}{u}{2}{t}{\infty}{\sangle}{\infty}{\intregion}&\lesssim\lambda^{-1/2-4\varepsilon_0}
		.\end{align}
	\end{subequations}
\end{proposition}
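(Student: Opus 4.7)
My plan is to prove Proposition \ref{mainproof} via a continuity/bootstrap argument whose assumptions are precisely those listed in Subsection \ref{BAGeo}, namely \eqref{bt1.1}-\eqref{bt7}. The strategy is to use the relativistic Euler wave equations \eqref{waveEQ} and the transport-div-curl system \eqref{DivCurlSystem} together with the purely geometric identities satisfied by $(\upchi,\uchi,\upzeta,\conformalfactor,\mass,\modmass,\modtorsion,\hodgemass)$ in order to derive transport and Hodge-type equations whose right-hand sides sit at a controllable level of regularity. The initial data for these equations on $\Sigma_{0}$ and along the cone-tip axis is supplied by Propositions \ref{InitialCT1} and \ref{InitialCT2}, and the null-hypersurface flux estimates of Proposition \ref{EnergyNullFluxes} together with the energy estimates of Proposition \ref{EnergyandEllipticEstimates} provide the $L^2$ control of $\pfour\vvariables$, $\vectvort$, $\DivGradEnt$ that must be fed into every geometric transport equation.

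First I would set up the Raychaudhuri equation \eqref{introRaychau} for $\gtr\upchi$ and combine it with the Ricci decomposition \eqref{introRicci} contracted with $\Lunit\otimes\Lunit$, substituting $\boxg \gfour_{\alpha\beta}$ via the wave equation \eqref{waveEQ}; this produces a transport equation for $\Restrace\reschi = \gtr\upchi+\Chfour_{\Lunit}$ whose inhomogeneous terms involve $\linsmoothfunction(\vvariables)[\vectvort,\DivGradEnt]+\quadsmoothfunction(\vvariables)[\pfour\vvariables,\pfour\vvariables]$ plus nonlinear quadratic connection-coefficient terms, and which immediately yields \eqref{mr4}-\eqref{mr5} via the initial conditions of Proposition \ref{InitialCT2}. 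Next I would derive Codazzi-type Hodge systems on $\stu$ for $\hchi$ and $\upzeta$ and iterate trace inequalities and sphere-elliptic estimates to obtain \eqref{mr1}-\eqref{mr3}, \eqref{mr7}-\eqref{mr8}; the null lapse estimates \eqref{mr9}, \eqref{mr18} and the angular-coordinate estimates \eqref{mr15}-\eqref{mr17} follow by integrating transport ODEs along $\Lunit$ from $\Sigma_{0}$, using \eqref{initialfoliation3}-\eqref{initialfoliation4} and the bootstrap estimates on $\upchi$. The sharp estimates for $(\conformalfactor,\mass,\modmass,\modtorsion,\hodgemass)$ in \eqref{mr21}-\eqref{mr28} are obtained next: the whole point of introducing $\modmass$ and $\modtorsion$ is that, after computing their transport/Hodge equations, the dangerous terms $\angnabla \gtr\upchi$ and $\Lunit\angnabla\Chfour_{\Lunit}$ (which can only be controlled together) cancel, leaving source terms that can be placed in $L^2_uL^2_tL^p_\sangle$ via the null-cone flux of Proposition \ref{EnergyNullFluxes}. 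Finally, the mixed-norm Hölder bounds \eqref{mr10}-\eqref{mr14} are extracted by combining the transport equations with the Schauder-type input from Theorem \ref{Schauder0} applied to $\vectvort,\DivGradEnt$, and the decomposition \eqref{mr265}-\eqref{mr28} of $\angnabla\conformalfactor$ is read off directly from \eqref{Lunitsigma} and \eqref{defmodtorsion}.

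The main obstacle, and the place where the relativistic structure truly bites, is that the source terms $\vectvort$ and $\DivGradEnt$ appearing on the right-hand side of \eqref{waveEQ} (and hence in the Ricci components \eqref{introRicci} feeding the transport equations for $\gtr\upchi$, $\hchi$, $\mass$, $\modmass$) must be controlled at exactly the same level of regularity as in the non-relativistic case of \cite{3DCompressibleEuler}, despite the fact that they now satisfy a \emph{space-time} transport-div-curl system rather than a spatial Hodge system with constant coefficients. This is precisely what is resolved by Proposition \ref{NewDivCurl} and the elliptic/Schauder estimates of Proposition \ref{EllipticestimatesinL2} and Lemma \ref{Schaudertypeestimate}, which rewrite the system as a spatial div-curl system with the Riemannian principal part $G^{-1}(v)$; with those estimates in hand, $\vectvort$ and $\DivGradEnt$ enter every geometric transport equation with exactly the required $L^\infty_tH^{N-1}_x$ and $L^\infty_t C^{0,\delta_1}_x$ bounds, so the delicate trilinear commutator estimates and fractional Littlewood-Paley product estimates on null cones go through identically to \cite[Section 10]{3DCompressibleEuler} and \cite[Section 5--6]{AGeoMetricApproach}.

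Because the algebraic form of all the geometric equations (Raychaudhuri, Codazzi, Gauss, the $\modmass$/$\modtorsion$ transport systems) depends only on the null frame and the Lorentzian structure of $\gfour$, and because our Proposition \ref{NewDivCurl}, Lemma \ref{Schaudertypeestimate}, and Proposition \ref{EnergyNullFluxes} supply exactly the analogs of the three key fluid-side inputs used in \cite{3DCompressibleEuler}, I expect no new difficulty beyond this structural rewriting; I would therefore omit the bulk of the calculation and refer to \cite[Section 10]{3DCompressibleEuler} and \cite[Section 5--6]{AGeoMetricApproach} for the verbatim argument, providing only the remarks needed to confirm that the schematic form $\linsmoothfunction(\vvariables)[\vectvort,\DivGradEnt]+\quadsmoothfunction(\vvariables)[\pfour\vvariables,\pfour\vvariables]$ of the inhomogeneities matches that treated in those works.
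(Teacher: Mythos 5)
Your proposal follows essentially the same route as the paper: derive the geometric transport and Hodge equations for the connection coefficients and the modified quantities, substitute $\boxg\gfour_{\alpha\beta}$ via the wave equation \eqref{waveEQ} in the Ricci decomposition \eqref{introRicci}, control the resulting source terms through the bootstrap assumptions, the energy estimates of Proposition \ref{EnergyandEllipticEstimates}, and the null fluxes of Proposition \ref{EnergyNullFluxes}, and then defer the (identical) mixed-norm analysis to \cite[Section 10]{3DCompressibleEuler} and \cite[Sections 5--6]{AGeoMetricApproach}. You also correctly isolate the genuinely new ingredient, namely that the space-time div-curl system for $\vort,\gradEnt$ must first be recast as the spatial quasilinear system of Proposition \ref{NewDivCurl} so that $\vectvort,\DivGradEnt$ enter the geometric equations with the same regularity as in the non-relativistic case.

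One caveat: for the H\"older bounds you invoke ``the Schauder-type input from Theorem \ref{Schauder0}.'' As literally stated that would be circular, since Theorem \ref{Schauder0} is conditional on the improved Strichartz estimate \eqref{preimprove}, which in turn rests on Proposition \ref{mainproof}. What you actually need (and what the paper uses) is only the unconditional Lemma \ref{Schaudertypeestimate} together with the Gr\"onwall bound \eqref{CDleq1}, both of which hold under the bootstrap assumptions alone; with that substitution your argument is free of circularity.
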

\subsubsection{Discussion of the proof of Proposition \ref{mainproof}}\label{finalproof2}
In this subsection, we discuss the proof of Proposition \ref{mainproof}. For the details of the proof, we refer readers to \cite[Section 10]{3DCompressibleEuler} and \cite[Section 5-6]{AGeoMetricApproach}. 

The structure of the proof consists of 3 major steps: 

	\begin{enumerate}
		
	\item 
 First of all, we derive the transport equations along null hypersurfaces and div-curl systems\footnote{These div-curl systems depend on the acoustic geometry and are independent of the structure of the relativistic Euler equations. Therefore, these div-curl systems are completely unrelated to the ones that we derived for the vorticity and entropy gradient in Section \ref{section4}.} on $\stu$ verified by the geometric quantities. These equations are derived using basic differential geometry and, at the appropriate spots, using the relativistic Euler equations for substitution for reasons further described in Step 2 below. The key point is that \textit{all of the equations we obtain have the exact same schematic structure as the equations in \cite{3DCompressibleEuler}}. We refer readers to \cite[Section 2]{ImprovedLocalwellPosedness} for the PDEs satisfied by connection coefficients and mass aspect function $\mass$. We refer readers to \cite[Section 6]{AGeoMetricApproach} for the PDEs of conformal factor $\conformalfactor$, modified torsion $\modtorsion$ and modified mass aspect function $\modmass,\hodgemass$. 
\item 
 Secondly, certain Ricci and Riemann curvature tensor components, which appear as source terms in the PDEs that we just obtained in the previous step, are rewritten by using Bianchi identities and the decomposition of the following Ricci curvature \cite[Lemma 2.1]{ImprovedLocalwellPosedness}:
\begin{align}
\Ricfour{\alpha}{\beta}=-\frac{1}{2}\boxg \gfour_{\alpha\beta}(\vvariables)+\frac{1}{2}\left(\Dfour_\alpha\Chfour_\beta+\Dfour_\beta\Chfour_\alpha\right)+\quadsmoothfunction(\vvariables)[\pfour\vvariables,\pfour\vvariables].
\end{align} 
It is at this step that the wave equation (\ref{waveEQ})  of the geometric formulation of the relativistic Euler equations is used to substitute for the term $\boxg \gfour_{\alpha\beta}(\vvariables)$, as we alluded to in the previous step. We again emphasize that following this substitution, one obtains equations of the exact same schematic form as in \cite{3DCompressibleEuler}. 

After the substitution, one is faced with controlling the source terms in the geometric equations in mixed space-time norms. The source terms depend on $\pfour\vvariables,\vvort,\vgradientEnt,\vectvort,\DivGradEnt$ and have the same schematic structure as in \cite{3DCompressibleEuler}. This step requires trace inequalities and Sobolev inequalities, which are provided by \cite[Proposition 10.2]{3DCompressibleEuler} and \cite[(5.34)-(5.39)]{AGeoMetricApproach}; the proofs of these inequalities are the same as in \cite{3DCompressibleEuler}, and rely on bootstrap assumptions (\ref{waveba})-(\ref{transportba}), energy estimates on constant-time hypersurfaces (\ref{energyestimates}) and energy estimates on null hypersurfaces (\ref{coneenergy1})-(\ref{coneenergy2}).

\item 
 After one has controlled the source terms in the geometric PDEs for the acoustic geometry from Step 1, one uses a transport lemma and div-curl estimates to obtain various mixed space-time norms estimates for $\rgeo^p$ weighted acoustic quantities in Proposition \ref{mainproof}. We refer readers to \cite[Section 10.9]{3DCompressibleEuler} and \cite[Section 5-6]{AGeoMetricApproach} for the detailed proof. We emphasize that the proof is the same as in \cite{3DCompressibleEuler} because it relies only on bootstrap assumptions (\ref{waveba})-(\ref{transportba}) and source term bounds that one obtained in Step 2, ingredients which are already available to us at this step in the proof.
	\end{enumerate}

We now point out some differences between the non-relativistic 3D compressible Euler equations and the relativistic Euler equations in terms of the control of acoustic geometry. Besides the different acoustic metric $\gfour$ in this paper compared to \cite{3DCompressibleEuler}, the $\gfour$-$\St$-normal vectorfield is $\Timelike$ (see (\ref{timelike2})) in this paper, while in the non-relativistic case \cite{3DCompressibleEuler}, it is $\materialderivative=\p_t+v^a\p_a$. Although these differences have necessitated changes to some of the proofs earlier in the paper (such as the proof of the energy estimates on constant-time hypersurfaces and the acoustic null hypersurfaces), these changes do not have any effect on the proofs of the estimate for the acoustic geometry; it is for this reason that we refer to \cite[Section 10]{3DCompressibleEuler} for the details behind the proof of Proposition \ref{mainproof}.
\appendix
\section*{Appendices}
\addcontentsline{toc}{section}{Appendices}
\renewcommand{\thesubsection}{\Alph{subsection}}

\subsection{Notations}\label{B}
In this appendix, we gather the notations that we use throughout the article.
\begin{center}

	\begin{tabular}{|c|c|} 
		\hline
		Symbol & Reference  \\ 
		\hline\hline
		$\minkowski,\minkowski^{-1},\epsilon_{\gamma\delta\kappa\lambda},\St,\p,\pfour$ & Section \ref{Notations}  \\ 
		\hline
		$v,\pressure,n,\ent,\temp,\enth$ & Subsection \ref{thebasicfluidvariables} \\
		\hline
		$\text{vort}^\alpha(\cdot)$ & Definition \ref{vorthogonalvorticity} \\
		\hline
		$\vort$ & Definition \ref{vorticity} \\
		\hline
		$\lnenth$ & Definition \ref{D:loglnen} \\
		\hline
		$\tempoverEnth$ & Definition \ref{D:Tempeatureoverenth} \\
		\hline
		$\gradEnt$ & Definition \ref{D:entgradient} \\
		\hline
		$\speed$ & Subsection \ref{D:speedofsound} \\
		\hline
		
		$\modivort, \DivGradEnt$ & Definition \ref{modifiedfluidvariables} \\
		\hline
		$\gacoustical, \gacoustical^{-1}$ & Definition \ref{D:acousticunrescaled} \\
		\hline
		$\gfour,\gfour^{-1}$ & Definition \ref{D:acousticmetric} \\
		\hline
		$\Timelike$ & Definition \ref{timelike} \\
		\hline
		$g, g^{-1}$ & Definition \ref{D:inducegonconstanttime} \\
		\hline
		$\Dfour,\boxg$ & Definition \ref{LeviCivita} \\
		\hline
		$\vec{v},\vvort,\vgradientEnt,\vectvort,\vvariables,\materialderivative$ & Definition \ref{D:cartesiancomponentsofvariables} \\
		\hline
		$\linsmoothfunction,\quadsmoothfunction$ & Subsection \ref{geometriceq} \\
		\hline

		$\inhom{\modivort^\alpha}, \inhom{\DivGradEnt}$ & Proposition \ref{WaveEqunderoringinalacoustical} \\
		\hline
		$\inhom{\variables}$ & Proposition \ref{Waveequationsafterrescalingtheacousticalmetric} \\
		\hline
		$\upnu, \littlewood, P_I, P_{\leq\upnu}$ & Section \ref{definitionlittlewood} \\
		\hline
		$\remainder{\pfour\variables},\remainder{\p\modivort^\alpha},\remainder{\p\DivGradEnt}$ & Lemma \ref{lwpequations} \\
		\hline

		$q,\varepsilon_0,\delta_0,\delta,\delta_1$ & Section \ref{ChoiceofParameters} \\
		\hline

		$\Domain,\breve{\Domain}$ & Section \ref{Data} \\
		\hline

		$\Tstar$ & Section \ref{bootstrap} \\
		\hline
		$Q, \Jen{X},\deform{X},\mathbb{E},\diff\tvol$ & Definition \ref{energymomentum} \\
		\hline
		$\lambda$ & Section \ref{partitioning} \\
		\hline

		$\Trescale$ & Section \ref{SelfRescaling} \\
		\hline
		$u,\coneu,\stu,\intregion,\extregion,\region$ & Section \ref{opticalfunction} \\
		\hline
		$\rgeo, w_*$ & Definition \ref{D:rescaledquantities}\\
		\hline
		$\Lgeo,\nulllapse,\spherenormal,\Lunit,\uLunit,\gsphere,e_A, \asphere$ & Definition \ref{D:Nullframe}\\
		\hline

		$\sphereproject,\sgabs{\upxi},\gtr\upxi,\hat{\upxi}$ & Definition \ref{D:stutensorfields}\\
		\hline
		$\mathcal{F}_{(wave)}, \mathcal{F}_{(transport)},\diff\spherevol$ & Definition \ref{D:acouticnullfluxes}\\
		\hline
		$\Energyconformal,\iEnergyconformal,\eEnergyconformal$ & Definition \ref{definitionofconformalenergy}\\
		\hline
		$\angD,\angnabla,\anglap,\Ricfour{}{},\Riemfour{}{}{}{}$ & Section \ref{D:levi-civita2} \\
		\hline
		$k,\spheresecondfund,\upchi,\uchi,\uzeta,\uzeta,\Lie,\angLie$ & Definition \ref{D:DEFSOFCONNECTIONCOEFFICIENTS}\\
		\hline
		$\conformalfactor,\Chfour_\Lunit,\rescaledgfour,\reschi,\resuchi,\Restrace,\chismall$ & Definition \ref{conformalchangemetric}\\
		\hline
		$\mass$ & Definition \ref{D:mass}\\
		\hline
		$\esphere$ & Definition \ref{D:norms}\\
		\hline
		$\lapse,w,\derinormal,\deriasphere,\volume$ & Proposition \ref{InitialCT1}\\
		\hline
		$\modmass,\hodgemass,\modtorsion$ & Definition \ref{D:mass}\\
		\hline
		$p$ & Subsection \ref{choiceofp}\\
		\hline
	\end{tabular}
\end{center}

\bibliography{bib.bib}
\bibliographystyle{plain}

\end{document}